\theoremstyle{plain}
\newtheorem{Thm}{Theorem}[section]
\newtheorem{Lem}[Thm]{Lemma}
\newtheorem{Prop}[Thm]{Proposition}
\newtheorem{Cor}[Thm]{Corollary}
\theoremstyle{plain}
\newtheorem{Def}[Thm]{Definition}
\newtheorem{Exa}[Thm] {Example}
\numberwithin{equation}{section}
\newcommand{\Spec}{\mathrm{Spec}}
\newcommand{\cO}{\mathcal{O}}
\newcommand{\cL}{\mathcal{L}}
\newcommand{\cF}{\mathcal{F}}
\newcommand{\cM}{\mathcal{M}}
\newcommand{\sk}{\mathbf{k}}
\newcommand{\N}{\mathbb{N}}
\newcommand{\Z}{\mathbb{Z}}
\newcommand{\B}{\mathbb{B}}
\newcommand{\Bt}{\mathbb{B}^{\mathrm{tor}}}
\newcommand{\A}{\mathbb{A}_\sk^1}
\newcommand{\Aa}{(\mathbb{A}_\sk^1)^*}
\newcommand{\p}{\mathbb{P}_\sk^1}
\newcommand{\ea}{\mathbb{E}}
\newcommand{\ega}{\mathbb{E}^{\mathrm{gp}}}
\newcommand{\et}{\mathbb{E}^{\mathrm{tor}}}
\newcommand{\iso}{\mathbf{Iso}_U^{\mathrm{gp}}(J_U,E\times_\sk U)}
\newcommand{\is}{\mathbf{Iso}}
\newcommand{\red}{\color{black}}
\begin{document}
\bibliographystyle{plain}
\title{Surface fibrations with  large equivariant automorphism group}

\author{Yi Gu}

\email{sudaguyi2017@suda.edu.cn}
\address{School of Mathematical Sciences, Soochow University, Suzhou, 215006, P. R. of China}
\thanks{This work is supported by grant NSFC (No.~11801391) and NSF of Jiangsu Province (No.~BK20180832)}

\begin{abstract}
For a relatively minimal surface fibration $f: X\to C$, the equivariant automorphism group of $f$ is, roughly speaking, the group of automorphisms of $X$ preserving the fibration structure. We present a classification of such fibrations of fibre genus $g\ge 1$ with smooth generic fibre over an arbitrary algebraically closed field $\sk$ whose equivariant automorphism group is infinite.
\end{abstract}
\maketitle

\tableofcontents
\section{Introduction}
Let $\sk$ be an algebraically closed field and $f: X\to C$ be a relatively minimal surface fibration.  The equivariant automorphism group of $X/C$, denoted by $\ea(X/C)$ in this paper, is defined as the group
$$\ea(X/C):=\{\,\,(\tau,\sigma) \,\,|\,\, \tau\in \mathrm{Aut}_\sk(X), \sigma\in \mathrm{Aut}_\sk(C),\,\,\, f\circ \tau =\sigma\circ f\,\, \}$$
equipped with the natural composition law. See the following picture:
 $$
\xymatrix{ X\ar[rr]_\sim^\tau \ar[d]_f&& X \ar[d]^f\\
C \ar[rr]_\sim^\sigma && C.
}
$$By definition, we have a natural exact sequence of groups:
\begin{equation}\label{ext: intro-fundamental}
1\to \mathrm{Aut}_C(X)\to \ea(X/C)\stackrel{f_\sharp}{\to}\mathrm{Aut}_{\sk}(C),  \,\,\, f_\sharp: (\tau,\sigma)\mapsto \sigma, 
\end{equation}and let us denote by \begin{equation}
\B(X/C):=\mathrm{Im}(f_\sharp: \ea(X/C) \to \mathrm{Aut}_\sk(C))\subseteq \mathrm{Aut}_\sk(C).
\end{equation} 
Via another projection $$f^\sharp: \ea(X/C)\to \mathrm{Aut}_\sk(X), \,\,\, f^\sharp: (\tau,\sigma)\mapsto \tau,$$  the group $\ea(X/C)$ is naturally identified with a subgroup of $\mathrm{Aut}_\sk(X)$.  Along with its subgroup $\mathrm{Aut}_C(X)$ and quotient group $\B(X/C)$, the equivariant automorphism group $\ea(X/C)$  measures the symmetricity of this fibration and is an important invariant of $f$. Moreover, when $f: X\to C$ occurs naturally, {\it e.g.}, when $f$ is the Albanese fibration, the (pluri-)canonical fibration, then $\ea(X/C)$ is usually identified with $\mathrm{Aut}_\sk(X)$. 

In history, this group $\ea(X/C)$  has been studied by several people in different ways over the field of complex numbers. For example, Hu, Keum and Zhang provide dynamic criteria so that a subgroup $G\subseteq \mathrm{Aut}_\sk(X)$ can be embed to $\ea(X/C)$ for a suitable fibration $f: X\to C$ in \cite{Hu-Keum-Zhang15-criteria-for-equivariant-firbation}. When the fibration $f: X\to C$ comes naturally, the structures of $\ea(X/C)$ and its subgroup acting trivially on cohomology are studied by Cai in several papers   \cite{CaiJ-X06Automorphismofgenustwofibration,Cai-JX06,Cai-JX07,Cai-JX09Automorphisms-of-elliptic-surfaces}.  More recently, in papers of Prokhorov and Shramov \cite{P-S17, P-S19} and Shramov \cite{Shramov19-finite-group-action-on-elliptic-surfaces}, when studying the Jordan property of the automorphism group of algebraic surfaces, the authors are concerned with the finiteness of the group $\B(X/C)$ for several different kinds of fibrations $f: X\to C$.

The aim of this paper is to give a complete classification of those relatively minimal surface fibration $f: X\to C$ of positive fibre genus so that $\ea(X/C)$ is infinite, in all characteristic. To be more precisely, in positive characteristic there are fibrations  with singular general fibres, {\it e.g.}, the quasi-elliptic fibrations.  Fibrations with singular general fibres are wild and need  a different treatment beyond this paper. By excluding these fibrations, we actually classify  fibrations with smooth general fibres whose equivariant automorphism group is infinite. For this purpose, let us identify ``{\bf surface fibration}" with ``{\bf surface fibration having smooth general fibre}" from now on for simplicity. 

As can be easily expected, surface fibration with infinite equivariant automorphism group are isotrivial. For isotrivial surface fibrations, we mention the following definition.
\begin{Def}[cf. \protect{\cite[\S~1]{Serrano96}}]\label{Def: standard fibration}
(1). An isotrivial surface fibration $f: X\to C$ with smooth general fibre $F$ is called   \emph{pre-standard} if there is a suitable open subset $V\subseteq C$ and a finite group scheme $G$ acting  faithfully both on $F$ and another irreducible \emph{quasi-projective} curve $V'$  so that we have  the  left commutative diagram above. Here $G$ acts on $F\times_\sk V'$ diagonally.
$$
\xymatrix{
  X\ar[d]^f &\ar@{_{(}->}[l] X_V \ar[d]^{f_V} \ar[rr]^{\simeq} && (F\times_\sk V')/G\ar[d]^{p_2}  & X \ar@{-->}[rr]^{\text{birational} \,\,\,\,\,\,\,} \ar[d]^f &&(F\times C')/G\ar[d]^{p_2}\\
     C&\ar@{_{(}->}[l] V \ar[rr]^\simeq && V'/G  & C\ar[rr]^\simeq && C'/G                                       .
}
$$
(2).  An pre-standard surface fibration $f: X\to C$ is called  \emph{standard} if we can take $V=C$ and $V'$ is a smooth. See the left commutative diagram. 

In above settings,  we call the pair $(F,C',G)$ ({\it resp.} $(F,V,V',G)$) as a ({\it resp.} pre-)standard realisation of $f: X\to C$.
\end{Def}
A standard fibration is automatically pre-standard and the converse is also true in characteristic zero  or if  $G$ is \'etale. A useful result in characteristic zero, given in \cite{Serrano96}, is that an isotrivial fibration is always standard (with a key step given in \cite[\S~VI]{Beauville-book} for the genus one case). This result helps substantially in some classification problem ({\it e.g.}, the classification of the so-called {\it hyperellipitc} surface, \cite[pp~148]{BHPV04}) and in calculations of invariants of $f: X\to C$, such as $\mathrm{Aut}_\sk(X), K_X^2, \chi(\cO_X), \cdots$ (see, {\it e.g.}, \cite{Bennett-Miranda90, Polizzi15}).

In positive characteristic, when the fibre genus of an isotrivial fibration $f: X\to C$ is at least two, the automorphism group of the general fibre $F$ is rigid and thus such a fibration remains standard. But when it comes to isotrivial fibration  of genus one, it turns out to be different.
\begin{Prop}[Proposition~\ref{Prop: isotrivial=pre standard}]\label{Prop: main in front}
A relatively minimal isotrivial genus one surface fibration is pre-standard but not necessarily standard.
\end{Prop}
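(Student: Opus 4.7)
The plan is to split the proposition into an affirmative half --- every relatively minimal isotrivial genus one fibration is pre-standard --- and a negative half, for which I would exhibit an explicit counterexample in positive characteristic. For the affirmative half I intend to trivialise the natural Isom-torsor by a finite faithfully flat cover and then pass to a Galois closure; for the counterexample I would exploit an infinitesimal subgroup of $\mathrm{Aut}^{\mathrm{gp}}(E)$ that is only available in positive characteristic.

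For pre-standardness, I would first restrict to the open $U\subseteq C$ over which $f_U$ is smooth and form the relative Jacobian $J_U\to U$. By isotriviality, $J_U$ is an elliptic fibration of constant $j$-invariant, and after fixing an elliptic curve $E$ over $\sk$ with $E\cong F$ as $\sk$-schemes, the Isom-torsor $\iso$ becomes an fppf torsor under the finite $\sk$-group scheme $\mathrm{Aut}^{\mathrm{gp}}(E)$. Such a torsor is killed by some finite fppf cover $U_1\to U$, trivialising $J_U$. Since $X_U$ is a $J_U$-torsor, after this base change it becomes an $E\times_\sk U_1$-torsor, and a further finite cover $U_2\to U_1$ absorbs that class, giving $X_U\times_U U_2\simeq E\times_\sk U_2$. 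Taking a Galois closure $V'\to V$ over a suitable open $V\subseteq U$ with finite group scheme $G$, I would verify that the induced $G$-action on $E\times_\sk V'$ is diagonal, i.e.\ factors through $\mathrm{Aut}^{\mathrm{gp}}(E)\times\mathrm{Aut}(V')$; this produces the pre-standard presentation $X_V\simeq(F\times_\sk V')/G$.

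For the failure of standardness, I would construct an isotrivial elliptic surface over $\p$ in characteristic $p>0$ by twisting $E\times_\sk\p$ along a non-trivial $\mu_p$-torsor, using the inclusion $\mu_p\hookrightarrow E\subseteq \mathrm{Aut}^{\mathrm{gp}}(E)$ of infinitesimal translations, and then taking the relatively minimal model. The minimal trivialising cover is then purely inseparable, and one still has to argue that no alternative pre-standard presentation of the same fibration can simultaneously achieve $V=C$ and a smooth $V'$. The main obstacle I foresee is the diagonality step in the affirmative half: one must rule out an element $g\in G$ acting as $(e,v)\mapsto(\varphi_g(v)(e),g(v))$ with $\varphi_g\colon V'\to\mathrm{Aut}^{\mathrm{gp}}(E)$ non-constant, which reduces to showing that the descent data from the two successive trivialisations land in the constant-in-$V'$ subgroup. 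This is delicate when $G$ has infinitesimal parts, and I expect it to be the technical heart of the argument, requiring careful tracking of the canonical $\sk$-structure underlying both the Isom-torsor and the $J_U$-torsor structure on $X_U$.
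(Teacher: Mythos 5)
Your overall decomposition (affirmative half via torsor reductions, negative half via a positive-characteristic example) matches the paper's, but both halves have genuine gaps.

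For pre-standardness, the step that fails is the ``Galois closure.'' After trivialising the Isom-torsor and then the $J_U$-torsor you obtain a finite flat cover $U_2\to U$, but in positive characteristic a finite flat cover with inseparable parts has no Galois closure: there is in general no finite group scheme $G$ and no torsor $V'\to V$ dominating $U_2|_V$, so the group $G$ your presentation needs never materialises. The paper avoids this entirely by producing the torsor structure \emph{by hand}: since $o_{X_U}$ is killed by some $n$ (Proposition~\ref{Prop: torsor of genus one is torsion}), one fixes a section $s:U\to\mathbf{Pic}^n_{X_U/U}$ and takes $Z\subseteq X_U$ to be the preimage of $s$ under multiplication by $n$, which is a $J_U[n]$-torsor by construction; then $\widetilde Z:=Z\times_U\iso$ is shown (Lemma~\ref{Lem: action glue}) to be a torsor under the \emph{constant} group scheme $\Lambda_0=E[n]\rtimes\mathrm{Aut}^{\mathrm{gp}}_\sk(E)\subseteq\mathbf{Aut}_\sk(E)$, and $X_U=(E\times_\sk\widetilde Z)/\Lambda_0$ with the action diagonal \emph{by construction} (Proposition~\ref{Prop: X realized as product quotient}). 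The diagonality issue you correctly flag as the technical heart is thus not ``ruled out'' afterwards but never arises, because both the $E[n]$-action (translations on $\mathbf{Pic}$) and the $\mathrm{Aut}^{\mathrm{gp}}_\sk(E)$-action (Galois action on the Isom-scheme) act on the $E$-factor through a fixed subgroup scheme of $\mathbf{Aut}_\sk(E)$. Finally one reduces $\widetilde Z$ to an irreducible torsor to get the pre-standard presentation.

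For non-standardness, your proposed counterexample does not work. First, $\mu_p$ embeds in $E$ only as a subgroup of the translation part (not of $\mathrm{Aut}^{\mathrm{gp}}(E)$), and only for $E$ ordinary. More seriously, $H^1_{\mathrm{fl}}(\p,\mu_p)=H^1_{\mathrm{fl}}(\A,\mu_p)=0$, so the only non-trivial $\mu_p$-torsors over opens of $\p$ live over $\Aa$, where they are $\Spec\bigl(\sk[t,t^{-1},x]/(x^p-t^i)\bigr)\simeq\Aa$: smooth, irreducible, and with $\mu_p$-action extending to the compactification $\p$. The resulting fibration $(E\times_\sk\p)/\mu_p\to\p$ is therefore \emph{standard} --- it is precisely case (D-i) of Theorem~\ref{Thm: classification of genus one}. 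The paper's counterexample (Examples~\ref{Exa: new} and~\ref{Exa: concrete}) instead uses $\alpha_p=\mathrm{Ker}(F_{E/\sk})$ on a \emph{supersingular} $E$, acting on $\A$ by the vector field $x^p\partial_x$, whose associated free $\alpha_p$-action on $\Aa$ does \emph{not} extend to a free action on $\p$; non-standardness is then proved by computing $\pi^*K_X$ via the foliation formula (\ref{formula: canonical bundle and foliation}) to show $K_X$ is nef for $p\ge 3$, so the Albanese map onto $E/\alpha_p$ has non-smooth generic fibre, which is incompatible with any standard realisation (Remark~\ref{Rem: feature of naive example}). You would need both a different group scheme and an actual argument of this kind to close the second half.
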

Example of non-standard genus one fibration is given in Example~\ref{Exa: concrete}. This proposition is somehow the weaker analogue of \cite[Lem.~VI.10(2)]{Beauville-book} and \cite{Serrano96} in positive characteristic. It should also be noted that the geometry ({\it e.g.}, their automorphism group and numerical invariants such as the Kodaira dimension of $X$) of a general pre-standard genus one fibration is difficult to handle. With some characteristic-$p$ pathologies behind (cf. Example~\ref{Exa: concrete} for the Kodaira dimension), the classical methods working with standard fibrations in characteristic zero (cf. \cite{Polizzi15}) does not apply in general. 

 
Now let us return to the classification problem aforementioned. By (\ref{ext: intro-fundamental}), the equivariant automorphism group $\ea(X/C)$ is infinite if and only if either $\mathrm{Aut}_C(X)$  or $\B(X/C)$ is infinite. A first result of the paper is an infinite criterion for the group $\mathrm{Aut}_C(X)$.
\begin{Thm}[Theorem.~\ref{Thm: main1}]\label{Thm: main1 in front}
Let $f:X\to C$ be a relatively minimal surface fibration of genus $g\ge 1$ over $\sk$. Then $\mathrm{Aut}_C(X)$ is infinite ({\it resp.} infinitely generated) if and only if $g=1$ and the Jacobian fibration (cf. Definition~\ref{Def: Jacobian fibration}) of $f: X\to C$ has  infinitely many sections ({\it resp}. is trivial).
\end{Thm}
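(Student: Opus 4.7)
My plan is to reduce the computation of $\mathrm{Aut}_C(X)$ to a question about the Mordell--N\'eron group of the Jacobian fibration and then invoke the Lang--N\'eron theorem. The first step is to identify $\mathrm{Aut}_C(X)$ with $\mathrm{Aut}_{K(C)}(X_\eta)$ via restriction to the generic fibre. Injectivity is immediate from the integrality of $X$. For surjectivity, any $K(C)$-automorphism of $X_\eta$ spreads to a birational self-map of $X$ over $C$; because $f$ is relatively minimal and $g\ge 1$, two-dimensional minimal-model theory (valid in every characteristic) forces this birational map to be a genuine isomorphism. This argument simultaneously kills the case $g\ge 2$: a smooth projective curve of genus $\ge 2$ over any field has finite automorphism group by the classical Hurwitz-type bound, so $\mathrm{Aut}_C(X)$ is then finite and the infinite condition forces $g=1$.

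When $g=1$, the generic fibre $X_\eta$ is a torsor under its Jacobian $J_\eta$. Translation by $J_\eta$ realises $J_\eta$ as the identity component of the $K(C)$-group scheme $\underline{\mathrm{Aut}}(X_\eta)$, with finite component group $G$ satisfying $|G|\le 24$ (a twisted form of $\mathrm{Aut}(J_\eta,0)$). Taking $K(C)$-points yields a left-exact sequence
\begin{equation*}
1 \longrightarrow J_\eta(K(C)) \longrightarrow \mathrm{Aut}_{K(C)}(X_\eta) \longrightarrow G(K(C)).
\end{equation*}
The valuative criterion of properness applied to the proper regular fibration $J\to C$ identifies $J_\eta(K(C))$ with the group of sections of the Jacobian fibration. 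Consequently, $\mathrm{Aut}_C(X)$ is infinite (resp.\ not finitely generated) if and only if $J(K(C))$ is. This already settles the ``infinite'' equivalence.

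For the ``infinitely generated'' refinement I would apply the Lang--N\'eron theorem: the group $J_\eta(K(C))/\mathrm{Tr}_{K(C)/\sk}(J_\eta)(\sk)$ is finitely generated. Since $\dim J_\eta = 1$, the trace is either $0$ or a full elliptic curve $E_0$, and a Galois-descent argument (any $K(C)$-homomorphism from a constant elliptic curve to a non-trivial twist must be invariant under the twist cocycle, which forces it to vanish) shows that the second case occurs precisely when $J_\eta$ is already constant, equivalently when $J=C\times E_0$ as fibrations. In the trivial case, $J(K(C))\supseteq E(\sk)$, which contains the divisible subgroup $E[\ell^\infty]$ for every prime $\ell\ne \mathrm{char}\,\sk$ and hence is not finitely generated; in the non-trivial case, Lang--N\'eron directly gives that $J(K(C))$ is finitely generated, completing the equivalence.

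The most delicate step I foresee is the extension of a $K(C)$-automorphism of $X_\eta$ to a regular $C$-automorphism of $X$ in positive characteristic, which must be handled carefully in the presence of wild or non-reduced fibres using relative minimality; the second subtle point is the trace-versus-triviality dichotomy in the last step, which requires the Galois-descent argument described above and is not formally contained in the bare Lang--N\'eron statement.
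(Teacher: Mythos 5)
Your route is the same as the paper's: identify $\mathrm{Aut}_C(X)$ with $\mathrm{Aut}_K(X_\eta)$ using relative minimality, kill $g\ge 2$ by finiteness of automorphisms of the geometric generic fibre, use the exact sequence $1\to J_\eta(K)\to \mathrm{Aut}_K(X_\eta)\to \mathrm{Aut}^{\mathrm{gp}}_K(J_\eta)$ with finite quotient (the paper's Corollary~\ref{Cor: ext of auto of genus one}), identify $J_\eta(K)$ with sections of the Jacobian fibration, and settle the ``infinitely generated'' dichotomy by Lang--N\'eron. All of that matches the paper and is correct.

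The one step where you diverge is the last one, and it is exactly where your sketch has a gap. Having shown that the $K/\sk$-trace $E_0$ of $J_\eta$ is a full elliptic curve, you want to conclude that $J_\eta$ is constant by arguing that a nonzero $K$-homomorphism from a constant elliptic curve to a \emph{non-trivial twist} would have to be invariant under the twisting cocycle and hence vanish. But nothing you have said establishes that $J_\eta$ is a form of a constant elliptic curve in the first place: what Lang--N\'eron hands you is a trace isogeny $\tau: E_0\times_\sk K\to J_\eta$, and to realize $J_\eta$ as $(E_0/N)\times_\sk K$ (so that your cocycle argument has something to act on, or so that the argument is simply unnecessary) you must show that the kernel $N\subseteq E_0\times_\sk K$, a finite subgroup scheme which in characteristic $p$ may be infinitesimal, descends to $\sk$. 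This is precisely the paper's rigidity statement for finite subgroup schemes of elliptic curves (Proposition~\ref{Prop: rigidity of subgroup}), whose positive-characteristic case is proved by observing that any non-trivial infinitesimal subgroup scheme contains the Frobenius kernel, which is defined over $\sk$, and inducting on the length; once $N$ is defined over $\sk$, the universal property of the trace forces $N=0$ and $J_\eta\cong E_0\times_\sk K$ directly. In characteristic zero $N$ is \'etale, its geometric points are torsion points of $E_0$ already rational over the algebraically closed field $\sk$, and your argument closes without further input; the gap is specifically in characteristic $p$, which is the case the theorem is really after. Everything else in your proposal (the $\ell$-divisible torsion showing $E_0(\sk)$, hence $J(C)$, is not finitely generated in the constant case; the passage from a constant generic fibre to triviality of the relatively minimal model $J=E\times_\sk C$) is fine.
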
 

Next we turn to the most important part of the paper: the classification of relatively minimal surface fibration $f:X\to C$ of positive fibre genus $g$ admitting an infinite $\B(X/C)$.  The main result is the following structure result:
\begin{Thm}[Theorem~\ref{Thm: main in g>2} \& \ref{Thm: main+main}]\label{Thm: main in front}
Let $f: X\to C$ be a relatively minimal surface fibration of genus $g\ge 1$ with infinite $\B(X/C)$. Taking $U\subseteq C$ to be the smooth locus of $f$, then there are following data:
\begin{enumerate}[(a).]
\item  a smooth curve $F$ of genus $g$;

\item   a finite subgroup scheme $G'\subseteq \mathbf{Aut}_\sk(F)$ over $\sk$, a $G'$-torsor $\nu': T'\to U$  such that

\begin{enumerate}[(i)]
\item $T'$ is irreducible and smooth over $\sk$;

\item the torsor $T'$ over $U$ admits an infinite equivariant torsor-automorphism group $\et(T'/U)$ (cf. Definition~\ref{Def: equivariant automorphism}); 

\item the $G'$-action on $T'$ extends to its normal compactification $T'\subseteq C'$.
\end{enumerate}
\end{enumerate}
With these data the fibration $f_U$ and $f$ is given as follows:
$$
\xymatrix{
X_U\ar@{=}[rr] \ar[d]^{f_U} && (F\times_\sk T')/G' \ar[d]^{p_2}\\
U  \ar@{=}[rr]                         &&  T'/G'
}\,\,\,\xymatrix{
X \ar@{-->}[rrrr]^{\text{minimal resolution}}    \ar[d]^{f } &&&& (F\times_\sk C')/G' \ar[d]^{p_2}\\
 C  \ar@{=}[rrrr]                         &&&&  C'/G'
}
$$
Here, with the $G'$-actions on $F,T',C'$ mentioned above, the group scheme $G'$ acts on $F\times_\sk T'$ and $F\times_\sk C'$ both diagonally.
\end{Thm}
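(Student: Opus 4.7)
The plan is to reduce the theorem to data coming from the smooth locus of $f$ and then glue everything back up via relative minimality. I proceed in four steps.

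First, I would show that the infiniteness of $\B(X/C)$ forces $f$ to be isotrivial. Any $\sigma\in\B(X/C)$ lifts to some $\tau\in\mathrm{Aut}_\sk(X)$ with $f\circ\tau=\sigma\circ f$, and $\tau$ restricts to an isomorphism between the fibre over $c$ and the fibre over $\sigma(c)$ for every $c\in C$. The induced moduli map $U\to\overline{\mathcal M}_g$ from the smooth locus $U\subseteq C$ is therefore constant on $\B(X/C)$-orbits. Since an infinite subgroup of $\mathrm{Aut}_\sk(C)$ has orbits that are Zariski dense in $C$, the moduli map is constant, so $f$ is isotrivial with smooth general fibre some curve $F$ of genus $g$.

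Next I would produce the torsor data $(G',T'\to U)$. Invoking Proposition~\ref{Prop: isotrivial=pre standard} (for $g=1$) and the analogous easier statement for $g\ge 2$, the fibration $f$ is pre-standard, so there exists an open $V\subseteq C$, a finite subgroup scheme $G\subseteq\mathbf{Aut}_\sk(F)$ and a $G$-scheme $V''$ realising $X_V\simeq (F\times_\sk V'')/G$. On the smooth locus $U$, $f$ has no degenerate fibres, so after shrinking and replacing $V''$ by an appropriate closure/normalisation the map $V''\to V$ becomes a flat $G$-torsor; spreading this fppf form of $F$ over all of $U$ yields $\nu':T'\to U$ as a $G'$-torsor with $G'\subseteq \mathbf{Aut}_\sk(F)$ finite. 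Passing to an irreducible component of $T'$ and shrinking $G'$ to its stabiliser preserves this data, giving $T'$ irreducible; smoothness of $T'$ then follows from smoothness of $U$ together with the flatness of the torsor. Finally, the normal compactification $C'$ of $T'$ is obtained as the normalisation of any projective compactification inside which $G'$ acts; since $G'$ is finite, its action on $T'$ extends uniquely to $C'$, and $C=C'/G'$.

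To complete the data I would check the infiniteness of $\et(T'/U)$. Given $(\tau,\sigma)\in\ea(X/C)$ with $\sigma\in\B(X/C)$, the automorphism $\tau$ of $X_U=(F\times_\sk T')/G'$ induces, after passing to the rigidifying cover $F\times_\sk T'\to X_U$, a compatible pair consisting of an automorphism of $F$ and an automorphism of $T'$ covering $\sigma$, and this latter automorphism is precisely an element of $\et(T'/U)$. The resulting homomorphism $\ea(X/C)\to\et(T'/U)$ has kernel contained in the finite group $G'\times\mathrm{Aut}_C(X_F)$ (roughly), so the image is infinite once $\B(X/C)$ is. Assembling everything, $X$ and $(F\times_\sk C')/G'$ are $C$-birational models of the same isotrivial fibration; by relative minimality, $X$ is the relative minimal resolution of $(F\times_\sk C')/G'\to C$, giving the displayed diagrams.

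The main obstacle, I expect, will be the second step: extracting a \emph{single} $G'$-torsor $T'\to U$ defined over all of the smooth locus from the merely local pre-standard realisation supplied by Proposition~\ref{Prop: isotrivial=pre standard}. In positive characteristic the group scheme $G'$ may be non-reduced, so one cannot simply use étale descent; one must argue fppf-locally and control the behaviour of the torsor at points of $U$ carrying multiple smooth fibres. The compatibility between the $G'$-action on the compactification $C'$ and the extension of $f$ across $C\setminus U$ is delicate as well, but this is essentially forced by the minimal resolution construction together with the rigidity of normal compactifications of a curve with a finite group action.
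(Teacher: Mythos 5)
Your outline (isotriviality, extract a torsor, show its equivariant automorphism group is infinite, reduce and compactify) matches the paper's architecture, but three of your key steps assert as automatic precisely the things the paper has to work hardest to prove, and two of them are false as stated. First, you claim that smoothness of $T'$ ``follows from smoothness of $U$ together with the flatness of the torsor.'' A flat torsor under an infinitesimal group scheme over a smooth curve need not be smooth or even reduced: $\Spec(\sk[t,x]/(x^p-f(t)))\to\A$ is an $\alpha_p$-torsor that is singular wherever $f'$ vanishes, and Remark~\ref{Rem: main on torsors}(2) notes that without the infiniteness hypothesis the Nori-reduced torsor can fail to be reduced. The smoothness of $T'$ is the content of Theorem~\ref{Thm: main on reduction of torsors}, proved by induction on the rank of the infinitesimal part using explicit computations of $H^1_{\mathrm{fl}}(Y,\alpha_p)$ and $H^1_{\mathrm{fl}}(Y,\mu_p)$ together with the infiniteness of $\Bt$ to pin down which classes can occur. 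Second, you claim the $G'$-action extends to the normal compactification $C'$ ``since $G'$ is finite.'' This is true for \'etale $G'$ but false for infinitesimal group schemes: an $\alpha_p$-action is a derivation, which may have a pole at the boundary (the discussion before Example~\ref{Exa: concrete} gives exactly such a non-extending action, $\delta_1=x^p\partial_x$). The paper's Lemma~\ref{Lem: extension the action} obtains the extension only because the infinite centralizer forces $G'$ into the linear automorphism group scheme $\mathbf{LAut}_\sk(T')$ (Corollary~\ref{Cor: extension of infinitesimal automorphism}).

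Third, for $g=1$ your step producing the torsor and transporting $\ea(X/C)$ to $\et(T'/U)$ via a ``rigidifying cover'' does not exist in the form you describe: $\mathbf{Aut}_\sk(F)$ is positive-dimensional for an elliptic curve, so there is no finite Isom-torsor trivializing the fibration. The paper instead realizes $X_U$ as $\mathbf{Pic}^1_{X_U/U}$, a torsor under $J_U$, reduces it to a $J_U[n]$-torsor $Z$ by choosing a section $s$ of $\mathbf{Pic}^n_{X_U/U}$, and twists by the Isom-scheme of the Jacobian to get a torsor $\widetilde Z$ under the finite group scheme $E[n]\rtimes\mathrm{Aut}^{\mathrm{gp}}_\sk(E)$. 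The lift of $(\tau,\sigma)\in\ea(X/C)$ to $\et(\widetilde Z/U)$ is then only defined on the stabilizer $N_s$ of the chosen section, and the crux (Proposition~\ref{Prop: finite kernel and cokernel}) is that $f_\sharp(N_s)$ still has finite index in $\B(X/C)$; this rests on the finiteness of $J_U(U)/nJ_U(U)$, which is deduced from the Lang--N\'eron theorem. Your remark that the kernel is ``contained in the finite group $G'\times\mathrm{Aut}_C(X_F)$'' cannot be right either, since $\mathrm{Aut}_C(X)$ is frequently infinite for $g=1$ (Theorem~\ref{Thm: main1}); what is needed is control of the cokernel, not the kernel, and that is where the arithmetic input enters. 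None of these gaps is cosmetic: each corresponds to a section-length argument in the paper.
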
  
As a result, such fibrations are standard and they admits a very restrictive standard realization $(F,C',G)$. The pairs $(G',\nu: T'\to U)$ above is classified in Proposition~\ref{Prop: classification of torsors with infinite Bt} and as a result, we manage to present a complete classification of relatively minimal surface fibration $f: X\to C$ of genus $g\ge 1$ with infinite $\B(X/C)$ in Theorem~\ref{Thm: main in g>2} ($g\ge 2$) in \S~\ref{Sec: B infinite, g>1} and Theorem~\ref{Thm: classification of genus one} ($g=1$) in \S~\ref{Subsec: classification in genus one}.  The two classification theorems are rather long, we shall not include it here.

A remarkable consequence of Theorem~\ref{Thm: main in front} is that for such a fibration $f:X\to C$,   the Kodaira dimension of $X$ is restrictive.
 \begin{Cor}[Corollary~\ref{Cor: last}]\label{Cor: 2020}
Under the same settings of Theorem~\ref{Thm: main in front} and assume $g=1$, then we have $\kappa(X)=0$ if $g(C)=1$ and $\kappa(X)=-\infty$ if $g(C)=0$. In particular, $\kappa(X)\le 0$. 
\end{Cor}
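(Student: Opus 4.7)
The plan is to combine the structural Theorem~\ref{Thm: main in front} with a case analysis on $g(C)$.  Since $\B(X/C)$ is an infinite subgroup of $\mathrm{Aut}_\sk(C)$ and $\mathrm{Aut}_\sk(C)$ is finite whenever $g(C)\ge 2$, we have $g(C)\in\{0,1\}$, and I treat the two cases separately.

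Suppose first $g(C)=1$, so $C$ is an elliptic curve; the Zariski closure of $\B(X/C)$ in $\mathrm{Aut}_\sk(C)=C\rtimes\mathrm{Aut}_\sk(C,0)$ is then positive-dimensional and must contain the translation subgroup $C$.  The finite set of points of $C$ over which $f$ has singular or multiple fibres is stabilised by this closure, hence by every translation, and so is empty; thus $f$ is everywhere smooth, $U=C$, and the $G'$-torsor $C'=T'\to C$ is \'etale.  By Riemann--Hurwitz $g(C')=1$, so $F\times_\sk C'$ is an abelian surface on which $G'$ acts freely, and $X=(F\times_\sk C')/G'$ is a finite \'etale quotient thereof.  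Pulling $K_X$ back along this cover yields $K_{F\times_\sk C'}=\cO$, so $K_X$ is torsion and $\kappa(X)=0$.

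Now suppose $g(C)=0$, so $C=\p$.  I first argue $g(C')=0$.  By property~(iii), the $G'$-action on $T'$ and every element of $\et(T'/U)$ extend compatibly to $C'$; since $\mathrm{Aut}_\sk(C')$ is finite when $g(C')\ge 2$, the infinitude of $\et(T'/U)$ forces $g(C')\le 1$.  If $g(C')=1$, then $C'=E$ is an elliptic curve with $E/G'=\p$, so $G'$ contains a non-translation element $\alpha\in\mathrm{Aut}_\sk(E)$.  The fixed-point subscheme $E^\alpha$ of such an $\alpha$ being finite, a direct computation inside $\mathrm{Aut}_\sk(E)=E\rtimes\mathrm{Aut}_\sk(E,0)$ shows that the centralizer of $\alpha$ (hence of $G'$) is already finite, contradicting the infinitude of $\et(T'/U)$.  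Therefore $C'=\p$, and $X$ is birational to $(F\times_\sk\p)/G'$.  The $G'$-equivariant first projection descends to a fibration $(F\times_\sk\p)/G'\to F/G'$ whose generic fibre is $\p$, because $G'$ acts faithfully on $F$ with trivial generic stabiliser.  Thus $X$ is birationally ruled, and $\kappa(X)=-\infty$.

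The main obstacle is the sub-case $g(C)=0$ with $g(C')=1$: one must exclude the ``bielliptic''-type geometry $(F\times_\sk E)/G'$, which would a priori produce $\kappa=0$ rather than $-\infty$.  The centralizer computation outlined above is what rules out this possibility, and in positive characteristic some care will be needed when $G'$ is a non-reduced finite group scheme, both in identifying the generic stabiliser of $G'$ on $F$ and in translating Riemann--Hurwitz into a genus identity on the \'etale locus of the cover.
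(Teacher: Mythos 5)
Your case division ($g(C)=1$ versus $g(C)=0$) matches the paper's, and your $g(C)=0$ branch is essentially correct: the paper likewise gets $\kappa(X)=-\infty$ by observing that $C'\simeq \p$ (read off directly from Proposition~\ref{Prop: classification of torsors with infinite Bt} rather than re-derived) and that the first projection $X\to (F\times_\sk C')/G'\to F/G'$ is generically a $\p$-bundle, so $X$ is ruled. Your centralizer argument excluding $g(C')=1$ is a valid substitute and can even be shortened: $G'$ acts freely on $T'=C'$ because $T'$ is a torsor, so by Proposition~\ref{Prop: classification of subgroup acting freely on E} it consists of translations and $C'/G'$ would again be elliptic, never $\p$.

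The $g(C)=1$ branch, however, has a genuine gap in positive characteristic. You assert that the $G'$-torsor $T'=C'\to C$ is \'etale, hence that $F\times_\sk C'\to X$ is a finite \'etale cover along which $K_X$ pulls back to $K_{F\times_\sk C'}=\cO$. But the paper's classification (Proposition~\ref{Prop: classification of torsors with infinite Bt}, case (A)) explicitly allows $G'$ to be an infinitesimal subgroup scheme of the elliptic curve $C'$, e.g.\ $\alpha_p$ or $\mu_{p^k}$; then $C'\to C$ and $F\times_\sk C'\to X$ are purely inseparable, not \'etale, and the identity $\pi^*K_X=K_{F\times_\sk C'}$ is precisely what fails for such quotients: by Theorem~\ref{Thm: Ekedahl} one has $\pi^*K_X=K_{F\times_\sk C'}-(p-1)c_1(\cF)$ with a correction term. (Here the correction happens to vanish because the infinitesimal part of $G'$ acts by translations on both elliptic factors, so the foliation is trivial; but that is an extra argument you have not supplied, and for $\mu_{p^k}$ with $k\ge 2$ one must also factor the quotient into height-one steps. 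The appeal to Riemann--Hurwitz to get $g(C')=1$ has the same inseparability problem.) You flag this difficulty in your closing remarks but the proof as written does not resolve it. The paper sidesteps it entirely: once $U=C$ is elliptic, $f$ is a smooth isotrivial genus one fibration, so $K_X=0$ by the canonical bundle formula and $\kappa(X)=0$ follows in one line.
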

Recall when $S$ is a minimal surface of Kodaira dimension one, its Iitaka fibration  $f: S\to C$ is either a relatively minimal genus one (with smooth general fibres) or quasi-elliptic (with singular general fibres) fibration. This fibration is $\mathrm{Aut}_\sk(S)$-equivariant so that $\ea(S/C)=\mathrm{Aut}_\sk(S)$. Corollary~\ref{Cor: 2020} then says that the image of the homomorphsim $$f_\sharp: \mathrm{Aut}_\sk(S)=\ea(S/C)\to \mathrm{Aut}_\sk(C)$$ is finite if $f$ is a genus one fibration.  In other words, this corollary gives a partial positive characteristic generalization of  \cite[Proposition~1.2]{P-S19}, which is considered as `{\it one of the main steps in the proof of Theorem~1.1 (the main theorem of \cite{P-S19})}'. And it then gives the Jordan property of $\mathrm{Aut}_\sk(S)$.
\begin{Cor}[Corollary~\ref{Cor: Jordan}]
Let $S$ be a relatively minimal surface of Kodaira dimension one, suppose its Iitaka fibration  $f: S\to C$ has smooth general fibre, then $\mathrm{Aut}_\sk(S)$ has Jordan property. Namely there is a uniform bound $N(S)\in \N_+$ such that any finite subgroup of $\mathrm{Aut}(S)$ contains an Abelian subgroup of index at most $N(S)$.  
\end{Cor}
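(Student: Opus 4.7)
The plan is to exhibit $\mathrm{Aut}_\sk(S)$ as a virtually abelian group, from which the Jordan property follows automatically.

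Since the Iitaka fibration is canonically defined, every automorphism of $S$ preserves it, so $\mathrm{Aut}_\sk(S) = \ea(S/C)$, and (\ref{ext: intro-fundamental}) becomes the short exact sequence
\begin{equation*}
1 \to \mathrm{Aut}_C(S) \to \mathrm{Aut}_\sk(S) \to \B(S/C) \to 1.
\end{equation*}
Under our hypothesis the general fibre of $f$ is smooth, so $f$ is a relatively minimal genus one fibration. If $\B(S/C)$ were infinite, Corollary~\ref{Cor: 2020} would force $\kappa(S)\le 0$, contradicting $\kappa(S)=1$. Hence $\B(S/C)$ is a finite group.

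Next I would show $\mathrm{Aut}_C(S)$ is virtually abelian by restricting to a general fibre. Each $\tau \in \mathrm{Aut}_C(S)$ restricts to an automorphism of a general smooth fibre $E$, and composing the restriction with the quotient $\mathrm{Aut}_\sk(E)\to \mathrm{Aut}_\sk(E,0)$ by the translation subgroup yields a homomorphism whose image lies in the finite group $\mathrm{Aut}_\sk(E,0)$ (of order dividing $24$ in any characteristic). Its kernel consists of automorphisms acting by translation on every general fibre, and these correspond bijectively to global sections of the Jacobian fibration $J\to C$; in particular they form an abelian subgroup of $\mathrm{Aut}_C(S)$, namely the kernel described in Theorem~\ref{Thm: main1 in front}. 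Consequently $\mathrm{Aut}_C(S)$ contains an abelian subgroup of index bounded by $|\mathrm{Aut}_\sk(E,0)|\le 24$.

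Putting the two facts together, $\mathrm{Aut}_\sk(S)$ is an extension of the finite group $\B(S/C)$ by the virtually abelian group $\mathrm{Aut}_C(S)$, and is therefore itself virtually abelian. Replacing its finite-index abelian subgroup by the intersection of its (finitely many) conjugates yields a normal abelian subgroup $A\le \mathrm{Aut}_\sk(S)$ of some finite index $N(S)$; for any finite subgroup $G\le \mathrm{Aut}_\sk(S)$ the intersection $G\cap A$ is then a normal abelian subgroup of $G$ of index at most $N(S)$, establishing the Jordan property with constant $N(S)$. The step I expect to require the most care is the identification of the translation kernel inside $\mathrm{Aut}_C(S)$ with the Mordell--Weil group of $J\to C$ in positive characteristic, especially in the presence of multiple or wild fibres; fortunately the structural content is already packaged in Theorem~\ref{Thm: main1 in front}, so the remaining work is essentially bookkeeping.
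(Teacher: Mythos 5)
Your proposal is correct and follows essentially the same route as the paper: the paper also deduces finiteness of $\B(S/C)$ from Corollary~\ref{Cor: last} and then exhibits the Mordell--Weil group $J(C)$ as a finite-index Abelian subgroup of $\mathrm{Aut}_\sk(S)$ via the exact sequences (\ref{ext: intro-fundamental}) and (\ref{ext: aut of genus one}), the latter being exactly your ``quotient by translations'' homomorphism with finite target $\mathrm{Aut}^{\mathrm{gp}}_{K}(J_\eta)$. Your extra bookkeeping (passing to a normal finite-index Abelian subgroup and intersecting with finite subgroups) correctly fills in the standard final step that the paper leaves implicit.
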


\newpage

 \begin{center}
 {\bf Conventions}
 \end{center}
 \begin{enumerate}[a)]
 \item $\sk$ is an algebraically closed field; 

 \item a surface fibration $f:X\to C$ over $\sk$ is a proper flat morphism from a smooth projective surface $X$ onto a curve $C$ with smooth connected geometric generic fibre. It is called relatively minimal if  there is no vertical $(-1)$-curves;

 \item for either an Abelian group or Abelian group scheme $G$, we denote by $$G[n]:=\mathrm{Ker} (G\stackrel{\cdot n}{\to} G);$$
 
 \item when $\mathrm{char}.(\sk)=p>0$, we denote by $F_{Y/\sk}: Y\to Y^{(p)}:=Y\times_{\sk, F_\sk} \sk$ the relative Frobenius map of any scheme $Y$ over $\sk$;

 \item denote by $(\mathbb{A}_\sk^1)^*:=\mathbb{A}_\sk^1\backslash\{0\}$;

 \item for a curve $F$, $\mathrm{Aut}_\sk(F)$ is the automorphism group of $F$ while  $\mathbf{Aut}_\sk(F)$ is the group scheme representing the automorphism functor of $F$ (cf. \S~\ref{Sub: auto of curves}); 
  
 \item symbol with underline means the associated functor, {\it i.e.}, $\underline{X}$ is the representable functor of $X$.

 \end{enumerate}

\section{Preliminaries}\label{Sec: Preliminaries}
\subsection{Torsors and  quotient of Schemes}
\subsubsection{Torosrs}
We first recall the notion of  (fppf-)torsors.  
 
\begin{Def}[Torsors, \protect{\cite[\S~6.4]{BLR90-Neron-Models}}]
Let $Y$ be a variety over $\sk$, $G$ be a flat group scheme of finite type over $Y$ and $T$ be a flat $Y$-scheme of finite type with a (left-)$G$-action $\mu:G\times_Y T\to T$. Then $T$ is called  a (fppf-)torosr under $G$ over $Y$ if the map $$G\times_YT\stackrel{(\mu, p_2)}{\longrightarrow} T\times_YT,\,\,\, (g,t)\mapsto (g\cdot t, t)$$ is an isomorphism.  

By abuse of language, when $G$ is a group scheme over $\sk$,  we also call a $G\times_\sk Y$-torsor over $Y$ as a $G$-torsor over $Y$ for simplicity. 
\end{Def} 

\medskip 

\begin{Rem}\label{Rem: on torsors}
If $G$ is an \'etale group scheme over $\sk$, then $G$ is the same as an abstract group because $\sk$ is algebraically closed. In this case,  a $G$-torsor over $Y$ is nothing but an \'etale Galois cover $\nu: T\to Y$ with Galois group $G$.
\end{Rem}

Then we recall the notion of extension and reduction of the structure group of a torsor.
 
\begin{Def}[Extension and Reduction of the structure group]\label{Def: reduction of structure group}
Let $G'\subseteq G$ be a finite flat subgroup scheme over $Y$. Then given any  $G'$-torsor $T'$, there defines a natural free $G'$-action on $G\times_Y T'$ functorially described as follows:
$$g'\cdot (g,t')= (gg'^{-1},g'(t')), \,\,\,\forall\,\,\, X\in \mathrm{Sch}/Y, g\in G(X),g\in G'(X), t'\in T'(X).$$ 
Then the $G$-action on $G\times_Y T'$ given by $$g\cdot (h,t')=(gh,t'),\,\,\,\forall\,\,\, X\in \mathrm{Sch}/Y, g,h\in G(X), t'\in T'(X)$$  is commutative to the previous diagonal $G'$-action. Thus there defines a natural $G$-action on the quotient scheme $T:=(G\times_Y T')/G'$ (cf. Theorem~\ref{Thm: quotient by gp} below) making $T$ a $G$-torsor (in fact, $T$ represents the functor $\underline{G}\times^{\underline{G'}} \underline{T'}$).  The $G$-torsor $T$ is called the extension of structure group by $G'\to G$ of $T'$ and conversely, $T'$ is called the reduction of structure group by $G'\to G$ of $T$.  For simplicity, we simply call that $T$ reduces to $T'$ and $T'$ extends to $T$. 
\end{Def}

\begin{Exa}\label{Exa: typical example of extension and reduction of torsors}
Let $Y$ be a smooth variety over $\sk$ and $\nu: T\to Y$ be an \'etale Galois cover with Galois group $G$ and $T'\subseteq T$ be an irreducible component. Then $T$ is a $G$-torsor over $Y$, and since $\nu|_{T'}: T'\to Y$ is again an \'etale Galois cover with Galois group $G'=\{g\in G| g(T')=T'\}$, $T'$ is a $G'$-torsor over $Y$. By construction, it is easy to figure out that $T$ reduces to $T'$ or equivalently $T'$ extends to $T$ by the group extension $G'\subseteq G$.
\end{Exa}

\subsubsection{Quotient a scheme by a finite group scheme}\label{Subsubsec: quotient by gp} Let $Y$ be a quasi-projective $\sk$-variety and $G$ be a finite flat group scheme over $Y$ acting on a quasi-projective $Y$-scheme $X$. Then there defines a functor
$$\underline{X}/\underline{G}: \mathrm{Sch}/Y\to  {\mathrm{\bf Set}}, \,\,\, T\mapsto X(T)/G(T), \,\,\,\forall\,\,\, T\in \mathrm{Sch}/Y.$$
\begin{Thm}[\protect{\cite[\S~V.10, Thm.~10.3.1]{SGA3}}]\label{Thm: quotient by gp}
(1). The category quotient   $\underline{X}/\underline{G}$ is represented by a scheme $Q$, denoted by $X/G$. 

(2). When the $G$-action on $X$ is free,   $\underline{Q}$ is the (fppf-)sheafification of $\underline{X}/\underline{G}$, the quotient map $\pi: X\to Q$ is a (fppf-)torsor under $G$ and the formation of quotient schemes commutes with any base change over $Q$.  
\end{Thm}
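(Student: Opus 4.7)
The plan is to follow the classical Mumford--Seshadri--SGA3 construction, starting from the affine case and globalising. I would first reduce to the situation where $Y=\Spec R$, $X=\Spec A$, $G=\Spec B$ with coaction $\rho\colon A\to A\otimes_R B$. Setting $A^G$ to be the equaliser of $\rho$ and $a\mapsto a\otimes 1$, I would take $Q:=\Spec A^G$. Since $B$ is finite flat over $R$, a scheme-theoretic norm argument shows that $A$ is finite over $A^G$, and a direct functor-of-points check identifies $\Spec A^G$ with the coequaliser of $G\times_Y X\rightrightarrows X$ in the category of $Y$-schemes, yielding (1) affine-locally.

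To globalise, the crucial step is to show that every point $x\in X$ admits a $G$-invariant affine open neighbourhood. Since $G\to Y$ is finite, the set-theoretic orbit of $x$ is finite, and by quasi-projectivity of $X$ this finite set is contained in some affine open $U\subset X$; replacing $U$ by a suitable affine subscheme of $\bigcap_{g} gU$ produces a $G$-invariant affine neighbourhood. Covering $X$ by such $G$-invariant affines and forming their affine quotients, the local pieces glue along $G$-invariant open immersions to yield the global $Q=X/G$.

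For part (2), assume the $G$-action is free. A standard affine-local computation shows that $A$ is locally free of rank $|G|$ over $A^G$, so $\pi\colon X\to Q$ is finite and faithfully flat. The torsor assertion reduces to proving that the shear map
\[
\Psi\colon G\times_Y X \longrightarrow X\times_Q X,\qquad (g,x)\mapsto (g\cdot x,\, x),
\]
is an isomorphism; both sides are finite flat of rank $|G|$ over $X$ via $p_2$, freeness gives a set-theoretic bijection, and it suffices to verify that $\Psi$ is a closed immersion, which can be checked after pulling back along the fppf cover $\pi$ itself. Once $\pi$ is known to be an fppf torsor, the fact that $\underline{Q}$ is the fppf sheafification of $\underline{X}/\underline{G}$ and that quotient formation commutes with any base change over $Q$ follow formally from faithfully flat descent.

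The main obstacle is the globalisation step producing $G$-invariant affine opens around arbitrary orbits; this is essentially the only place where the quasi-projectivity of $X$ is genuinely needed, and without it the gluing would break. Everything else reduces to commutative algebra over the invariant ring $A^G$ together with standard fppf descent.
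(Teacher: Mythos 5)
The paper offers no proof of this statement: it is quoted verbatim from SGA3 (\S~V.10, Thm.~10.3.1), so there is no in-paper argument to compare against. Your proposal reconstructs the standard proof from that source (invariant subring affine-locally, gluing over $G$-invariant affine opens via quasi-projectivity, shear map for the torsor assertion, descent for the base-change statement), so the overall route is the right one.

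Two steps, however, are phrased in a way that only works when $G$ is \'etale, and this matters because the theorem is invoked in this paper precisely for infinitesimal group schemes such as $\alpha_p$, $\mu_p$ and Frobenius kernels. First, in the globalisation step you obtain a $G$-invariant affine open around $x$ by passing to "a suitable affine subscheme of $\bigcap_g gU$". For infinitesimal $G$ the only $\sk$-point is the identity, so $\bigcap_g gU=U$, which need not be $G$-invariant: invariance under an infinitesimal group scheme is a condition on the coaction (equivalently on derivations), not on the underlying set, so as written this step fails exactly in the cases the paper needs. The correct argument is to choose a section $s$ of an ample sheaf whose non-vanishing locus is affine and contains the finite (possibly non-reduced) scheme-theoretic orbit $\mu(G\times\{x\})$, and to replace $s$ by its norm along the finite locally free morphism $p_2\colon G\times_Y X\to X$ pulled back through $\mu$; the norm is $G$-invariant and its non-vanishing locus is the required invariant affine. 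Second, in part (2) you deduce bijectivity of the shear map $\Psi$ "set-theoretically" from freeness; for infinitesimal $G$ this carries no information. One must instead use the scheme-theoretic meaning of freeness (the shear map is a monomorphism, equivalently the stabiliser group scheme is trivial), show that a finite monomorphism is a closed immersion, and conclude by comparing the ranks of the two finite locally free $\cO_X$-algebras, which is where the local freeness of $A$ over $A^G$ of rank equal to the order of $G$ is genuinely used. With these two repairs your sketch agrees with the cited proof.
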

As a consequence, we have the following corollary.
\begin{Cor}\label{Cor: quotient by gp}
(1). Suppose $G$ is a finite group scheme over $\sk$ acting freely on a smooth quasi-projective $\sk$-variety $X$, then the quotient variety $X/G$ is also smooth over $\sk$.

(2). Suppose $G$ is a finite group scheme over $\sk$ acting on two smooth quasi-projective $\sk$-varieties $X,Y$ and hence acts diagonally on $X\times_\sk Y$. Then if the $G$-action on $Y$ is free, we have the following commutative is Cartesian:
$$
\xymatrix{
X\times_\sk Y \ar[rr] \ar[d]^{p_2}&& (X\times_\sk Y)/G \ar[d]^{p_2}\\
Y \ar[rr] && Y/G 
}
$$
In particular, the  morphism $(X\times_\sk Y)/G\to Y/G$ is smooth and all the  closed fibres of this morphism are isomorphic to $X$.
\end{Cor}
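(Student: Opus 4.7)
The plan is to derive both parts as formal consequences of Theorem~\ref{Thm: quotient by gp}(2): freeness of the $G$-action turns each quotient map into an fppf $G$-torsor, after which (1) follows by faithfully flat descent of regularity and (2) follows by a comparison of $G$-torsors together with descent of smoothness.

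For part (1), Theorem~\ref{Thm: quotient by gp}(2) asserts that $\pi: X\to X/G$ is an fppf $G$-torsor and in particular faithfully flat. Thus for every closed point $\bar x\in X/G$ one can pick a closed preimage $x\in X$, producing a faithfully flat local homomorphism $\cO_{X/G,\bar x}\to \cO_{X,x}$. The target ring is regular because $X$ is smooth over $\sk$, and regularity descends along faithfully flat local homomorphisms of Noetherian local rings; hence $\cO_{X/G,\bar x}$ is regular. Since $\sk$ is algebraically closed and $X/G$ is of finite type over $\sk$, regularity at every closed point is equivalent to smoothness over $\sk$.

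For part (2), I first note that the diagonal $G$-action on $X\times_\sk Y$ is free, because the $G$-equivariant projection $p_2$ sends scheme-theoretic stabilisers into stabilisers on $Y$, which are trivial by assumption. Theorem~\ref{Thm: quotient by gp}(2) then yields the quotient $(X\times_\sk Y)/G$ together with the fppf torsor $X\times_\sk Y\to (X\times_\sk Y)/G$, and the equivariant map $p_2$ descends to the right-hand vertical arrow of the diagram. To verify that the square is Cartesian, the plan is to compare two $G$-torsors on $(X\times_\sk Y)/G$: the quotient torsor itself, and the pullback of the torsor $Y\to Y/G$. The natural morphism
$$X\times_\sk Y\longrightarrow \bigl((X\times_\sk Y)/G\bigr)\times_{Y/G} Y,\qquad (x,y)\mapsto \bigl(\overline{(x,y)},y\bigr),$$
is $G$-equivariant over the common base $(X\times_\sk Y)/G$ and is therefore a morphism of $G$-torsors, hence an isomorphism; alternatively this is the commutation of the quotient with base change over the quotient asserted in Theorem~\ref{Thm: quotient by gp}(2) applied to the torsor $Y\to Y/G$.

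With the square Cartesian, the remaining assertions of (2) are immediate: pulling $(X\times_\sk Y)/G\to Y/G$ back along the fppf cover $Y\to Y/G$ recovers the smooth morphism $p_2: X\times_\sk Y\to Y$, a base change of $X\to \Spec\sk$, and smoothness descends along fppf covers of the target. The fibre of $p_2$ over any closed point $y\in Y$ is $X$, and because $Y\to Y/G$ is finite and surjective with residue fields all equal to $\sk$, the Cartesian square identifies the fibre of $(X\times_\sk Y)/G\to Y/G$ over the image $\bar y$ with $X$ as well. The only genuinely non-formal ingredient of the whole argument, and hence the main obstacle, is the descent of regularity along faithfully flat local homomorphisms in (1) and of smoothness along fppf covers of the target in (2); everything else is a formal manipulation with torsors and base change.
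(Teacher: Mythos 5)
Your proof is correct and follows the route the paper intends: the paper offers no written argument, merely deriving the corollary "as a consequence" of Theorem~\ref{Thm: quotient by gp}, and your derivation (freeness gives an fppf torsor, regularity/smoothness descend along the faithfully flat quotient map, and the Cartesian square is the standard comparison of two $G$-torsors over $(X\times_\sk Y)/G$) is exactly the standard way to fill in those details.
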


\subsubsection{Quotient a scheme by a foliation}\label{Subsubsec: foliation}Next, we recall the quotient of a scheme by a foliation.
\begin{Def}[\protect{\cite[\S~I]{Ekedahl88-Canonical-models}}]
Let $Y$ be a smooth variety over $\sk$ of positve characteristic. A foliation $\cF\subseteq \mathcal{T}_{Y/\sk}$  on $Y$ is a saturated  sub-$p$-Lie  algebra. A foliation $\cF$ is smooth if it is at the same time a subbundle of  $\mathcal{T}_{Y/\sk}$, {\it i.e.}, $\mathcal{T}_{Y/\sk}/\cF$ is locally free.
\end{Def}

For a foliation $\cF$, the quotient scheme $X:=Y/\cF$ is constructed as a ringed spaced $(X,\cO_X)$ as below: 
\begin{itemize}
\item the underlying space $X=Y$;

\item $\cO_X\subseteq \cO_Y$ is the subalgebra consists of functions killed by any local derivatives in $\cF$.
\end{itemize}

\begin{Thm}[\protect{\cite[\S~3]{Ekedahl85-Foliations}}]\label{Thm: Ekedahl}
If $\cF$ is a smooth foliation on $Y$, then $X=Y/\cF$ is also smooth over $\sk$. For the quotient map $\pi: Y\to X$, we have:
\begin{equation}\label{formula: canonical bundle and foliation}
\pi^*K_X=K_Y-(p-1)c_1(\cF).
\end{equation}
\end{Thm}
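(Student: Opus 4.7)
The plan is to reduce everything to an étale-local coordinate computation. First I would establish a local normal form for a smooth $p$-closed foliation: at every closed point $y \in Y$, there exist étale local coordinates $t_1, \ldots, t_n$ on $Y$ and a local $\cO_Y$-basis $D_1, \ldots, D_r$ of $\cF$ (with $r = \mathrm{rank}(\cF)$) such that $D_i = \partial_{t_i}$ for $1 \le i \le r$. The argument is by induction on $r$: from the subbundle hypothesis one extracts a nowhere-vanishing section $D \in \cF$ near $y$; using $p$-closedness one finds a suitable local representative with $D^{[p]} = 0$, integrates it to a coordinate $t_1$ with $D(t_1) = 1$, and then descends to the smooth divisor $\{t_1 = 0\}$ on which $\cF$ induces a rank-$(r-1)$ smooth $p$-closed foliation. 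Establishing this normal form, in particular the reduction to $D^{[p]} = 0$ involving the semisimple part of the restricted $p$-operator on $\cF$, is the main obstacle of the proof.

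Given the normal form, smoothness of $X$ follows immediately: $\cO_X = \cO_Y^{\cF}$ is étale-locally generated by the algebraically independent elements $t_1^p, \ldots, t_r^p, t_{r+1}, \ldots, t_n$, identifying $X$ étale-locally with $\mathbb{A}_\sk^n$. Moreover $\pi\colon Y \to X$ is presented as the finite flat complete intersection $\Spec\, \cO_X[t_1, \ldots, t_r]/(t_1^p - s_1, \ldots, t_r^p - s_r)$ of degree $p^r$, where $s_i := t_i^p$ are the local coordinates on $X$.

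For the canonical bundle formula, I would compute the relative dualizing sheaf $\omega_{Y/X}$ and identify it canonically with $\det(\cF)^{\otimes(p-1)}$. Since $\pi$ is finite flat, $\omega_{Y/X} = \mathscr{H}om_{\cO_X}(\pi_*\cO_Y, \cO_X)$; with the $\cO_X$-basis $\{\,t_1^{a_1}\cdots t_r^{a_r}: 0 \le a_i \le p-1\,\}$ of $\pi_*\cO_Y$, this sheaf is locally a free $\cO_Y$-module of rank one, generated by the functional $\phi$ dual to the top element $t_1^{p-1}\cdots t_r^{p-1}$. A direct calculation of how $\phi$ transforms under a change of local frame $D_i \mapsto \sum_j g_{ij} D_j$ shows that $\phi$ scales by $\det(g)^{p-1}$, exactly matching the transformation of $(D_1 \wedge \cdots \wedge D_r)^{\otimes(p-1)}$. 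This produces a canonical global isomorphism $\omega_{Y/X} \cong \det(\cF)^{\otimes(p-1)}$, and the adjunction $\omega_Y \cong \pi^*\omega_X \otimes \omega_{Y/X}$ gives the asserted identity $\pi^* K_X = K_Y - (p-1)c_1(\cF)$.
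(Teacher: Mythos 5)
This theorem is one the paper does not prove at all: it is quoted verbatim from Ekedahl (the citation to \cite[\S~3]{Ekedahl85-Foliations} is the entire justification), so there is no in-paper argument to compare yours against. Judged on its own, your outline is the standard proof of this classical result and is essentially correct: local normal form for a smooth $p$-closed foliation, smoothness of the invariant ring read off from that normal form, and the canonical bundle formula via the relative dualizing sheaf $\mathscr{H}om_{\cO_X}(\pi_*\cO_Y,\cO_X)$ of the finite flat degree-$p^r$ map rather than via the (incorrect) naive determinant of the sequence $\pi^*\Omega_X\to\Omega_Y\to\Omega_{Y/X}\to 0$ — choosing the dualizing-sheaf route is exactly the right move, since that sequence is not exact on the left here.

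Two places where your sketch is thinner than it reads. First, for the rank-one step you do not need any semisimple/nilpotent decomposition: once $D$ is normalized so that $D(t_1)=1$, evaluating the $p$-closedness relation $D^p=gD$ on $t_1$ gives $g=D^p(t_1)=D^{p-1}(1)=0$, so $D^p=0$ automatically; conversely, the inductive step does require checking that the residual rank-$(r-1)$ distribution is again involutive and $p$-closed after you correct $D_2,\dots,D_r$ by multiples of $D_1$ so that they kill $t_1$. Also, the normal form is most safely asserted in the completed (or formal) local ring rather than étale-locally; that is all you need, since smoothness of $X=Y/\cF$ and local freeness of $\pi_*\cO_Y$ can be checked on completions. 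Second, the transformation law for the generator $\phi$ of $\omega_{Y/X}$ under a change of frame $D_i\mapsto\sum_j g_{ij}D_j$ is cleanest if you first identify $\phi$ intrinsically as $-D_1^{p-1}\cdots D_r^{p-1}$ (in rank one, the dual of $t^{p-1}$ is $-D^{p-1}$ because $D^{p-1}(t^{p-1})=(p-1)!=-1$); only with such an intrinsic description is the scaling by $\det(g)^{p-1}$ a well-posed computation, since the coordinates $t_i$ themselves do not transform linearly under a change of frame. With those points supplied, the argument is complete and agrees with the cited source.
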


\subsubsection{Examples of group actions}
Let $t$ be the affine coordinate of $\A\subseteq \p$, then we have two natural group actions as follows.  

\begin{Exa}\label{Exa: example of group actions}
(1)[The additive action]. The group scheme $\mathbb{G}_a$ can acts on $\p$ naturally via 
$$(\lambda, t)\mapsto t+\lambda.$$
Here $\lambda$ is the coordinate of $\mathbb{G}_a$.  

(2)[The multiplicative action]. The group  scheme $\mathbb{G}_m$ can acts on $\p$ naturally via  $$(\lambda, t)\mapsto \lambda\cdot t.$$ Here $\lambda$ is the  coordinate of $\mathbb{G}_m$.
\end{Exa}
We will frequently refer to these two actions in this paper. Finally, we recall a characterization of $\alpha_p$ or $\mu_p$ action on varieties.
\begin{Prop}[see \protect{\cite[\S~3]{Tziolas17-Quotient-of-schemes}}]\label{Prop: action by alphap}
Let $Y$ be a smooth variety over $\sk$, then an $\alpha_p$ ({\it resp.} $\mu_p$) action on $Y$ is the same as a regular derivative $\delta\in H^0(Y,\mathcal{T}_{Y/\sk})$ so that $\delta^p=0$ ({\it resp.} $\delta^p=\delta$). 
\end{Prop}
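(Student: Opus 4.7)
The plan is to recover from the group scheme action a global vector field on $Y$ (and vice versa) via the induced coaction on $\cO_Y$. Both statements and constructions are local, so I would work over an affine open $\Spec A\subseteq Y$ and then observe that the correspondence is canonical enough to glue to all of $Y$.

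For the $\alpha_p$-case, an action on $\Spec A$ corresponds to a counital, coassociative $\sk$-algebra map $\rho\colon A\to A[x]/(x^p)$ with respect to the primitive coproduct $\Delta(x)=x\otimes 1+1\otimes x$. Writing $\rho(a)=\sum_{i=0}^{p-1}\delta_i(a)\,x^i$, counitality is $\delta_0=\mathrm{id}$, and coassociativity translates into the operator identities
\[
\delta_j\circ\delta_i=\binom{i+j}{j}\delta_{i+j}\ \ (i+j<p),\qquad \delta_j\circ\delta_i=0\ \ (i+j\ge p).
\]
Setting $\delta:=\delta_1$ and using that $i!\in\sk^\times$ for $0<i<p$, induction gives $\delta_i=\delta^i/i!$, while the relation for $(j,i)=(1,p-1)$ forces $\delta^p=0$. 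Extracting the coefficient of $x$ in $\rho(ab)=\rho(a)\rho(b)$ yields the Leibniz rule for $\delta$, so $\delta\in H^0(\Spec A,\mathcal{T}_{\Spec A/\sk})$. Conversely, given any such $\delta$ with $\delta^p=0$, the formula $\rho(a):=\sum_{i=0}^{p-1}\delta^i(a)\,x^i/i!$ is an algebra map: the iterated Leibniz identity $\delta^n(ab)=\sum_{i+j=n}\binom{n}{i}\delta^i(a)\delta^j(b)$, combined with $\delta^p=0$, causes all $x^n$-terms with $n\ge p$ in $\rho(a)\rho(b)$ to vanish, recovering an $\alpha_p$-action.

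For the $\mu_p$-case, the coproduct on $\sk[x]/(x^p-1)$ is $\Delta(x)=x\otimes x$, and a $\sk$-algebra coaction on $A$ is equivalent to a $\mathbb{Z}/p\mathbb{Z}$-grading $A=\bigoplus_{i=0}^{p-1} A_i$ with $A_i\cdot A_j\subseteq A_{(i+j)\bmod p}$. Given such a grading, declare $\delta|_{A_i}=i\cdot\mathrm{id}_{A_i}$; this is a derivation by the multiplicative compatibility of the grading, and $i^p\equiv i\pmod p$ immediately gives $\delta^p=\delta$. Conversely, since $T^p-T=\prod_{i=0}^{p-1}(T-i)$ splits with distinct roots over $\mathbb{F}_p\subseteq\sk$, any $\delta$ satisfying $\delta^p=\delta$ diagonalizes on $A$ with eigenvalues in $\{0,1,\dots,p-1\}$, producing the required grading.

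The only real obstacle is verifying that both constructions are canonical enough to be independent of the affine chart. The $\alpha_p$- or $\mu_p$-action on $Y$ restricts to an action on any affine open, and the derivation $\delta$ produced is defined intrinsically in terms of $\mu$, so the locally defined $\delta$'s glue to a global $\delta\in H^0(Y,\mathcal{T}_{Y/\sk})$; conversely, a global $\delta$ locally yields actions whose coaction formulas are functorial in $A$ and thus automatically agree on overlaps. Once this gluing is checked, the two constructions are mutually inverse, establishing the claimed equivalence.
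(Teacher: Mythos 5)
The paper offers no proof of this proposition at all: it is quoted as known from \cite[\S~3]{Tziolas17-Quotient-of-schemes}, so there is nothing internal to compare against. Your argument is the standard Hopf-algebraic dictionary and is essentially correct: an action of $\Spec\bigl(\sk[x]/(x^p)\bigr)$ (resp.\ $\Spec\bigl(\sk[x]/(x^p-1)\bigr)$) on $\Spec A$ is a counital, coassociative algebra coaction, and unwinding coefficients gives exactly the divided powers $\delta_i=\delta^i/i!$ of a single derivation (resp.\ the $\Z/p\Z$-grading of $A$ by eigenspaces of $\delta$). The gluing remark at the end is also the right thing to say, since both constructions are canonical and functorial in $A$.

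One detail in the converse direction of the $\alpha_p$-case is misplaced. The identity $\rho(ab)=\rho(a)\rho(b)$ holds in $A[x]/(x^p)$ for \emph{any} derivation $\delta$, because the monomials $x^n$ with $n\ge p$ are already zero in the target ring; the iterated Leibniz rule handles the coefficients with $n<p$ and nothing more is needed, so $\delta^p=0$ plays no role there. Where $\delta^p=0$ actually enters is coassociativity: your own forward computation shows that $(\rho\otimes\mathrm{id})\circ\rho=(\mathrm{id}\otimes\Delta)\circ\rho$ forces $\delta_j\circ\delta_i=0$ whenever $i+j\ge p$, and for $\delta_i=\delta^i/i!$ this is precisely the condition $\delta^p=0$ (take $i=1$, $j=p-1$ for necessity; sufficiency is clear since $\delta^{i+j}=\delta^{i+j-p}\circ\delta^p$). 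So in the converse you still need to check the counit and coassociativity axioms for the $\rho$ you wrote down --- both follow from the same identities --- and the hypothesis $\delta^p=0$ should be cited there rather than in the verification that $\rho$ is an algebra map. With that bookkeeping corrected the proof is complete.
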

Under this correspondence, the $\alpha_p$ ({\it resp.} $\mu_p$)-action on $Y$ is free if and only if the associated derivative $\delta$ is nowhere vanishing. When this is the case, the derivative $\delta$ gives to a smooth foliation $\cF(\delta):=\cO_Y\cdot \delta\subseteq \mathcal{T}_{Y/\sk}$  called as the foliation associated to $\delta$.  Then the quotient by foliation $Y/\cF(\delta)$ is the same as the quotient by the group scheme action $Y/\alpha_p$ ({\it resp.}  $Y/\mu_p$).

\subsection{Equivariant automorphisms}
We introduce the notation of equivaraint automorphism group of a morphism which is compatible with the one given in the introduction.
\begin{Def}\label{Def: equivariant automorphism}
(1). Given an arbitrary morphism $h:X\to Y$ over $\sk$, the equivariant automorphism group of $h:X\to Y$, denoted by $\ea (X/Y)$, is the set of pairs $$\{(\tau,\sigma)|\tau\in \mathrm{Aut}_\sk(X), \sigma\in \mathrm{Aut}_\sk(Y) \,\, \text{with} \,\, h\cdot \tau=\sigma\cdot h \}$$
equipped with its natural group structure.

(2). If $h: G\to Y$ is a group scheme, then the equivariant group automorphism group of $h:G\to Y$, denote by $\ega (G/Y)$, is the subgroup of $\ea (G/Y)$ consisting of $(\tau,\sigma)$ such that $(\tau,h): G\to G\times_{Y,\sigma} Y$ is an isomorphism of group schemes.
$$
\xymatrix{
G\ar[rrd]^{(\tau,h)} \ar@/^2em/[rrrrd]^h \ar[rrdd]_\tau \\
&& G\times_{Y,\sigma} Y \ar[rr]\ar[d]  && Y\ar[d]^\sigma \\
&& G\ar[rr]^h && Y
}\,\,\,\,\,\
\xymatrix{
G\times_\sk T\ar[d]^{\mu} \ar[rr]^{\mathrm{id}\times \tau} &&G\times_\sk T  \ar[d]^{ \mu}\\
T\ar[rr]^{\tau} \ar[d]^h&&  T \ar[d]^h \\
 Y \ar[rr]^\sigma && Y}
$$

(3). If $G$ is a group scheme over $\sk$, $Y$ is a $\sk$-variety and $h: T\to Y$ is a  $G$-torsor. Then the equivariant torsor automorphism group of $h:T\to Y$, denote by $\et(T/Y)$, is the subgroup of $\ea (T/Y)$ consisting of $(\tau,\sigma)$ such that $(\tau,h):  T\to T\times_{Y,\sigma} Y$ is an isomorphism of $G$-torsors. Equivalently, we have the right commutative diagram above for the pair $(\tau,\sigma)$,  where $\mu: G\times_\sk T\to T$ is the group action morphism.
\end{Def}
These notions will be substantially used in the following of the paper. To be familiar with these notions, we check an easy example.
\begin{Exa}\label{Exa: ea and centalizer}
Let $\pi: X\to Y$ be an \'etale Galois cover of smooth $\sk$-varieties  with Galois group $G=\mathrm{Aut}_Y(X)\subseteq \mathrm{Aut}_\sk(X)$. Then $X$ is considered as a $G$-torsor over $Y$ (cf. Remark~\ref{Rem: on torsors}(1)). In this case, we have identifications of $\et(X/Y)$ and $\ea(X/Y)$ shown in the following picture: 
$$
\xymatrix{
\et(X/Y) \ar[r]^{p_1}_\simeq \ar@{^{(}->}[d]& C_G(\mathrm{Aut}_\sk(X)) \ar@{^{(}->}[d]\ar@{^{(}->}[r] & \mathrm{Aut}_\sk(X)\ar@{=}[d] &  \ea(X/Y)\ar[r]^{p_1} &\mathrm{Aut}_\sk(X) \\ 
\ea(X/Y) \ar[r]^{p_1}_\simeq  & N_G(\mathrm{Aut}_\sk(X)) \ar@{^{(}->}[r] & \mathrm{Aut}_\sk(X) & (\tau,\sigma) \ar@{|->}[r]^{p_1} & \tau
}
$$ 
\end{Exa}

\subsection{Isomorphism and Automorphism of curves}\label{Sub: auto of curves}
We first declare a convention. Let $F$ be a smooth projective curve over $\sk$. Then 
\begin{itemize}
\item $\mathrm{Aut}_\sk(F)$ is to denote the $\sk$-automorphism group of $F$.

\item  $\mathbf{Aut}_\sk(F)$ is to denote the group scheme representing the functor (cf. \cite[\S~5.6.2]{FGA-explained})
$$\underline{\mathrm{Aut}}_\sk(F): \mathrm{Sch}/\sk \to \mathbf{Group}, \,\,\, T\mapsto \mathrm{Aut}_T(F\times_\sk T).$$
\end{itemize}
Namely, we have $$\mathbf{Aut}_\sk(F)(\sk)=\mathrm{Aut}_\sk(F).$$ When $g\ge 2$, it is well known that the group scheme $\mathbf{Aut}_\sk(F)$ is finite and \'etale over $\sk$. So it makes no difference to use $\mathbf{Aut}_\sk(F)$ and $\mathrm{Aut}_\sk(F)$ interchangeably. 

\subsubsection{Automorphism of elliptic curves}
Let $E$ be an elliptic curve (with group scheme structure fixed). Then the functor 
$$\underline{\mathrm{Aut}}^{\mathrm{gp}}_\sk(E): \mathrm{Sch}/\sk \to \mathbf{Group}, \,\,\, T\mapsto \mathrm{Aut}^{\mathrm{gp}}_T(E\times_\sk T)$$ is represented by a finite \'etale subgroup scheme $\mathbf{Aut}_\sk^{\mathrm{gp}}(E)\subseteq \mathbf{Aut}_\sk(E)$. As this subgroup scheme is finite \'etale, it makes no different to use  $\mathbf{Aut}_\sk^{\mathrm{gp}}(E)$ and $\mathrm{Aut}_\sk^{\mathrm{gp}}(E)$ interchangeably. Then it is well known $\mathbf{Aut}_\sk(E)$ admits a natural  semi-direct product:
 \begin{equation}\label{Equ: semi-direct decomposition of E}
 \mathbf{Aut}_\sk(E)=E\rtimes \mathrm{Aut}_\sk^{\mathrm{gp}}(E),
 \end{equation}
where $E\subseteq \mathbf{Aut}_\sk(E)$ is the translation subgroup scheme.  It is well known that 
 \begin{Prop}\label{Prop: classification of subgroup acting freely on E}
 Any finite subgroup scheme of $\mathbf{Aut}_\sk(E)$ acting freely on $E$ is contained in the translation subgroup scheme $E\subseteq \mathbf{Aut}_\sk(E)$.
 \end{Prop}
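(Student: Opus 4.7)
The plan is to use the semi-direct decomposition (\ref{Equ: semi-direct decomposition of E}) to split the problem into an infinitesimal part contained in $E$ and an étale quotient mapping to $\mathrm{Aut}_\sk^{\mathrm{gp}}(E)$, then show that any non-trivial element of the latter would have a fixed point on $E(\sk)$ and thus violate freeness.

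Let $G \subseteq \mathbf{Aut}_\sk(E)$ be a finite subgroup scheme. Since $\mathrm{Aut}_\sk^{\mathrm{gp}}(E)$ is finite étale, the identity component of $\mathbf{Aut}_\sk(E)$ is exactly the translation subgroup $E$, so the identity component $G^0$ is automatically contained in $E$. Consequently the composition $G \hookrightarrow \mathbf{Aut}_\sk(E) \twoheadrightarrow \mathrm{Aut}_\sk^{\mathrm{gp}}(E)$ factors through the étale quotient $G/G^0$, and I let $H$ denote its (finite étale) image. The claim reduces to showing $H$ is trivial, since this forces $G \subseteq E$.

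Suppose for contradiction that $H$ is non-trivial. Then $H(\sk)$ contains a non-identity element $\phi$, and I can lift it to some $\sk$-point $(a,\phi) \in G(\sk)$ with $a \in E(\sk)$. Under the identification (\ref{Equ: semi-direct decomposition of E}), the automorphism $(a,\phi)$ acts on $E(\sk)$ by $x \mapsto \phi(x) + a$. Since $\phi \neq \mathrm{id}$ is a group automorphism of $E$, the homomorphism $\mathrm{id} - \phi : E \to E$ is a non-zero isogeny, hence surjective on $\sk$-points. Thus there exists $x \in E(\sk)$ with $x - \phi(x) = a$, i.e., a fixed point of $(a,\phi)$. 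This contradicts the freeness of the $G$-action on $E$, which already fails at the level of $\sk$-points.

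The only delicate point is checking that the image $H$ really is étale (so that the hypothetical non-triviality forces a $\sk$-point to exist); this is immediate from the fact that $\mathrm{Aut}_\sk^{\mathrm{gp}}(E)$ is itself finite étale over the algebraically closed field $\sk$, so every subgroup scheme of it is a constant group scheme. The rest is a straightforward application of the surjectivity of non-zero isogenies between elliptic curves, which holds in every characteristic.
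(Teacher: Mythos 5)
Your proof is correct and complete. Note that the paper itself offers no proof of this proposition --- it is stated as ``well known'' immediately after the semi-direct decomposition (\ref{Equ: semi-direct decomposition of E}) --- so there is nothing to compare against; your argument is the standard one. The two points that need care are both handled properly: the identity component $G^0$ lands in $E$ (since $E$ is the identity component of $\mathbf{Aut}_\sk(E)$), so the image $H$ in $\mathrm{Aut}_\sk^{\mathrm{gp}}(E)$ is a constant group and any nontrivial element lifts to a $\sk$-point $(a,\phi)$ of $G$ (the fibre of $G\to H$ over a $\sk$-point is a nonempty finite scheme over the algebraically closed field $\sk$); and the fixed-point equation $(\mathrm{id}-\phi)(x)=a$ is solvable because $\mathrm{id}-\phi$ is a nonzero endomorphism, hence a surjective isogeny in every characteristic. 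This contradicts freeness already on $\sk$-points, so $H$ is trivial and $G\subseteq E$.
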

\subsubsection{Automorphism of affine curves}
Let $Y$ be  either $\A$ or $\Aa$ now. As $Y$ is no longer projective, the automorphism functor $$\underline{\mathrm{Aut}}_\sk(Y):\mathrm{Sch}/\sk \to \mathbf{Group}, \,\,\, W\mapsto \mathrm{Aut}_W(Y\times_\sk W)$$ is no longer representable. In this case, we consider the sub-functor of the automorphism functor consisting of linear automorphisms. Namely, we use the canonical embedding $Y\hookrightarrow \p$ and take $\Delta=(\p\backslash Y)_{\mathrm{red}}$. Then we define a subfunctor $\underline{\mathrm{LAut}}_\sk(Y)\subseteq \underline{\mathrm{Aut}}_\sk(Y)$
$$\underline{\mathrm{LAut}}_\sk(Y)=\underline{\mathrm{Aut}}_\sk(\p, \Delta): \mathrm{Sch}/\sk \to \mathbf{Group},$$
$$ W\mapsto   \{\,\, \sigma \in \mathrm{Aut}_W(\p \times_\sk W)\,\,|\,\, \sigma|_{\Delta\times_\sk W}=\mathrm{id} \,\,  \} $$
that is represented by a subgroup  scheme $\mathbf{LAut}_\sk(Y)\subseteq \mathbf{PGL_{2,\sk}}$. We call this subgroup scheme $\mathbf{LAut}_\sk(Y)$ the linear automorphism group scheme.  The explicit description of the linear automorphism group scheme is as follows: 
\begin{itemize}
\item when $Y=\Aa$, $\mathbf{LAut}_\sk(Y)=\mathbb{G}_m$ which acts on $\Aa\subseteq \p$ multiplicatively (cf. Example~\ref{Exa: example of group actions});

\item when $Y=\A$, then $\mathbf{LAut}_\sk(Y)=\mathbb{G}_a\rtimes \mathbb{G}_m$, where $\mathbb{G}_a$ and $\mathbb{G}_m$ acts on $\A\subseteq \p$ additively and multiplicatively (cf. Example~\ref{Exa: example of group actions}) respectively.
\end{itemize}

 Now let $R$ be a finite $\sk$-algebra. We shall study the $R$-automorphism group of $Y_R:=Y\times_\sk R$.

\begin{Prop}\label{Prop: infinitesimal automorphism}
Let $H\subseteq \mathrm{Aut}_\sk(Y)\subseteq \mathrm{Aut}_R(Y_R)$ be an arbitrary infinite subgroup, $\varphi: Y_R\to Y_R$ be an $R$-automorphism contained in the centralizer group of $H$  in $\mathrm{Aut}_R(Y_R)$. Then  $\varphi$ is contained in $\mathbf{LAut}_\sk(Y)(R)$.
\end{Prop}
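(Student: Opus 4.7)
The plan is to translate the commutation $h \varphi = \varphi h$ into a functional equation on $f := \varphi^*(t)$, an element of $R[t]$ (when $Y = \A$) or $R[t,t^{-1}]$ (when $Y = \Aa$), and then to exploit the infinitude of $H$ together with the following polynomial-vanishing principle: if $R$ is a finite $\sk$-algebra, choose a $\sk$-basis $\{e_s\}$ of $R$ and decompose any $p(X) \in R[X]$ as $p = \sum_s p_s(X) e_s$ with $p_s \in \sk[X]$; then $p(\alpha) = 0$ in $R$ for infinitely many $\alpha \in \sk \hookrightarrow R$ forces each $p_s$ to vanish on an infinite subset of $\sk$ and hence to be the zero polynomial, so $p = 0$ in $R[X]$. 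The same argument works for Laurent polynomials. This is the only technical ingredient needed.

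Suppose first $Y = \A$, so $h \in H$ has the form $h(t) = at + b$ with $a \in \sk^*, b \in \sk$; the commutation $h\varphi = \varphi h$ becomes, after pullback, the functional equation $f(at+b) = a f(t) + b$. Since $H$ is infinite, either the image of $H$ in $\sk^*$ is infinite, or $H_0 := H \cap \mathbb{G}_a(\sk)$ is infinite. In the first case, letting $d$ be the largest $i$ with $c_i \ne 0$ in $f = \sum c_i t^i$ and comparing the coefficient of $t^d$ on the two sides yields $c_d(a^{d-1}-1) = 0$ after cancelling the unit $a \in R^*$; the principle applied to $c_d(X^{d-1}-1) \in R[X]$, vanishing at infinitely many $a \in \sk^*$, forces $c_d = 0$ whenever $d \ge 2$, a contradiction, so $d \le 1$ and $f$ is linear. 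In the second case, applying the equation to $(1,b) \in H_0$ and comparing the constant terms in $t$ gives $(c_1-1)b + \sum_{j \ge 2} c_j b^j = 0$ for infinitely many $b \in \sk$; the principle forces $c_1 = 1$ and $c_j = 0$ for $j \ge 2$, so $f(t) = c_0 + t$. Either way, $\varphi \in \mathbf{LAut}_\sk(\A)(R) = (\mathbb{G}_a \rtimes \mathbb{G}_m)(R)$.

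When $Y = \Aa$, the group $\mathrm{Aut}_\sk(\Aa)$ equals $\mathbb{G}_m(\sk) \rtimes \langle t \mapsto t^{-1}\rangle$, so $H_0 := H \cap \mathbb{G}_m(\sk)$ has index at most two in $H$ and hence is infinite. For $h_c(t) = c t$ with $c \in H_0$ the equation $f(c t) = c f(t)$ gives, coefficient-by-coefficient on the Laurent expansion $f = \sum_i c_i t^i$, the relation $c_i(c^{i-1}-1) = 0$ for every $i$; for each $i \ne 1$ the Laurent polynomial $X^{i-1}-1 \in \sk[X,X^{-1}]$ is nonzero, so the Laurent version of the principle forces $c_i = 0$. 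Hence $f(t) = c_1 t$ with $c_1 \in R^*$, which means $\varphi \in \mathbf{LAut}_\sk(\Aa)(R) = \mathbb{G}_m(R)$. The main obstacle throughout is to keep the polynomial-vanishing argument honest in the presence of nilpotents in $R$; the $\sk$-basis decomposition handles this because $\sk$ is algebraically closed, so every residue field of the finite $\sk$-algebra $R$ equals $\sk$ and the composition $\sk \hookrightarrow R \twoheadrightarrow \sk$ is the identity.
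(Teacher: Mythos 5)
Your proof is correct, and for $Y=\Aa$ it coincides with the paper's argument (restrict to the index-two subgroup $H\cap\sk^*$ and read off $c_i(\lambda^{i-1}-1)=0$ coefficientwise). For $Y=\A$ the two arguments share the same underlying principle --- a polynomial over the finite $\sk$-algebra $R$ with infinitely many roots in $\sk$ vanishes, proved by decomposing over a $\sk$-basis of $R$ --- but you organize the case differently. The paper argues by contradiction: assuming $\deg\varphi\ge 2$, it compares leading coefficients to force the multiplicative parts $a$ to range over a finite set, deduces that $H\cap\{\phi_{1,b}\}$ is infinite, and then decomposes $\varphi=\sum_i\varphi_i e_i$ over a basis of $R$ adapted to a maximal ideal, analyzing $\varphi_1$ as the reduction of $\varphi$ modulo $\mathfrak m$ and showing each $\varphi_i$, $i\ge 2$, is constant. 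You instead split cleanly into two cases according to whether the image of $H$ in $\sk^*$ or the kernel $H\cap\mathbb{G}_a(\sk)$ is infinite, dispose of the first case by the leading-coefficient comparison alone, and in the second case simply evaluate the identity $f(t+b)=f(t)+b$ at $t=0$ to conclude $f(X)=f(0)+X$ outright. This evaluation trick replaces the paper's componentwise analysis and is both shorter and less delicate in the presence of nilpotents; the only price is that you must state the vanishing principle once up front, which you do correctly. (One small point worth making explicit in either treatment: having shown $f$ is linear, the invertibility of the leading coefficient --- needed to land in $\mathbf{LAut}_\sk(Y)(R)$ --- follows because $\varphi$ reduces to an automorphism of $\mathbb{A}^1_\sk$ modulo each maximal ideal of $R$.)
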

\begin{proof}
In the following, in either $Y=\A$ or $\Aa$ case, we denote by $t$ its natural coordinate function.

{\bf Case: $Y=\Aa$.}

In this case we have an exact sequence: 
\begin{equation}\label{Ext: aut Gm}
0\to \sk^* \to \mathrm{Aut}_\sk(Y) \to \mathbb{Z}/2\Z\to 0.
\end{equation}Here $\sk^*$-acts on $Y$ multiplicatively (cf. Example~\ref{Exa: example of group actions}). Then by assumption, the group  $H\cap \sk^*$ is also infinite. We write $\varphi(t)=\sum\limits_{i=-n}^n \mu_it^i, \mu_i\in R$ for some $n\in \N_+$. Then for any $\lambda\in H\cap \sk^*$, by the centralizer assumption we have 
$\varphi(\lambda t)=\lambda \varphi(t)$. In all, the equation $(\lambda^{i-1}-1)\mu_i=0$ holds for infinitely many $\lambda\in \sk$ for any $i$. Hence we must have $\mu_i=0$ for all $i$ except $i=1$. We are done.

{\bf Case: $Y=\A$.} 

We have $\mathrm{Aut}_\sk(\A)=\{\phi_{a,b}: t\mapsto at+b | a\in \sk^*,b\in \sk\}$. 
We  can write $\varphi(t)=\sum\limits_{i=0}^n \mu_it^i, \mu_i\in R$ for some $n=\deg(\varphi)\in \mathbb{N}_+$ (here $\deg(\varphi)$ is the degree of the leading non-zero coefficient rather than the invertible coefficient). By the centralizer assumption, we have
$$\varphi(at+b)=a\varphi(t)+b,\,\,\, \forall \,\,\,\phi_{a,b}\in H.$$ 

Assuming $n=\deg(\varphi)\ge 2$, we need to obtain a contradiction. Firstly by comparing the leading coefficient in the above equation we have $a^n=a \in \sk$ for any $\phi_{a,b}\in H$. Therefore, there are only finitely many choice of $a$ and following the infiniteness of $H$, the intersection of $H$ with the subgroup $\sk:=\{\phi_{1,b}| b\in \sk\}$ is infinite. Namely $\varphi(t+b)=\varphi(t)+b$ holds for infinitely many $b\in \sk$.  Taking a $\sk$ basis $1=e_1,\cdots, e_m$ of $R$ such that $e_2,\cdots, e_m$ form a basis of a maximal ideal $\mathfrak{m}$ of $R$. Then we can find $\varphi_i(t)\in \sk[t]$ so that
$$\varphi(t)=\varphi_1(t)e_1+\varphi_1(t)e_2+\cdots+\varphi_m(t)e_m, \,\,\, \varphi_i(t)\in \sk[t].$$
Now the equation $\varphi(t+b)=\varphi(t)+b$ is equivalent to $\varphi_i(t+b)=\varphi_i(t)+\delta_{1,i}b$. First note that $\varphi_1: \A\to \A$ is the induced isomorphism $$\varphi_1: \mathbb{A}_R^1\otimes_R \sk \stackrel{\varphi}{\to}\mathbb{A}_R^1\otimes_R \sk, \,\,\, \sk=R/\mathfrak{m}$$ for the maximal ideal $\mathfrak{m}$. so $\deg(\varphi_1)=1$. Then for $i\ge 2$, the equation $\varphi_i(t+b)=\phi_i(t)$ for infinitely many $b\in \sk$ forces $\varphi_i(t)$ to be a  constant polynomial. Therefore  $n=\mathrm{max}\{deg(\varphi_i(t))| i=1,2,\cdots,m\}$ is at most one, a contradiction.
\end{proof}

\begin{Cor}\label{Cor: extension of infinitesimal automorphism}
Let $G$ be a finite group scheme over $\sk$ admitting a faithful action on $Y=\A$ or $\Aa$. Suppose the group action of $G$ is  commutative with an infinite subgroup $H\subseteq \mathrm{Aut}_\sk(Y)$.  Then $G$ is a subgroup scheme of  $\mathbf{LAut}_\sk(Y)$. In particular, the $G$-action on $Y$ extends to the compactification $Y\subseteq \p$.
\end{Cor}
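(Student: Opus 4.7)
The plan is to reduce the claim to Proposition~\ref{Prop: infinitesimal automorphism} by packaging the entire $G$-action into a single $R$-linear automorphism. Since $G$ is a finite group scheme over $\sk$, write $G=\Spec(R)$ for a finite $\sk$-algebra $R$. The action morphism $\mu: G\times_\sk Y\to Y$ is equivalently encoded as the $R$-automorphism
\[
  \varphi_G:\ Y_R=Y\times_\sk G\ \longrightarrow\ Y\times_\sk G=Y_R,\qquad (y,g)\mapsto (g\cdot y,\,g).
\]
Under this dictionary, asking that $G$ act faithfully on $Y$ is the same as requiring the corresponding group-scheme homomorphism $\rho: G\to \underline{\mathrm{Aut}}_\sk(Y)$ to be a monomorphism of functors.

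Next I would verify that $\varphi_G$ lies in the centralizer of $H$ inside $\mathrm{Aut}_R(Y_R)$. Each element $h\in H$ induces an $R$-automorphism $h_R:=h\times\mathrm{id}_R$ of $Y_R$, and the hypothesis that the $G$- and $H$-actions commute translates, on functorial points, into $g\cdot h(y)=h\cdot g(y)$ for every $y\in Y(T)$ and every $g\in G(T)$. This is precisely the identity $\varphi_G\circ h_R=h_R\circ \varphi_G$, so $\varphi_G$ centralizes the image of $H$ in $\mathrm{Aut}_R(Y_R)$. Proposition~\ref{Prop: infinitesimal automorphism} then supplies $\varphi_G\in \mathbf{LAut}_\sk(Y)(R)$, which is to say that $\rho$ factors through the subgroup scheme $\mathbf{LAut}_\sk(Y)\subseteq \mathbf{PGL}_{2,\sk}$.

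Finally, I would promote this factorization to an honest closed subgroup-scheme structure. The morphism $G\to \mathbf{LAut}_\sk(Y)$ is a homomorphism of group schemes, proper since $G$ is finite over $\sk$, and a monomorphism by the faithfulness of the action; being a proper monomorphism of finite type, it is a closed immersion. Hence $G$ is realized as a closed subgroup scheme of $\mathbf{LAut}_\sk(Y)$. Since $\mathbf{LAut}_\sk(Y)=\mathbf{Aut}_\sk(\p,\Delta)$ acts on $\p$ preserving $\Delta=(\p\setminus Y)_{\mathrm{red}}$, the $G$-action on $Y$ automatically extends to a $G$-action on the compactification $Y\subseteq \p$. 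No step seems genuinely delicate here; the only point worth recording is the standard passage from \emph{monomorphism of group schemes} to \emph{closed immersion} for a finite $G$, which is the mildest obstacle along the way.
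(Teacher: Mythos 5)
Your proposal is correct and follows essentially the same route as the paper: both encode the action morphism $\mu$ as a single $R$-automorphism of $Y_R$ with $R=\cO_G$, observe that it centralizes $H$, and invoke Proposition~\ref{Prop: infinitesimal automorphism} to land in $\mathbf{LAut}_\sk(Y)(R)$. Your additional remarks — that the resulting homomorphism $G\to\mathbf{LAut}_\sk(Y)$ is a closed immersion because it is a proper monomorphism, and that the extension to $\p$ is automatic since $\mathbf{LAut}_\sk(Y)=\mathbf{Aut}_\sk(\p,\Delta)$ — merely make explicit steps the paper leaves implicit.
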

\begin{proof} 
The group action $\mu: G\times_\sk Y\to Y$ gives an $R:=\cO_{G}$-automorphism of $Y$:
$$
\xymatrix{
G\times_\sk Y \ar[d]^{p_1}\ar[rrrr]^{\Phi:=\mathrm{id}\times \mu} &&&& G\times_\sk  Y \ar[d]^{p_1} && (g,y)\mapsto (g, g\cdot y) \\
G\ar@{=}[rrrr] &&&& G}
$$
This automorphism by definition commutes with $H$. By Proposition~\ref{Prop: infinitesimal automorphism}, $G$ is a subgroup scheme of $\mathbf{LAut}_\sk(Y)$.
\end{proof}

Finally, let us figure out the finite subgroup schemes of  $\mathbf{LAut}_\sk(Y)$ that acts freely on $Y$.
\begin{Prop}\label{Prop: classification of subgroup acting freely on U}
(1). When $Y=\Aa$, the  group scheme $\mathbf{LAut}_\sk(Y)=\mathbb{G}_m$ acts freely on $Y$. In particular, any finite subgroup scheme of $\mathbb{G}_m$ acts on $U$ freely.

(2). When $Y=\A$, with respective to the semi-direct product $\mathbf{LAut}_\sk(Y)=\mathbb{G}_a\rtimes \mathbb{G}_m$, a subgroup scheme of $\mathbf{LAut}_\sk(Y)$ acts freely on $Y$ if and only if it is contained in $\mathbb{G}_a$.
\end{Prop}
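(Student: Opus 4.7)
The plan for part (1) is immediate: since $(\mathbb{A}_\sk^1)^*=\mathbb{G}_m$ and $\mathbf{LAut}_\sk((\mathbb{A}_\sk^1)^*)=\mathbb{G}_m$ acts by its own group law, this is the regular (free) action of $\mathbb{G}_m$ on itself, and any subgroup scheme inherits freeness. For the ``if'' direction of part (2), I would verify that a subgroup scheme $G\subseteq\mathbb{G}_a$ acts on $\A$ by translation $t\mapsto t+b$, exhibiting the explicit inverse $(u,v)\mapsto((u-v,1),v)$ to the action map $((b,1),t)\mapsto(t+b,t)$.

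For the hard ``only if'' direction, I would split the analysis using the connected-\'etale decomposition $G=G^0\rtimes G^{\text{\'et}}$ (available since $\sk$ is algebraically closed). First, to handle the \'etale part, I would compute that any element $(b,a)\in\mathbf{LAut}_\sk(\A)(\sk)$ with $a\ne 1$ has fixed $\sk$-point $c=b/(1-a)\in\A$; freeness therefore forces every nontrivial $\sk$-point of $G$ to satisfy $a=1$, giving $G^{\text{\'et}}\subseteq\mathbb{G}_a$.

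To handle the infinitesimal part $G^0$, I would use the foliation/$\alpha_p$-, $\mu_p$-action correspondence of Proposition~\ref{Prop: action by alphap}. Since $\mathrm{Lie}(\mathbf{LAut}_\sk(\A))=\sk\partial_t\oplus\sk\cdot t\partial_t$ and any derivation $(\alpha+\beta t)\partial_t$ with $\beta\ne 0$ vanishes at $-\alpha/\beta$, the only nowhere-vanishing $1$-dimensional restricted $p$-Lie subalgebras are those contained in $\sk\partial_t=\mathrm{Lie}(\mathbb{G}_a)$. A height-one free action of $G^0[F]$ on $\A$ therefore forces $G^0[F]\subseteq\mathbb{G}_a[F]=\alpha_p$. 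Then, since $\alpha_p$ acts by translation with quotient $\A/\alpha_p\cong\A$ via $t\mapsto t^p$ (Theorem~\ref{Thm: Ekedahl}), I would induct on $|G^0|$ by passing to the residual action of $G^0/G^0[F]$ on this quotient, which is again free.

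The main obstacle will be making the induction step clean: the descent map $(\mathbb{G}_a\rtimes\mathbb{G}_m)/\alpha_p\to\mathbf{LAut}_\sk(\A/\alpha_p)=\mathbb{G}_a\rtimes\mathbb{G}_m$ is $(B,a)\mapsto(B,a^p)$, which has nontrivial kernel $\mu_p$; so one must verify that freeness propagates correctly through the Frobenius twist of the $\mathbb{G}_m$-factor at each stage, and that the induction actually closes by combining the $\sk$-point and Lie-algebra constraints to force the $\mathbb{G}_m$-projection of $G^0$ to vanish rather than merely lie in $\mu_p$.
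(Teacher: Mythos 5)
The paper itself gives no argument here (it declares the proof ``elementary'' and leaves it to the reader), so there is nothing to compare your route against; I can only assess it on its own terms. Parts (1), the ``if'' half of (2), the reduction of the \'etale part via the fixed point $b/(1-a)$, and the Lie-algebra computation showing $\mathrm{Lie}(G)\subseteq \sk\,\partial_t$ (hence $G[F]\subseteq\alpha_p$) are all correct.

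The genuine gap is exactly the one you flag at the end, and it is not a bookkeeping issue: your two constraints --- $G(\sk)\subseteq\mathbb{G}_a(\sk)$ and $\mathrm{Lie}(G)\subseteq\mathrm{Lie}(\mathbb{G}_a)$ --- do \emph{not} formally force the projection $\pi:G\to\mathbb{G}_m$ to be trivial. They only show that $\pi(G)$ is infinitesimal (so $\pi(G)=\mu_{p^k}$) and that $\mathrm{Lie}(G)\to\mathrm{Lie}(\mathbb{G}_m)$ vanishes; since $\mathrm{Lie}$ does not commute with quotients of finite group schemes (e.g.\ $\mu_{p^2}\twoheadrightarrow\mu_p$ is zero on Lie algebras), this leaves open a priori a nonsplit extension $1\to G\cap\mathbb{G}_a\to G\to\mu_p\to 1$ with one-dimensional unipotent Lie algebra. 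To close the induction you need a structural input: because $\mu_{p^k}$ is linearly reductive and $G\cap\mathbb{G}_a$ is unipotent, the extension splits, so $G$ would contain a subgroup scheme isomorphic to $\mu_{p^k}$; every such subgroup of $\mathbb{G}_a\rtimes\mathbb{G}_m$ is conjugate by a point $(c,1)$ to the standard one (the conjugate is $\{(c(1-a),a)\}$), hence fixes a point of $\A$, contradicting freeness. Supplying this splitting lemma completes your argument. Alternatively, you can bypass the entire induction on $G^0$: with $N:=G\cap\mathbb{G}_a$, both $\A/N$ and $\A/G$ are normal affine rational curves with one place at infinity, hence isomorphic to $\A$, and $\A/N\to\A/G$ is a $\pi(G)$-torsor; since $H^1_{\mathrm{fl}}(\A,\mu_n)=0$ for all $n$ (Kummer/Artin--Schreier, as in the paper's equation for $H^1_{\mathrm{fl}}(Y,\mu_p)$), this torsor is trivial, so $\mu_n\times\A\cong\A$ forces $n=1$, i.e.\ $G\subseteq\mathbb{G}_a$. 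Either repair is needed; as written, the crux of the ``only if'' direction is asserted rather than proved.
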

The proof is elementary and we left it to the readers.

\section{Torsors with large equivariant automorphisms}\label{Section: torsors}
\subsection{Statement of the main theorem}
The aim of this section is to give a classification of the pair $(G, \nu:T\to U)$, where 
\begin{enumerate}[(i)]
\item $G$ is a finite group scheme over $\sk$ whose identity component $G^0$ is Abelian;

\item $U$ is an irreducible, smooth curve over $\sk$;

\item $\nu: T\to U$ is a  $G$-torsor with infinite $\et(T/U)$ (cf. Definition~\ref{Def: equivariant automorphism}).
\end{enumerate}
Note that there is an exact sequence  (cf. \cite[Lem.~4.1]{Brion11})
$$1\to \mathrm{Hom}_\sk^G(T,G)\to \et(T/U)\stackrel{\nu_\sharp}{\to} \mathrm{Aut}_\sk(U), \,\,\, \nu_\sharp: (\tau,\sigma)\mapsto \sigma,$$
here the group $\mathrm{Hom}_\sk^G(T,G)\subseteq \mathrm{Hom}_\sk(T,G)$ is the subgroup consisting of equivariant $G$-homomorphism from $T$ to $G$ ($G$ acts on itself by conjugation). 

\begin{Lem} 
The subgroup $\mathrm{Hom}_\sk^G(T,G)$ is finite under our assumption. 
\end{Lem}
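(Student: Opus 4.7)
The plan is to realise $\mathrm{Hom}_\sk^G(T,G)$ as the group of $U$-sections of a finite flat group scheme $G^T\to U$, and then bound this group using the generic fibre. The hypothesis that $G^0$ is Abelian will play no role in this lemma; the finiteness is essentially formal from the torsor setup.

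First, I would form the inner twist of $G$ by $T$. Let $G$ act on $G\times_\sk T$ diagonally, by conjugation on the first factor and via the torsor action on the second. Since the $G$-action on $T$ is free, Theorem~\ref{Thm: quotient by gp} produces a quotient scheme
\[
G^T := (G\times_\sk T)/G,
\]
together with a natural morphism $\pi: G^T \to T/G = U$. Because the formation of this quotient commutes with base change along the $G$-torsor $T\to U$ (Theorem~\ref{Thm: quotient by gp}(2)), the pullback of $\pi$ along $T\to U$ is the trivial projection $G\times_\sk T\to T$. Hence $\pi$ is finite and flat of order $|G|$ on fibres. As conjugation preserves the group law on $G$, the multiplication on $G$ descends to make $\pi:G^T\to U$ a $U$-group scheme (the inner twist of the constant group scheme $G\times_\sk U$ by $T$).

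Next, I identify $\mathrm{Hom}_\sk^G(T,G)$ with $G^T(U)$. A morphism $\phi: T\to G$ lies in $\mathrm{Hom}_\sk^G(T,G)$ exactly when $(\phi,\mathrm{id}_T): T\to G\times_\sk T$ is equivariant for the above diagonal $G$-action; such equivariant morphisms over $U$ descend bijectively along the torsor $T\to U$ to sections $U\to G^T$ of $\pi$. This is a group isomorphism
\[
\mathrm{Hom}_\sk^G(T,G) \;\cong\; G^T(U).
\]

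Finally, I bound $|G^T(U)|$ via the generic point $\eta\in U$. The fibre $G^T_\eta = G^T\times_U \Spec K(U)$ is a finite $K(U)$-scheme of length $|G|$, so $G^T(K(U))$ has at most $|G|$ elements. The restriction map $G^T(U)\to G^T(K(U))$ is injective: since $\pi$ is finite, it is separated, so the equalizer of two sections $s_1,s_2: U\rightrightarrows G^T$ is a closed subscheme of $U$; if $s_1$ and $s_2$ agree at $\eta$, this equalizer contains the dense point $\eta$, and as $U$ is reduced and irreducible it equals $U$. Therefore $|\mathrm{Hom}_\sk^G(T,G)| = |G^T(U)|\le |G| <\infty$, which proves the lemma. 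There is no real obstacle here; the argument is a standard twist-and-test-at-the-generic-fibre, and it works without any assumption on $\et(T/U)$ or on $G^0$.
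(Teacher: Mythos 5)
Your proof is correct, but it follows a genuinely different route from the one in the paper. The paper splits $G=G^0\rtimes G(\sk)$ and uses the exact sequence
$1\to\mathrm{Hom}_\sk^{G}(T,G^0)\to \mathrm{Hom}_\sk^G(T,G) \to \mathrm{Hom}_\sk^G(T,G(\sk))$:
the \'etale quotient piece is finite because $T$ has finitely many components and $G(\sk)$ is a finite set, while the infinitesimal piece is killed by invoking the hypothesis that $G^0$ is Abelian to identify $\mathrm{Hom}_\sk^{G^0}(T,G^0)$ with $\mathrm{Hom}_\sk(T/G^0,G^0)$ via \cite[Lem.~4.1]{Brion11}, and then observing that $T/G^0$ is reduced (being \'etale over the smooth curve $U$), so it admits only the trivial map to the infinitesimal scheme $G^0$. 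You instead form the inner twist $G^T=(G\times_\sk T)/G$, identify $\mathrm{Hom}_\sk^G(T,G)$ with $G^T(U)$ by fppf descent along the torsor, and bound the sections by restricting injectively to the generic fibre, a finite $K(U)$-scheme of length $|G|$. Your argument buys more: it needs neither the Abelian hypothesis on $G^0$ nor the smoothness of $U$ (only integrality), and it yields the explicit bound $|\mathrm{Hom}_\sk^G(T,G)|\le \dim_\sk\cO_G$; the paper's argument stays entirely within the toolkit it has already set up (the Brion lemma is cited for the ambient exact sequence anyway) and avoids introducing the twisting construction. One small point of hygiene: the base-change fact you need, namely $G^T\times_U T\simeq G\times_\sk T$, is not literally Theorem~\ref{Thm: quotient by gp}(2) (which concerns base change over the quotient $Q$); it is the standard descent statement in the spirit of Corollary~\ref{Cor: quotient by gp}(2), valid here because a morphism of $G$-torsors over $G^T$ is automatically an isomorphism. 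This is a referencing quibble, not a gap.
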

\begin{proof}
 As $G$ is finite, we have a natural semi-direct decomposition: $G=G^0\rtimes G(\sk)$ and thus obtain  another natural exact sequence:
$$
1\to\mathrm{Hom}_\sk^{G}(T,G^0)\to \mathrm{Hom}_\sk^G(T,G) \to \mathrm{Hom}_\sk^G(T,G(\sk)).
$$
The quotient target group $\mathrm{Hom}_\sk^G(T,G(\sk))\subseteq \mathrm{Hom}_\sk(T,G(\sk))$ is automatically finite, and it suffices to prove that  $\mathrm{Hom}_\sk^{G}(T,G_0)\subseteq \mathrm{Hom}_\sk^{G^0}(T,G^0)$ is finite. As our $G^0$ is Abelian,  we have  $\mathrm{Hom}_\sk^{G^0}(T,G^0)=\mathrm{Hom}_\sk(T/G^0, G^0)$ (cf. \cite[Lem.~4.1]{Brion11}). As by construction $T/G_0$ is an $G(\sk)$-torsor over the smooth curve  $U$,  $T/G^0$ is itself smooth too. In particular, $\mathrm{Hom}_\sk(T/G^0, G^0)$ is the trivial group and we are done.
\end{proof}
\begin{Cor}\label{Cor: Bt definition}
The group  $\et(T/U)$ is infinite if and only if the group
$$
\Bt(T/U):=\mathrm{Im}(\nu_\sharp: \et(T/U)\to \mathrm{Aut}_\sk(U)) \subseteq \mathrm{Aut}_\sk(U)
$$
is infinite. 
\end{Cor}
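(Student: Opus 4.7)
The statement is an immediate consequence of the preceding lemma together with the exact sequence
$$1\to \mathrm{Hom}_\sk^G(T,G)\to \et(T/U)\stackrel{\nu_\sharp}{\to} \mathrm{Aut}_\sk(U)$$
recalled from \cite[Lem.~4.1]{Brion11}. My plan is simply to observe that $\Bt(T/U)$ is by definition the image of $\nu_\sharp$, so that this sequence rewrites as a short exact sequence
$$1\to \mathrm{Hom}_\sk^G(T,G)\to \et(T/U)\to \Bt(T/U)\to 1.$$

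From here the corollary is a formal consequence: in any short exact sequence of groups with finite kernel, the middle term is finite if and only if the quotient is. The preceding lemma guarantees that the kernel $\mathrm{Hom}_\sk^G(T,G)$ is finite under the standing hypothesis that $G^0$ is Abelian and $U$ is smooth. Thus $\et(T/U)$ is infinite exactly when $\Bt(T/U)$ is.

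There is no real obstacle here; the whole work is done by the preceding lemma (and the identification $\mathrm{Hom}_\sk^{G^0}(T,G^0)=\mathrm{Hom}_\sk(T/G^0,G^0)$, which vanishes because a smooth curve admits no nonconstant morphism to a finite group scheme). The proof is essentially a one-line bookkeeping argument from the exact sequence.
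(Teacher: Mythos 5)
Your proposal is correct and is exactly the argument the paper intends: the corollary is stated as an immediate consequence of the preceding lemma (finiteness of the kernel $\mathrm{Hom}_\sk^G(T,G)$) together with the exact sequence, and the paper offers no further proof beyond this formal bookkeeping. No discrepancy.
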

 The main result of this section is the following:
\begin{Thm}\label{Thm: main on torsors}
Given a pair $(G,\nu:T\to U)$ satisfying the conditions (i)-(iii) in the beginning of  this section, there is a subgroup scheme $G'\subseteq G$, a $G'$-torsor $\nu': T'\to U$ satisfying:
\begin{enumerate}[(a)]
\item $T$ reduces to $T'$ (cf. Definition~\ref{Def: reduction of structure group});

\item $T'$ is irreducible and smooth over $\sk$;

\item $\et(T'/U)$ (equivalently $\Bt(T'/U)$ by Corollary~\ref{Cor: Bt definition}) is also infinite.

\item let $T'\subseteq C'$ be the normal compactification, then the $G'$-action on $T'$ extends to $C'$.
\end{enumerate}
\end{Thm}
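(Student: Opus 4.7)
The plan is to perform successive reductions of the structure group, each preserving the infiniteness of $\et$, until conditions (a)--(d) are met simultaneously.

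\emph{Reduction to an irreducible \'etale quotient.} Using the canonical splitting $G = G^0\rtimes G(\sk)$, form $\tilde T := T/G^0$, a $G(\sk)$-torsor over $U$ (hence an \'etale Galois cover). The finite set of irreducible components of $\tilde T$ carries a permutation action of $\et(T/U)$, so a finite-index---hence still infinite---subgroup stabilizes some chosen component $\tilde T_1$. Setting $H := \mathrm{Stab}_{G(\sk)}(\tilde T_1)$ and $G_1 := G^0\rtimes H \subseteq G$, the preimage $T_1\subseteq T$ of $\tilde T_1$ is, via the splitting together with a variant of Example~\ref{Exa: typical example of extension and reduction of torsors}, a $G_1$-torsor realizing $T$ as its extension of structure group by $G_1\hookrightarrow G$ in the sense of Definition~\ref{Def: reduction of structure group}, and the stabilizer of $\tilde T_1$ embeds into $\et(T_1/U)$. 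Replacing $(G,T)$ by $(G_1, T_1)$, we may assume $\tilde T$ (and hence $T$ topologically) is irreducible.

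\emph{Achieving smoothness.} Now $T$ is a $G^0$-torsor over the smooth irreducible curve $\tilde T$. In characteristic zero, or when $G^0$ is trivial, $T$ is already smooth. Otherwise, via Proposition~\ref{Prop: action by alphap} the $G^0$-action on $T$ is encoded by commuting $p$-nilpotent and multiplicative derivations, i.e.\ foliations, which are normalized by $\et(T/U)$ via the torsor-compatibility of Definition~\ref{Def: equivariant automorphism}(3). Working through a composition series of $G^0$ by $\alpha_p$- and $\mu_p$-factors, at each step the rigidity in Proposition~\ref{Prop: infinitesimal automorphism} forces the associated derivation either to be nowhere vanishing (so the foliation is smooth and by Theorem~\ref{Thm: Ekedahl} its quotient remains smooth) or to split off as a further reduction of the structure group. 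This iteration yields a subgroup $G'\subseteq G$ and a $G'$-torsor $T'$ satisfying (a), with $T'$ irreducible, smooth, and $\et(T'/U)$ infinite, securing (b) and (c).

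\emph{Extending the action to $C'$.} For (d), let $T'\subseteq C'$ be the normal compactification. The infinite $\et(T'/U)$ projects injectively (since $\nu'$ is surjective) to an infinite subgroup $H'\subseteq\mathrm{Aut}_\sk(T')$ commuting with the $G'$-action by Definition~\ref{Def: equivariant automorphism}(3). The existence of such $H'$ forces $T' \in \{\p, \A, \Aa, E\}$ for some elliptic curve $E$; in the projective cases $C' = T'$ and there is nothing to do, while for $T' \in \{\A, \Aa\}$, Corollary~\ref{Cor: extension of infinitesimal automorphism} places $G'\subseteq \mathbf{LAut}_\sk(T')$, whose action extends canonically to $C' = \p$. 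The main obstacle is the smoothness step, where one must verify that each sub-reduction of the infinitesimal structure group preserves both smoothness and the infinite automorphism group; this requires carefully interweaving the infinitesimal rigidity of Proposition~\ref{Prop: infinitesimal automorphism} with the smooth-foliation quotient of Theorem~\ref{Thm: Ekedahl}.
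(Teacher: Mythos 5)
Your first step (splitting $G=G^0\rtimes G(\sk)$, passing to an irreducible component of the \'etale quotient, and noting that the stabilizer of that component has finite index in $\et(T/U)$) and your last step (using Corollary~\ref{Cor: extension of infinitesimal automorphism} to extend the $G'$-action to the compactification once $T'$ is known to be one of the four curves with infinite automorphism group) both match the paper. The problem is the middle step, which is the actual content of the theorem, and which you acknowledge but do not close.

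Concretely, the mechanism you propose for smoothness does not work as stated. Proposition~\ref{Prop: action by alphap} encodes an $\alpha_p$- or $\mu_p$-action by a derivation only on a \emph{smooth} variety, but at this stage $T$ is exactly the object whose smoothness (after reduction) is in question, so invoking it is circular; moreover a general Abelian infinitesimal $G^0$ (e.g.\ $\alpha_{p^2}$, $\mu_{p^n}$) is not captured by a single derivation or by ``commuting derivations,'' only its height-one quotients are. Theorem~\ref{Thm: Ekedahl} is also pointed the wrong way: it gives smoothness of a quotient of a smooth variety by a smooth foliation, whereas here the quotient $T/G^0=M_1$ is already smooth and what must be shown is smoothness of the torsor's \emph{total space} (or of its reduction). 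The paper instead proves this as a separate statement (Theorem~\ref{Thm: main on reduction of torsors}) by identifying $\Bt(X/Y)$ with the stabilizer of the class $o_X\in H^1_{\mathrm{fl}}(Y,\underline{\Lambda})$ under $\mathrm{Aut}_\sk(Y)$, computing $H^1_{\mathrm{fl}}(Y,\mu_p)$ and $H^1_{\mathrm{fl}}(Y,\alpha_p)$ via Kummer and Artin--Schreier sequences for each of the four possible bases $Y\in\{E,\p,\A,\Aa\}$, and checking by hand which classes have infinite stabilizer (e.g.\ over $\A$ only $x^p=\lambda t$, which is smooth); the inductive step for higher rank then needs a diagram chase in flat cohomology plus a nontrivial case split on the degree of $X_{\mathrm{red}}\to X/\Xi$, including the construction of a homomorphism $\Lambda\to\Xi$ splitting off the kernel when the sub-torsor is trivial. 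None of this is supplied or replaced by your rigidity-plus-foliation argument, so the proof as written has a genuine gap at its central point.
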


\begin{Rem}\label{Rem: main on torsors}
(1) We recall that a torsor under a finite group scheme is called \emph{Nori-reduced} if it can not non-trivially reduce any more (cf. \cite[Def.~5.10]{Niels-Vistoli15}) and every such a torsor admits a Nori-reduced torsor associated. The torsor $T'$ that $T$ reduces to in the above theorem is exactly the Nori-reduced torsor associated to $\nu: T\to U$. 

(2) In general, without the infiniteness assumption on $\et(T/U)$,  the Nori-reduced torsor associated is not necessarily reduced. 
\end{Rem}
This theorem will be proved in \S~\ref{Subsec: proof of thm-torsor main} based on the assumption of Theorem~\ref{Thm: main on reduction of torsors}. The assumed Theorem~\ref{Thm: main on reduction of torsors} will be proved in \S~\ref{Subsec: proof of a key theorem in torsor}. Finally, in \S~\ref{Subsec: classification of torsors},  we will give a complete classification of the pairs $(G', \nu': T'\to U)$ mentioned in Theorem~\ref{Thm: main on torsors}.
 
\subsection{Proof of Theorem~\ref{Thm: main on torsors}}\label{Subsec: proof of thm-torsor main}
We shall make several reductions to prove Theorem~\ref{Thm: main on torsors}. Let $(G,\nu: T\to U)$ be as in Theorem~\ref{Thm: main on torsors}. We remind that there is a canonical semi-direct decomposition $G=G^0\rtimes_\sk G(\sk)$ and $G^0$ is Abelian by assumption. First let $M:=T/G^0$, then $\pi: T\to M$ is a $G^0$-torsor (cf. Theorem~\ref{Thm: quotient by gp}(2)) and $\gamma: M\to U$ is a $G(\sk)$-torsor as $G^0$ is normal in $G$.
$$
\xymatrix{
T\ar[rrrrrr]^\pi_{{\text{$G^0$-torsor}}}\ar[rrrd]^{\nu}_{\text{$G$-torsor}} &&&&&& M=T/G^0\ar[llld]_{\gamma}^{\text{\,\, $G(\sk)$-torsor}}\\
&&& U
}
$$

\begin{Lem}
With respect to their torsor structures, the groups $\et(T/M)$ and $\et(M/U)$ are both infinite.
\end{Lem}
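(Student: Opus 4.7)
The plan is to construct two natural homomorphisms out of $\et(T/U)$, one landing in each of the target groups, and to show each has infinite image.

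The key observation is that every $(\tau,\sigma) \in \et(T/U)$ automatically descends along the quotient $\pi: T\to M = T/G^0$. Indeed, since $\tau$ is $G$-torsor equivariant (in the sense of Definition~\ref{Def: equivariant automorphism}(3)), it is in particular $G^0$-equivariant, so by the universal property of the GIT quotient (Theorem~\ref{Thm: quotient by gp}) it descends to a unique automorphism $\bar\tau: M\to M$ satisfying $\pi\circ\tau = \bar\tau\circ\pi$ and $\gamma\circ\bar\tau = \sigma\circ\gamma$. Moreover, because $G^0$ is normal in $G$ with $G(\sk) = G/G^0$, and $\tau$ was $G$-equivariant, $\bar\tau$ is automatically $G(\sk)$-torsor equivariant over the $G(\sk)$-torsor $\gamma:M\to U$. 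This yields two group homomorphisms
$$
\Phi_1: \et(T/U)\longrightarrow \et(T/M),\quad (\tau,\sigma)\mapsto (\tau,\bar\tau),
$$
$$
\Phi_2: \et(T/U)\longrightarrow \et(M/U),\quad (\tau,\sigma)\mapsto (\bar\tau,\sigma).
$$

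For $\Phi_1$, I would simply observe injectivity: if $\tau = \mathrm{id}_T$ then $\bar\tau=\mathrm{id}_M$ and, using surjectivity of $\nu$ together with the intertwining relation $\nu\circ\tau = \sigma\circ\nu$, we get $\sigma=\mathrm{id}_U$. Hence $\Phi_1$ is injective and the infiniteness of $\et(T/U)$ transfers immediately to $\et(T/M)$.

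For $\Phi_2$, the plan is to bound the kernel. An element $(\tau,\sigma)$ lying in $\ker(\Phi_2)$ satisfies $\sigma = \mathrm{id}_U$ and $\bar\tau = \mathrm{id}_M$, so $\tau$ is an automorphism of $T$ over $M$ commuting with the $G^0$-action, i.e.\ an element of the group of $G^0$-torsor automorphisms $\mathrm{Aut}_M^{G^0}(T) \cong \mathrm{Hom}_\sk^{G^0}(T,G^0)$. By the same argument just used in the preceding lemma (the Abelian-$G^0$ case gives $\mathrm{Hom}_\sk^{G^0}(T,G^0) = \mathrm{Hom}_\sk(M,G^0)$, and $M$ is a smooth curve mapping into a finite group scheme, forcing finiteness), this kernel is finite. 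Therefore $\mathrm{Im}(\Phi_2)$ is infinite, and consequently $\et(M/U)$ is infinite.

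The only mildly delicate point is the descent step: one must verify that the descended $\bar\tau$ is not merely a set-theoretic or scheme-theoretic automorphism of $M$ but actually a torsor automorphism in the strict sense of Definition~\ref{Def: equivariant automorphism}(3), which I would handle by invoking functoriality of the quotient under the commuting $G$- and $G^0$-actions and the compatibility of quotients with base change (Theorem~\ref{Thm: quotient by gp}(2)). Everything else is formal diagram-chasing, so I do not anticipate a serious obstacle.
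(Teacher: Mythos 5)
Your proposal is correct and follows essentially the same route as the paper: the paper likewise descends $\tau$ along $\pi:T\to M=T/G^0$ to an automorphism $\tau_*$ of $M$ (using that $\tau$ commutes with the $G$-action, hence with the normal $G^0$-action) and observes that $(\tau,\tau_*)\in\et(T/M)$ and $(\tau_*,\sigma)\in\et(M/U)$. Your additional bookkeeping — injectivity of $\Phi_1$ and finiteness (in fact triviality) of $\ker\Phi_2$ via $\mathrm{Hom}_\sk(M,G^0)$ — just makes explicit the counting the paper leaves implicit.
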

\begin{proof}
For any $(\tau,\sigma)\in \et(T/U)$ (cf. Definition~\ref{Def: equivariant automorphism}), the isomorphism $\tau: T\to T$ is commutative with the $G$-action on $T$ by definition. So there induces an isomorphism $\tau_*: M\to M$ commutative with the $G(\sk)$-action on it as follows:
$$
\xymatrix{
T\ar[d]^\tau \ar[rr]^\pi && M \ar[rr]^{\gamma} \ar[d]^{\tau_*}&& U\ar[d]^\sigma \\
T \ar[rr]^\pi && M \ar[rr]^{\gamma} && U
}
$$ In other words, $(\tau,\tau_*), (\tau_*,\sigma)$ are contained in $\et(T/M)$ and $\et(M/U)$ respectively. Therefore the groups $\et(T/M)$ and $\et(M/U)$ are both infinite.
\end{proof}

Then we note that $M$, as an \'etale Galois cover of $U$ with Galois group $G(\sk)$ (cf. Remark~\ref{Rem: on torsors}) is smooth over $\sk$ as $U$ is so. Fixing an arbitrary irreducible component $M_1$ of $M$, then $M_1$ is a reduction of $M$ by the group extension $H=\{g\in G(\sk)| g(T_2)=T_2\}\subseteq G(\sk)$ (cf. Example~\ref{Exa: typical example of extension and reduction of torsors}). In other words, let $G_1=G^0\rtimes H\subseteq G^0\rtimes G(\sk)=G$ be the associated subgroup scheme, then $T$ first reduces to a $G_1$-torsor $T_1:=\pi^{-1}(M_1)$ so that $T_1$ is irreducible. 
$$
\xymatrix{
T_1=\pi^{-1}(M_1) \ar[rrrd]^{\nu_1:=\nu|_{T_1}}_{\text{$G_1'$-torsor}} \ar[rrrrrr]^{\pi|_{T_1}}_{\text{$G^0$-torsor}} &&&&&& M_1 \ar[llld]_{\gamma_1:=\gamma|_{M_1}}^{\,\, \,\,\,\text{$G_1'(\sk)$-torsor}}\\
&&& U}
$$
 
 \begin{Lem}\label{Lem: local infinite local}
 The group $\et(T_1/U)$ is also infinite.
 \end{Lem}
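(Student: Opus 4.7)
The plan is to restrict to $T_1$ a finite-index subgroup of $\et(T/U)$ and show the restriction map is injective into $\et(T_1/U)$. First I would note that each $(\tau,\sigma)\in\et(T/U)$ descends via the $G^0$-quotient $\pi$ to an isomorphism $\tau_*\colon M\to M$ (as in the preceding lemma), which must permute the finite set $\{M_1,\ldots,M_k\}$ of irreducible components of $M$. Let $\Gamma\subseteq\et(T/U)$ be the subgroup of those $(\tau,\sigma)$ with $\tau_*(M_1)=M_1$. Via the orbit of $M_1$ in $\{M_1,\ldots,M_k\}$ we see $[\et(T/U):\Gamma]\le k$, so $\Gamma$ is still infinite.

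Next, for any $(\tau,\sigma)\in\Gamma$, the identity $\pi\circ\tau=\tau_*\circ\pi$ combined with $\tau_*(M_1)=M_1$ yields $\tau(T_1)=\tau(\pi^{-1}(M_1))=\pi^{-1}(M_1)=T_1$, so $\tau$ restricts to an automorphism $\tau|_{T_1}$ of $T_1$. The relations $\nu\circ\tau=\sigma\circ\nu$ and $\tau\circ\mu=\mu\circ(\mathrm{id}_G\times\tau)$ defining membership in $\et(T/U)$ restrict directly to the corresponding relations for $\nu_1:=\nu|_{T_1}$ and the $G_1$-action $\mu_1$, because $G_1=G^0\rtimes H$ is by construction the subgroup scheme of $G$ that preserves $T_1$ (its identity component acts along $\pi$-fibres, while $H$ is the stabilizer of $M_1$ in $G(\sk)$). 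Hence $(\tau|_{T_1},\sigma)\in\et(T_1/U)$, and we obtain a group homomorphism $\Phi\colon\Gamma\to\et(T_1/U)$.

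It remains to check that $\Phi$ is injective, from which the infiniteness of $\et(T_1/U)$ will follow. Suppose $\Phi(\tau,\sigma)=(\mathrm{id}_{T_1},\mathrm{id}_U)$; then $\sigma=\mathrm{id}_U$ because $\nu_1\colon T_1\to U$ is surjective. The closed subscheme $Z\subseteq T$ where $\tau$ coincides with $\mathrm{id}_T$ is $G$-stable (both morphisms are $G$-equivariant) and contains $T_1$. But the $G$-orbit of $T_1$ is all of $T$: since $\nu\colon T\to U$ is a $G$-torsor and $\nu(T_1)=U$, the map $G\times T_1\to T$, $(g,t_1)\mapsto g\cdot t_1$, is surjective. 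Therefore $Z=T$ and $\tau=\mathrm{id}_T$, proving $\Phi$ is injective. The main point that needs care is this final scheme-theoretic surjectivity when $G^0$ is infinitesimal, which one settles by factoring $G\times T_1\to T$ through the torsor isomorphism $G\times T\cong T\times_U T$ and the surjectivity of $\nu_1$.
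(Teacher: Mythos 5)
Your proof is correct and follows essentially the same route as the paper: both pass to the finite-index subgroup of $\et(T/U)$ consisting of pairs whose first component stabilizes the irreducible component $T_1$, and then restrict to $T_1$. The only difference is how infiniteness of $\et(T_1/U)$ is extracted: you prove the restriction homomorphism is injective (correctly handling the non-reduced/infinitesimal case via faithful flatness of $G\times_\sk T_1\to T$), whereas the paper leaves this step implicit, effectively relying on the fact that the image in $\mathrm{Aut}_\sk(U)$ of this finite-index subgroup is an infinite subgroup of $\Bt(T_1/U)$ and then invoking Corollary~\ref{Cor: Bt definition}.
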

 \begin{proof}
In fact, given any $(\tau,\sigma)\in \et(T/U)$, if $\tau(T_1)=T_1$, then its restriction on $T_1$ induces an element $(\tau|_{T_1}, \sigma)\in \et(T_1/U)$. As $T, T_1$ are homeomorphic to $M,M_1$ respectively, $T_1$ is an irreducible component of $T$. Since $T$ has only finitely many irreducible components, the subgroup $\{(\tau,\sigma)| \tau(T_1)=T_1\}\subseteq  \et(T/U)$ has a finite index.  We are done.
 \end{proof} 
So similarly, the groups $\et(T_1/M_1)$ and $\et(M_1/U)$ are both infinite too. Up to now, we have reduced $T$ into an irreducible torsor $T_1$. Then we claim the following theorem, which will be proved in \S~\ref{Subsec: proof of a key theorem in torsor}.
\begin{Thm}\label{Thm: main on reduction of torsors}
Suppose $\Lambda$ is an infinitesimal Abelian group scheme, $\nu: X\to Y$ is a $\Lambda$-torsor over an smooth irreducible curve $Y$ with infinite $\et(X/Y)$ (equivalently, infinite $\Bt(X/Y)$). Then $X$ reduces to a torosr $X'$ under a subgroup scheme $\Lambda'\subseteq \Lambda$ satisfying:
\begin{enumerate}[(i).]
\item $X'$ is smooth over $\sk$;

\item $\et(X'/Y)$ (equivalently, $\Bt(X'/Y)$) is still infinite.
\end{enumerate}
\end{Thm}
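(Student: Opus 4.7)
The strategy I propose is to take $X'$ to be the Nori-reduced torsor associated to $X$, i.e., the essentially unique torsor under some subgroup scheme $\Lambda' \subseteq \Lambda$ to which $X$ reduces and which itself admits no further non-trivial reduction (see Remark~\ref{Rem: main on torsors}). The proof then splits into two parts: first, $\et(X'/Y)$ (equivalently, $\Bt(X'/Y)$) remains infinite; second, $X'$ is smooth.

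The infiniteness of $\et(X'/Y)$ should follow from the canonicity of the Nori-reduction. Any $(\tau,\sigma) \in \et(X/Y)$ is $\Lambda$-equivariant by the definition of torsor automorphism, hence preserves the canonically defined sub-torsor $X' \subseteq X$ and restricts to a torsor automorphism of $X' \to Y$. The restriction map $\et(X/Y) \to \et(X'/Y)$ is moreover injective: using the presentation $X = \Lambda \times^{\Lambda'} X'$, every point of $X$ has the form $\lambda \cdot x'$ with $\lambda \in \Lambda$ and $x' \in X'$, so if $\tau|_{X'} = \mathrm{id}$, then $\Lambda$-equivariance forces $\tau(\lambda \cdot x') = \lambda \cdot \tau(x') = \lambda \cdot x'$ throughout $X$. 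Hence $\et(X'/Y)$ inherits the infiniteness from $\et(X/Y)$.

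Smoothness of $X'$ is the main content, which I would argue by contradiction combined with induction on $|\Lambda|$. Suppose $X'$ is not smooth. Since $\Bt(X'/Y) \subseteq \mathrm{Aut}_\sk(Y)$ is infinite, $Y$ must be a smooth curve with infinite automorphism group; using Corollary~\ref{Cor: extension of infinitesimal automorphism} and the classification of such curves, $Y$ is an open subset of $\p$ or of an elliptic curve, and the connected part of $\Bt(X'/Y)$ acts by translations or by $\mathbb{G}_m$-multiplication, leaving only $\emptyset$ and $Y$ as invariant closed subsets. The non-smooth locus $S \subseteq X'$, being $\Bt(X'/Y)$-invariant, has image in $Y$ either empty (forcing $X'$ smooth, a contradiction) or equal to $Y$. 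In the latter case $X'$ is everywhere non-reduced, and one must derive a contradiction with Nori-reducedness: concretely, by detecting a proper subgroup scheme $\Lambda_0 \subseteq \Lambda'$ (corresponding to a ``nilpotent direction'' of $X'$) and exhibiting a further non-trivial reduction of $X'$ through $\Lambda_0$, which contradicts the minimality of the Nori-reduction.

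The main obstacle is precisely this last step: ruling out an everywhere non-reduced Nori-reduced torsor under the infinite-$\Bt$ hypothesis. Making it rigorous would use the structure theory of infinitesimal Abelian group schemes (the canonical decomposition into multiplicative-type and unipotent parts, and the filtration by iterated Frobenius kernels) together with the explicit local description of such torsors (étale-locally $t^p = f$, with non-smoothness controlled by the vanishing of $df$). I expect to split into the $\mu_p$-subquotient case and the $\alpha_p$-subquotient case, and in each to exploit the infinite $\Bt$-symmetry of $Y$ to produce a global ``$p$-th root'' of the defining data that accounts for the non-reducedness, yielding the required further reduction and the desired contradiction.
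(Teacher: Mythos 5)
Your first step---that the Nori-reduction $X'$ inherits an infinite $\et(X'/Y)$---is sound and matches the paper, which gets it from Lemma~\ref{Lem: local in torsors with}: the map $H^1_{\mathrm{fl}}(Y,\underline{\Lambda'})\to H^1_{\mathrm{fl}}(Y,\underline{\Lambda})$ is injective and $\mathrm{Aut}_\sk(Y)$-equivariant, so by Lemma~\ref{Lem: B with stab} the stabilizer groups $\Bt(X'/Y)$ and $\Bt(X/Y)$ coincide. The second step, however, is where the whole content of the theorem lies, and there your proposal has a genuine gap. First, the dichotomy ``the image of the non-smooth locus in $Y$ is either empty or all of $Y$'' is false as stated: an infinite subgroup of $\mathrm{Aut}_\sk(Y)$ for $Y=\A$ or $\p$ can have nonempty finite invariant closed subsets (the group of multiplications by roots of unity fixes $0\in\A$, and the smooth $\mu_p$-torsors $x^p=t^i$ over $\Aa$ that actually occur are exactly of this multiplicative type), so you cannot conclude that a non-smooth Nori-reduced $X'$ is everywhere non-reduced without first classifying which infinite subgroups of $\mathrm{Aut}_\sk(Y)$ arise and what they fix. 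The paper uses your orbit argument only when $Y$ is an elliptic curve, where infinitely many translations genuinely rule out finite invariant sets. Second, and decisively, the step ``produce a global $p$-th root of the defining data and contradict Nori-reducedness'' is precisely the theorem; your sketch names the tools but not the mechanism, and you acknowledge as much.

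For comparison, the paper proves the statement by induction on $\mathrm{rk}(\Lambda)$. In the base case $\Lambda=\mu_p$ or $\alpha_p$ it computes $H^1_{\mathrm{fl}}(Y,\mu_p)$ and $H^1_{\mathrm{fl}}(Y,\alpha_p)$ explicitly for each of the four possible curves $Y$ via the sequences (\ref{eq: Kummar}) and (\ref{eq: A-R}), and determines by direct calculation which classes have infinite $\mathrm{Aut}_\sk(Y)$-stabilizer (for $\alpha_p$ over $\A$ only $f=\lambda t$ survives, giving a smooth torsor; over $\Aa$ only the trivial class survives; etc.). In the inductive step it quotients by a rank-one subgroup $\Xi\subseteq\Lambda$, and in the delicate case where $X\to \overline{X}:=X/\Xi$ is a trivial $\Xi$-torsor it proves a constancy lemma for the induced map $X_{\mathrm{red}}\to\mathbf{Hom}_\sk(\Lambda,\Xi)$---this is exactly where the infinitude of $\Bt(X/Y)$ is used---in order to manufacture a homomorphism $\theta:\Lambda\to\Xi$ whose kernel gives the further reduction. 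None of this machinery is present in your outline, so as written the proposal establishes only the easy half of the statement.
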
 
Assuming this theorem at the moment, by the reduction we have an identification  $(X'\times_\sk \Lambda)/\Lambda'=X$ (cf. Definition~\ref{Cor: extension of infinitesimal automorphism}) inducing a canonical isomorphism $X'=X_{\mathrm{red}}$ as follows
$$
\xymatrix{
X'\ar@{=}[rr]   && ((X'\times_Y \Lambda)/\Lambda')_{\mathrm{red}} \ar@{^{(}->}[d]\ar@{=}[rr]&& X_{\mathrm{red}}\ar@{^{(}->}[d]\\
                 &&  (X'\times_Y \Lambda)/\Lambda' \ar@{=}[rr] && X 
}
$$In other words, the above theorem can be restated as follows:

\begin{Thm}\label{Thm: restatement}
Suppose $\Lambda$ is an infinitesimal Abelian group scheme, $\nu: X\to Y$ is a $\Lambda$-torsor over an smooth, irreducible curve $Y$ with infinite $\Bt(X/Y)$. Then 
\begin{enumerate}[(i).]
\item $X'=X_{\mathrm{red}}$ is smooth over $\sk$;

\item there is a subgroup scheme $\Lambda'\subseteq \Lambda$ preserving $X'$ and with this $\Lambda'$-action  $X'$ is a $\Lambda'$-torsor over $Y$.
\end{enumerate}
By this description, it is clear that $\Bt(X'/Y)$ contains $\Bt(X/Y)$ as a subgroup.
\end{Thm}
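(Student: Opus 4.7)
The plan is to make the torsor explicit and then exploit the infinite equivariant symmetry to force it into a very particular shape. Since $\Lambda$ is infinitesimal and Abelian, the morphism $\nu$ is finite, flat and purely inseparable, so $X$ and $Y$ share the same underlying topological space and $X_{\mathrm{red}}\to Y$ is a finite bijective morphism.

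First I would reduce to the case where $\Lambda$ is of height one (killed by Frobenius). The general case follows by induction along the Frobenius filtration $\Lambda\supseteq\Lambda[F]\supseteq\Lambda[F^2]\supseteq\cdots$: the quotient torsor $X/\Lambda[F]\to Y$ inherits an infinite $\Bt$ because every element of $\et(X/Y)$ descends to an equivariant automorphism of $X/\Lambda[F]\to Y$. For height-one Abelian $\Lambda$ over an algebraically closed field of positive characteristic, $\Lambda$ is built from copies of $\alpha_p$ and $\mu_p$, and by Proposition~\ref{Prop: action by alphap} the free $\Lambda$-action on $X$ is described by commuting derivations on $\cO_X$. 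Locally on $Y=\Spec(B)$ the torsor can then be written explicitly by equations such as $t^p=f(x)$ for each $\alpha_p$-factor or $t^p=\lambda(x)$ with $\lambda\in B^*$ for each $\mu_p$-factor.

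The core step translates the condition $(\tau,\sigma)\in\et(X/Y)$ into a concrete constraint on the defining data. In the $\alpha_p$-case, $\Lambda$-equivariance forces $\tau^*(t)=t+q(x)$ for some $q\in B$, and compatibility with the defining equation yields $f(\sigma^*x)-f(x)=q(x)^p$; thus every $\sigma\in\Bt(X/Y)$ stabilises the class $[f]\in B/B^p$. An analogous condition holds in the $\mu_p$-case modulo $p$-th powers in $B^*$. Since $\Bt(X/Y)$ is infinite, $\mathrm{Aut}_\sk(Y)$ is infinite, so $Y$ is an open subset of $\p$ or of an elliptic curve, with $\mathrm{Aut}_\sk(Y)$ described explicitly by Propositions~\ref{Prop: classification of subgroup acting freely on E} and~\ref{Prop: classification of subgroup acting freely on U}. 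A case-by-case analysis then shows that an infinite stabiliser of $[f]$ forces $f$ (modulo $p$-th powers) into a very restrictive linear or multiplicative shape, and after a substitution $t\mapsto t-g(x)$ for an appropriate $g$ the torsor becomes Frobenius-like, so $X_{\mathrm{red}}$ is smooth. This proves (i).

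For (ii), define $\Lambda'\subseteq\Lambda$ as the subgroup scheme whose action preserves the nilradical $\sqrt{0}\subseteq\cO_X$; equivalently, $\mathrm{Lie}(\Lambda')\subseteq\mathrm{Lie}(\Lambda)$ is the sub-$p$-Lie algebra of derivations sending $\sqrt{0}$ into itself. By construction $\Lambda'$ acts on $X_{\mathrm{red}}$ and the restricted action is free. Theorem~\ref{Thm: quotient by gp} then exhibits $X_{\mathrm{red}}\to X_{\mathrm{red}}/\Lambda'$ as a $\Lambda'$-torsor, and a degree comparison (using $[k(X_{\mathrm{red}}):k(Y)]=|\Lambda'|$, which is visible from the explicit normalisation obtained above) shows that the induced map $X_{\mathrm{red}}/\Lambda'\to Y$ is a finite birational morphism between smooth curves, hence an isomorphism. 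The main obstacle is the algebraic classification step: enumerating all classes $[f]\in B/B^p$ (respectively $[\lambda]\in B^*/(B^*)^p$) with infinite stabiliser in $\mathrm{Aut}_\sk(Y)$ for each of $Y=\p,\A,\Aa$, or an elliptic curve, each of which requires its own careful analysis using the concrete descriptions of the automorphism groups.
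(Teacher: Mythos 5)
Your rank-one analysis over $\A$ and $\Aa$ --- computing $H^1_{\mathrm{fl}}(Y,\alpha_p)$ and $H^1_{\mathrm{fl}}(Y,\mu_p)$ explicitly and showing that an infinite stabiliser in $\mathrm{Aut}_\sk(Y)$ forces the class $[f]$ into a linear or monomial shape --- is exactly the paper's base case and is sound. The genuine gap is in the reduction to that base case. First, a height-one commutative infinitesimal group scheme is \emph{not} in general a product of copies of $\alpha_p$ and $\mu_p$ (the Frobenius kernel of the length-two Witt group is a height-one, order-$p^2$, non-split self-extension of $\alpha_p$), so the presentation ``one equation $t^p=f(x)$ per factor'' on which your core computation rests need not exist; at best $\Lambda$ is an iterated extension of rank-one pieces, and one is forced into an induction whose inductive step is to recover the conclusion for $X$ from the conclusion for $\overline{X}:=X/\Xi$ with $\Xi$ of rank one. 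That step is where essentially all of the remaining work lies, and your proposal does not address it: knowing that $\overline{X}$ is smooth (or descends) does not by itself control the $\Xi$-torsor $X\to\overline{X}$, whose base is no longer $Y$.

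Concretely, the dangerous case is when $\overline{X}$ is smooth and $X_{\mathrm{red}}\to\overline{X}$ has degree one, so that $X\cong\Xi\times_\sk X_{\mathrm{red}}$. Your candidate $\Lambda'$ (the stabiliser of the nilradical, equivalently of $X_{\mathrm{red}}\subseteq X$) is the right object, but to show it has order $[\sk(X_{\mathrm{red}}):\sk(Y)]$ --- equivalently, that it maps isomorphically onto $\Lambda/\Xi$ --- one must show that the map $\theta\colon \Lambda\times_\sk X_{\mathrm{red}}\to\Xi$ defined by $g\cdot x_0=\theta(g,x_0)\cdot v(x_0,g)$ is constant along $X_{\mathrm{red}}$, so that it becomes a homomorphism $\Lambda\to\Xi$ splitting the extension, with $\Lambda'=\mathrm{Ker}(\theta)$. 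This constancy is not formal: in the paper it is deduced from the fact that $\theta$ is invariant under the infinitely many automorphisms of $X_{\mathrm{red}}$ induced from $\et(X/Y)$, so the infiniteness of $\Bt(X/Y)$ is used a second, independent time precisely here. Your degree comparison in (ii) silently assumes the conclusion of this step. Finally, the case where $Y$ is an elliptic curve is not reachable by your local-equation method (the relevant $H^1$ has global contributions from $H^1(Y,\cO_Y)$ and $p$-torsion in $\mathrm{Pic}(Y)$); the paper instead observes that a reduced non-smooth $X$ would have a nonempty finite $\Bt(X/Y)$-invariant subset of $Y$ (the image of its singular locus), which is impossible since the stabiliser of a nonempty finite subset of an elliptic curve in $\mathrm{Aut}_\sk(E)$ is finite.
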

Now take $\Lambda=G^0, X=T_1, Y=M_1$, by Theorem~\ref{Thm: restatement}, we obtain a subgroup $\Lambda'\subseteq G^0$ acting on the smooth $\sk$-variety $T':=(T_1)_{\mathrm{red}}$ making $T'$ a $\Lambda'$-torsor over $M_1$. Namely $T_1$, as a $G^0$-torsor over $M_1$ reduces to $T'$.  But note our target is to prove that $T_1$, as a $G_1$-torsor over $U$,  reduces to $T'$. So we still have to show that the $G_1(\sk)$-action is preserved by the reduction $T_1\to T'$ at the same time. Namely, we need to prove that
\begin{itemize}
\item the $G_1(\sk)$-action on $T_1$ preserves $T'$;

\item the subgroup scheme $\Lambda'$ is preserved by the conjugation action of $G_1'(\sk)$.
\end{itemize}
As $G_1'(\sk)$ is an \'etale group scheme, it clearly preserves $T'=(T_1)_{\mathrm{red}}$. So it remains to prove the second condition.  
\begin{Lem}
 The subgroup scheme $\Lambda'\subseteq G^0$  is preserved by the (conjugation-)action of $G'_1(\sk)$. 
\end{Lem}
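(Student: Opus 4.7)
The plan is to give an intrinsic, functorial description of $\Lambda'\subseteq G^0$ inside the $G_1$-torsor $T_1$, and then verify directly that conjugation by elements of $H=G_1(\sk)$ respects this description.

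First I would argue that $\Lambda'$ is precisely the \emph{stabiliser} subgroup scheme of $T'=(T_1)_{\mathrm{red}}$ for the $G^0$-action on $T_1$. More concretely: for any $\sk$-algebra $R$ and any $R$-point $g\in G^0(R)$, one has $g\in \Lambda'(R)$ if and only if the automorphism $\mu_g\colon T_{1,R}\to T_{1,R}$ carries the closed subscheme $T'_R$ into itself. This follows because Theorem~\ref{Thm: restatement} applied to $(X,Y,\Lambda)=(T_1,M_1,G^0)$ identifies $T_1$ with the extension $(T'\times_{\sk}G^0)/\Lambda'$ of $T'$ along $\Lambda'\subseteq G^0$, under which the image of $T'$ is the $\Lambda'$-orbit of the identity section; any $g\in G^0$ not lying in $\Lambda'$ translates this image off itself.

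Second I would note that $H\subseteq G_1$ preserves $T'$ inside $T_1$: since $H$ is \'etale (indeed reduced), its action on $T_1$ is by $\sk$-scheme automorphisms, and any such automorphism carries $(T_1)_{\mathrm{red}}=T'$ to itself. Equivalently, for every $\sk$-algebra $R$ and every $h\in H$, the morphism $\mu_h\colon T_{1,R}\to T_{1,R}$ restricts to an automorphism of $T'_R$.

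The conjugation invariance then follows by direct computation. By definition of the semi-direct product $G_1=G^0\rtimes H$, for any $h\in H$ and any $g\in G^0(R)$ the action of $hgh^{-1}\in G^0(R)$ on $T_{1,R}$ is the composite $\mu_h\circ \mu_g\circ \mu_h^{-1}$. If $g\in \Lambda'(R)$, then for an $R$-valued point $t\in T'_R$ we have $\mu_h^{-1}(t)\in T'_R$ by the second step, then $\mu_g(\mu_h^{-1}(t))\in T'_R$ by the first step, and finally $\mu_h(\mu_g(\mu_h^{-1}(t)))\in T'_R$ again by the second step. Hence $hgh^{-1}$ preserves $T'_R$, so $hgh^{-1}\in \Lambda'(R)$ by the intrinsic characterisation. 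Varying $R$ gives $h\Lambda' h^{-1}\subseteq \Lambda'$, and symmetrically the reverse inclusion, as required. The only real subtlety is the first step: because $\Lambda'\subseteq G^0$ is infinitesimal, its geometric points are useless, and the stabiliser description must be formulated functorially; once it is in place, the conjugation calculation is purely formal.
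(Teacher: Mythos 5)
Your proposal is correct and follows essentially the same route as the paper: both arguments characterise $\Lambda'$ functorially as the stabiliser of the reduced subscheme $T'=(T_1)_{\mathrm{red}}$ inside the torsor (using that $T'$ is a $\Lambda'$-torsor, so any point of $G^0$ outside $\Lambda'$ translates $T'$ off itself), observe that the \'etale group $G_1(\sk)$ preserves $T'$ because automorphisms preserve the reduced subscheme, and then conclude by the same formal conjugation computation on $R$-points. The paper writes this computation as an equality of point sets $g\Lambda'(W)g^{-1}=\Lambda'(W)$ rather than chasing a single point through $\mu_h\circ\mu_g\circ\mu_h^{-1}$, but the content is identical.
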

\begin{proof}
Note the subgroup scheme $\Lambda'$, is uniquely determined by $X'$ as its normalizer in $G_1'$. In fact, as $T'$ is a $\Lambda'$-torsor over $M_1$,  for any  $\sk$-scheme $W$ with $X'(W)\neq \emptyset$, we have $$\Lambda'(W)=\{h\in G_1'(W)| h \cdot X'(W)=X'(W)\}\subseteq G_1'(W).$$ Now giving an arbitrary $g\in G_1'(\sk)$, the map $g: X\to X$ is an isomorphism of $G'_1$-torsors preserving $X'$ by construction. So
\begin{align*}
 g\Lambda'(W)g^{-1}&=\{h\in G_1'(W)| g^{-1}hg\cdot X'(W)=X'(W)\}\\
                   &=\{h\in G_1'(W)| h g\cdot X'(W)=g\cdot X'(W)\}\\
                   &=\{h\in G_1'(W)| h \cdot X'(W)=X'(W)\}=\Lambda'(W)
\end{align*} Here note as $g(X')=X'$ we have $g\cdot X'(W)=X'(W)$, hence $g\Lambda'g^{-1}=\Lambda'$ as desired.
\end{proof}
In other words, the smooth irreducible curve $T'$ is a $G'=\Lambda'\rtimes G_1(\sk)$-torsor over $U$ which the $G_1$-torsor $T_1$ reduces to.    Note from the construction $T'=(T_1)_{\mathrm{red}}$, any $(\tau,\sigma)\in \et(T_1/U)$ must restricts to an element in $\et(T'/U)$. So $\Bt(T'/U)$ contains $\Bt(T_1/U)$ as a subgroup and hence  $\et(T'/U)$ is infinite (cf. Lemma~\ref{Lem: local infinite local}). Finally, the next lemma completes the proof of the last assertion of Theorem~\ref{Thm: main on torsors}.
\begin{Lem}\label{Lem: extension the action}
Let  $T'\subseteq C'$ be the normal compactification of $T'$. Then the $G'$-action on $T'$ extends to $C'$.
\end{Lem}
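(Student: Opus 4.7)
The plan is to reduce the lemma to a direct application of Corollary~\ref{Cor: extension of infinitesimal automorphism}. If $T'$ is already projective, then $C'=T'$ and there is nothing to do, so I will assume $T'$ is not projective; since $T'\to U$ is finite, this means $U$ is affine as well.

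The first and main step is to show that $T'\cong \A$ or $\Aa$. For this, I would use that the projection $\et(T'/U)\to \mathrm{Aut}_\sk(T')$, $(\tau,\sigma)\mapsto \tau$, has image $H$ which is infinite: its kernel sits inside the finite group $\mathrm{Hom}_\sk^{G'}(T',G')$, while $\et(T'/U)$ is infinite by the construction preceding the lemma. Every automorphism of the smooth curve $T'$ extends uniquely to $C'$ and stabilizes the nonempty finite boundary $D:=C'\setminus T'$, giving an injection $H\hookrightarrow \mathrm{Aut}_\sk(C',D)$. If $g(C')\ge 2$ then $\mathrm{Aut}_\sk(C')$ itself is finite; while if $g(C')=1$ the identity component of $\mathrm{Aut}_\sk(C')$ is the group of translations of $C'$, and the translations stabilizing a nonempty finite set $D$ form a finite subgroup. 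Hence $C'=\p$, and the infinite stabilizer of a finite subset $D\subseteq \p$ inside $\mathrm{PGL}_2$ forces $|D|\le 2$, giving $T'\cong \A$ (when $|D|=1$) or $T'\cong \Aa$ (when $|D|=2$).

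With $T'\in\{\A,\Aa\}$ in hand, I would apply Corollary~\ref{Cor: extension of infinitesimal automorphism} to the finite group scheme $G'=\Lambda'\rtimes G_1(\sk)$ acting on $T'$. The action is faithful (in fact free, as $T'\to U$ is a $G'$-torsor), and it commutes with the infinite subgroup $H\subseteq \mathrm{Aut}_\sk(T')$ by the $G'$-equivariance of elements of $\et(T'/U)$. The corollary then yields $G'\subseteq \mathbf{LAut}_\sk(T')$ together with the assertion that the $G'$-action on $T'$ extends to the compactification $T'\subseteq \p=C'$, which is precisely what we need. The only nontrivial step is the identification $T'\in\{\A,\Aa\}$, and this is elementary; all the real work has been absorbed into Corollary~\ref{Cor: extension of infinitesimal automorphism}, so I do not anticipate any serious obstacle.
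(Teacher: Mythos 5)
Your proposal is correct and follows essentially the same route as the paper: both arguments observe that the image $H$ of $\et(T'/U)\to\mathrm{Aut}_\sk(T')$ is infinite and commutes with the $G'$-action, deduce that $T'$ must be an elliptic curve, $\p$, $\A$ or $\Aa$ (the projective cases being trivial), and then invoke Corollary~\ref{Cor: extension of infinitesimal automorphism} in the affine cases. Your only slip is cosmetic: the kernel of $(\tau,\sigma)\mapsto\tau$ is in fact trivial (since $\nu$ is surjective, $\tau=\mathrm{id}$ forces $\sigma=\mathrm{id}$) rather than a subgroup of $\mathrm{Hom}_\sk^{G'}(T',G')$, which is the kernel of the other projection $\nu_\sharp$ — but this does not affect the conclusion that $H$ is infinite.
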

\begin{proof}
Note that in this case, we have an infinite subgroup $$H:=\mathrm{Im}(p_1: \et(T'/U)\to \mathrm{Aut}_\sk(T'),\,\,\, (\tau,\sigma)\mapsto \tau)\subseteq \mathrm{Aut}_\sk(T')$$ commutes with the $G'$-action on $T'$ by definition. In particular, $\mathrm{Aut}_\sk(T')$ is infinite. As $T'$ is smooth irreducible, $T'$ is one of the four curves: an elliptic curve, $\p, \A$ or $\Aa$. There is nothing to prove when $T'$ is projective. So it remains to prove when $T'=\A$ or $\Aa$. Note in this case, with the group $H$ in mind, our conclusion follows from Corollary~\ref{Cor: extension of infinitesimal automorphism} immediately.
\end{proof}

In conclusion, assuming Theorem~\ref{Thm: main on reduction of torsors}, we complete the proof of Theorem~\ref{Thm: main on torsors}.

\subsection{Infinitesimal torsors with infinite equivariant automorphism group}\label{Subsec: proof of a key theorem in torsor}
We now prove Theorem~\ref{Thm: main on reduction of torsors}.  Recall that $\Lambda$ is an infinitesimal group scheme over $\sk$, $\nu: X\to Y$ is a $\Lambda$-torsor with infinite $\et(X/Y)$ (or equivalently $\Bt(X/Y)$) and $Y$ is a smooth irreducible curve over $\sk$. Note as $\Bt(X/Y)$ is a subgroup of $\mathrm{Aut}_\sk(Y)$, this curve $Y$ can only be one of the following: an elliptic curve, $\p,\A$ and $\Aa$.

Since $\Lambda$ is finite and commutative over $\sk$ this time, the (fppf-)$\Lambda$-torsors of is classified by the Abelian group $H^1_{\mathrm{fl}}(Y,\Lambda)$:
$$\{\text{(fppf-)$\Lambda$-torsor over $Y$}\}\stackrel{1-1}{\Longleftrightarrow} H^1_{\mathrm{fl}}(Y,\underline{\Lambda}), \,\,\, X\mapsto o_X\in H^1_{\mathrm{fl}}(Y,\underline{\Lambda}).$$
Note that there is a natural $\mathrm{Aut}_\sk(Y)$-action on the group $H^1_{\mathrm{fl}}(Y,\underline{\Lambda})$,  we can then reinterpret $\Bt(X/Y)$ as follows. 
\begin{Lem}\label{Lem: B with stab}
With respect to the natural $\mathrm{Aut}_\sk(Y)$-action, we have $$\Bt(X/Y)=\mathrm{Stab}_{o_{X}}\subseteq \mathrm{Aut}_\sk(Y).$$
\end{Lem}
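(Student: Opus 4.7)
The plan is to unwind both sides and observe that they are two descriptions of the same condition. By Definition~\ref{Def: equivariant automorphism}(3), an element $\sigma \in \mathrm{Aut}_\sk(Y)$ lies in $\Bt(X/Y) = \mathrm{Im}(\nu_\sharp)$ precisely when there exists $\tau \in \mathrm{Aut}_\sk(X)$ such that the induced morphism $(\tau,\nu)\colon X \to X\times_{Y,\sigma}Y$ is an isomorphism of $\Lambda$-torsors over $Y$. Equivalently, $\sigma \in \Bt(X/Y)$ iff the pulled-back torsor $\sigma^{*}X := X\times_{Y,\sigma} Y$ is isomorphic to $X$ as a $\Lambda$-torsor over $Y$.

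On the cohomological side, the natural $\mathrm{Aut}_\sk(Y)$-action on $H^1_{\mathrm{fl}}(Y,\underline{\Lambda})$ is defined by pullback of torsors, so (with our sign convention) $\sigma \cdot o_X = o_{\sigma^{*}X}$. Consequently, $\sigma$ stabilizes $o_X$ iff $\sigma^{*}X$ and $X$ represent the same cohomology class, which under the bijection between (fppf-)$\Lambda$-torsors and $H^1_{\mathrm{fl}}(Y,\underline{\Lambda})$ is exactly the existence of an isomorphism of $\Lambda$-torsors $X \cong \sigma^{*}X$.

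Putting these two equivalences together yields $\Bt(X/Y) = \mathrm{Stab}_{o_X}$. Concretely, the inclusion $\Bt(X/Y) \subseteq \mathrm{Stab}_{o_X}$ is immediate: a witness $(\tau,\sigma) \in \et(X/Y)$ supplies the required torsor isomorphism $X \cong \sigma^{*}X$ directly. Conversely, given a $\Lambda$-torsor isomorphism $\phi\colon X \xrightarrow{\sim} \sigma^{*}X$, one sets $\tau := p_1 \circ \phi \in \mathrm{Aut}_\sk(X)$, and a routine verification from the universal property of the fiber product together with $\Lambda$-equivariance shows that $(\tau,\sigma) \in \et(X/Y)$. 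The argument is essentially formal — the only bookkeeping is making sure that the chosen direction of the pullback action on $H^1_{\mathrm{fl}}$ matches the direction of the fibre product $X\times_{Y,\sigma}Y$, so I expect no substantive obstacle.
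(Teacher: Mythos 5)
Your proposal is correct and follows essentially the same route as the paper's proof: both identify membership in $\mathrm{Stab}_{o_X}$ with the existence of a $\Lambda$-torsor isomorphism $\theta\colon X\simeq X\times_{Y,\sigma}Y$ and then recover the witness $\tau=p_1\circ\theta$ to conclude $(\tau,\sigma)\in\et(X/Y)$. No substantive difference.
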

\begin{proof}
By definition, $\sigma\in  \mathrm{Aut}_\sk(Y)$ is in the stabilizer of $o_X$ if and only if there is an isomorphism $\theta: X\simeq X\times_{Y,\sigma} Y$ commutative with the $\Lambda$-action. The existence of such $\theta$ is the same as the existence of  $\tau:=p_1\circ \theta: X\to X$ making $(\tau,\sigma)\in \et(X/Y)$ by definition.   We are done.
\end{proof}

Next, recall that for a subgroup scheme $\Lambda'\subseteq \Lambda$, there is an $\mathrm{Aut}_\sk(Y)$-equivariant homomorphism
$$\Phi_{\Lambda,\Lambda'}: H^1_{\mathrm{fl}}(Y,\underline{\Lambda'})\to H^1_{\mathrm{fl}}(Y,\underline{\Lambda}).$$ Then a reduction of structure group on a $\Lambda$-torsor $X$ to $\Lambda'$ (cf. Definition~\ref{Def: reduction of structure group}) is nothing but a $\Lambda'$-torosr $X'$ on $Y$ satisfying: $o_X=\Phi_{\Lambda,\Lambda'}(o_{X'})$. 

\begin{Lem}\label{Lem: local in torsors with}
Let $\Lambda'\subseteq \Lambda$ be a subgroup scheme, then  the natural map $\Phi_{\Lambda,\Lambda'}: H^1_{\mathrm{fl}}(Y,\underline{\Lambda'})\to H^1_{\mathrm{fl}}(Y,\underline{\Lambda})$ is injective. In particular, suppose $X$ descends to $X'$, then $\Bt(X'/Y)= \Bt(X/Y)$.
\end{Lem}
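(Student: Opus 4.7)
My plan is to derive the injectivity of $\Phi_{\Lambda,\Lambda'}$ from the long exact sequence in fppf cohomology attached to the short exact sequence
$$1\to \Lambda'\to \Lambda\to \Lambda/\Lambda'\to 1$$
of group schemes over $\sk$. This sequence makes sense because $\Lambda$ is assumed to be abelian: $\Lambda'$ is then automatically normal and the quotient $\Lambda/\Lambda'$ exists as a further infinitesimal abelian group scheme. The relevant portion of the long exact sequence is
$$H^0_{\mathrm{fl}}(Y,\underline{\Lambda/\Lambda'})\to H^1_{\mathrm{fl}}(Y,\underline{\Lambda'})\stackrel{\Phi_{\Lambda,\Lambda'}}{\longrightarrow} H^1_{\mathrm{fl}}(Y,\underline{\Lambda}),$$
so everything reduces to showing that the leftmost term is trivial.

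For this vanishing I would invoke the following elementary observation: if $\Gamma$ is an infinitesimal group scheme over the algebraically closed field $\sk$ and $Z$ is any reduced $\sk$-scheme, then $H^0_{\mathrm{fl}}(Z,\underline{\Gamma})=\mathrm{Hom}_\sk(Z,\Gamma)$ is the trivial group. Indeed, $\Gamma$ is set-theoretically the single point $\Spec\sk$ with local artinian structure sheaf of residue field $\sk$, so any $\sk$-morphism out of a reduced $Z$ must factor through $\Gamma_{\mathrm{red}}=\Spec\sk$, i.e.\ through the identity section. Applied to $\Gamma=\Lambda/\Lambda'$ and $Z=Y$ (smooth, hence reduced) this gives the desired vanishing, completing the proof of injectivity.

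For the ``in particular'' statement, the point is that $\Phi_{\Lambda,\Lambda'}$ is $\mathrm{Aut}_\sk(Y)$-equivariant by naturality of $H^1_{\mathrm{fl}}(Y,-)$ in the base $Y$. Combined with the injectivity just established and Lemma~\ref{Lem: B with stab}, an element $\sigma\in\mathrm{Aut}_\sk(Y)$ fixes $o_X=\Phi_{\Lambda,\Lambda'}(o_{X'})$ if and only if it fixes $o_{X'}$, so the two stabilizers coincide and $\Bt(X/Y)=\Bt(X'/Y)$. There is no real obstacle here; the only place one must be slightly careful is that forming $\Lambda/\Lambda'$ and running the six-term sequence genuinely needs $\Lambda$ abelian — this is the standing hypothesis in the section, but it is what makes the otherwise purely formal argument go through.
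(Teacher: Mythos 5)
Your proposal is correct and follows essentially the same route as the paper: the fppf long exact sequence for $0\to\underline{\Lambda'}\to\underline{\Lambda}\to\underline{\Lambda/\Lambda'}\to 0$, the vanishing of $\Lambda/\Lambda'(Y)$ because an infinitesimal group scheme has no nontrivial points on a reduced scheme, and $\mathrm{Aut}_\sk(Y)$-equivariance of $\Phi_{\Lambda,\Lambda'}$ for the ``in particular'' part. Your added remarks on why abelianness is needed and why the $H^0$ term vanishes are just slightly more explicit versions of what the paper states.
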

\begin{proof}
We have an exact sequence in the fppf topology:
$$0\to \underline{\Lambda'} \to \underline{\Lambda}\to \underline{\Lambda/\Lambda'}\to 0.$$ And we therefore obtain the following long exact sequence:
$$\Lambda/\Lambda'(Y)\to H^1_{\mathrm{fl}}(Y,\underline{\Lambda'})\stackrel{\Phi_{\Lambda,\Lambda'}}{\longrightarrow} H^1_{\mathrm{fl}}(Y,\underline{\Lambda}).$$ As $\Lambda/\Lambda'$ is finite and $Y$ is reduced, we have  $\Lambda/\Lambda'(Y)$ is trivial since $\Lambda$ is infinitesimal.  The in particular part follows from the previous lemma since $\phi_{\Lambda,\Lambda'}$ is equivariant with respect to the $\mathrm{Aut}_\sk(Y)$-action.
\end{proof}

\begin{proof}[Proof of Theorem~\ref{Thm: main on reduction of torsors}]
We shall prove by induction on $r(\Lambda):=\mathrm{rk}(\Lambda)$.

\medskip
{\bf Starting stage: $r(\Lambda)=1$.}

In this case we have $\Lambda=\mu_p$ or $\alpha_p$. 
Using Kummar theory, we have the following two exact sequences:
\begin{equation}\label{eq: Kummar}
H^0(Y,\cO_Y)^*\stackrel{^p}{\to} H^0(Y,\cO_Y)^*\to H_{\mathrm{fl}}^1(Y,\mu_p)\to \mathbf{Pic}(Y)\stackrel{p}{\to} \mathbf{Pic}(Y). 
\end{equation}
\begin{equation}\label{eq: A-R}
H^0(Y,\cO_Y)\stackrel{  F^*_{Y/\sk}}{\to} H^0(Y,\cO_Y)\to H_{\mathrm{fl}}^1(Y,\alpha_p)\to H^1(Y,\cO_Y)\stackrel{ F^*_{Y/\sk}}{\longrightarrow} H^1(Y,\cO_Y).
\end{equation}
\begin{enumerate}[(A.)]
\item When $Y=E$ is an elliptic curve. Let $X_0:=X_{\mathrm{red}}$. Then the map $\rho: X_0\to E$ has two possibilities:
either $\deg(\rho)=1$ or $\deg(\rho)=p$ since $\rho$ is a homeomorphism. 

If $\deg(\rho)=1$,  $\rho: X_0\to E$ is an isomorphism and thus the torsor $\nu:X\to E$ admits a section. So $X$ is trivial and then reduces to the trivial torsor $X'=E$ under the identity group $\Lambda'=e\subseteq \Lambda$.

If $\deg(\rho)=p$, $X=X_0$ is reduced by flatness. Suppose $X$ is not smooth and $x_1,x_2,\cdots, x_n$ are its singularities, then the finite set $y_i=\nu(x_i)\in E$ is preserved by any $\sigma\in \Bt(X/Y)$. Hence $\Bt(X/Y)$ must be finite, a contradiction. 

\item When $Y=\p$. We have $H_{\mathrm{fl}}^1(Y,\mu_p)=H_{\mathrm{fl}}^1(Y,\alpha_p)=0$ by (\ref{eq: Kummar}) and (\ref{eq: A-R}). We are done. 

\item When $Y=\A=\Spec(\sk[t])$. If $\Lambda=\mu_p$, we have $H_{\mathrm{fl}}^1(Y,\mu_p)=0$ by (\ref{eq: Kummar}) and we are done. If $\Lambda=\alpha_p$,  by (\ref{eq: A-R}),  $H^1_{\mathrm{fl}}(Y,\alpha_p)=\sk[t]/\sk[t^p]$. By construction, for each $\overline{f}(t)\in \sk[t]/\sk[t^p]$, the torsor associated to it   is nothing but $X=\Spec(\sk[t,x]/(x^p-f(t)))$.   Now, the automorphism group $\mathrm{Aut}_\sk(\A)=\{\phi_{a,b}: t\mapsto at+b, a\in \sk^*,b\in \sk\}$ acts on $\sk[t]/\sk[t^p] $ by mapping $$ \phi_{a,b}\cdot  \overline{f}(t)= \overline{f}(at+b).$$  So $\phi_{a,b}$ stabilize $\overline{f}(t) \in \sk[t]/(\sk[t])^p$ if and only if $f(t)-f(at+b)\in \sk[t^p]$. An easy calculation shows that only $\overline{f}(t)$ with $f=\lambda t, \lambda\in \sk$ has an infinite stabilizer group. In other words, except for the trivial torsor ($f=0$), other $\alpha_p$-torsors $X$ with infinite $\Bt(X/\A)$ are given as $X:=\Spec(\sk[x,t]/(x^p-\lambda t), \lambda\in \sk^*$, which is smooth over $\sk$.

\item When $Y=\Aa=\Spec(\sk[t,t^{-1}])$. If $\Lambda=\mu_p$, we have by (\ref{eq: Kummar}) $$H^1_{\mathrm{fl}}(Y,\mu_p)=\{1,t,t^2, \cdots, t^{p-1}\}.$$ Except for the trivial torsor represented by $1$, for each $t^i,i=1,\cdots p-1$, the associated torsor   $X:=\Spec(\sk[t,t^{-1},x]/(x^p-t^i))$ is smooth and connected. 

If  $\Lambda=\alpha_p$, we have by (\ref{eq: A-R}), $H^1_{\mathrm{fl}}(Y,\alpha_p)$ is the space  $\sk[t,t^{-1}] /\sk[t^p,t^{-p}]$. Following the exact sequence (\ref{Ext: aut Gm}), the stabilizer of $\overline{f}(t)\in \sk[t,t^{-1}] /\sk[t^p,t^{-p}]$ is infinite if and only if $\overline{f}(t)$ is fixed by infinitely many $\lambda\in \sk^*$.  In other words, $f(\lambda t)-f(t)\in \sk[t^p,t^{-p}]$ for infinitely many $\lambda\in \sk^*$.  An easy calculation shows that only $\overline{f}=0$ satisfying this property and we are done.
\end{enumerate}

\medskip
{\bf Inductive process.}

Now assume $r(\Lambda)\ge 2$. We can first choose an arbitrary  subgroup scheme $ \Xi\subseteq \Lambda$ of rank $1$. Denote by $\overline{X}:= X/\Xi$, the $\Lambda/\Xi$-torsor on $Y$. By inductive assumption, we have 
\begin{itemize}
\item either $\overline{X}$ descends to some $\Lambda'/\Xi$-torsor for a proper subgroup scheme $\Lambda'\subseteq \Lambda$ containing $\Xi$;

\item or  $\overline{X}$ is smooth connected.
\end{itemize}
In the first case,  we have the following commutative diagram of exact sequence:
$$
\xymatrix{
0\ar[r] & \Xi \ar[r]  \ar@{=}[d] & \Lambda' \ar[r] \ar@{^{(}->}[d] & \Lambda'/\Xi \ar[d]\ar[r] & 0 \\
0\ar[r] & \Xi \ar[r]         & \Lambda  \ar[r]                 &\Lambda/\Xi \ar[r] &0
}
$$
Hence, we derive the following commutative diagram.
$$
\xymatrix{
0\ar[r] & H^1(Y, \Xi) \ar[r]  \ar@{=}[d] & H^1(Y, \Lambda') \ar[r] \ar@{^{(}->}[d] &H^1(Y, \Lambda'/\Xi) \ar@{^{(}->}[d]\ar[r] & H^2(Y,\Xi) \ar@{=}[d]\\
0\ar[r] & H^1(Y, \Xi) \ar[r]         & H^1(Y, \Lambda)  \ar[r]^\pi                 &H^1(Y, \Lambda/\Xi) \ar[r] & H^2(Y,\Xi)
}
$$Here the middle vertical arrows are injective by Lemma~\ref{Lem: local in torsors with}. 
By diagram chasing, it is not difficult to see that $o_X\in  H^1(Y, \Lambda)$ this time descends to some $o_{X'}\in H^1(Y, \Lambda')$ since $o_{\overline{X}}=\pi(o_X)$ is so.  Then we are done by the inductive assumption as $r(\Lambda')<r(\Lambda)$.

The next case, where $\overline{X}$ is smooth, we consider the map $\rho: X_0:=X_{\mathrm{red}}\to \overline{X}$. We have two possibilities: $\deg(\rho)=1$ or $p$ since $\rho$ is a homeomorphism of degree at most $p$.

First assume $\deg(\rho)=1$. As $\overline{X}$ is smooth, $\rho$ is an isomorphism. In other words, the $\Xi$-torsor $X\to \overline{X}$ admits a section and hence is trivial. So, the $\Xi$-action map $\iota: \Xi\times_\sk X_0\to X$ is an isomorphism. Namely, we can identify $X=\Xi\times_\sk X_0$ so that the subgroup $\Xi$ acts only on the first factor by transaltions. Then the group action of $\Lambda$ induces gives a natural morphism $$\theta: \Lambda\times_\sk X_0 \to X\stackrel{\iota^{-1}}{\to} \Xi\times_\sk X_0\stackrel{p_1}{\to} \Xi,$$  inducing a map $$X_0\mapsto \underline{\mathrm{Hom}}_\sk(\Lambda,\Xi)$$
\begin{Lem}
The map $\theta$ is constant.
\end{Lem}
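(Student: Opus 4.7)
The plan is to unwind what ``$\theta$ is constant'' means concretely, derive a cocycle identity, and then use that $\Xi$ is infinitesimal together with the special structure of $\theta$ on the $\Xi$-part.  By construction, writing the $\Lambda$-action on $X = \Xi\times_\sk X_0$ as
$$\lambda\cdot(\xi,x_0)=(\xi+\theta(\lambda,x_0),\,g(\lambda,x_0)),$$
with $g(\lambda,x_0)=\overline{\lambda}\cdot x_0$ determined by the $\Gamma:=\Lambda/\Xi$-torsor structure on $X_0=\overline{X}$, the $\Lambda$-action being a group action forces the cocycle relation
$$\theta(\lambda_1\lambda_2,x_0)=\theta(\lambda_2,x_0)+\theta(\lambda_1,\overline{\lambda_2}\cdot x_0),$$
and the fact that the $\Xi$-action extends the translation action forces the normalisation $\theta(\xi,x_0)=\xi$ for $\xi\in\Xi$.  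In particular, setting $\lambda_2=\xi\in\Xi$ gives the $\Xi$-equivariance $\theta(\xi\lambda,x_0)=\xi+\theta(\lambda,x_0)$.

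The adjoint of $\theta$ is a morphism of $\sk$-schemes $\alpha\colon X_0\to\mathcal{H}:=\underline{\mathrm{Hom}}_\sk(\Lambda,\Xi)$, and what we want to prove is that $\alpha$ is a constant morphism.  The normalisation $\theta|_{\Xi\times X_0}=p_1$ says $\alpha$ factors through the closed subscheme $\mathcal{H}_{\mathrm{fix}}\subseteq\mathcal{H}$ of scheme morphisms $\Lambda\to\Xi$ whose restriction to $\Xi$ is $\mathrm{id}_\Xi$.  The key point is that the image of a $\sk$-point $x_0$ under $\alpha$ records, together with $g$, the structure of a $\Lambda$-torsor extending $\Xi$; and conversely, once $\alpha$ is shown constant with value some $\tilde\theta\colon\Lambda\to\Xi$, the cocycle identity forces $\tilde\theta$ to be a group homomorphism which is the identity on $\Xi$, so it is a retraction, splitting $\Lambda=\Xi\oplus\Lambda'$ with $\Lambda':=\ker\tilde\theta$, and exhibiting the reduction of $X$ to the $\Lambda'$-torsor $X_0$ sought in Theorem~\ref{Thm: restatement}.

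To establish constancy of $\alpha$, my plan is to exploit that $\Xi$ is infinitesimal together with reducedness of $X_0$.  First I would observe that the composition $\{e\}\times X_0\hookrightarrow\Lambda\times X_0\xrightarrow{\theta}\Xi$ is the constant map to $e$, because $X_0$ is reduced and $\Xi$ is infinitesimal so any morphism from $X_0$ to $\Xi$ factors through $\Xi_{\mathrm{red}}=\{e\}$.  Using the $\Xi$-equivariance of $\theta$, the same reasoning applies on each ``layer'' obtained by restricting along the filtration of $\Lambda$ by subgroup schemes containing $\Xi$; this forces any ``$x_0$-dependence'' of $\theta$ to take values in $\Xi$-valued functions on $X_0$ that factor through $\Xi_{\mathrm{red}}=\{e\}$, hence vanish.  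More concretely, I would consider the difference $\delta(\lambda,x_0):=\theta(\lambda,x_0)-\theta(\lambda,x_0^0)$ for a chosen base point $x_0^0\in X_0(\sk)$; this is a $\Xi$-valued function on $\Lambda\times X_0$ that restricts to the constant map $0$ both on $\Lambda\times\{x_0^0\}$ (by construction) and on $\Xi\times X_0$ (by the normalisation).  The cocycle condition pushes $\delta$ through the induction on the rank of $\Lambda/\Xi$, reducing to the observation that $\delta$ restricted to any $\Xi$-coset in $\Lambda$, viewed as an $X_0$-family of morphisms into $\Xi$, must be constant in $x_0$ because $\Xi$ is infinitesimal and $X_0$ is reduced.

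The main obstacle will be carrying out this last step cleanly in the presence of nilpotents: the subtlety is that one cannot simply say ``any morphism from a reduced scheme to $\Xi$ is constant'', because $\theta$ depends on the $\Lambda$-variable which carries nilpotents.  The right way to make the argument precise, I expect, is to work with the Hopf-algebra coaction $\cO_X\to\cO_\Lambda\otimes\cO_X$ and to track the component of the coaction landing in $\mathfrak{m}_\Lambda\otimes\cO_{X_0}\subseteq\cO_\Lambda\otimes\cO_{X_0}$: the cocycle condition together with the normalisation forces this component to be constant along $X_0$, so that $\theta$ is pulled back from a morphism $\tilde\theta\colon\Lambda\to\Xi$ along the first projection $\Lambda\times X_0\to\Lambda$, which is exactly the statement $\alpha$ is constant.
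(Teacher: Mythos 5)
Your argument has a fatal gap: it never uses the standing hypothesis that $\et(X/Y)$ (equivalently $\Bt(X/Y)$) is infinite, and that hypothesis is the entire content of the lemma. The paper's proof is completely different: it observes that the adjoint morphism $X_0\to\mathbf{Hom}_\sk(\Lambda,\Xi)$ is invariant under the infinitely many automorphisms $\tau|_{X_0}$ of the curve $X_0$ induced by elements of $\et(X/Y)$, and a non-constant morphism from a curve cannot admit infinitely many such symmetries. Without this input the statement is simply false, so no manipulation of the cocycle identity, the normalisation $\theta|_{\Xi\times X_0}=p_1$, the infinitesimality of $\Xi$ and the reducedness of $X_0$ can establish it. Concretely, take $p>2$, $Y=\A=\Spec(\sk[t])$, $\Lambda=\alpha_p\times\alpha_p$ acting on $X:=\Spec\bigl(\sk[t,x,y]/(x^p-t^2,\,y^p-t)\bigr)$ by $(\alpha,\beta)\cdot(x,y)=(x+\alpha,y+\beta)$, and $\Xi=\alpha_p\times 0$. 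Then $\overline{X}=X/\Xi=\Spec(\sk[y])$ is smooth, $X\to\overline{X}$ is the trivial $\Xi$-torsor with $X_0=X_{\mathrm{red}}=\{x=y^2\}$, and your cocycle relation and normalisation both hold; nevertheless
$$\theta\bigl((\alpha,\beta),y_0\bigr)=y_0^2+\alpha-(y_0+\beta)^2=\alpha-2y_0\beta-\beta^2$$
visibly depends on $y_0$. (Consistently, the class $(t^2,t)\in\bigl(\sk[t]/\sk[t^p]\bigr)^2$ has trivial stabiliser in $\mathrm{Aut}_\sk(\A)$, so $\Bt(X/Y)$ is finite here; this contradicts your proof, not the theorem.)

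The step of your sketch that breaks is precisely the one you flagged as delicate: the claim that the cocycle condition and the normalisation force the component of the coaction in $\mathfrak{m}_\Lambda\otimes\cO_{X_0}$ to be constant along $X_0$. They do not. Equivalently, the target $\mathcal{H}_{\mathrm{fix}}\subseteq\underline{\mathrm{Hom}}_\sk(\Lambda,\Xi)$ of the adjoint map is not infinitesimal --- in the example above it contains the whole affine line of maps $(\alpha,\beta)\mapsto\alpha-2c\beta-\beta^2$, $c\in\sk$ --- so reducedness of $X_0$ imposes no constraint, and the function $\delta(\lambda,x_0)$ you introduce can vanish on $\Lambda\times\{x_0^0\}$, on $\Xi\times X_0$ and on $\Lambda_{\mathrm{red}}\times X_0$ while being nonzero (here $\delta=-2(y_0-y_0^0)\beta$). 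The first part of your sketch --- that once constancy is known, $\widetilde{\theta}$ is a retraction splitting $\Lambda=\Xi\oplus\Lambda'$ and yielding the desired reduction --- does agree with what the paper does afterwards; but the constancy itself must be extracted from the infiniteness of $\et(X/Y)$, as in the paper's proof.
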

\begin{proof}
Since both $\Lambda,\Xi$ are  proper over $\sk$, $\underline{\mathrm{Hom}}_\sk(\Lambda,\Xi)$ is represented by a scheme $\mathbf{Hom}_\sk(\Lambda,\Xi)$ locally of finite type over $\sk$ (cf. \cite[\S~5.6.2]{FGA-explained}). By construction, for any $W\in\mathbf{Sch}/\sk$ and $W$-points $\lambda\in \Lambda(W), x_0\in X_0(W)$, its image $\theta(\lambda,x_0)=\xi\in \Xi(W)$ is such that there is a $x_0'\in X_0(W)$ with $\lambda\cdot x_0=\xi\cdot x_0'$. Now for any $(\tau,\sigma)\in\et(X/Y)$, as $\tau$ automatically preserves $X_0$, we have $$\lambda \cdot (\tau(x_0))=\tau(\lambda\cdot x_0)=\tau(\xi\cdot x_0')=\xi\cdot \tau(x_0').$$ In other words, $\theta(\lambda,\tau(x_0)=\theta(\lambda,x_0)$. So the morphism $X_0\to \mathbf{Hom}_\sk(\Lambda,\Xi)$ is invariant under the isomorphism $\tau|_{X_0}: X_0\to X_0$ for all $(\tau,\sigma)\in \et(X/Y)$. Note for different $\sigma$, the automorphism  $\tau|_{X_0}$ is also different. As $\Bt(X/Y)$ is infinite, there are infinitely many  such automorphism $\tau|_{X_0}$. So the morphism $X_0\to \mathbf{Hom}_\sk(\Lambda,\Xi)$ must be a constant one as $X_0$ is only a curve. 
\end{proof}
From this lemma there is a morphism $\theta: \Lambda\to \Xi$ so that for any $W\in\mathbf{Sch}/Y, g\in \Lambda(W), t\in X_0(\Lambda)$ there is $v(t,g)\in X_0(T)$ so that $g\cdot t=\theta(g)\cdot v(t,g)$. As a result, we have 
\begin{align*}
(g_1g_2)\cdot t&=\theta(g_1g_2)\cdot v(t, g_1g_2) =g_1\cdot (\theta(g_2) \cdot v(t,g_2) )\\
           &=\theta(g_2)\cdot (g_1\cdot  v(t,g_2 ))=\theta(g_2)\cdot (\theta(g_1)\cdot v(v(t,g_2),g_1))\\
           &= \theta(g_1)\theta(g_2)\cdot  v(v(t,g_2),g_1)) 
\end{align*}
The isomorphism $\iota$ then says $$\theta(g_1g_2)=\theta(g_1)\theta(g_2) \,\,\,\text{and}\,\,\,v(t,g_1g_2)=v(v(t,g_2),g_1).$$ In other words, $\theta: \Lambda\to \Xi$ is a homomorphism and the action $$\Lambda\times X_0\to X_0, (g, t)\mapsto v(t,g)$$ is a group action. Let $\Lambda'=\mathrm{Ker}(\theta)$, then $\pi: \Lambda'\stackrel{\simeq}{\to} \Lambda/\Xi$ is an isomorphism. One checks directly the induced $\Lambda'$-action on $X_0$ is identified with the $\Lambda/\Xi$-action on $\overline{X}$.  
$$
\xymatrix{
\Lambda'\times_\sk X_0 \ar[d]^{\pi\times \rho } \ar[rr] && X_0\ar[d]^{\rho}\\
\Lambda/\Xi\times_\sk \overline{X} \ar[rr] && \overline{X}}
 $$ 
Namely, $X_0$ is a $\Lambda'$-torsor reducing $X$. We are done in this case.

Next we assume $\deg(\rho)=p$. Then $X$ is reduced by flatness assumption. Now this time $X\to \overline{X}$ is a non-trivial $\Xi$-torsor. As $r(\Xi)=1$, this $\Xi$-torsor $X$ over $\overline{X}$ can not descend to the identity subgroup, which is the unique subgroup of $\Xi$. It then suffices to prove that this $\Xi$-torsor has an infinite $\Bt(X/\overline{X})$ by our inductive assumption. In fact, for any $(\tau,\sigma)\in \et(X/Y)$, the automorphism $\tau: X\to X$ preserves the $\Lambda$-action by definition, it induces an isomorphism $\overline{\tau}: \overline{X}\to \overline{X}$ so that we have the following commutative diagram:
$$
\xymatrix{
X \ar[rr]^\pi \ar[d]^\tau && \overline{X} \ar[d]^{\overline{\tau}}\ar[rr] &&  Y \ar[d]^\sigma\\
X \ar[rr]^\pi && \overline{X} \ar[rr]&&  Y 
}
$$
In other words, the $\Xi$-torsor $X/X'$ also has an infinite $\Bt(X/X')$. 
 \end{proof}

\subsection{Classification of torsor  with infinite equivaraint automorphism}\label{Subsec: classification of torsors}
Now we  classify the pair $(G',\nu': T'\to U)$ given in Theorem~\ref{Thm: main on torsors}. By changing symbols $(G',T')$ to $(G,T)$, we want in fact to classify the pairs $(T,G)$ so that:
\begin{itemize}
\item $T$ is a smooth irreducible curve over $\sk$;

\item $G$ is a finite group scheme acting freely on $T$;
 
\item the centralizer of $G$ in $\mathrm{Aut}_\sk(T)$ is infinite.
\end{itemize}
With these data, the quotient map $\nu: T\to U:=T/G$ gives a $G$-torsor realization (cf. Themorem~\ref{Thm: quotient by gp}). The group $\et(T/U)$ is identified with the centralizer of $G$ in $\mathrm{Aut}_\sk(T)$ (cf. Example~\ref{Exa: ea and centalizer}). 

The complete classification is as follows:
\begin{Prop}\label{Prop: classification of torsors with infinite Bt}
Let $G$ be a finite group scheme over $\sk$, $T$ be a smooth irreducible curve over $\sk$ admitting a free $G$-action so that the centralizer of $G$ in $\mathrm{Aut}_\sk(T)$ is infinite. Then the pair $(T,G)$ is one of the following:
\begin{enumerate}[(A.)]
\item  $T=E'$ is an elliptic curve. Then subgroup schemes of $\mathbf{Aut}_\sk(E')$ acting freely on $E'$ are contained in the translation subgroups $E' \subseteq \mathbf{Aut}_\sk(E)$ by Proposition~\ref{Prop: classification of subgroup acting freely on E}. So $G$ is a finite subgroup scheme of $E'$ acting  naturally on $T=E'$ by natural translations. This time $U=E'/G$ is an elliptic curve. 

\item   $T=\p$, $G=\{\mathrm{id}\}$. This time $\nu=\mathrm{id}: T=\p \to U=\p$.

\item ($\mathrm{char}.(\sk)=p>0$) $T=\A$.  This time the group $G$ is a subgroup scheme of $\mathbf{LAut}_\sk(\A)=\mathbb{G}_a\rtimes_\sk \mathbb{G}_m$ by Corollary~\ref{Cor: extension of infinitesimal automorphism}.  Then following  Proposition~\ref{Prop: classification of subgroup acting freely on U}, $G$ is a subgroup of the semi-direct summand $\mathbb{G}_a\subseteq \mathbf{LAut}_\sk(\A)$ acting additively on $\A$ (cf. Example~\ref{Exa: example of group actions}). This time $U=T/G\simeq \A$.  

\item $T=\Aa$. This time the group $G$ is a subgroup scheme of $\mathbf{LAut}_\sk(\Aa)=\mathbb{G}_m$ by Corollary~\ref{Cor: extension of infinitesimal automorphism}.  In other words, $G$ is a finite subgroup scheme of $\mathbb{G}_m$-acting multiplicatively on $\Aa$. This time $U=T/G\simeq \A$.   
\end{enumerate}
\end{Prop}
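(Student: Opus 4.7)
The first step is to identify which smooth irreducible curves $T$ can arise. Since the centralizer of $G$ in $\mathrm{Aut}_\sk(T)$ is assumed infinite, $\mathrm{Aut}_\sk(T)$ itself is infinite. Let $\overline{T}$ be the smooth projective compactification of $T$; every automorphism of $T$ extends to $\overline{T}$ fixing the boundary $\overline{T}\setminus T$ setwise. If $g(\overline{T})\ge 2$ then $\mathrm{Aut}_\sk(\overline{T})$ is already finite. If $\overline{T}$ is elliptic, the subgroup preserving a non-empty finite set is finite, forcing $T=\overline{T}$. If $\overline{T}=\p$, the stabiliser in $\mathrm{PGL}_2$ of any set of $\ge 3$ points is finite. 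Thus $T$ is one of $E'$, $\p$, $\A$, or $\Aa$, matching the four listed cases.

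Cases $(A)$, $(C)$, $(D)$ then reduce almost directly to earlier results. In case $(A)$, Proposition~\ref{Prop: classification of subgroup acting freely on E} puts $G$ inside the translation subgroup $E'\subseteq \mathbf{Aut}_\sk(E')$, and a translation quotient of an elliptic curve is again elliptic. In cases $(C)$ and $(D)$, the infinite centralizer of $G$ in $\mathrm{Aut}_\sk(T)$ plays the role of the infinite subgroup $H$ in Corollary~\ref{Cor: extension of infinitesimal automorphism}, placing $G$ inside $\mathbf{LAut}_\sk(T)$. Proposition~\ref{Prop: classification of subgroup acting freely on U} then pins $G$ down to a subgroup scheme of the $\mathbb{G}_a$-factor (acting additively) when $T=\A$, and of $\mathbb{G}_m$ (acting multiplicatively) when $T=\Aa$; the description of $U=T/G$ is immediate since quotients of $\mathbb{G}_a$ or $\mathbb{G}_m$ by finite subgroup schemes are again $\mathbb{G}_a$ or $\mathbb{G}_m$.

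The substantive case is $(B)$: I need to show that no non-trivial finite group scheme acts freely on $\p$. I would split $G=G^0\rtimes G(\sk)$ and treat the two factors separately. A free action of the étale part $G(\sk)$ would realise $\p$ as a non-trivial étale Galois cover of a smooth projective curve, contradicting the simple-connectedness of $\p$. For the infinitesimal part $G^0$, I would pick a non-trivial normal subgroup scheme $\Xi\subseteq G^0$ of rank $p$ (isomorphic to $\alpha_p$ or $\mu_p$) from a composition series; since $G^0$ acts freely, so does $\Xi$. By Proposition~\ref{Prop: action by alphap}, a free $\alpha_p$- or $\mu_p$-action on $\p$ corresponds to a nowhere-vanishing global section of $\mathcal{T}_{\p/\sk}\simeq \cO_\p(2)$, which contradicts the elementary fact that every non-zero global section of $\cO_\p(2)$ vanishes (with multiplicity) at two points. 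Hence $G^0=\{e\}$, and combined with $G(\sk)=\{e\}$ we obtain $G=\{e\}$ and $U=T=\p$.

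The main obstacle is case $(B)$: cases $(A)$, $(C)$, $(D)$ are essentially mechanical invocations of the preparatory results in \S\ref{Sub: auto of curves}, whereas $(B)$ genuinely requires two independent obstructions---topological (simple-connectedness of $\p$) for the étale part, and infinitesimal (non-existence of nowhere-vanishing vector fields on $\p$) for the infinitesimal part.
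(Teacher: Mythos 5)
Your proposal is correct and follows essentially the same route as the paper: reduce to the four curves with infinite automorphism group (elliptic, $\p$, $\A$, $\Aa$) and then invoke Proposition~\ref{Prop: classification of subgroup acting freely on E}, Corollary~\ref{Cor: extension of infinitesimal automorphism} and Proposition~\ref{Prop: classification of subgroup acting freely on U}; the paper in fact states case (B) without justification, and your two obstructions (simple connectedness of $\p$ for the \'etale part $G(\sk)$, and the non-existence of a nowhere-vanishing section of $\mathcal{T}_{\p/\sk}\simeq \cO_{\p}(2)$ for a rank-$p$ subgroup scheme of $G^0$ via Proposition~\ref{Prop: action by alphap}) supply exactly the missing argument. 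One minor remark: in case (D) your computation correctly gives $U=T/G\simeq \Aa$, whereas the printed statement says $U\simeq \A$; this appears to be a typo in the paper rather than a flaw in your argument.
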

\begin{Rem}\label{Rem: G is abelian in classification}
In the above classification,  $G$ is always Abelian. 
\end{Rem}

\section{On genus one fibrations}\label{Sec: genus one fibration}
The genus one fibration is the most difficult part of this paper. We shall make some preparations in this section. As we need to work over non-algebraically closed field, we make the following conventions on genus one curves and elliptic curves.
\begin{Def}[Genus one curve VS. Elliptic curve] We will fix the following definition in this paper:
\begin{itemize}
\item  a genus one curve $X_K$ over a field $K$ is referred to a smooth projective, geometrically connected curve of genus one.  

\item  an elliptic curve $E_K$ over $K$ is referred to a genus one curve along with a fixed group scheme structure. In particular, $E_K(K)\neq \emptyset$.

\item a genus one ({\it resp.} an elliptic) surface fibration $f:X\to C$  ({\it resp.} $\mu: J\to C$) over $\sk$ is a fibration whose generic fibre  is a genus one ({\it resp.} an elliptic) curve.
\end{itemize}
\end{Def}
There is a connection between  genus one surface fibrations and  elliptic fibrations known as the Jacobian fibration. Let us recall in the following. Let $f:X\to C$ be a surface fibration of genus one over $\sk$. By our convention,  there is an open subset $U\subseteq C$ so that $f_U: X_U:=X\times_C U\to U$ is smooth.  The relative Picard functor $\underline{\mathrm{Pic}}_{X_U/U}$ (cf. \cite[pp.~201, Chapter~8]{BLR90-Neron-Models}) classifying the relative invertible sheaves on $X_U$
is represented by a formally smooth group scheme $\mathbf{Pic}_{X_U/U}$ locally of finite type (cf. \cite[$n^\circ 232$, Thm.~3.1]{FGA} or \cite[pp.~210, Thm.~1]{BLR90-Neron-Models})), called the relative Picard scheme. This relative Picard scheme is a disjoint union  
$$\mathbf{Pic}_{X_U/U}=\coprod \limits_{n\in \Z} \mathbf{Pic}^n_{X_U/U}$$ of subschemes $\mathbf{Pic}^n_{X_U/U}$ classifying relative invertible sheaves on degree $n, n\in \Z$. These components $\mathbf{Pic}^n_{X_U/U}$ are smooth and projective over $U$.  The identity component $J_U:=\mathbf{Pic}^0_{X_U/U}$ is in particular a smooth projective group scheme over $U$ of relative dimension $g=1$, or equivalently $J_U \to U$ is a relatively minimal elliptic fibration over $U$. We then denote by $\mu: J\to C$ its relative minimal model over $C$. 

\begin{Def}[Jacobian fibration]\label{Def: Jacobian fibration}
This model $\mu: J\to C$ is called the Jacobian fibration of $f:X\to C$.
\end{Def} 
Let $K:=K(C)$  be the function field of $C$, $\eta:=\Spec(K)\hookrightarrow C$ be the generic point of $C$. Then by construction the generic fibre $J_\eta$ of the Jacobian fibration  $\mu: J\to C$ is the elliptic curve $\mathbf{Pic}^0_{X_\eta/K}$. In other words, the Jacobian fibration of a genus one fibration is first replacing the generic fibre by  its Jacobian ({\it i.e.}, $\mathbf{Pic}^0$) and then taking relatively minimal model.

Noticing that every degree one invertible sheaf on a genus one curve is the invertible sheaf associated to a unique degree one effective divisor on it,  we can then identify $X_U$ canonically with $\mathbf{Pic}^1_{X_U/U}$. So the group structure of $\mathbf{Pic}_{X_U/U}$  gives a natural group $J_U$-action on $X_U$ 
$$
\xymatrix{
\mathbf{Pic}^0_{X_U/U} \times_U\mathbf{Pic}^1_{X_U/U} \ar@{=}[d] \ar[rr]  &&\mathbf{Pic}^1_{X_U/U} \ar@{=}[d]\\
J_U \times_U  X_U\ar[rr] &&X_U
}
$$ With this action, $X_U$ is regarded naturally as an (\'etale-)torsor under $J_U$. Similarly, the schemes $\mathbf{Pic}^n_{X_U/U}$ also admits natural (\'etale-)torsor structure under $J_U=\mathbf{Pic}^0_{X_U/U}$. It is well known that \'etale torsors of $J_U$ are classified by the group $H^1_{\acute{e}t}(U,\underline{J_U})$.  Let $o_{X_U}\in H^1_{\acute{e}t}(U,\underline{J_U})$ be the element corresponding to $X_U$. Then we can see that the torsor corresponding to $n\cdot o_{X_U}\in H^1_{\acute{e}t}(U,\underline{J_U})$ is exactly $\mathbf{Pic}^n_{X_U/U}$.

\begin{Prop}\label{Prop: torsor of genus one is torsion}
The element $o_{X_U}\in H^1_{\acute{e}t}(U,\underline{J_U})$ is a torsion element. In fact, for any irreducible horizontal $D$, we have  $[D:C]\cdot o_{X_U}=0$.
\end{Prop}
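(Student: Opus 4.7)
The plan is to exhibit, from the horizontal divisor $D$, an explicit $U$-point of the torsor $\mathbf{Pic}^n_{X_U/U}$, where $n=[D:C]$. Since $\mathbf{Pic}^n_{X_U/U}$ is the torsor corresponding to $n\cdot o_{X_U}\in H^1_{\acute{e}t}(U,\underline{J_U})$ (this identification is already recorded in the paragraph above the statement), producing such a section will immediately force $n\cdot o_{X_U}=0$, and hence $o_{X_U}$ is torsion whenever any horizontal divisor exists (which is automatic from projectivity of $X$: take, e.g., an irreducible horizontal component of a hyperplane section of $X$ in some projective embedding).

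First I would set $D_U:=D\cap X_U$, viewed as a closed subscheme of the smooth surface $X_U$. Because $D\subseteq X$ is an irreducible horizontal curve, $f|_D: D\to C$ is finite and surjective of degree $n=[D:C]$; restricting to $U$, the morphism $D_U\to U$ is then also finite surjective of degree $n$. Next I would verify that $D_U$ is a \emph{relative effective Cartier divisor} on $X_U/U$ of relative degree $n$. Flatness over $U$ is free: $U$ is a smooth curve (Dedekind) and $D_U$ is reduced and finite over $U$ with no embedded fibres, hence torsion-free over $U$, hence flat. Local principality of $D_U\subseteq X_U$ follows from $X_U$ being smooth of dimension two and $D_U$ being an effective Weil divisor, so $\mathcal{O}_{X_U}(-D_U)$ is an invertible ideal sheaf.

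With $D_U$ a relative effective Cartier divisor on $X_U/U$ of relative degree $n$, the invertible sheaf $\cO_{X_U}(D_U)$ has degree $n$ on every geometric fibre of $f_U$, and therefore defines a $U$-point $[\cO_{X_U}(D_U)]\in \mathbf{Pic}^n_{X_U/U}(U)$. This trivializes the torsor $\mathbf{Pic}^n_{X_U/U}$, proving $n\cdot o_{X_U}=0$. In particular, taking any horizontal irreducible curve $D$ (which exists by projectivity of $X$) shows that $o_{X_U}$ is a torsion class in $H^1_{\acute{e}t}(U,\underline{J_U})$.

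The argument is essentially routine; the only point requiring a moment of care is the verification that $D_U$ is a relative effective Cartier divisor (flat over $U$ and cut out by a single regular function locally on $X_U$), but both properties reduce immediately to the smoothness of the surface $X_U$ and of the base curve $U$. No serious obstacle should arise.
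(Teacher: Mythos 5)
Your proposal is correct and follows essentially the same route as the paper: the paper's proof likewise observes that $D$ yields the element $\cO_X(D)|_{X_U}\in \mathbf{Pic}^n_{X_U/U}(U)$, so that the torsor $\mathbf{Pic}^n_{X_U/U}$ corresponding to $n\cdot o_{X_U}$ is trivial. Your write-up merely spells out the routine verifications (flatness and Cartier-ness of $D_U$ over $U$) that the paper leaves implicit.
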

\begin{proof}
Let $n:=[D:C]$. The  divisor $D$ gives an element $\cO_X(D)|_{X_U} \in \mathbf{Pic}^n_{X_U/U}(U)$. In other words, the torsor $\mathbf{Pic}^n_{X_U/U}$ is trivial and we are done.
\end{proof} 

Then we study the relative automorphism group $\mathrm{Aut}_C(X)$. When $f$ is relatively minimal we have  $\mathrm{Aut}_C(X)=\mathrm{Aut}_K(X_\eta)$. Denote by $\overline{K}:=K^{\mathrm{sep}}$ and $\overline{\eta}:=\Spec(\overline{K})$ the geometric generic point of $C$.
\begin{Prop}
We have an exact sequence: 
\begin{equation}
0\to J_\eta(K)\stackrel{\varrho}{\to} \mathrm{Aut}_K(X_\eta) \stackrel{\nu}{\to} \mathrm{Aut}^{\mathrm{gp}}_{K}(J_{\eta})
\end{equation}
\end{Prop}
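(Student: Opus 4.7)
The statement is an exact sequence, so there are essentially three things to verify: injectivity of $\varrho$, the vanishing $\nu\circ\varrho=0$, and the inclusion $\ker\nu\subseteq \mathrm{im}(\varrho)$. The maps themselves must first be described precisely.

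The natural identification $X_\eta=\mathbf{Pic}^1_{X_\eta/K}$ discussed just before the statement turns $X_\eta$ into a $J_\eta$-torsor via the group law on $\mathbf{Pic}$. My map $\varrho$ sends $a\in J_\eta(K)$ to the translation $T_a:\mathcal{L}\mapsto \mathcal{L}\otimes a$; this is a group homomorphism, and injectivity is immediate because the torsor action of $J_\eta$ on $X_\eta$ is free. For $\nu$, I would use functoriality of the Picard scheme: a $K$-automorphism $\tau$ of $X_\eta$ induces $(\tau^{-1})^\ast$ on $\mathbf{Pic}_{X_\eta/K}$, which preserves degrees and fixes the class $[\mathcal{O}_{X_\eta}]$, hence restricts to a group-scheme automorphism $\nu(\tau)$ of $J_\eta$.

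To show $\nu\circ\varrho=0$, I would pass to $\overline{K}$: after base change, $X_{\overline\eta}$ acquires a rational point, is isomorphic to $J_{\overline\eta}$ as a torsor, and translations by points of an abelian variety act trivially on $\mathrm{Pic}^0$ by the theorem of the square. Since $\nu$ is defined intrinsically over $K$, faithfully flat descent gives $\nu(T_a)=\mathrm{id}$ already over $K$.

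The main point, and the crux of the argument, is exactness at $\mathrm{Aut}_K(X_\eta)$. Here I would establish the following compatibility: for every $\tau\in\mathrm{Aut}_K(X_\eta)$, the automorphism $\nu(\tau)$ of $J_\eta$ is exactly the unique group-scheme automorphism making $\tau$ equivariant with respect to the torsor action, i.e.\
\[
\tau(a\cdot x)=\nu(\tau)(a)\cdot \tau(x),\qquad a\in J_\eta,\ x\in X_\eta.
\]
This compatibility is forced by the way $J_\eta$ acts on $X_\eta=\mathbf{Pic}^1_{X_\eta/K}$ (tensoring), together with the obvious identity $(\tau^{-1})^\ast(\mathcal{L}\otimes a)=(\tau^{-1})^\ast\mathcal{L}\otimes (\tau^{-1})^\ast a$. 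Once the compatibility is granted, any $\tau\in\ker\nu$ commutes with the $J_\eta$-action, hence is an automorphism of $X_\eta$ as a $J_\eta$-torsor; and such torsor automorphisms are precisely the translations by elements of $J_\eta(K)$, i.e.\ lie in $\mathrm{im}(\varrho)$. The hardest step is the intertwining identity above, but it reduces to the naturality of pullback on $\mathbf{Pic}$ with respect to tensor products, so no new idea is required beyond carefully bookkeeping the Picard functor interpretation of the torsor action.
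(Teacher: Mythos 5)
Your proof is correct, but it follows a genuinely different route from the paper's. The paper base-changes to $\overline{K}:=K^{\mathrm{sep}}$, where $X_{\overline{\eta}}$ acquires a point and becomes isomorphic to the elliptic curve $J_{\overline{\eta}}$; it then invokes the classical decomposition of the automorphism group of an elliptic curve into translations and group automorphisms to get a short exact sequence of $\mathrm{Gal}(\overline{K}/K)$-groups, and finally takes Galois invariants (left exactness of $(-)^{\Gamma}$ gives exactly the three-term exactness claimed). You instead stay over $K$ and exploit the torsor structure of $X_\eta=\mathbf{Pic}^1_{X_\eta/K}$ under $J_\eta$ directly: the intertwining identity $\tau(a\cdot x)=\nu(\tau)(a)\cdot\tau(x)$ reduces $\ker\nu$ to the group of torsor automorphisms, which for a torsor under an abelian group scheme over $\mathrm{Spec}(K)$ is exactly $J_\eta(K)$ (the same fact the paper cites from Brion elsewhere, in the guise of $\mathrm{Hom}^G_\sk(T,G)$). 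The paper's argument is shorter given the classical input and in fact yields surjectivity of $\overline{\nu}$ over $\overline{K}$ as a byproduct; your argument avoids Galois descent altogether, makes transparent why the kernel consists precisely of translations, and would generalize verbatim to torsors under higher-dimensional abelian varieties. One cosmetic remark: your choice $\nu(\tau)=(\tau^{-1})^{*}$ is the cleaner one, since $\sigma\mapsto\sigma^{*}$ (as written in the paper) is an anti-homomorphism; this does not affect the exactness statement, whose content is only about kernels and images.
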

\begin{proof}
First note that the group action  of $J_\eta$ on $X_\eta$ gives an injective homomorphism $\varrho: J_\eta(K)\to \mathrm{Aut}_K(X_\eta)$ naturally.   Then given an $K$-automorphism $\sigma: X_\eta\to X_\eta$, there induces a group homomorphism $\sigma^*: \mathbf{Pic}^0_{X_\eta/K}=J_\eta \to \mathbf{Pic}^0_{X_\eta/K}=J_\eta$. The second map  $\nu$ is given by $\sigma\mapsto \sigma^*$. One can clearly extend the two maps above to $X_{\overline{K}}$.

Over $\overline{K}$, we know $X_{\overline{\eta}}$ is isomorphic to $J_{\overline{\eta}}$.  It is well known that the automorphism group  of an elliptic curve is generated by translations and group automorphism. Therefore  the following  sequence of $\Gamma:=\mathrm{Gal}(\overline{K}/K)$-groups obtained similarly to that over $K$ is exact:
\begin{equation*}
0\to J_\eta(\overline{K})\stackrel{\overline{\varrho}}{\to} \mathrm{Aut}_{\overline{K}}(X_{\overline{\eta}})\stackrel{\overline{\nu}}{\to} \mathrm{Aut}^\mathrm{gp}_{\overline{K}}(J_{\overline{\eta}})\to 1
\end{equation*}
Since Galois descent is effective on morphisms, we have $$J_\eta(K)=J_\eta(\overline{K})^{\Gamma}, \mathrm{Aut}_K(X_\eta)=\mathrm{Aut}_{\overline{K}}(X_{\overline{\eta}})^{\Gamma} \,\,\text{and} \,\,\mathrm{Aut}_{K}^{\mathrm{gp}}(J_\eta)= (\mathrm{Aut}^\mathrm{gp}_{\overline{K}}(J_{\overline{\eta}}))^{\Gamma}.$$ Then the desired exact sequence follows immediately.
\end{proof}

\begin{Cor}\label{Cor: ext of auto of genus one}
If $f:X\to C$ is relatively minimal, we have the following exact sequence:
\begin{equation}\label{ext: aut of genus one}
0\to J(C)\to \mathrm{Aut}_C(X) \to \mathrm{Aut}^{\mathrm{gp}}_{K}(J_\eta).
\end{equation}
\end{Cor}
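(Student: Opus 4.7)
The plan is to deduce this corollary from the preceding proposition by identifying both the left and middle terms of its exact sequence with their $C$-level counterparts, using the relative minimality of $f$.

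First I would handle the middle term. Since $f: X \to C$ is a relatively minimal surface fibration of positive fibre genus, the restriction map $\mathrm{Aut}_C(X) \to \mathrm{Aut}_K(X_\eta)$ is an isomorphism. This is exactly the identification already recorded in the paragraph preceding the previous proposition: any $K$-automorphism of $X_\eta$ spreads out to a birational self-map of $X$ over $C$, and the absence of vertical $(-1)$-curves forces this birational self-map to be biregular via the standard minimal model theory for surface fibrations of positive fibre genus.

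Next I would identify $J(C)$, the group of sections of the Jacobian fibration $\mu: J \to C$, with $J_\eta(K)$. Restriction to the generic point gives a natural map $J(C) \to J_\eta(K)$ sending a section $s$ to $s_\eta$; it is injective because $C$ is reduced and $J$ is separated. For surjectivity, a $K$-point of $J_\eta$ is the same as a rational map $s: C \dashrightarrow J$ with $\mu \circ s = \mathrm{id}$ on its domain of definition. Since $J$ is smooth projective (as $\mu: J \to C$ is relatively minimal) and $C$ is a smooth curve, the valuative criterion of properness extends $s$ uniquely across each point where it is a priori undefined, producing a morphism $s: C \to J$. The identity $\mu \circ s = \mathrm{id}_C$ holds on an open dense subset and therefore everywhere, so $s$ is a section.

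Combining these two identifications with the exact sequence of the preceding proposition immediately yields the displayed sequence. The only step that is not purely formal is the first identification, which invokes the minimal-model theory for surface fibrations of positive genus, already quoted by the paper; the second identification is a direct application of properness. Accordingly, I do not expect any genuine obstacle in writing up this argument.
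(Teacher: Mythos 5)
Your proposal is correct and matches the paper's (implicit) argument: the paper derives this corollary directly from the preceding proposition via the identification $\mathrm{Aut}_C(X)=\mathrm{Aut}_K(X_\eta)$ stated just before it, together with the standard identification $J(C)=J_\eta(K)$ obtained by extending rational sections of the projective relatively minimal model $\mu: J\to C$ over the smooth curve $C$. The details you supply (uniqueness of the relative minimal model in fibre genus $\ge 1$ for the middle term, and the valuative criterion for the left term) are exactly the ones the paper leaves unwritten.
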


Finally, we recall the following result on the rigidity of subgroups of an elliptic curve.
\begin{Prop}\label{Prop: rigidity of subgroup}
Let $E$ be an elliptic curve over $\sk$, then its finite subgroup schemes are rigid. In other words, if $K$ is a field extension of $\sk$ and $M_K$ is a finite subgroup scheme of $E_K:=E\times_\sk K$, then there is a finite subgroup scheme $\Lambda$ of $E$ so that $M_K=\Lambda\times_\sk K$.
\end{Prop}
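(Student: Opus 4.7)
The plan is to reduce the rigidity statement to closed subgroup schemes of the $n$-torsion $E[n]$ and then to analyze $E[n]$ via its canonical decomposition into an \'etale and an infinitesimal part.

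First I would let $n$ be the order of the finite commutative group scheme $M_K$. By Deligne's theorem, every finite commutative group scheme of order $n$ is annihilated by $[n]$, whence $M_K \subseteq E_K[n] = E[n]\times_\sk K$. It therefore suffices to prove the following statement: every closed $K$-subgroup scheme of $E[n]\times_\sk K$ is the base change of a unique closed $\sk$-subgroup scheme of $E[n]$. Since $\sk$ is perfect, $E[n]$ splits canonically as $E[n] = E[n]^{\mathrm{\acute{e}t}} \times_\sk E[n]^0$. Writing $n = mp^r$ with $(m,p)=1$ (with $p = \mathrm{char}(\sk)$ positive, or $r = 0$ in characteristic zero), one has $E[n]^0 = E[p^r]^0$, which is $\mu_{p^r}$ in the ordinary case and all of $E[p^r]$ in the supersingular case. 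Given a closed subgroup $H \subseteq E[n]\times_\sk K$, I set $H^0 = H \cap (E[n]^0\times_\sk K)$ and let $H^{\mathrm{\acute{e}t}}$ be the image of $H$ in $E[n]^{\mathrm{\acute{e}t}}\times_\sk K$; then $H^0$ is the connected component of identity of $H$ and $H^{\mathrm{\acute{e}t}} = H/H^0$ is its \'etale quotient. Because $\mathrm{Hom}_K$ from an \'etale to a connected infinitesimal commutative group scheme vanishes, a standard diagram chase forces $H = H^0 \times H^{\mathrm{\acute{e}t}}$ as a subgroup of $E[n]\times_\sk K$, reducing the descent to two separate problems.

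The \'etale descent is immediate: $E[n]^{\mathrm{\acute{e}t}}$ is a constant finite commutative group scheme over the algebraically closed $\sk$, so $\mathrm{Gal}(K^{\mathrm{sep}}/K)$ acts trivially on its geometric points, and its closed $K$-subgroup schemes biject with subgroups of the abstract group $E[n]^{\mathrm{\acute{e}t}}(\sk)$, matching the enumeration over $\sk$. For the connected infinitesimal piece $E[p^r]^0$: in the ordinary case the only closed subgroup schemes of $\mu_{p^r}$ over any base are the $\mu_{p^i}$ for $0 \le i \le r$, all defined over $\mathbb{F}_p$. In the supersingular case $E[p^r]^0 = E[p^r]$ is a connected commutative group scheme of order $p^{2r}$ with one-dimensional Lie algebra, identifiable with the $p^r$-torsion of the height-two formal group $\widehat{E}$, and a classical result on one-dimensional formal groups of finite height asserts that its closed subgroup schemes over any field form a single chain $0 = H_0 \subsetneq H_1 \subsetneq \cdots \subsetneq H_{2r} = E[p^r]$, with $H_i = \ker F^i$ (the kernel of the $i$-fold relative Frobenius) of order $p^i$. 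Because the relative Frobenius commutes with the base change $\sk \to K$, this chain is preserved, and every subgroup of $E[p^r]^0 \times_\sk K$ descends to $\sk$.

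The main obstacle is showing that in the supersingular case no new infinitesimal subgroup schemes of $E[p^r]^0\times_\sk K$ appear beyond those descending from $\sk$, i.e., that the chain classification above is exhaustive over an arbitrary field. This can be established either by computing with the Dieudonn\'e module of $E[p^r]$---which is cyclic as a module over the truncated Dieudonn\'e ring when $\sk$ is algebraically closed, so that its sub-Dieudonn\'e modules automatically form a chain---or by induction on $r$, starting from the identification of $\alpha_p = \ker F$ as the unique nontrivial closed infinitesimal subgroup scheme of $E[p]$, a property manifestly preserved under any base change. Combined with the \'etale descent and Deligne's reduction to the $n$-torsion, this completes the proof.
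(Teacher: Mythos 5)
Your proof is correct, but it is organized rather differently from the one in the paper, so a comparison is worthwhile. The paper's proof is a two-step induction: the \'etale part is declared rigid, and a nontrivial infinitesimal $M_K$ must contain $\ker F_{E_K}$ (because any nontrivial connected finite group scheme has nontrivial Frobenius kernel, which is height one and hence, the Lie algebra of $E$ being one-dimensional, equals $\ker F_{E_K}=(\ker F_{E/\sk})\times_\sk K$); one then passes to $E^{(p)}$ and inducts on the length. Your argument contains this same engine -- your ``induction on $r$ starting from the identification of $\alpha_p=\ker F$ as the unique nontrivial infinitesimal subgroup scheme of $E[p]$'' is exactly the paper's step -- but you wrap it in more structure: the Deligne reduction $M_K\subseteq E_K[n]$, the canonical splitting $E[n]=E[n]^{\mathrm{\acute{e}t}}\times_\sk E[n]^0$ over the perfect $\sk$, the Goursat-type argument showing $H=H^0\times H^{\mathrm{\acute{e}t}}$ inside this product, and Cartier duality for the $\mu_{p^r}$ case. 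What this buys you is an honest treatment of the mixed case: the paper's phrase ``so it remains to assume $M_K$ is infinitesimal'' silently assumes that a general subgroup scheme over the possibly imperfect field $K$ decomposes compatibly into its \'etale and connected pieces, and your product argument (an isomorphism between a connected subquotient and an \'etale subquotient must be trivial) is precisely the missing justification. Two small caveats: first, of your two suggested routes for the supersingular chain, the Dieudonn\'e-module computation as you state it classifies subgroups over the perfect field $\sk$ but does not by itself rule out new subgroups after base change to an imperfect $K$, so you should rely on the Frobenius-kernel induction (which works over any field and is what the paper does); second, the Deligne reduction to $E[n]$ is harmless but unnecessary, since the connected--\'etale analysis applies to $M_K$ directly.
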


\begin{proof}
Clearly, the finite \'etale subgroup schemes of $E$ are rigid. So it remains to assume $M_K$ is infinitesimal. If $\mathrm{char}. (\sk)=0$, then $M_K$ is trivial by Chevalley's celebrated theorem and we are done. Otherwise, we assume $M_K$ is not trivial. Then $M_K$ must contain the kernel of the relative Frobenius homomorphism of $E_K$, which descends to that kernel over $\sk$. We are done then by modulo this kernel and take inductions on the length of $M_K$. 
\end{proof}
\begin{Rem}
This proposition fails in positive characteristics if $E$ is replaced by an Abelian variety of dimension at least two. See \cite{Moret-Bailly}.
\end{Rem}

\section{Surface fibration with a large relative automorphism group}\label{Sec:Fibration with large relative automorphism group}
In this section, we study the structure of $f: X\to C$ of fibre genus $g\ge 1$ with $\mathrm{Aut}_C(X)$ infinite. 

\subsection{A criterion for large relative automorphism group}
Let $K:=K(C)$ be the function field of $C$, $\eta:=\Spec(K)\hookrightarrow C$ the generic point of $C$ and $X_\eta$ the generic fibre of $f:X\to C$. Then since $f$ is relatively minimal, we have $\mathrm{Aut}_C(X)\simeq \mathrm{Aut}_K (X_\eta)$. 

\begin{Thm}\label{Thm: main1}
The group $\mathrm{Aut}_C(X)$ is infinite ({\it resp.} infinitely generated) if and only if $g=1$ and the Jacobian fibration $g: J\to C$ of $f$ is has infinitely many sections ({\it resp.} the Jacobian fibration is trivial).
\end{Thm}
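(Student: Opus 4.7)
The plan is to reduce the theorem to a Mordell--Weil style analysis of $J_\eta(K)$ via the exact sequence already recorded in Corollary~\ref{Cor: ext of auto of genus one}, and then invoke the Lang--N\'eron theorem.

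First I would dispose of the case $g\ge 2$. Since $f$ is relatively minimal we have $\mathrm{Aut}_C(X)\simeq \mathrm{Aut}_K(X_\eta)$, and the generic fibre $X_\eta$ is a smooth projective curve of genus $g\ge 2$ over $K=K(C)$. Classically, the automorphism group scheme of such a curve is finite \'etale over $K$ in every characteristic (e.g.\ by bounding via the canonical or tricanonical embedding), so $\mathrm{Aut}_C(X)$ is already finite. In particular it is finitely generated, and both directions of the claimed equivalence force $g=1$.

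For $g=1$ I would apply the exact sequence~(\ref{ext: aut of genus one}),
\[
0\to J(C)\to \mathrm{Aut}_C(X)\to \mathrm{Aut}^{\mathrm{gp}}_K(J_\eta).
\]
Since the group of pointed automorphisms of any elliptic curve has order dividing $24$, the third term is finite, so $J(C)$ has finite index in $\mathrm{Aut}_C(X)$. Consequently $\mathrm{Aut}_C(X)$ is infinite (resp.\ infinitely generated) if and only if $J(C)$ is so. I would then identify $J(C)=J_\eta(K)$ with the group of sections of $\mu:J\to C$: as $\mu$ is proper and $C$ is a smooth curve, the valuative criterion guarantees that every $K$-point of $J_\eta$ extends uniquely to a morphism $C\to J$ over $C$. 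This already settles the ``infinite $\Leftrightarrow$ infinitely many sections'' part.

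For the ``infinitely generated $\Leftrightarrow$ trivial'' part, I would invoke the Lang--N\'eron theorem applied to the elliptic curve $J_\eta$ over $K$ with field of constants $\sk$: the quotient $J_\eta(K)/\mathrm{Tr}_{K/\sk}(J_\eta)(\sk)$ is a finitely generated abelian group, where $\mathrm{Tr}_{K/\sk}(J_\eta)$ denotes the $K/\sk$-trace. As $J_\eta$ is one-dimensional, the trace is either $0$ or all of $J_\eta$ descended to $\sk$. In the zero case, $J_\eta(K)$ is finitely generated. Otherwise $J_\eta$ is $K$-isogenous to a constant elliptic curve $E_K$ for some $E/\sk$; by Proposition~\ref{Prop: rigidity of subgroup} the isogeny kernel (a finite subgroup scheme of $E_K$) descends to $E$, so in fact $J_\eta\simeq E'_K$ for some elliptic curve $E'/\sk$, and by uniqueness of the relatively minimal smooth proper model of an elliptic curve of everywhere good reduction the whole Jacobian fibration is $\mu=p_2:E'\times_\sk C\to C$, i.e.\ trivial. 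Conversely, if $\mu$ is trivial, then $J(C)=\mathrm{Mor}_\sk(C,E')\supseteq E'(\sk)$ is infinitely generated, since $E'(\sk)$ already contains $E'[n]$ for all $n$ coprime to the characteristic.

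The main obstacle I anticipate is packaging the Lang--N\'eron input correctly and executing the passage from ``$J_\eta$ has nontrivial trace'' to ``$\mu:J\to C$ is the trivial fibration''. This depends on two ingredients already present in the paper: the rigidity of finite subgroup schemes of elliptic curves (Proposition~\ref{Prop: rigidity of subgroup}), which upgrades the isogeny provided by the trace to an isomorphism; and the fact that a relatively minimal elliptic surface with everywhere good reduction and constant generic fibre must equal the product model.
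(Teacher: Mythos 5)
Your proposal is correct and follows essentially the same route as the paper: dispose of $g\ge 2$ by finiteness of $\mathrm{Aut}_K(X_\eta)$, reduce the $g=1$ case to $J(C)=J_\eta(K)$ via the exact sequence of Corollary~\ref{Cor: ext of auto of genus one}, and then combine Lang--N\'eron with Proposition~\ref{Prop: rigidity of subgroup} to pass from a nontrivial $K/\sk$-trace to triviality of the Jacobian fibration. The only cosmetic difference is that you quotient $E$ by the descended isogeny kernel to produce a constant model $E'_K$, whereas the paper uses the universal property of the trace to conclude the kernel is already zero; both are valid.
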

\begin{proof}
If $g\ge 2$, then $\mathrm{Aut}_C(X)=\mathrm{Aut}_K (X_\eta)\subseteq \mathrm{Aut}_{\overline{K}}(X_{\overline{\eta}})$ is finite.

If $g=1$, let $\mu: J\to C$ be the Jacobian fibration of $f:X\to C$.  Then we have an exact sequence $(\ref{ext: aut of genus one})$ in Corollary~\ref{Cor: ext of auto of genus one}. Noticing   $\mathrm{Aut}^{\mathrm{gp}}_{K}(J_\eta)\subseteq \mathrm{Aut}^\mathrm{gp}_{\overline{K}}(J_{\overline{\eta}})$  is always finite, $\mathrm{Aut}_C(X)$ is infinite ({\it resp.} infinitely generated) if and only if $J(C)=J_\eta(K)$ is infinite {\it resp.} infinitely generated). So it remains to prove that $J(C)=J_\eta(K)$ is infinitely generated if and only if $J$ is trivial, or in other words $J=E\times_\sk C$ for an elliptic curve $E$. The `if' part of the claim is clear and we now prove the `only if' part.  In fact let $E:=\mathrm{Tr}_{K/\sk}(J_\eta)$ be the $K/\sk$-trace of $J_{\eta}$ (cf. \cite{Conrad06-Chow's-K/k-trace}). By the celebrated theorem of Lang-N\'eron (cf. \cite[Thm.~7.1]{Conrad06-Chow's-K/k-trace}), the group $J_\eta(K)/E(\sk)$ is finitely generated. As a result, $E(\sk)$ is infinite and hence $E$ is an elliptic curve over $\sk$. So the trace map  $\pi: E\times_\sk K\to J_\eta$ is a surjective group homomorphism. Due to Proposition~\ref{Prop: rigidity of subgroup}, the kernel of $\pi$ is defined over  $\sk$ and hence must be zero by the universal property of the $K/\sk$-trace. In other words, $J_\eta=E\times_\sk K$ and thus $J=E\times_\sk C$ is trivial. We are done.
\end{proof}

\subsection{Standard Genus one fibrations with trivial Jacobian}
We first present some typical construction of genus one fibration with trivial Jacobian. 
\begin{Exa}[Standard genus one fibrations with trivial Jacobian]\label{Exa: genus one fibration with trivial Jacobian, p=0}
Let $E$ be an elliptic curve over $\sk$, $\Lambda\subseteq E$ be a finite subgroup scheme and $C'$ be a smooth projective curve admitting a faithful $\Lambda$-action.  Therefore $\Lambda$  acts  diagonally on the product $E\times_\sk C'$. Note this diagonal action is free since the $\Lambda$-action on $E$ is so. Then the fibration $f:=p_2: X:=(E\times_\sk C')/\Lambda\to C:=C'/\Lambda$ is  a relatively minimal genus one fibration with trivial Jacobian. 
$$
\xymatrix{
X\ar[d]^f \ar@{=}[rr] && (E\times_\sk C')/\Gamma \ar[d]^{p_2}\\
C  \ar@{=}[rr] && C'/\Gamma
}
$$
In fact, this $X$ is smooth by Corollary~\ref{Cor: quotient by gp} and as the translations on $E$ induces a trivial action on its $\mathbf{Pic}^0$, the Jacobian fibration of $f$ is trivial.  
\end{Exa}
\begin{Rem}\label{Rem: feature of naive example}
(1). The subgroup scheme $\Lambda\subseteq E$ can be non-\'etale in positive characteristic.

(2). One key feature for these fibration $f:X\to C$ is that it admits another fibration $h=p_1: X=(E\times_\sk C')/\Lambda \to E/\Lambda$ onto an elliptic curve $E/\Lambda$. This fibration $h$ is smooth and all of its closed fibres are isomorphic to $C'$ (cf. Corollary~\ref{Cor: quotient by gp}). 
\end{Rem} 
 
\begin{Thm}\label{Thm: standard genus one fibration with trivial Jacobian}
Example~\ref{Exa: genus one fibration with trivial Jacobian, p=0} exhausts all relatively minimal genus one fibration with trivial Jacobian fibration that is at the same time standard (cf. Definition~\ref{Def: standard fibration}). 
\end{Thm}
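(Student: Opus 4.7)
The plan is to unwind the given standard realization $(F, C', G)$ of $f$: by Definition~\ref{Def: standard fibration}, I may write $X = (F \times_\sk C')/G$, $C = C'/G$, $f$ induced by $p_2$, with $G$ acting faithfully on $F$ and freely on $C'$. Because $\sk$ is algebraically closed, I may fix $o \in F(\sk)$ and regard $F$ as an elliptic curve $E := (F, o)$; the semi-direct decomposition $\mathbf{Aut}_\sk(F) = E \rtimes \mathrm{Aut}_\sk^{\mathrm{gp}}(E)$ of \eqref{Equ: semi-direct decomposition of E} yields a canonical projection $\phi: G \to \mathrm{Aut}_\sk^{\mathrm{gp}}(E)$. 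The entire goal is to show that $\phi$ is trivial, since then $G$ sits inside the translation subgroup $E \subseteq \mathbf{Aut}_\sk(F)$ and the fibration is visibly of the form in Example~\ref{Exa: genus one fibration with trivial Jacobian, p=0}.

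To achieve this I compute the Jacobian fibration explicitly. Corollary~\ref{Cor: quotient by gp}(2) provides the Cartesian diagram $X \times_C C' = F \times_\sk C'$, and since the formation of $\mathbf{Pic}^0$ commutes with flat base change,
\[
J \times_C C' \;=\; \mathbf{Pic}^0_{F \times_\sk C'/C'} \;=\; E \times_\sk C'.
\]
On this pullback, $G$ acts diagonally: on $C'$ by its given action, and on $E$ by pullback of line bundles induced from the $G$-action on $F$. Translations on an elliptic curve act trivially on $\mathbf{Pic}^0$ (theorem of the square), so the induced $G$-action on $E$ factors through $\phi$. Descending the $G$-torsor $C' \to C$ I obtain
\[
J \;=\; (E \times_\sk C')/G
\]
as a $C$-group scheme, with $G$ acting on $E$ through $\phi$ and on $C'$ as given.

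The triviality hypothesis $J \cong E_0 \times_\sk C$ as $C$-group schemes for some elliptic curve $E_0/\sk$ forces $E_0 \cong E$ by comparing generic fibres. Pulling back along $C' \to C$ turns this isomorphism into a $G$-equivariant automorphism $\Psi: E \times_\sk C' \to E \times_\sk C'$ of $C'$-group schemes, intertwining the $\phi$-twisted $G$-action on the source with the trivial action on the target. Any such $\Psi$ is a section of $\mathbf{Aut}^{\mathrm{gp}}_{C'}(E \times_\sk C') = \mathrm{Aut}_\sk^{\mathrm{gp}}(E) \times_\sk C'$, which is finite \'etale over the connected base $C'$, so $\Psi$ must be a constant element $\psi \in \mathrm{Aut}_\sk^{\mathrm{gp}}(E)$. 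The equivariance condition then collapses to $\psi \circ \phi(g) = \psi$ for every $g \in G$, forcing $\phi \equiv 1$ and completing the argument.

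The main obstacle is the Jacobian computation in the second paragraph. When $G$ has infinitesimal components, one must be careful to verify that translations by an infinitesimal subgroup scheme of $E$ still act trivially on $\mathbf{Pic}^0(E)$, and that the descent of $E \times_\sk C'$ along the free diagonal $G$-action genuinely recovers $J$ as a $C$-\emph{group scheme} (equipped with its canonical zero section coming from the identity of $\mathbf{Pic}^0$), not merely as an abstract elliptic fibration. Once this is secured, the final extraction $\phi = 1$ from the intertwining condition is a formal line.
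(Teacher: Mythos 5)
Your strategy coincides with the paper's in its essential reduction: both arguments fix the standard realization $(E,C',G)$, use the semi-direct decomposition $\mathbf{Aut}_\sk(E)=E\rtimes\mathrm{Aut}^{\mathrm{gp}}_\sk(E)$, invoke the fact that translations act trivially on $\mathbf{Pic}^0$, and conclude that triviality of the Jacobian forces the composite $G\to\mathrm{Aut}^{\mathrm{gp}}_\sk(E)$ to vanish. Where you differ is the endgame. The paper passes to the intermediate quotient $C_1'=C'/G_1$ (with $G_1=\varrho^{-1}(E)$), identifies the Jacobian birationally with $(E\times_\sk C_1')/N\to C_1'/N$, and then simply \emph{asserts} that this is trivial if and only if $N=G/G_1$ is trivial. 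You instead pull the trivialization $J\cong E_0\times_\sk C$ back to $C'$ and extract $\phi=1$ from the rigidity of $\mathrm{Aut}^{\mathrm{gp}}$-valued sections over a connected base. Your version actually supplies a proof of the one step the paper leaves unjustified, which is a genuine gain in completeness.

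There is, however, one real error. Definition~\ref{Def: standard fibration} only gives a \emph{faithful} $G$-action on $C'$, not a free one, and your second paragraph leans on freeness twice: Corollary~\ref{Cor: quotient by gp}(2) yields the Cartesian square $X\times_C C'=F\times_\sk C'$ only when the action on $C'$ is free, and the ``free diagonal $G$-action'' on $E\times_\sk C'$ invoked in your final paragraph is in fact not free when $\phi\neq 1$ (a group automorphism fixes the origin of $E$, so any $g$ with a fixed point $c'\in C'$ and $\phi(g)\neq\mathrm{id}$ fixes $(0,c')$). Over the branch points of $C'\to C$ the identities $X\times_C C'=F\times_\sk C'$ and $J=(E\times_\sk C')/G$ both fail: the fibres of $f$ there are multiple fibres, and $J$ is defined there only as a relatively minimal model, not as a relative $\mathbf{Pic}^0$. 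The repair is routine: restrict to the dense open subset $C'_0\subseteq C'$ on which $G$ acts freely (nonempty since the action is faithful), run the base-change and descent argument over $C_0=C'_0/G$, and observe that the Jacobian fibration is determined by its restriction to any dense open subset of $C$ while $C'_0$ remains connected, so the rigidity step goes through unchanged. This restriction is exactly the hedge hidden in the paper's phrase ``birationally equivalent to''. With that one correction your argument is sound.
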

\begin{proof}
Let $(E,C',G)$ be a standard realization of this fibration $f: X\to C$ and let $\varrho: G\to \mathbf{Aut}_\sk(E)$ be the associated $G$-action on $E$. It suffices to prove that $\varrho: G\to \mathbf{Aut}_\sk(E)$  factors through the translation subgroup scheme $E\subseteq \mathbf{Aut}_\sk(E)$. In fact, let $G_1:=\varrho^{-1}(E) \subseteq G, N:=G/G_1\subseteq \mathrm{Aut}_\sk^{\mathrm{gp}}(E)$, and $C_1'=C'/G_1$. Then $C_1'$ admits a natural faithful $N$-action so that $C_1'/N=C'/G=C$. As the translations induces trivial action on Jacobian level, the Jacobian fibration of $f$ is now birational equivalent to 
$$p_2: (E\times_\sk C_1')/N\to C=C_1'/N$$
with diagonal $N$-action on $E\times_\sk C_1'$. In particular, the Jacobian fibration is trivial if and only if $N$ is trivial, or equivalently $\varrho$ factors through the translation subgroup scheme $E\subseteq \mathbf{Aut}_\sk(E)$.  We are done.
\end{proof}

After \protect{\cite[\S~VI]{Beauville-book}} and \cite{Serrano96}, which tell  us any isotrivial fibration in characteristic zero is standard, we have the following classification theorem.
\begin{Thm} \label{Thm: genus one fibration with trivial Jacobian, p=0}
In characteristic zero, every relatively minimal genus one fibration with trivial Jacobian is as in Example~\ref{Exa: genus one fibration with trivial Jacobian, p=0}.
\end{Thm}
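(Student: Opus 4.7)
The plan is to combine three ingredients: (i)~a trivial Jacobian forces the fibration to be isotrivial, (ii)~in characteristic zero, an isotrivial fibration is standard by the cited result of Serrano (with the genus one case essentially contained in \cite[\S~VI]{Beauville-book}), and (iii)~Theorem~\ref{Thm: standard genus one fibration with trivial Jacobian} then produces the description of Example~\ref{Exa: genus one fibration with trivial Jacobian, p=0}.

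First I would verify isotriviality. Since the Jacobian fibration of $f:X\to C$ is trivial, we have $J=E\times_\sk C$ for some elliptic curve $E$ over $\sk$. Let $U\subseteq C$ be the smooth locus of $f$; for any closed point $c\in U$, the fibre $X_c$ is a torsor under $J_c\cong E$. Because $\sk$ is algebraically closed we have $X_c(\sk)\neq\emptyset$, so the torsor is trivial and $X_c\cong E$ as an $\sk$-curve. Hence all smooth fibres of $f$ are isomorphic to the single curve $E$, that is, $f$ is isotrivial in the sense of Definition~\ref{Def: standard fibration}.

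Next, in characteristic zero, the theorem of Serrano cited in the paragraph following Definition~\ref{Def: standard fibration} (and relying in the genus one case on \cite[\S~VI]{Beauville-book}) asserts that every isotrivial surface fibration is standard. Applied to our $f:X\to C$, this yields a standard realization $(E,C',G)$ of $f$, with $G$ a finite group (automatically \'etale in characteristic zero) acting faithfully and diagonally on $E\times_\sk C'$.

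Finally, Theorem~\ref{Thm: standard genus one fibration with trivial Jacobian} applied to this standard realization shows that the homomorphism $\varrho:G\to\mathbf{Aut}_\sk(E)$ factors through the translation subgroup scheme $E\subseteq\mathbf{Aut}_\sk(E)$, so $G$ becomes a finite subgroup scheme $\Lambda\subseteq E$ acting diagonally on $E\times_\sk C'$ and $f$ is exactly the construction of Example~\ref{Exa: genus one fibration with trivial Jacobian, p=0}. The argument is essentially a bookkeeping of Theorem~\ref{Thm: standard genus one fibration with trivial Jacobian} together with the cited characteristic-zero input; the only genuine content is the observation in the second paragraph that a trivial Jacobian forces all smooth fibres to be isomorphic to $E$, which needs nothing more than algebraic closedness of $\sk$. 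Consequently, no step appears to present a serious obstacle.
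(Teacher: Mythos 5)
Your proposal is correct and follows the same route the paper takes: the paper states this theorem as an immediate consequence of the Serrano--Beauville result (isotrivial $\Rightarrow$ standard in characteristic zero) combined with Theorem~\ref{Thm: standard genus one fibration with trivial Jacobian}, which is exactly your ingredients (ii) and (iii). Your second paragraph merely makes explicit the (correct, and implicitly used) observation that a trivial Jacobian forces all smooth fibres to be trivial torsors under $E$, hence isomorphic to $E$.
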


\subsection{Non-standard examples}\label{Subsec: genus one fibration with trivial Jacobian, p>0}
 Unlike the characteristic zero case, there are non-standard isotrivial genus one fibration with trivial Jacobian. We shall present such an example in the following.
\begin{Exa}\label{Exa: new}
We first choose a supersingular elliptic curve $E$ over $\sk$ and fix an isomorphism  $\alpha_p\simeq \mathrm{Ker}(F_{E/\sk}: E\to E^{(p)})\subseteq E$. Thus, $\alpha_p$ is a subgroup scheme of $E$ and can act freely on $E$ by translations. Then we choose an arbitrary smooth projective curve $C'$ and denote by $C:=C'^{(p)}$. Let $U'\subseteq C'$ be an open subset admitting a free action of $\alpha_p$ and $U:=U'^{(p)}\subseteq C$. Then $\alpha_p$ can act on $E\times_\sk U'$ diagonally as usual. Note this action is free on both factors.

Take the fibration $f_0=p_2: X_0:=(E\times_\sk U')/\alpha_p \to U'/\alpha_p=U'^{(p)}=U$. It can be seen that all closed fibres of $f_0$ are isomorphic to $E$ (cf. Corollary~\ref{Cor: quotient by gp}). So $f_0$ is a smooth  genus one fibration over $U:=U'^{(p)}\subseteq C$. Take $f: X\to C$ to be the relative minimal model of $f_0$. 
$$
\xymatrix{
E\times_\sk U \ar[rr]^{\pi} \ar[d]^{p_2} && X_0= (E\times_\sk U')/\alpha_p\ar@{^{(}->}[rr]^{\,\,\,\,\,\,\,\,\,\,\,\,\,\,\,\,\text{relatively}}_{\,\,\,\,\,\,\,\,\,\,\,\,\,\,\,\,\,\,\,\,\,\,\,\,\text{minimal model}} \ar[d]^{f_0=p_2}&& X \ar[d]^f\\
U' \ar[rr]^{F_{U'/\sk}}                  &&  U'^{(p)}=U'/\alpha_p \ar@{^{(}->}[rr] &&  C
}
$$ We claim this fibration $f$ has a trivial Jacobian. The reason is similar to Example~\ref{Exa: genus one fibration with trivial Jacobian, p=0}: as $\alpha_p$ acts by translations on $E$, the Jacobian fibration of $f_0$ is trivial over $U$ and hence over $C$.

Finally, we mention that the open subset $U'$ always exists. We take $U\subseteq C$ to be any open affine subset admitting a regular function $x$ so that its K\"ahler differential $\mathrm{d}x$ generates $\Omega_{U/\sk}^1$. Then $\cO_{U'}=\cO_U[\sqrt[p]{x}]$. We can define the free $\alpha_p$-action on $U'$ over $U$ functorially by 
$$
(\alpha, \sqrt[p]{x})\mapsto \sqrt[p]{x}+\alpha, \,\,\, \forall\,\,\, T\in \mathrm{Sch}/U, \,\, \text{and} \,\, \alpha\in \alpha_p(T)=\{\alpha\in \cO_T | \alpha^p=0\}.
$$ 
\end{Exa}  
These examples in general are different to Example~\ref{Exa: genus one fibration with trivial Jacobian, p=0}. The key reason is that the $\alpha_p$-action on $U'$ does not necessarily extend to $C'$. In fact, recall by Proposition~\ref{Prop: action by alphap}, the $\alpha_p$-action on $U'$ is given by a regular derivative on $U'$ which a priori, does not necessarily extend to a global vector field on $C'$.  Then we present a concrete example as below.
\begin{Exa}[A concrete example]\label{Exa: concrete}
Taking $\A =\Spec(\sk[x])\subseteq \p=C'$ and the $\alpha_p$-action on $\A$  given by $$(\alpha,  x) \mapsto x+\alpha\cdot x^p, \,\,\,\forall \,\,\, T\in\mathrm{Sch}/\sk, \alpha\in \alpha_p(T)=\{\alpha\in \cO_T | \alpha^p=0\}$$ in Example~\ref{Exa: new}. Then the regular derivative associated to this $\alpha_p$-action by Proposition~\ref{Prop: action by alphap} is $\delta_1=x^p\partial_x \in H^0(\A,\mathcal{T}_{\A/\sk})$ and in particular, the $\alpha_p$-action is free on the open subset $U'=\Aa\subsetneq \A$ where $\delta_1$ is non-vanishing.  Let $\delta_2\in H^0(E,\mathcal{T}_{E/\sk})$ be the non-zero global vector field on $E$ associated to the free $\alpha_p$-action on $E$. Then the derivative $\delta:= 1\otimes \delta_1+ \delta_2\otimes 1$ on $E\times_\sk \A$ is the one associated to the diagonal $\alpha_p$-action on the open subset $E\times_\sk \A$ in Proposition~\ref{Prop: action by alphap}.  We can easily check that the foliation $\cF(\delta)=\cO_{E\times_\sk\A }\cdot \delta$ (cf. the paragraph below Proposition~\ref{Prop: action by alphap}) extends to a \emph{smooth} foliation $\cF$ on the total space $E\times_\sk \p$:
\begin{itemize}
\item $\cF|_{E\times_\sk \A}$ is $\cF(\delta)$ generated by $\delta$;

\item $\cF$ is generated by $D_2=t^{p-2}\delta=1\otimes \partial_t +\delta_2\otimes t^{p-2}$ near $E\times_\sk\infty$, here $t=1/x$ is the local parameter at $\infty$. 
\end{itemize} 
As a result, the surface $X$ is $(E\times_\sk \p)/\cF$ (cf. Theorem.~\ref{Thm: Ekedahl}). $$\xymatrix{
E\times_\sk\p\ar[d]^{p_2} \ar[r]^{\pi\,\,\,\,\,\,\,\,\,\,\,\,} & X=(E\times_\sk \p)/\cF \ar[d]^f & \ar@{_{(}->}[l]  (E\times_\sk \A)/\cF(\delta) \ar@{=}[r]\ar[d]^{p_2}& (E\times_\sk \A)/\alpha_p \ar[d]^{p_2}\\
C'=\p  \ar[r]^{\text{Frobenius map}}                              & C=\p                             &  \ar@{_{(}->}[l]  \A/\cF(\delta_1) \ar@{=}[r]           & \A/\alpha_p
}$$

By construction, we have $c_1(\cF)=p_2^*\cO_{\p}(-(p-2))$ and hence  by (\ref{formula: canonical bundle and foliation}), 
$$\pi^*K_X=K_{E\times_\sk \p}-(p-1)c_1(\cF)=p_2^*\cO_{\p}(p(p-3)).$$ In particular, as soon as $p\ge 3$, $K_X$ is nef. So the general fibre of its Albanese fibration  $h=p_1: X\to E/\alpha_p$ is not  smooth (otherwise  as $X$ is uniruled, the general fibre has to be $\p$ and $X$ is ruled, a contradition). In other words, there is no fibration from $X$ onto an elliptic curve with smooth fibres, ruling out the possibility of $f$ to be standard (cf. Remark~\ref{Rem: feature of naive example}~(2) and Theorem~\ref{Thm: standard genus one fibration with trivial Jacobian}). 

Finally, when $p\ge 5$,  we have $\kappa(X)=1$. In other words,  $f: X\to \p$ is an isotrivial genus one fibration with only two singular fibres over $\p$ satisfying $\kappa(X)=1$. This is never the case in characteristic zero (cf. \S~\ref{Subsec: k(S)=1}).
\end{Exa}

\subsection{Structure on genus one fibration in positive characteristics}\label{Subsubsec: Structure on genus one fibration in positive characteristics}
After Proposition~\ref{Prop: isotrivial=pre standard} in \S~\ref{subsub: no name} saying that isotrivial fibrations of genus one are pre-standard type, we give  a coarse classification on genus one fibrations with trivial Jacobian fibration in positive characteristic.

\begin{Thm}\label{Thm: structure of genus one with trivial Jacobian p>0}
Let $f: X\to C$ be a relatively minimal genus one surface fibration with trivial Jacobian fibration. Then there is an elliptic curve $E$ over $\sk$, a finite subgroup scheme $\Lambda\subseteq E$, an open subset $V\subseteq C$ and a $\Lambda$-torsor $T'$ over $V$ which is itself irreducible and smooth over $\sk$. With these data,  $f: X\to C$ is the relatively minimal model of the fibration $p_2: (T'\times_\sk E)/\Lambda \to V=T'/\Lambda$. Here $\Lambda$ act diagonally on $T'\times_\sk E$ as before.
$$
\xymatrix{
E\times_\sk T' \ar[rr]^{\pi} \ar[d]^{p_2} &&   (T'\times_\sk E)/\Lambda\ar[r]^{\,\,\,\,\,\,\,\,\,\,\,\,\,\,\,\,\,\,\,\,\simeq} \ar[d]^{p_2}& X_V\ar@{^{(}->}[r]\ar[d]^{f_V}  & X \ar[d]^f\\
T' \ar[rr]                  && T'/\Lambda \ar[r]^{\simeq} & V \ar@{^{(}->}[r]&  C
}
$$
\end{Thm}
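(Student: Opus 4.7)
The plan is to first reduce the structure group of the torsor $X_U \to U$, where $U \subseteq C$ is the smooth locus of $f$, from $E$ down to a minimal finite subgroup scheme $\Lambda \subseteq E$ via Kummer theory, and then to shrink $U$ to an open $V$ in order to secure smoothness of the resulting $\Lambda$-torsor. Concretely, the triviality of the Jacobian gives $J = E \times_\sk C$ for some elliptic curve $E/\sk$, so $f_U\colon X_U \to U$ is an \'etale torsor under $J_U = E \times_\sk U$ and defines a class $[X_U] \in H^1_{\mathrm{fppf}}(U, E)$. By Proposition~\ref{Prop: torsor of genus one is torsion} this class is killed by some $n \ge 1$; the fppf long exact sequence attached to $0 \to E[n] \to E \xrightarrow{\cdot n} E \to 0$ then lifts $[X_U]$ to a class in $H^1_{\mathrm{fppf}}(U, E[n])$, producing an $E[n]$-torsor $\tilde T \to U$ whose extension of structure group from $E[n]$ to $E$ (in the sense of Definition~\ref{Def: reduction of structure group}) recovers $X_U$.

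Next, among the finitely many subgroup schemes $\Lambda_K \subseteq E[n]_K$ up to isomorphism, I would choose $\Lambda_K$ minimal with the property that the generic restriction $[X_\eta] \in H^1(K, E_K)$ admits a lift to $H^1(K, \Lambda_K)$; here Proposition~\ref{Prop: rigidity of subgroup} guarantees that $\Lambda_K = \Lambda \times_\sk K$ for some finite subgroup scheme $\Lambda \subseteq E$ already defined over $\sk$. Let $T'_\eta$ denote a corresponding $\Lambda_K$-torsor over $\eta$.

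The main step, and expected obstacle, is to show that this minimality forces $T'_\eta = \mathrm{Spec}(L)$ for a single field extension $L/K$. If $T'_\eta$ had several irreducible components, then the stabiliser of one would give a proper reducing subgroup (cf.\ Example~\ref{Exa: typical example of extension and reduction of torsors}); and if its unique component were non-reduced, I would exploit the semi-direct decomposition $\Lambda = \Lambda^0 \rtimes \Lambda^{\mathrm{\acute{e}t}}$, with $\Lambda^0$ infinitesimal and Abelian as a subgroup scheme of $E$, and induct on $\mathrm{rk}(\Lambda^0)$, mimicking the $\alpha_p$- and $\mu_p$-Kummer/Artin-Schreier analysis of \S~\ref{Subsec: proof of a key theorem in torsor} at the generic point, to produce a further reduction of structure group to a proper subgroup of $\Lambda$, contradicting minimality. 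Granting this, $T'_\eta$ spreads out to an integral $\Lambda$-torsor over $U$, and deleting the finite image of its non-smooth locus leaves an open $V \subseteq U$ over which $T' \to V$ is smooth and irreducible; by construction the extension of structure group from $\Lambda$ to $E$ recovers $X_V$, so $X_V \cong (T' \times_\sk E)/\Lambda$ fibred over $V = T'/\Lambda$ via $p_2$, and $f\colon X \to C$ is the relatively minimal model of this smooth fibration.
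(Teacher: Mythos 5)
Your overall architecture is different from the paper's and, in its first half, genuinely cleaner for this particular statement. The paper deduces Theorem~\ref{Thm: structure of genus one with trivial Jacobian p>0} from the general product--quotient realization of \S~\ref{subsub: no name} (Proposition~\ref{Prop: isotrivial=pre standard}), where the structure group is a priori $E[n]\rtimes \mathrm{Aut}_\sk^{\mathrm{gp}}(E)\subseteq \mathbf{Aut}_\sk(E)$ and one must pass through the Galois twist $\iso$; the trivial-Jacobian hypothesis is only used at the very end, as in Theorem~\ref{Thm: standard genus one fibration with trivial Jacobian}, to kill the $\mathrm{Aut}_\sk^{\mathrm{gp}}(E)$-part. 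You instead use the trivial Jacobian from the start, so $X_U$ is a torsor under the \emph{constant} group scheme $E\times_\sk U$, the $\mathbf{Iso}$-machinery is unnecessary, and the conclusion $\Lambda\subseteq E$ (action by translations) is automatic. Up to the lift of the torsion class $o_{X_U}$ to $H^1_{\mathrm{fl}}(U,E[n])$ and the descent of $\Lambda_K$ to $\sk$ via Proposition~\ref{Prop: rigidity of subgroup}, this is all sound.

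The gap is exactly at the step you flag as the main obstacle, and the strategy you propose there would fail. To rule out a non-reduced minimal torsor you want to ``mimic the analysis of \S~\ref{Subsec: proof of a key theorem in torsor}'', but every nontrivial step of that analysis --- the base case over $\A$ and $\Aa$, and above all the lemma that the map $\theta:\Lambda\times_\sk X_0\to \Xi$ is constant --- uses the hypothesis that $\et(X/Y)$ is \emph{infinite}, which is not available in Theorem~\ref{Thm: structure of genus one with trivial Jacobian p>0} (no assumption on $\B(X/C)$ is made). Indeed Remark~\ref{Rem: main on torsors}(2) warns precisely that without that hypothesis a Nori-reduced torsor under an infinitesimal group scheme need not be reduced, so ``minimality forces integrality'' is false as a general principle and cannot be extracted from \S~\ref{Subsec: proof of a key theorem in torsor}. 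What saves your argument is a different input that you never invoke: the smoothness of the ambient surface. Since $\Lambda\subseteq E$ acts freely on $E$ by translations, Corollary~\ref{Cor: quotient by gp}(2) identifies every closed fibre of $p_1:(E\times_\sk T')/\Lambda\to E/\Lambda$ with $T'$; as $(E\times_\sk T')/\Lambda\cong X_V$ is open in the smooth surface $X$, the torsor $T'$ must be reduced, hence integral once you have cut down to one component, hence smooth after deleting finitely many points of $V$. With that substitution the proof closes; as written, the non-reduced case is not handled.
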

\begin{proof}
By Proposition~\ref{Prop: isotrivial=pre standard}, we have all the data $(E,\Lambda, T'/V)$ except that a prior, $\Lambda$ is a subgroup scheme of $\mathbf{Aut}_\sk(E)$ rather than the translation subgroup scheme $E$ as desired. Following the same argument in the proof of Theorem~\ref{Thm: standard genus one fibration with trivial Jacobian}, the fact $\Lambda$ is contained in $E$ comes from the triviality assumption of the  Jacobian fibration.
\end{proof}

\section{Surface fibration with a large B-automorphism group, $g\ge 2$}\label{Sec: B infinite, g>1}
In this and the next section, we study the relatively minimal surface fibration $f: X\to C$ over $\sk$ with infinite $\B(X/C)$. In this section, we assume the fibre genus of $f$ is $g\ge 2$. The treatment here is well known and standard.   We take $U\subseteq C$ to be the smooth locus of $f$ and $f_U: X_U:=X\times_CU\to U$   in the following of this section.

\begin{Lem}\label{Lem: basic}
The fibration $f$ is isotrivial. Namely, there is a smooth curve $F$ of genus $g$ isomorphic to any closed fibre of $f_U:X_U\to U$.
\end{Lem}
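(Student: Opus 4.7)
The plan is to use the coarse moduli space of smooth projective curves of genus $g\ge 2$ to convert the infiniteness of $\B(X/C)$ into the constancy of the moduli map. First, I would note that the smooth locus $U\subseteq C$ of $f$ is preserved by every $\sigma\in \B(X/C)$: if $(\tau,\sigma)\in \ea(X/C)$, then $\tau$ restricts to an isomorphism of closed fibres $X_u\to X_{\sigma(u)}$, so the locus of points $u\in C$ whose fibre is smooth is $\sigma$-invariant. In particular $\B(X/C)$ acts on the smooth curve $U$, and since $\mathrm{Aut}_\sk(U)$ has finite stabilizers at the generic point, any infinite subgroup admits a point with infinite orbit.

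Next, I would invoke the existence of the coarse moduli space $M_g$ of smooth projective curves of genus $g\ge 2$ over $\sk$. The smooth family $f_U:X_U\to U$ induces a moduli map
\[
\varphi: U\longrightarrow M_g,\qquad u\mapsto [X_u].
\]
For every $(\tau,\sigma)\in\ea(X/C)$ with image $\sigma\in\B(X/C)$, the isomorphism $\tau|_{X_u}:X_u\xrightarrow{\sim}X_{\sigma(u)}$ shows that $[X_u]=[X_{\sigma(u)}]$ in $M_g(\sk)$, i.e.\ $\varphi\circ \sigma|_U=\varphi$. Hence $\varphi$ is constant on each $\B(X/C)$-orbit in $U$.

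Now I would exploit the infiniteness of $\B(X/C)$. As explained above, $\B(X/C)$ contains an element with an infinite orbit in $U$, so $\varphi$ is constant on an infinite subset of the irreducible curve $U$. Any morphism from an irreducible curve to a separated scheme of finite type that is constant on an infinite set of closed points must be constant, so $\varphi$ is constant. Therefore all closed fibres of $f_U$ have the same isomorphism class in $M_g(\sk)$, which over the algebraically closed field $\sk$ means they are all isomorphic to a single smooth projective curve $F$ of genus $g$.

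The only subtlety is purely formal: I must use that for $g\ge 2$ two smooth projective curves over $\sk$ are isomorphic if and only if they define the same point of $M_g(\sk)$, which holds because $M_g$ is a coarse moduli space and $\sk$ is algebraically closed. No real obstacle arises; the rigidity of curves of general type (finiteness of automorphism groups, existence of $M_g$) does all the work, and the only place the hypothesis $g\ge 2$ is essential is in the existence of such a moduli space.
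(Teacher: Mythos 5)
Your proof is correct and is essentially the paper's argument with the details filled in: the paper's one-line proof (fibres are isomorphic along orbits of the infinite group $\B(X/C)$, hence $f$ is isotrivial) implicitly relies on exactly the infinite-orbit-plus-coarse-moduli-map reasoning you spell out for $g\ge 2$. One small phrasing issue: stabilizers of closed points in $\mathrm{Aut}_\sk(\p)$ are infinite, so the existence of an infinite $\B(X/C)$-orbit is better justified by noting that an infinite subgroup of $\mathrm{Aut}_\sk(U)$ cannot fix three points of a rational curve (nor a single point of an elliptic curve), but this is easily repaired and does not affect the argument.
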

\begin{proof}
By the definition of $\B(X/C)$, for any $\sigma\in \B(X/C)$ the fibre $f^{-1}(c)$ is isomorphic to $f^{-1}(\sigma(c))$ for any closed point $c\in C$. As $\B(X/C)$ is infinite,  $f$ is isotrivial. 
\end{proof} As $f$ is isotrivial and the functor $\underline{\mathrm{Aut}}_\sk(F)$ is rigid, the functor $$\underline{\mathrm{Iso}}_U(X_U, F\times_\sk U): \mathbf{Sch}/U \to \mathbf{Set}, \,\,\, W\mapsto \mathrm{Iso}_W(X_U\times_U W, F\times_\sk W)$$ is represented by an \'etale Galois  cover $\iota:  \mathbf{Iso}_U(X_U, F\times_\sk U)\to U$ of smooth curves whose Galois group is canonically identified with $\mathrm{Aut}_\sk(F)$. The Galois group $\mathrm{Aut}_\sk(F)$ action on $\mathbf{Iso}_U(X_U, F\times_\sk U)$ is functorially described as follows. For any $\gamma\in \mathrm{Aut}_\sk(F)$ and $\zeta:  X_U\times_U W\stackrel{\sim}{\to} F\times_\sk W \in \underline{\mathrm{Iso}}_U(X_U, F\times_\sk U)(W)$ the element $\gamma\cdot \zeta$ is the following composition of  isomorphisms
$$
\gamma\cdot \zeta: X_U\times_U W\stackrel{\zeta}{\to} F\times_\sk W \stackrel{\gamma\times \mathrm{id}}{\longrightarrow} F\times_\sk W  \in \underline{\mathrm{Iso}}_U(X_U, F\times_\sk U)(W).
$$
To avoid confusions, we  denote by $\gamma^*: \mathbf{Iso}_U(X_U, F\times_\sk U) \to  \mathbf{Iso}_U(X_U, F\times_\sk U)$ the Galois action  for any $\gamma\in \mathrm{Aut}_\sk(F)$. Following this Galois action, taking $$\phi: X_U\times_U\mathbf{Iso}_U(X_U, F\times_\sk U)\to F\times_\sk\mathbf{Iso}_U(X_U, F\times_\sk U) $$ to be the universal isomorphism, we have the   commutative diagram below for any   $\gamma\in \mathrm{Aut}_\sk(F)$:
$$
\xymatrix{
X_U\times_U\mathbf{Iso}_U(X_U, F\times_\sk U)\ar[d]^{\mathrm{id}\times \gamma^*} \ar[rr]^{\phi} && F\times_\sk \mathbf{Iso}_U(X_U, F\times_\sk U)\ar[d]^{\gamma\times \gamma^*}\\ 
X_U\times_U\mathbf{Iso}_U(X_U, F\times_\sk U) \ar[rr]^\phi&& F\times_\sk \mathbf{Iso}_U(X_U, F\times_\sk U).
}
$$  

Let $T:=\mathbf{Iso}_U(X_U, F\times_\sk U)$ and $G:=\mathrm{Aut}_\sk(F)$ for short. 
Then in summation, 
\begin{itemize}
\item $T$ is a $G$-torsor over $U$;

\item $X_U\times_U T$ is canonically isomorphic (via $\phi$) to $F\times_\sk T$

\item under the canonical isomorphism, the Galois action on $X_U\times_UT$ is translated to the diagonal $G$-action on $F\times_\sk T$. In particular, with the diagonal $G$-action on  $F\times_\sk T$, we have the following commutative diagram:
 \begin{equation}\label{equ: product-quotient g>2}
 \xymatrix{
X\ar@{=}[rr]\ar[d]^{f_U} && (X\times_U T)/G \ar[d]^{p_2} \ar[rr]_\simeq^{\phi} && (F\times_\sk T)/G \ar[d]^{p_2}\\
U \ar@{=}[rr] && T/G        \ar@{=}[rr] && T/G              .}
 \end{equation} 
\end{itemize}

\begin{Prop}\label{Prop: infinite Bt g>2}
 The group $\et(T/U)$  (cf. Definition~\ref{Def: equivariant automorphism}) is infinite.
\end{Prop}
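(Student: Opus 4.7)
The plan is to produce an infinite family of equivariant torsor automorphisms of $\nu: T \to U$ by restricting elements of $\mathbf{E}(X/C)$ and using the universal property of $T = \iso[V=U]$, then invoke Corollary~\ref{Cor: Bt definition} to conclude.

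\textbf{Step 1: $U$ is $\mathbf{B}(X/C)$-stable, with infinite image in $\mathrm{Aut}_\sk(U)$.} For any $(\tau,\sigma)\in\ea(X/C)$, the map $\tau$ sends the fibre $f^{-1}(c)$ isomorphically onto $f^{-1}(\sigma(c))$, so smooth fibres go to smooth fibres and $\sigma(U)=U$. The restriction map $\mathbf{B}(X/C)\to \mathrm{Aut}_\sk(U)$ has finite kernel (bounded by $|\mathrm{Aut}_\sk(C\setminus U)|$), hence the image $\mathbf{B}(X/C)|_U\subseteq\mathrm{Aut}_\sk(U)$ is infinite.

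\textbf{Step 2: Construct a homomorphism $\ea(X/C)\to \et(T/U)$.} Given $(\tau,\sigma)\in\ea(X/C)$, restrict to $(\tau_U,\sigma_U)\in\ea(X_U/U)$ and view $\tau_U$ as an isomorphism of $U$-schemes $\widetilde{\tau}_U\colon X_U\xrightarrow{\sim} X_U\times_{U,\sigma_U}U$. For any $w\colon W\to U$ and any $\zeta\in T(W)=\mathrm{Iso}_W(X_U\times_U W,\,F\times_\sk W)$, define $\widetilde{\tau}(\zeta)\in T(\sigma_U\circ w)$ as the composite
$$
X_U\times_{U,\sigma_U w}W \;=\;(X_U\times_{U,\sigma_U}U)\times_{U,w}W \;\xrightarrow{\;\widetilde{\tau}_U^{-1}\times\mathrm{id}\;}\; X_U\times_{U,w}W \;\xrightarrow{\;\zeta\;}\; F\times_\sk W.
$$
This functorially gives a morphism $\widetilde{\tau}\colon T\to T$ lifting $\sigma_U$. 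Since the modification happens only on the $X_U$-factor, it commutes with the Galois/$G$-action of Step before (\ref{equ: product-quotient g>2}), which acts on $\zeta$ via post-composition with $\gamma\times\mathrm{id}_W$ on the $F$-side. Hence $(\widetilde{\tau},\sigma_U)\in\et(T/U)$, and $(\tau,\sigma)\mapsto(\widetilde{\tau},\sigma_U)$ is a group homomorphism.

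\textbf{Step 3: Conclude infiniteness.} By construction, the composition
$$
\ea(X/C)\;\longrightarrow\;\et(T/U)\;\xrightarrow{\;\nu_\sharp\;}\;\mathrm{Aut}_\sk(U)
$$
is exactly $(\tau,\sigma)\mapsto\sigma_U$, whose image is $\mathbf{B}(X/C)|_U$ and thus infinite by Step 1. Therefore $\Bt(T/U)=\mathrm{Im}(\nu_\sharp)$ is infinite, and Corollary~\ref{Cor: Bt definition} yields the desired infiniteness of $\et(T/U)$.

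The only non-routine point is the compatibility in Step 2: one must trace the functorial definition carefully to confirm that $\widetilde{\tau}$ is indeed a $G$-torsor morphism (not merely a $U$-morphism of schemes), but this follows because the $G$-action on $T$ is defined by post-composition with $G$ on the $F$-side whereas $\widetilde{\tau}$ acts by pre-composition with $\widetilde{\tau}_U^{-1}$ on the $X_U$-side, so the two operations trivially commute.
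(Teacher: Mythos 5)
Your proposal is correct and takes essentially the same route as the paper: the paper also produces, for each $(\tau,\sigma)\in\ea(X/C)$, an isomorphism $\Theta(\tau,\sigma):\mathbf{Iso}_U(X_U,F\times_\sk U)\to \mathbf{Iso}_U(X_U,F\times_\sk U)\times_{U,\sigma}U$ induced by $(\tau,f_U)$, checks $\mathrm{Aut}_\sk(F)$-equivariance exactly by your "pre-composition on the $X_U$-side versus post-composition on the $F$-side" observation, and concludes that $\Bt(T/U)$ contains the infinite group $\B(X/C)$. (Minor remark: the restriction $\B(X/C)\to\mathrm{Aut}_\sk(U)$ is in fact injective since $U$ is dense in $C$, so your "finite kernel" step is even easier than stated.)
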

\begin{proof}
Given any $(\tau,\sigma)\in \ea(X/C)$, let $X_U^\sigma:=X_U\times_{U,\sigma} U$. Then the isomorphism $$(\tau,f_U): X_U\to X_U=X_U\times_{U,\sigma} U$$
$$ \xymatrix{
 X_U \ar[drr]^{(\tau,f_U)}_{\simeq }\ar@/^1pc/[drrrr]^{f_U} \ar@/_1pc/[ddrr]_\tau \\
  && X_U\times_{U,\sigma} U \ar[rr]\ar[d] && U\ar[d]^{\sigma}\\
                          && X_U\ar[rr]^{f_U}&& U
}$$
induces an $\mathrm{Aut}_\sk(F)$-equivariant isomorphism 
$$\Theta(\tau,\sigma): \mathbf{Iso}_U(X_U, F\times_\sk U) \to\mathbf{Iso}_U(X^{\sigma}_U, F\times_\sk U)= \mathbf{Iso}_U(X_U, F\times_\sk U)\times_{U,\sigma} U.$$
 Then the isomorphism
$$\tau^*=p_1\circ \Theta(\tau,\sigma):  \mathbf{Iso}_U(X_U, F\times_\sk U)\simeq  \mathbf{Iso}_U(X_U, F\times_\sk U)$$
({\it i.e.},  $\Theta(\tau,\sigma)=\tau^*\times \iota)$ is such that $\iota\cdot\tau^*=\sigma\cdot \iota$ . Namely, $(\tau^*,\sigma)$ is contained in $\ea(\mathbf{Iso}_U(X_U, F\times_\sk U)/U)$.
$$
\xymatrix{
 \mathbf{Iso}_U(X_U, F\times_\sk U) \ar[d]^\iota\ar[rr]^{\tau^*} &&  \mathbf{Iso}_U(X_U, F\times_\sk U)\ar[d]^\iota\\
U\ar[rr]^\sigma && U}
$$
As mentioned the above isomorphism $\tau^*$ is $\mathrm{Aut}_\sk(F)$-equivariant,  $(\tau^*,\sigma)$ is in fact contained in the subgroup $\et(T/U)$, see the following commutative diagram below. 
$$
\xymatrix{
\mathbf{Iso}_U(X_U, F\times_\sk U)\ar[rr]^{\gamma^*} \ar[d]^{\Theta(\tau,\sigma)} \ar@/_5pc/[dd]_{\tau^*\times \iota}&& \mathbf{Iso}_U(X_U, F\times_\sk U) \ar[d]^{\Theta(\tau,\sigma)}  \ar@/^5pc/[dd]^{\tau^*\times \iota}\\
\mathbf{Iso}_U(X_U^\sigma, F\times_\sk U)\ar@{=}[d]\ar[rr]^{\gamma^\sharp} && \mathbf{Iso}_U(X_U^\sigma, F\times_\sk U)\ar@{=}[d]\\
 \mathbf{Iso}_U(X_U, F\times_\sk U)\times_{U,\sigma} U \ar[rr]^{\gamma^*\times \mathrm{id}} &&  \mathbf{Iso}_U(X_U, F\times_\sk U)\times_{U,\sigma} U
}
$$
Here $\gamma^\sharp$ is the automorphism induced by the automorphism  $\gamma\in \mathrm{Aut}_\sk(F)$ on the target space $F\times_\sk U$ of $\mathbf{Iso}_U(X_U\times_{U,\sigma} U, F\times_\sk U)$ similar to $\gamma^*$. 

In particular, $\Bt(T/U)$ contains the infinite subgroup $\B(X_U/U)=\B(X/C)$ and we are done.
\end{proof}
Following Proposition~\ref{Prop: infinite Bt g>2} and Theorem~\ref{Thm: main on torsors}, there is a subgroup $\Gamma\subseteq G$ so that
the $G$-torsor $T$ furthermore reduces to a $\Gamma$-torsor $T'$ satisfying:
\begin{itemize}
\item $T'$ is irreducible and smooth over $\sk$;

\item $\et(T'/U)$ is infinite;

\item the $\Gamma$-action on $T'$ extends to its normal compactification $T'\subseteq C'$.
\end{itemize}
In particular, we have the following picture after (\ref{equ: product-quotient g>2}):
$$
\xymatrix{
(F\times_\sk C')/\Gamma \ar[d]^{p_2} & (F\times_\sk T')/\Gamma \ar[d]^{p_2} \ar@{=}[r]  \ar@{_{(}->}[l]& (F\times_\sk T)/G \ar[rr]^\simeq \ar[d]^{p_2}&& X_U  \ar[d]^{f_U}\\
C=C'/\Lambda &\ar@{_{(}->}[l] T'/\Gamma\ar@{=}[r]                         &  T/G \ar@{=}[rr] && U
}
$$
 
In summary, we have the following theorem.
\begin{Thm}
Let $f: X\to C$ be a relatively minimal surface fibration of  genus $g\ge 2$ with infinite $\B(X/C)$. Taking $U\subseteq C$ to be the smooth locus of $f$, then there are the following data:
\begin{enumerate}[i).]
\item    a smooth curve $F$ of genus $g$;

\item   an \'etale Galois cover $\iota: T'\to U$ with Galois group $\Gamma\subseteq \mathrm{Aut}_\sk(F)$;
 
\item  this Galois cover $\iota: T'\to U$, considered as a $\Gamma$-torsor, has an infinite $\et(T'/U)$.
\end{enumerate}
With these data, the fibration $f_U$ and $f$ is given as follows:
$$
\xymatrix{
X_U\ar@{=}[rr] \ar[d]^{f_U} && (F\times_\sk T')/\Gamma \ar[d]^{p_2}\\
U  \ar@{=}[rr]                         &&  T'/\Gamma
}\,\,\,\xymatrix{
X \ar@{-->}[rrrr]^{\text{minimal resolution}}    \ar[d]^{f } &&&& (F\times_\sk C')/\Gamma \ar[d]^{p_2}\\
 C  \ar@{=}[rrrr]                         &&&&  C'/\Gamma
}
$$
Here $C'$ is the normal compactification of $T'$ and $\Gamma$-acts on $F\times_\sk T'$ and $F\times_\sk C'$ both diagonally.
\end{Thm}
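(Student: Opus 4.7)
The plan is to thread together the three ingredients already prepared in this section and invoke the torsor-reduction result. First, Lemma~\ref{Lem: basic} produces the smooth curve $F$ of genus $g$ that realises every closed fibre of $f_U$. Since $g\ge 2$, the group scheme $\mathbf{Aut}_\sk(F)$ is finite and \'etale, so the Isom functor $\underline{\mathrm{Iso}}_U(X_U,F\times_\sk U)$ is representable by an \'etale Galois cover $\iota\colon T\to U$ with Galois group $G:=\mathrm{Aut}_\sk(F)$, which we view as a $G$-torsor over $U$ in the sense of Remark~\ref{Rem: on torsors}. The universal isomorphism translates the Galois action on $X_U\times_UT$ into the diagonal $G$-action on $F\times_\sk T$, giving the product-quotient presentation $X_U=(F\times_\sk T)/G$ of diagram~(\ref{equ: product-quotient g>2}).

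Next, I would apply Proposition~\ref{Prop: infinite Bt g>2} to promote any $(\tau,\sigma)\in\ea(X/C)$ to an element $(\tau^*,\sigma)\in\et(T/U)$; the resulting inclusion $\B(X/C)\subseteq\Bt(T/U)$ forces $\et(T/U)$ to be infinite. Since $G$ is \'etale, its identity component $G^0$ is trivial and therefore Abelian, so Theorem~\ref{Thm: main on torsors} applies to the pair $(G,\iota\colon T\to U)$ and yields a subgroup scheme $\Gamma\subseteq G$ together with a $\Gamma$-torsor $\nu'\colon T'\to U$ satisfying: $T$ reduces to $T'$; the curve $T'$ is smooth and irreducible; $\et(T'/U)$ is still infinite; and the $\Gamma$-action on $T'$ extends to its normal compactification $T'\subseteq C'$. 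Because $\Gamma\subseteq G$ is automatically \'etale, $\nu'$ is an \'etale Galois subcover (cf.~Remark~\ref{Rem: on torsors}), and this is precisely the data (i)--(iii) required by the statement.

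Finally, I would derive the two diagrams. From the reduction one has the canonical identification $T=(G\times_U T')/\Gamma$ of Definition~\ref{Def: reduction of structure group}, from which descent gives $(F\times_\sk T)/G=(F\times_\sk T')/\Gamma$; together with (\ref{equ: product-quotient g>2}) this produces the left diagram for $f_U$. For the global picture I would form the projective surface $Y:=(F\times_\sk C')/\Gamma$ with the diagonal $\Gamma$-action: its only singularities are quotient singularities over the non-free locus of $\Gamma$ on $C'$, hence rational, and its smooth locus contains $X_U=(F\times_\sk T')/\Gamma$. The projection $p_2\colon Y\to C'/\Gamma$ restricts on $U$ to $f_U$, and since $C'/\Gamma$ is a normal projective curve containing $U$ it must equal $C$; hence $X$ and $Y$ are birational over $C$. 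The main obstacle of the whole argument has already been absorbed into Theorem~\ref{Thm: main on torsors}; the only residual point is that, because $g\ge 2$, the relatively minimal model of a surface fibration with generic fibre $F_{K(C)}$ over $C$ is unique, and so $X$ is recovered from $Y$ as the minimal desingularization.
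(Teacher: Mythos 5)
Your proposal is correct and follows essentially the same route as the paper: isotriviality via Lemma~\ref{Lem: basic}, representability of the Isom functor as an \'etale $\mathrm{Aut}_\sk(F)$-torsor giving the product-quotient presentation (\ref{equ: product-quotient g>2}), Proposition~\ref{Prop: infinite Bt g>2} to get $\B(X/C)\subseteq\Bt(T/U)$, and Theorem~\ref{Thm: main on torsors} to reduce to the irreducible smooth $\Gamma$-torsor $T'$ with extendable action. Your closing remarks on the rationality of the quotient singularities of $(F\times_\sk C')/\Gamma$ and the uniqueness of the relatively minimal model for $g\ge 2$ are in fact slightly more careful than the paper, which passes over the global compactification step without comment.
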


Note conversely, given the above data $(T',U,\Gamma, F)$, we can easily check that $\Bt(T'/U)$ is contained in  $\B(f:(F\times_\sk U')/\Gamma \to U)$. So the above data $(T',U,\Gamma, F)$ is a sufficient and necessary condition to construct fibrations $f: X\to C$ as desired. Now applying the classification of torsors with infinite equivariant automorphism group given in \S~\ref{Subsec: classification of torsors}, we have the following classification theorem. 
\begin{Thm}\label{Thm: main in g>2}
Up to isomorphism, there are exactly four kinds of relatively minimal surface fibration $f:X\to C$ of genus $g\ge 2$ with $\B(X/C)$ infinite as below.
\begin{enumerate}[(A.)]
\item There is an elliptic curve $E'$ over $\sk$, a finite subgroup $\Gamma\subseteq E'(\sk)$ and a smooth curve $F$ of genus $g$ admitting a faithful $\Gamma$-action so that $$f=p_2: X:=(F\times_\sk E')/\Gamma\to C:=E'/\Gamma.$$ Here $\Gamma$ acts on $E'$ by natural translations and acts diagonally on $F\times_\sk E'$. 

In this case $C$ is an elliptic curve, $X$ is minimal, $\kappa(X)=1$ and $f$ is smooth.

\item  The fibration is $f=p_2: X:=F\times_\sk \p \to C:=\p$ for a smooth curve $F$ of genus $g$. 

\item ($\mathrm{char}.(\sk)=p>0$)  There is a non-trivial finite dimensional $\mathbb{F}_p$-linear subspace $\Gamma\subseteq \sk$ and a smooth curve $F$ of genus $g$ admitting a faithful $\Gamma$-action so that $f: X\to C$ is as follows
$$\xymatrix{
X\ar[rrrrrr]_{\vartheta}^{\text{minimal resolution of singularities}} \ar[d]^f&&&&&& (F\times_\sk \p)/\Gamma\ar[d]^{p_2}\\
C\ar@{=}[rrrrrr] &&&&&& \p/\Gamma
}$$ Here $\Gamma$ acts on $\p$ additively (cf. Example~\ref{Exa: example of group actions}) and acts diagonally on $F\times_\sk \p$. 

In this case $C\simeq \p, \kappa(X)=-\infty$, $f$ has a unique singular fibre at $\infty$, and the Albanese map of $X$ factors through the fibration $$p_1\circ \vartheta: X\stackrel{\vartheta}{\to} (F\times_\sk \p)/\Gamma \stackrel{p_1}{\to}  F/\Gamma.$$ In particular, $q(X)=g(F/\Gamma)$. Moreover, $X$ is minimal if and only if the $\Gamma$-action on $F$ is free.

\item  There is some $n>1$ (in case $\mathrm{char}.(\sk)=p>0$, $n$ is prime to $p$), a smooth curve $F$ of genus $g$ admitting a faithful $\mu_n(\sk)$-action so that $f: X\to C$ is as follows
$$\xymatrix{
X\ar[rrrrrr]_{\vartheta}^{\text{minimal resolution of singularities}} \ar[d]^f&&&&&& (F\times_\sk \p)/\Gamma\ar[d]^{p_2}\\
C\ar@{=}[rrrrrr] &&&&&& \p/\Gamma
}$$ Here $\mu_n(\sk)$ acts on $\p$ multiplicatively (cf. Example~\ref{Exa: example of group actions}) and acts diagonally on $F\times_\sk \p$. 

In this case $C\simeq \p, \kappa(X)=-\infty$, $f$ has two singular fibres at $0,\infty$, and the Albanese map of $X$ factors through the fibration $$p_1\circ \vartheta: X\stackrel{\vartheta}{\to} (F\times_\sk \p)/\Gamma \stackrel{p_1}{\to}  F/\Gamma.$$ In particular, $q(X)=g(F/\Gamma)$. Moreover, $X$ is minimal if and only if the $\Gamma$-action on $F$ is free.   
\end{enumerate}
\end{Thm}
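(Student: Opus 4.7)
The plan is to combine the structural theorem obtained in the paragraphs preceding the statement with the torsor classification of Proposition~\ref{Prop: classification of torsors with infinite Bt}. Write $(\Gamma,\nu':T'\to U)$ for the pair produced. Since $\Gamma$ embeds into $\mathbf{Aut}_\sk(F)$ and this group scheme is finite and \'etale for $g\ge 2$, the group scheme $\Gamma$ is automatically \'etale. Applying the classification therefore leaves only the four possibilities for $T'$: an elliptic curve $E'$ with $\Gamma$ a finite translation subgroup; $\p$ with $\Gamma$ trivial; $\A$ with $\Gamma$ a finite $\mathbb{F}_p$-linear subspace of $\mathbb{G}_a(\sk)=\sk$; or $\Aa$ with $\Gamma=\mu_n(\sk)$ for some $n$ coprime to $\mathrm{char}(\sk)$. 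These match cases (A)--(D) respectively, with the faithfulness of $\Gamma\hookrightarrow\mathbf{Aut}_\sk(F)$ giving the faithful $\Gamma$-action on $F$.

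For each case one writes down the model $(F\times_\sk C')/\Gamma$, where $C'$ is the normal compactification of $T'$, and reads off the assertions. In (A) both factors are proper and the diagonal action is free, so the quotient is smooth and equals $X$; smoothness of $f$ with fibre genus $\ge 2$ over an elliptic base yields $\kappa(X)=1$ by Iitaka's easy addition and rules out vertical $(-1)$-curves. Case (B) is immediate. In cases (C) and (D) the quotient $\p/\Gamma$ is again $\p$ because additive ({\it resp.} multiplicative) quotients of $\A$ ({\it resp.} $\Aa$) remain of the same type; the one ({\it resp.} two) distinguished point(s) fixed by $\Gamma$ in $\p$ are precisely where $f$ fails to be smooth, which explains the singular-fibre count.

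The Kodaira-dimension and irregularity claims in (C) and (D) come from the other projection $p_1\circ\vartheta:X\to (F\times_\sk\p)/\Gamma\to F/\Gamma$. Its generic fibre is $\p$, so $X$ is birationally ruled over $F/\Gamma$, forcing $\kappa(X)=-\infty$ and $q(X)=g(F/\Gamma)$, and the Albanese morphism then factors through this map by the universal property of the Albanese variety. Conversely, to see that each configuration really does produce a fibration with infinite $\B(X/C)$, one argues as in the proof of Proposition~\ref{Prop: infinite Bt g>2} that $\Bt(T'/U)$ embeds into $\B(X/C)$, and the four classes of torsors listed all have infinite $\Bt$.

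The principal technical point is the minimality criterion in (C) and (D). When $\Gamma$ acts freely on $F$ the diagonal action on $F\times_\sk\p$ is free away from finitely many points of the base, and after checking that no $(-1)$-curve lies in the resulting surface, $X$ coincides with the quotient and is minimal. When $\Gamma$-fixed points on $F$ exist, the quotient acquires cyclic quotient singularities lying over the one or two distinguished points of $C$, and one must trace through the Hirzebruch--Jung resolution to exhibit the $(-1)$-curves that appear in these special fibres. This fibre-local analysis is the main obstacle; the remaining global assertions follow formally from the product-quotient description.
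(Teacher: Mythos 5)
Your proposal follows essentially the same route as the paper: the structure theorem established immediately before the statement reduces everything to the pair $(\Gamma,\nu':T'\to U)$, and Proposition~\ref{Prop: classification of torsors with infinite Bt} (with $\Gamma$ forced to be \'etale because $\mathbf{Aut}_\sk(F)$ is finite \'etale for $g\ge 2$) yields exactly the four cases, the paper offering no more detail than your sketch for the supplementary assertions on singular fibres, Kodaira dimension and minimality. One small correction in case (A): easy addition applied to $f$ itself only gives $\kappa(X)\le \kappa(F)+\dim C=2$; to conclude $\kappa(X)=1$ apply easy addition instead to the elliptic fibration $X\to F/\Gamma$ for the upper bound (or invoke invariance of Kodaira dimension under the finite \'etale cover $F\times_\sk E'\to X$), combined with the fact that $X$ fibres over an elliptic curve with fibres of genus $\ge 2$, which rules out $\kappa(X)\le 0$.
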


\section{Surface fibration with a large B-automorphism group, $g=1$}\label{Sec: B-infinite, g=1}
We then study the relatively minimal genus one fibration $f:X\to C$ with infinite $\B(X/C)$. As the automorphism group of a genus one curve is no longer rigid, the approach in the previous section does not apply.
\subsection{Statement of the main result}
\begin{Thm}\label{Thm: main+main}
Given a relatively minimal genus one fibration $f: X\to C$ with infinite $\B(X/C)$. Take $U\subseteq C$ to be the smooth locus of $f$. Then 
there is a an elliptic curve $E$ over $\sk$, a finite subgroup scheme $\Lambda \subseteq  \mathbf{Aut} _\sk(E)$, a (fppf-)$\Lambda$-torsor $\nu: T\to U$ over $U$ so that 
\begin{enumerate}[a).]

\item $\et(T/U)$ (equivalently $\Bt(T/U)$) is infinite;

\item $T$ itself is irreducible and smooth over $\sk$; 

\item let $T\subseteq C'$ be the normal compactification of $T$, then the $\Lambda$-action on $T$ extends to $C'$. 
\end{enumerate}
With these data, the fibration $f: X\to C$ is given as follows:
$$\xymatrix{
X\ar[d]^f \ar[rrrrrr]_{\text{minimal resolution of singularities}}^\vartheta &&&&&& (E\times_\sk C')/\Lambda\ar[d]^{p_2}\\ 
C\ar@{=}[rrrrrr]&&&&&& C'/\Lambda}
$$
Here $\Lambda$-acts on $E\times_\sk C'$ diagonally.
\end{Thm}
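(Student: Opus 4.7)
The plan is to mirror the approach of Section~\ref{Sec: B infinite, g>1} and reduce to Theorem~\ref{Thm: main on torsors}, with an extra layer to handle the non-rigidity of $\mathbf{Aut}_\sk(E)$. As in Lemma~\ref{Lem: basic}, the infinitude of $\B(X/C)$ forces $f$ to be isotrivial, so I fix an elliptic curve $E$ (with a chosen group scheme structure) such that every smooth closed fibre of $f$ is isomorphic to the underlying curve of $E$ and the generic fibre of the Jacobian $\mu: J \to C$ is $E_K$ as an elliptic curve. Since $H_0 := \mathrm{Aut}_\sk^{\mathrm{gp}}(E)$ is finite étale, the scheme $\iso$ is an étale $H_0$-torsor over $U$. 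Replacing it by an irreducible component with stabilizer $H \subseteq H_0$ produces an étale $H$-torsor $\pi: M_1 \to U$ between smooth irreducible $\sk$-curves on which $J_U$ becomes canonically $E \times_\sk M_1$.

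Over $M_1$, the pullback $X_{M_1} := X_U \times_U M_1$ is a torsor under $E \times_\sk M_1$, and its class in $H^1_{\mathrm{fl}}(M_1, E)$ is $N$-torsion for some $N$ by Proposition~\ref{Prop: torsor of genus one is torsion} applied after pullback. The Kummer-type sequence $0 \to E[N] \to E \xrightarrow{[N]} E \to 0$ then furnishes an $E[N]$-torsor $T_1 \to M_1$ with $X_{M_1} = (E \times_\sk T_1)/E[N]$ diagonally. Because $E[N] \subseteq E$ is preserved by every element of $H_0$, and because the $H$-action on $X_{M_1}$ (descending to $X_U$) preserves the $E$-torsor structure up to the canonical $H$-action on the structure group $E$, one can arrange that $T_1$ carries a compatible $H$-action, so that $T_1 \to U$ becomes a torsor under $\Lambda := E[N] \rtimes H \subseteq \mathbf{Aut}_\sk(E)$ with $X_U = (E \times_\sk T_1)/\Lambda$ diagonally; the identity component $\Lambda^0 = E[N]^0 \subseteq E$ is Abelian.

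To verify that $\et(T_1/U)$ is infinite I would repeat the functorial construction of Proposition~\ref{Prop: infinite Bt g>2}: each $(\tau,\sigma) \in \ea(X/C)$ induces, by functoriality of $\iso$ and of the Kummer reduction, an element of $\et(T_1/U)$ lifting $\sigma$, so $\Bt(T_1/U)$ contains a finite-index subgroup of $\B(X/C)$. Theorem~\ref{Thm: main on torsors} applied to $(\Lambda, T_1 \to U)$ then produces a subgroup scheme $\Lambda' \subseteq \Lambda$ and a $\Lambda'$-torsor $T \to U$ that is smooth, irreducible, has infinite $\et(T/U)$, and whose action extends to the normal compactification $T \subseteq C'$. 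Reduction of structure group preserves the associated bundle, so $X_U = (E \times_\sk T)/\Lambda'$, and because $f$ is relatively minimal it must be the minimal resolution of the isotrivial quotient $p_2: (E \times_\sk C')/\Lambda' \to C'/\Lambda' = C$. Renaming $\Lambda'$ as $\Lambda$ yields the desired statement.

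The main obstacle is the construction of the compatible $H$-action on $T_1$ in the second paragraph. The Kummer lift from $H^1_{\mathrm{fl}}(M_1, E)$ to $H^1_{\mathrm{fl}}(M_1, E[N])$ is only defined up to the image of $E(M_1)$, so making it $H$-equivariant requires a small descent argument, and in positive characteristic, where $E[N]$ may have a non-trivial infinitesimal part, étale-descent arguments do not apply. A clean workaround is to replace $T_1$ by the Nori-reduced torsor (cf.\ Remark~\ref{Rem: main on torsors}) associated to $X_{M_1}$ over $M_1$, which is intrinsic and therefore automatically $H$-stable, at the possible cost of shrinking $\Lambda$; this is harmless since Theorem~\ref{Thm: main on torsors} reduces $\Lambda$ further anyway.
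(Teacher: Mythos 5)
Your overall strategy coincides with the paper's: isotriviality, reduction of the $J_U$-torsor $X_U$ to a torsor under a finite group scheme of the form $E[n]\rtimes(\text{group automorphisms})$, infiniteness of the equivariant torsor-automorphism group, then Theorem~\ref{Thm: main on torsors}. But the step you flag as "the main obstacle" is not a technicality that a workaround dissolves; it is, together with a second issue you do not flag, the actual content of the paper's proof, and your proposal does not close either gap. First, the proposed fix via Nori-reduction does not make sense as stated: $X_{M_1}$ is a torsor under $E\times_\sk M_1$, which is not finite, so it has no intrinsic Nori-reduced torsor. The reductions of $X_{M_1}$ to $E[N]$-torsors form a nonempty torsor under $E(M_1)/N\cdot E(M_1)$, and different choices have genuinely different (and not obviously $H$-compatible) further reductions; there is no canonical one to fall back on. The paper instead keeps a specific choice -- a section $s\in\mathbf{Pic}^n_{X_U/U}(U)$, with $Z:=(\cdot n)^{-1}(s(U))\subseteq X_U$ defined already over $U$ -- and glues the $E[n]$- and $\mathrm{Aut}^{\mathrm{gp}}_\sk(E)$-actions on $\widetilde{Z}=Z\times_U\iso$ explicitly (Lemma~\ref{Lem: action glue}), which is how the semidirect-product torsor structure over $U$ is actually obtained.

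Second, and more seriously, your claim that ``each $(\tau,\sigma)\in\ea(X/C)$ induces, by functoriality of $\iso$ and of the Kummer reduction, an element of $\et(T_1/U)$'' is false: the Kummer reduction is not functorial precisely because of the $E(M_1)/N$-ambiguity above, so a given $(\tau,\sigma)$ need not preserve the chosen $E[N]$-torsor $T_1$ at all. In the paper only the stabilizer $N_s\subseteq\ea(X/C)$ of the chosen section $s$ acts on $\widetilde{Z}$, and the whole point of Proposition~\ref{Prop: finite kernel and cokernel} is to prove that $f_\sharp(N_s)$ has finite index in $\B(X/C)$. That proof is not formal: it needs the exact sequence (\ref{ext: aut of genus one}), the finiteness of $\mathrm{Aut}^{\mathrm{gp}}_K(J_\eta)$, and the Lang--N\'eron theorem to show that $J_U(U)/n\cdot J_U(U)$ is finite, hence that there are only finitely many $\mathrm{Aut}_C(X)$-orbits in $\mathbf{Pic}^n_{X_U/U}(U)$. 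None of this appears in your proposal, so the infiniteness of $\et(T/U)$ -- the hypothesis you need in order to invoke Theorem~\ref{Thm: main on torsors} -- is not established.
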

\begin{Cor}\label{Cor: last}
In the above theorem, we have $\kappa(X)=0$ if $g(C)=1$ and $\kappa(X)=-\infty$ if $g(C)=0$. 
\end{Cor}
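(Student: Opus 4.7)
The plan is to combine the explicit structure from Theorem~\ref{Thm: main+main} with the four-case classification of $(\Lambda, T)$ in Proposition~\ref{Prop: classification of torsors with infinite Bt}. First I would determine $g(C)$ in each case: in Case~(A), where $T=E'$ is elliptic and $\Lambda\subseteq E'$ acts by translations, the normal compactification satisfies $C'=E'$ and $C=E'/\Lambda$ is again elliptic so $g(C)=1$; in Cases (B), (C), (D) one has $T\in\{\p,\A,\Aa\}$ whose normal compactification is always $\p$, and $C=\p/\Lambda\cong \p$ (either trivially, or because an additive, respectively multiplicative, finite subgroup of $\mathrm{Aut}_\sk(\p)$ yields $\p$ again). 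Hence $g(C)=1$ precisely in Case~(A).

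For $g(C)=1$, $\Lambda\subseteq E'$ acts freely on $E'$, so the diagonal $\Lambda$-action on $E\times E'$ is free and $X=(E\times E')/\Lambda$ is already smooth by Corollary~\ref{Cor: quotient by gp}, i.e.\ $\vartheta$ is an isomorphism. I would then prove $K_X\equiv 0$ by quotienting along the filtration $1\to \Lambda^0\to\Lambda\to\Lambda(\sk)\to 1$. Since $\mathrm{Aut}_\sk^{\mathrm{gp}}(E)$ is étale, any connected subgroup scheme of $\mathbf{Aut}_\sk(E)$ is contained in the translation subgroup $E$; so $\Lambda^0\subseteq E$. Filtering $\Lambda^0$ by height-one subquotients ($\alpha_p$ or $\mu_p$), the foliation formula~\eqref{formula: canonical bundle and foliation} reduces the claim to showing the associated foliation is a trivial line bundle at each stage: the diagonal derivation $\delta=\delta_E\otimes 1+1\otimes \delta_{E'}$ on $E\times E'$ is nowhere vanishing, because both $\delta_E$ and $\delta_{E'}$ are translation-invariant and hence nowhere-vanishing on their elliptic factors, giving $c_1(\cF)=0$ at each step and therefore $K\equiv 0$ after passage through $\Lambda^0$. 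The remaining étale quotient by $\Lambda(\sk)$ is finite étale (free action, hence unramified) and preserves numerical triviality of the canonical class. Finally, $K_X\equiv 0$ together with the fact that a ruled surface has $K\cdot f=-2$ on any $\p$-ruling $f$ rules out $\kappa(X)=-\infty$; the Enriques--Bombieri--Mumford classification then forces $\kappa(X)=0$.

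For $g(C)=0$, one has $C'=\p$ in all three remaining cases, so $E\times C'=E\times \p$ is a ruled surface. The finite surjection $E\times \p\twoheadrightarrow (E\times \p)/\Lambda$ shows the quotient is uniruled, and since $\vartheta:X\to (E\times \p)/\Lambda$ is birational, $X$ itself is uniruled. By the Enriques--Bombieri--Mumford classification of smooth projective surfaces in arbitrary characteristic, a uniruled surface has Kodaira dimension $-\infty$; hence $\kappa(X)=-\infty$.

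The main obstacle I expect is the verification of $K_X\equiv 0$ in Case~(A) when $\Lambda$ has a non-trivial infinitesimal part: the wild (purely inseparable) quotient precludes a naive ramification argument, so one must carry out the iterative foliation computation above, with the nowhere-vanishing of the diagonal derivation on $E\times E'$ as the decisive geometric input.
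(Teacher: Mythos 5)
Your case analysis and the $g(C)=1$ half are fine (if more laborious than necessary: once you know $\Lambda^0$ lies in the translation part of $\mathbf{Aut}_\sk(E)$ and $\Lambda$ maps into the translation part of $\mathbf{Aut}_\sk(E')$, the quotient $(E\times_\sk E')/\Lambda^0$ is simply the quotient of an abelian surface by a finite subgroup scheme acting by translations, hence again an abelian surface with $K=0$; the paper dispenses with all of this by observing that $f$ is a smooth genus one fibration over an elliptic curve, so $K_X\equiv 0$ by the canonical bundle formula). But the $g(C)=0$ half contains a genuine error: in positive characteristic it is \emph{not} true that a uniruled surface has Kodaira dimension $-\infty$. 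The quotient map $E\times_\sk\p\to (E\times_\sk\p)/\Lambda$ is purely inseparable whenever $\Lambda$ has a nontrivial infinitesimal part (which is precisely the situation this paper is designed to handle), so what you obtain is only inseparable uniruledness. Quasi-elliptic surfaces of Kodaira dimension one, supersingular K3 surfaces, and Shioda's unirational surfaces of general type are all uniruled in this sense, so the classification does not let you conclude $\kappa(X)=-\infty$ from uniruledness alone. As written, this step fails.

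The repair is what the paper actually does: use the \emph{other} projection. Since $C'\simeq\p$ in cases (B), (C), (D), the composite $p_1\circ\vartheta: X\to (E\times_\sk C')/\Lambda\to E/\Lambda$ is a morphism whose general closed fibres are isomorphic to $C'=\p$ (Corollary~\ref{Cor: quotient by gp}, applied over the dense open subset of $E$ where the faithful $\Lambda$-action is free). A surface fibred over a curve with general fibre $\p$ is honestly ruled (Tsen/Noether--Enriques), and ruled surfaces do have $\kappa=-\infty$ in every characteristic. So you should replace the uniruledness argument by the observation that $X$ carries a genuine $\p$-ruling over $E/\Lambda$.
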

\begin{proof}
As $\B(X/C)\subseteq \mathrm{Aut}_\sk(U)$ is infinite, $U$ is one of the four curves: an elliptic curve, $\p, \A$ or $\Aa$.

First if $U$ is an elliptic curve, then $f$ is a smooth isotrivial genus one fibration, we have $K_X=0$ and hence $\kappa(X)=0$. 

Then in the rest cases, according to the classification in Proposition~\ref{Prop: classification of torsors with infinite Bt}, we have $C'\simeq \p$ is a rational curve. Note there is another fibration $$p_1: X\stackrel{\vartheta}{\to}(E\times_\sk C')/\Lambda \to E/\Lambda$$ whose general closed fibres are isomorphic to $C'$ (cf. Corollary~\ref{Cor: quotient by gp}). The surface $X$ is a ruled surface and hence $\kappa(X)=0$.
\end{proof}

In \S~\ref{Subsec: proof of main+main}, we give a proof of Theorem~\ref{Thm: main+main} and then in \S~\ref{Subsec: classification in genus one}, we shall give a complete classification of relatively minimal genus one fibration $f: X\to C$ with infinite $\B(X/C)$ after Theorem~\ref{Thm: main+main}. Finally, we shall discuss an application to surface of Kodaira dimension one in \S~\ref{Subsec: k(S)=1}.

\subsection{Proof of Theorem~\ref{Thm: main+main}}\label{Subsec: proof of main+main}
Now let $f:X\to C$ be a relatively minimal genus one surface fibration with infinite $\B(X/C)$, $U\subseteq C$  be the smooth locus of $f$ and $\mu: J\to C$ be its Jacobian fibration. We are now going to prove Theorem~\ref{Thm: main+main}. As mentioned in \S~\ref{Sec: genus one fibration}, $X_U=\mathbf{Pic}^1_{X_U/U}$ is a torsor under its Jacobian $J_U:=\mathbf{Pic}^0_{X_U/U}$. We will denote by $\beta: J_U\times_U X_U\to X_U$ this group action.  

Similar to the genus $g\ge 2$ case, we first have the triviality of the Jacobian fibration whose proof   is the same as that of  Lemma~\ref{Lem: basic}.
\begin{Lem}\label{Lem: triviality of Jacobian}
The fibration $f: X\to C$ is isotrivial. In other words, there is an elliptic curve $E$ over $\sk$ so that all closed fibres of $f_U$ is isomorphic to $E$. In particular, the fibration $\mu: J\to C$ is also isotrivial and all of its smooth closed fibre is also isomorphic to $E$. 
\end{Lem}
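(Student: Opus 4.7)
My plan is to mirror the argument of Lemma~\ref{Lem: basic} essentially verbatim and then transfer the conclusion to the Jacobian fibration. The key input is that an infinite orbit of an automorphism group on a curve forces any regular map to an irreducible moduli-type curve to be constant.

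First I would observe that by the very definition of $\B(X/C)$, for any $\sigma\in\B(X/C)$ and any closed point $c\in U$ with $\sigma(c)\in U$, the smooth genus one fibres $f^{-1}(c)$ and $f^{-1}(\sigma(c))$ are $\sk$-isomorphic. Since $C\smallsetminus U$ is finite while $\B(X/C)\subseteq\mathrm{Aut}_\sk(C)$ is assumed infinite, the orbit $\B(X/C)\cdot c$ meets $U$ in infinitely many points, and the fibres over these points are pairwise isomorphic to a single genus one curve $F$. Choosing any $\sk$-rational point on $F$ (available since $\sk$ is algebraically closed) endows it with the structure of an elliptic curve, which I will call $E$.

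Next I would upgrade ``infinitely many fibres isomorphic to $E$'' to ``every closed fibre of $f_U$ isomorphic to $E$'' via the $j$-invariant. The assignment $c\mapsto j(f^{-1}(c))$ defines a morphism $j\colon U\to \A$ which, by the preceding paragraph, takes the value $j(E)$ on an infinite subset of the irreducible curve $U$; hence $j$ is constant. Over the algebraically closed field $\sk$, the $j$-invariant is a complete isomorphism invariant for smooth genus one curves, so every closed fibre of $f_U$ is $\sk$-isomorphic to $E$.

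Finally, for the Jacobian fibration I would use that formation of $\mathbf{Pic}^0$ commutes with smooth base change, so for any closed $c\in U$ one has $\mu^{-1}(c)=\mathbf{Pic}^0(f^{-1}(c))$; since $f^{-1}(c)\cong E$ as a curve, its $\mathbf{Pic}^0$ is isomorphic to $\mathbf{Pic}^0(E)\cong E$. Thus every fibre of $J_U\to U$ is isomorphic to $E$, and since $J\to C$ is the relatively minimal model of $J_U\to U$, the same holds for every smooth closed fibre of $\mu$. I foresee no real obstacle; the only mild subtlety is keeping straight the distinction between ``isomorphic as curves'' and ``isomorphic as elliptic curves'', but the statement of the lemma only requires the former.
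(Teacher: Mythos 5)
Your proposal is correct and follows essentially the same route as the paper, which simply invokes the argument of Lemma~\ref{Lem: basic}: fibres over a single $\B(X/C)$-orbit are pairwise isomorphic, and infiniteness of $\B(X/C)$ forces isotriviality; you merely make explicit the $j$-invariant step and the transfer to the Jacobian via base change of $\mathbf{Pic}^0$, both of which the paper leaves implicit. The only point worth tightening is that a particular closed point $c$ could be fixed by the whole group, so one should take $c$ general (all but finitely many points have infinite orbit under an infinite subgroup of $\mathrm{Aut}_\sk(C)$), after which your argument goes through verbatim.
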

The proof of Theorem~\ref{Thm: main+main} consists of two parts.
\begin{itemize}
\item  In \S~\ref{subsub: no name} we will construct a torsor $\widetilde{Z}$ over $U$ under a subgroup scheme $\Lambda_0\subseteq \mathbf{Aut}_\sk(E)$ so that $$f_U=p_2: X_U=(E\times_\sk \widetilde{Z})/\Lambda_0\to U=\widetilde{Z}/\Lambda_0.$$ Here $\Lambda_0$ acts diagonally on $E\times_\sk \widetilde{Z}$. This construction works for all isotrivial relatively minimal genus one fibration and is a characteristic-free generalization of \cite[\S~VI]{Beauville-book}. But the torsor $\widetilde{Z}$ we have here is possibly reducible and non-reduced.  We then claim a lemma (Lemma~\ref{Lemma: infinite et for wT}) saying that $\et(\widetilde{Z} /U)$ is infinite under the assumption $\B(X/C)$ is infinite. With this lemma, we prove Theorem~\ref{Thm: main+main}.

\item The \S~\ref{Subsub: proof of lemma infinite bt} is devoted to prove Lemma~\ref{Lemma: infinite et for wT}.
\end{itemize}

\subsubsection{Product-quotient realization of genus one isotrivial fibrations} \label{subsub: no name}
As $X_U$ is a torsor under $J_U$, it is represented by an element $o_{X_U}\subseteq H^1_{\acute{e}t}(U, E)=H^1_{\mathrm{fl}}(U, E)$ (cf. \S~\ref{Sec: genus one fibration} and \cite{Grothendieck-Brauer-groupIII} for the equality of \'etale and flat cohomology). Moreover, this element $o_{X_U}$ is killed by some $n\in \N_+$ (cf. Proposition~\ref{Prop: torsor of genus one is torsion}). Then the following  exact sequence in fppf topology
$$
0\to J_U[n]\to J_U\stackrel{\cdot n}{\to} J_U\to 0
$$
gives us a surjective map
$$
H^1_{\mathrm{fl}}(U,J_U[n])\to H^1_{\mathrm{fl}}(U,J_U)[n].
$$
In other words, the $J_U$-torsor $X_U$ reduces to a certain $J_U[n]$-torsor. We now construct this torsor explicitly. As $n$ kills $o_{X_U}$, the $J_U$-torsor $\mathbf{Pic}^n_{X_U/U}$ is trivial. We can thus fix an arbitrary  $s: U\to \mathbf{Pic}^n_{X_U/U}$. Then the divisor $Z\subseteq X_U$ which is the pull back of  the section $s$ under the multiplicative by $n$ map is the $J_U[n]$-torsor desired.
\begin{equation}\label{equ: definition of Z}
\xymatrix{
                Z\ar@{^{(}->}[d]           \ar[rr] && U\ar@{^{(}->}[d]^s  & J_U[n]\times_U Z\ar@{^{(}->}[d]\ar[rr]^{\beta|_Z} && Z\ar@{^{(}->}[d]\\
X_U=\mathbf{Pic}^1_{X_U/U} \ar[rr]^{\cdot n} && \mathbf{Pic}^n_{X_U/U}          & J_U\times_U X \ar[rr]^{\beta} && X
}
\end{equation}
In other words, we have the right picture above and the natural $J_U$-action on $X_U$ induces an isomorphism
\begin{equation}\label{equ: torsor isomorpism}
(J_U\times_U Z)/J_U[n]\simeq X_U,
\end{equation}
 where $J_U[n]$-acts on $J_U\times_U Z$ diagonally.
 
\begin{Rem} 
Although the above construction of the reduction $Z$ of $X_U$ is classical, we would like to explain a little more why such $Z$ is a torsor under $J_U[n]$. Functorially, by losing some rigorousness, $s$ is represented by an invertible sheaf $\cL_s$ of relative degree $n$ on $X_U$. For any $U$-scheme $W$, a section $\xi$ in $X_U(W)$ gives a section, still denoted by $\xi$, of $X_U\times_U W \stackrel{p_2}{\to} W$. This section $\xi$ is an effective relative divisor of degree one  on $X_U\times_U W$ and hence gives an invertible sheaf $\cL_\xi:=\cO_{X_U\times_UW}(\xi(W))$ on $X_U\times_U W$ of relative degree one.  By construction, the subset $Z(W)\subseteq X_U(W)$ is consisting of those $\xi$ so that $\cL_{\xi}^n\simeq \cL_s|_{X_U\times_U W}$ fibrewisely. Note that by this description, all such $\cL_\xi, \xi\in Z(W)$ is a principal set of the group $J_U[n](W)=J_U(W)[n]$ consisting of relative invertible sheaves $\cM$ on $X_U\times_U W$ so that $\cM^n\simeq \cO_{X_U\times_U W}$ fibrewisely. This explains the $J_U[n]$-torsor  structure on $T$.
\end{Rem}

\medskip

Now note the group automorphism of $E$ ($E$ is given in Lemma~\ref{Lem: triviality of Jacobian}) is also rigid. So following the isotriviality in Lemma~\ref{Lem: triviality of Jacobian}, the functor $$\underline{\mathrm{Iso}}^{\mathrm{gp}}_U(J_U,E\times_\sk U): \mathbf{Sch}/U \to \mathbf{Set},\,\,\, W\mapsto \mathrm{Iso}^{\mathrm{gp}}_W(J_U\times_U W, E\times_\sk W)$$ is represent by an \'etale Galois cover $\iota: \iso\to U$ 
whose Galois group is canonically identified with $\mathrm{Aut}_\sk^{\mathrm{gp}}(E)$. Similar to the $g\ge 2$ case, the Galois group $\mathrm{Aut}_\sk^{\mathrm{gp}}(E)$-action on $\iso$ is functorially give as follows. For any $\gamma\in \mathrm{Aut}_\sk^{\mathrm{gp}}(E)$ and group automorphism $\zeta:  J_U\times_U W\stackrel{\sim}{\to} E\times_\sk W \in \iso(W)$ the element $\gamma\cdot \zeta$ is the following composition of  isomorphisms
\begin{equation}\label{equ:  definition of gamma*}
\gamma\cdot \zeta: J_U\times_U W\stackrel{\zeta}{\to} E\times_\sk W \stackrel{\gamma\times \mathrm{id}}{\longrightarrow} E\times_\sk W  \in \underline{\mathrm{Iso}}^{\mathrm{gp}}_U(X_U, E\times_\sk U)(W).
\end{equation}
To avoid confusions, we  denote by $\gamma^*: \mathbf{Iso}^{\mathrm{gp}}_U(X_U, E\times_\sk U) \to  \mathbf{Iso}^{\mathrm{gp}}_U(X_U, E\times_\sk U)$ this Galois action  for any $\gamma\in \mathrm{Aut}_\sk^{\mathrm{gp}}(E)$. Following this Galois action, taking $$\phi: X_U\times_U\mathbf{Iso}^{\mathrm{gp}}_U(X_U, F\times_\sk U)\to E\times_\sk\mathbf{Iso}^{\mathrm{gp}}_U(X_U, E\times_\sk U) $$ the universal isomorphism, then we have the following commutative diagram for any $\gamma\in \mathrm{Aut}_\sk(F)$:
\begin{equation}\label{eq: gamma*}
\xymatrix{
\widetilde{J}\ar@{=}[r]& J_U\times_U\iso\ar[d]^{\mathrm{id}\times \gamma^*} \ar[rr]^{\phi} && E\times_\sk \iso\ar[d]^{\gamma\times \gamma^*}\\ 
\widetilde{J}\ar@{=}[r]&J_U\times_U\iso \ar[rr]^\phi&& E\times_\sk \iso.
}
\end{equation}

In the following, for simplicity, we denote by $\is:=\iso$, $\widetilde{Z}:=Z\times_U\is $ and $ \widetilde{J}:=J_U\times_U\is$. Then both  $\widetilde{Z},\widetilde{J}$ admit the following group scheme actions:
\begin{itemize}
\item the Galois group $\mathrm{Aut}_\sk^{\mathrm{gp}}(E)$-action on $\widetilde{Z}, \widetilde{J}$:
\begin{align*}
\gamma\mapsto \gamma^{\sharp}:     \widetilde{Z}=Z\times_U \is \stackrel{\mathrm{id}\times \gamma^*}{\longrightarrow}Z\times_U\is =\widetilde{T},\\
\gamma\mapsto \gamma^{\sharp}:  \widetilde{J}=J_U\times_U \is \stackrel{\mathrm{id}\times \gamma^*}{\longrightarrow}J_U\times_U\is =\widetilde{J}.
\end{align*}

\item an $E$-action $\widetilde{m}: E\times_\sk \widetilde{J}\to \widetilde{J}$ on $\widetilde{J}$ induced by the isomorphism $\phi$ as the following picture:
$$
\xymatrix{
\widetilde{J}\times_\is \widetilde{J}\ar@{=}[r]\ar[d]^{\phi\times \mathrm{id}}_\simeq & (J_U\times_U J_U)\times_U \is \ar[rrr]^{\,\,\,\,\,\,\,\, (\text{group multiplication})\times \mathrm{id}}  &&&  J_U\times_\sk \is  \ar@{=}[d] \\
(E\times_\sk \is)\times_{\is} \widetilde{J} \ar@{=}[r] & E\times_\sk  \widetilde{J} \ar[rrr]^{\widetilde{m}} &&& \widetilde{J}
} $$
\item an  $E[n]$-action $\widetilde{\beta}: E[n]\times_\sk \widetilde{Z}\to \widetilde{Z}$ on $\widetilde{Z}$  given by the following picture:
$$\xymatrix{
(J_U[n]\times_U Z)\times_U \is \ar[rr]^{\beta|_Z\times \mathrm{id}} \ar@{=}[d] &&  Z\times_U\is =\widetilde{Z} \ar@{=}[d]\\
  \widetilde{J}[n] \times_{\is} \widetilde{Z} \ar[rr] \ar[d]^{\phi\times\mathrm{id}} && \widetilde{Z} \ar@{=}[d]\\
(E[n]\times_\sk \is)\times_{\is} \widetilde{Z} \ar@{=}[r] & E[n]\times_\sk \widetilde{Z} \ar[r]^{\widetilde{\beta}} & \widetilde{Z}
}
$$
\end{itemize}
On the other hand, there is a third group action as below. Taking the subgroup scheme $$\Lambda_0:=E[n]\rtimes_\sk \mathrm{Aut}_\sk^{\mathrm{gp}}(E)\subseteq E\rtimes_\sk \mathrm{Aut}_\sk^{\mathrm{gp}}(E)= \mathbf{Aut}_\sk(E),$$ as $\Lambda_0$ is a subgroup scheme of $\mathbf{Aut}_\sk(E)$, $\Lambda_0$ can firstly act on $E$ naturally and then it can also act  on $\is$ by the canonical quotient homomorphism $\Lambda_0\to  \mathrm{Aut}_\sk^{\mathrm{gp}}(E)=\mathrm{Aut}_U(\is)$. Therefore $\Lambda_0$ can acts on $E\times_\sk\is$ diagonally. 

Using the identification $\phi$,  the two group scheme actions by $E[n]$ (induced by the $E$-action) and $\mathrm{Aut}_\sk^{\mathrm{gp}}(E)$ on $\widetilde{J}$ are in fact identified with the restriction of the diagonal $\Lambda_0=E[n]\rtimes \mathrm{Aut}_\sk^{\mathrm{gp}}(E)$-action on  $E\times_\sk \is$ to its semi-direct components $E[n]$ and $\mathrm{Aut}_\sk^{\mathrm{gp}}(E)$ by the following two commutative diagrams easily obtained. 
{\small$$
\xymatrix{
E[n]\times_\sk \widetilde{J}\ar[d]^{\mathrm{id}\times \phi}\ar[rrr]^{\widetilde{m}} &&&\widetilde{J}\ar[d]^{  \phi} & \widetilde{J}\ar[d]^{\phi} \ar[r]^{\gamma^\sharp} & \widetilde{J}\ar[d]^\phi \\
E[n]\times_\sk (E\times_\sk \is) \ar[rrr]^{\text{ group multiplication}}  &&&E\times_\sk \is & E\times_\sk \is\ar[r]^{\gamma\times \gamma^*} &   E\times_\sk \is
}$$}
In other words, the two group actions on $\widetilde{J}$ can be glued together into a single $\Lambda_0$-action.  In fact, not only the actions on $\widetilde{J}$, but also that on $\widetilde{Z}$ glue.
\begin{Lem}\label{Lem: action glue}
The above  $E[n]$ and $\mathrm{Aut}_\sk^{\mathrm{gp}}(E)$ actions on $ \widetilde{Z}$ glue  altogether into a $\Lambda_0$-action on $\widetilde{Z}$. Moreover, this time $\widetilde{Z}$ is a $\Lambda_0$-torsor over $U$. 
\end{Lem}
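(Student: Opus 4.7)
The plan has three movements, organized around the explicit functorial descriptions of the actions and the commutative diagram (\ref{eq: gamma*}).

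First, I would spell out the two actions at a $W$-valued point $(z,\iota)\in\widetilde{Z}(W)=Z(W)\times_{U(W)}\is(W)$. The point $\iota$ corresponds to a group isomorphism $\phi_\iota:=\phi|_\iota\colon J_U\times_U W\to E\times_\sk W$. Unwinding the defining diagram of $\widetilde{\beta}$ shows that
$$\widetilde{\beta}(e,(z,\iota))=\bigl(\phi_\iota^{-1}(e)\cdot z,\,\iota\bigr),\qquad e\in E[n](W),$$
while the Galois action reads $\gamma^\sharp(z,\iota)=(z,\gamma^*(\iota))$ for $\gamma\in\mathrm{Aut}_\sk^{\mathrm{gp}}(E)$. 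Restricting diagram (\ref{eq: gamma*}) to the fibre over $\iota$ gives the crucial twist
$$\phi_{\gamma^*(\iota)}=\gamma\circ\phi_\iota.$$

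Second, I would verify the semi-direct product compatibility
$$\gamma^\sharp\circ\widetilde{\beta}(e,-)\circ(\gamma^\sharp)^{-1}=\widetilde{\beta}(\gamma(e),-).$$
This is a direct point chase: the left side sends $(z,\iota)$ to $(z,(\gamma^{-1})^*(\iota))$, then to $\bigl(\phi_{(\gamma^{-1})^*(\iota)}^{-1}(e)\cdot z,\,(\gamma^{-1})^*(\iota)\bigr)$; using $\phi_{(\gamma^{-1})^*(\iota)}=\gamma^{-1}\circ\phi_\iota$ this becomes $\bigl(\phi_\iota^{-1}(\gamma(e))\cdot z,\,(\gamma^{-1})^*(\iota)\bigr)$, and applying $\gamma^\sharp$ yields $\bigl(\phi_\iota^{-1}(\gamma(e))\cdot z,\,\iota\bigr)$, which matches the right side. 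This is exactly the defining relation of the semi-direct product $E[n]\rtimes\mathrm{Aut}_\sk^{\mathrm{gp}}(E)=\Lambda_0$, so the two actions glue into a single $\Lambda_0$-action on $\widetilde{Z}$.

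Third, for the torsor assertion, I would analyse the tower $\widetilde{Z}\to\is\to U$. The arrow $\is\to U$ is an $\mathrm{Aut}_\sk^{\mathrm{gp}}(E)$-torsor by the definition of $\is$. For the first arrow, $Z\to U$ is a $J_U[n]$-torsor as the pullback of the section $s$ along the $J_U$-equivariant isogeny $\cdot n\colon\mathbf{Pic}^1_{X_U/U}\to\mathbf{Pic}^n_{X_U/U}$ in diagram (\ref{equ: definition of Z}); base-changing to $\is$ and applying the group scheme isomorphism $\widetilde{J}[n]\simeq E[n]\times_\sk\is$ induced by $\phi$ converts $\widetilde{Z}\to\is$ into an $E[n]$-torsor. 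The semi-direct compatibility from step two guarantees that the action map $\Lambda_0\times_\sk\widetilde{Z}\to\widetilde{Z}\times_U\widetilde{Z}$ is an isomorphism, since it is fibrewise an isomorphism both over the $\mathrm{Aut}_\sk^{\mathrm{gp}}(E)$-torsor $\is\to U$ and over the $E[n]$-torsor $\widetilde{Z}\to\is$.

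The main obstacle, minor but deserving attention, is the careful bookkeeping around (\ref{eq: gamma*}): the isomorphism $\phi$ does \emph{not} commute with the Galois action, but is twisted by $\gamma$ on the target. This twist is precisely what forces the extension of structure groups to be the semi-direct product $\Lambda_0$ rather than the direct product; without it the two actions would be independent and no gluing would be available.
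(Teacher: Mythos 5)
Your proposal is correct and follows essentially the same route as the paper: the $W$-point computation with the twist $\phi_{\gamma^*(\iota)}=\gamma\circ\phi_\iota$ is exactly an unwinding of the paper's large commutative diagram built from (\ref{eq: gamma*}), yielding the same semi-direct relation $\gamma^\sharp(e\cdot\widetilde{z})=\gamma(e)\cdot\gamma^\sharp(\widetilde{z})$. The torsor assertion is likewise handled identically, via the tower $\widetilde{Z}\to\widetilde{Z}/E[n]=\is\to U$ with $\is$ an $\mathrm{Aut}_\sk^{\mathrm{gp}}(E)=\Lambda_0/E[n]$-torsor over $U$.
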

\begin{proof}
First we check that the two actions glue to a $\Lambda_0$-action.  Given a $\gamma\in \mathrm{Aut}_\sk^{\mathrm{gp}}(E)$, from (\ref{eq: gamma*}) and the commutative diagrams above,  we obtain the following large  commutative diagram:
$$
\xymatrix{
(J_U[n]\times_\sk \is) \times_{\is} \widetilde{Z}\ar@/_6pc/[dd]_{\phi\times \mathrm{id}}\ar[d]^{(\mathrm{id}\times \gamma^*)\times_{\gamma^*} \gamma^\sharp}\ar[rrr]^{\beta\times \mathrm{id}} &&&\widetilde{Z} \ar[d]^{\gamma^\sharp} \ar@/^2pc/[dd]^{\mathrm{id}}\\ 
(J_U[n]\times_\sk \is) \times_{\is} \widetilde{Z}      \ar@/_6pc/[dd]_{\phi\times \mathrm{id}}     \ar[rrr]^{\beta\times \mathrm{id}} &&&\widetilde{Z}\ar@/^2pc/[dd]^{\mathrm{id}}\\
 (E[n]\times_\sk \is)\times_{\is}\widetilde{Z}\ar[d]^{(\gamma,\gamma^*)\times_{\gamma^*} \gamma^\sharp} \ar@{=}[r] & E[n]\times_\sk \widetilde{Z}\ar[d]^{\gamma\times \gamma^\sharp} \ar[rr]^{\widetilde{\beta}} && \widetilde{Z} \ar[d]^{\gamma^\sharp}\\
  (E[n]\times_\sk \is)\times_{\is}\widetilde{Z} \ar@{=}[r] &E[n]\times_\sk \widetilde{Z} \ar[rr]^{\widetilde{\beta}} && \widetilde{T}
}
$$
In this commutative diagram, the box in bottom right-hand corner gives
$$
\gamma^\sharp (e_0\cdot \widetilde{z})=\gamma(e_0)\cdot (\gamma^\sharp (\widetilde{z})) \,\,\, \forall \,\,\, W\in\mathbf{Sch}/\sk, e_0\in E[n](W), \widetilde{z}\in \widetilde{Z}(W).
$$
We then finish the proof by noticing $\gamma\cdot e_0  \cdot \gamma^{-1} =\gamma(e_0)$ in $E[n]\rtimes \mathrm{Aut}_\sk^{\mathrm{gp}}(E)$.

Then we show $\widetilde{Z}$ is a torsor under $\Lambda_0$ over $U$. In fact by construction, as $Z$ is a $J_U[n]$-torsor over $U$, the action of $E[n]$ on $\widetilde{Z}$ is also free and we have $\widetilde{Z}/E[n]= (Z/J_U[n])\times_U\is=\is$. Then our conclusion follows from the fact  $\is$ is a torsor over $U$ under the group $\mathrm{Aut}_\sk^{\mathrm{gp}}(E)=\Lambda_0/E[n]$. We are done.     
\end{proof}
\begin{Prop}\label{Prop: X realized as product quotient}
We have the following commutative diagram:
$$
\xymatrix{
(E\times_\sk \widetilde{Z})/\Lambda_0\ar[rr]^\simeq \ar[d]^{p_2}&& X_U\ar[d]^{f_U}\\
\widetilde{Z}/\Lambda_0 \ar@{=}[rr] && U.
}
$$Here $\Lambda_0$-acts on $E\times_\sk \widetilde{Z}$ diagonally.
\end{Prop}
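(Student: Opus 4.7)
The plan is to base-change the torsor realization \eqref{equ: torsor isomorpism} along the \'etale Galois cover $\iota: \is\to U$, transport the right-hand side through the universal group isomorphism $\phi$, and then descend back down to $U$ by quotienting by the Galois group $\mathrm{Aut}_\sk^{\mathrm{gp}}(E)=\Lambda_0/E[n]$.

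First I would pull \eqref{equ: torsor isomorpism} back along $\iota:\is\to U$. Since $\iota$ is \'etale (in particular flat) and $J_U[n]$ is a finite flat $U$-group scheme, Theorem~\ref{Thm: quotient by gp}(2) says that quotient formation commutes with this base change, giving
\[
X_U\times_U\is \;\simeq\; (\widetilde J\times_{\is}\widetilde Z)/\widetilde J[n],
\]
with $\widetilde J[n]$ acting diagonally (trivially on the $\is$-factor of $\widetilde J$). Composing with the $\is$-isomorphism $\phi:\widetilde J\xrightarrow{\sim} E\times_\sk\is$, which restricts to $\widetilde J[n]\xrightarrow{\sim}E[n]\times_\sk\is$, transports this to
\[
X_U\times_U\is \;\simeq\; (E\times_\sk\widetilde Z)/E[n],
\]
where $E[n]$ now acts diagonally through the actions $\widetilde m$ on the $E$-factor and $\widetilde\beta$ on $\widetilde Z$ described just before Lemma~\ref{Lem: action glue}.

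Next I would quotient both sides by the Galois group $\mathrm{Aut}_\sk^{\mathrm{gp}}(E)$. On the target, $X_U\times_U\is\to X_U$ is an $\mathrm{Aut}_\sk^{\mathrm{gp}}(E)$-torsor (being the pullback of the torsor $\is\to U$), so the quotient is $X_U$. On the source, Lemma~\ref{Lem: action glue} tells us that the Galois action on $\widetilde Z$, combined via $\phi$ with the action $\gamma\times\gamma^*$ on $E\times_\sk\is$, commutes with the diagonal $E[n]$-action and together they assemble into an action of the semidirect product $\Lambda_0=E[n]\rtimes\mathrm{Aut}_\sk^{\mathrm{gp}}(E)$. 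Consequently the residual $\mathrm{Aut}_\sk^{\mathrm{gp}}(E)$-action on $(E\times_\sk\widetilde Z)/E[n]$ is induced by the diagonal $\Lambda_0$-action, and the iterated quotient collapses to $(E\times_\sk\widetilde Z)/\Lambda_0$. Combining the two identifications yields the desired isomorphism, visibly compatible with the projections to $\widetilde Z/\Lambda_0=U$.

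The main subtlety, which I would isolate as a short lemma-style verification, is checking in the second step that under the identification $X_U\times_U\is\simeq(E\times_\sk\widetilde Z)/E[n]$ the descent datum coming from $\iota:\is\to U$ matches the residual action built in the preceding paragraph. This is a pure functoriality chase through \eqref{equ:  definition of gamma*} and the definitions of $\widetilde\beta$ and $\gamma^\sharp$: the compatibility of $\phi$ with the Galois action encoded in \eqref{eq: gamma*} is precisely what ensures the two $\mathrm{Aut}_\sk^{\mathrm{gp}}(E)$-actions agree, so no further computation beyond unwinding the diagrams already present in the paper should be necessary.
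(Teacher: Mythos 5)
Your proposal is correct and follows essentially the same route as the paper's proof: base-change the realization $X_U\simeq(J_U\times_UZ)/J_U[n]$ along $\iota:\is\to U$, transport through $\phi$ to identify $X_U\times_U\is$ with $(E\times_\sk\widetilde Z)/E[n]$, and then descend by the Galois group $\mathrm{Aut}_\sk^{\mathrm{gp}}(E)$ using Lemma~\ref{Lem: action glue} and the compatibility (\ref{eq: gamma*}) to see that the residual action is the semidirect-summand piece of the diagonal $\Lambda_0$-action. The "subtlety" you isolate at the end is exactly the diagram chase the paper carries out explicitly in its last commutative square.
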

\begin{proof}
By taking base change over $\is$, we have a canonical isomorphism 
\begin{equation}
\delta: E\times_\sk \widetilde{Z}=(E\times_\sk \is)\times_\is \widetilde{Z} \stackrel{\phi\times\mathrm{id}}{\simeq} \widetilde{J}\times_\is \widetilde{Z}=(J_U\times_U Z)\times_U \is.
\end{equation}
Now since $X_U=(J_U\times_UZ)/J_U[n]$ by (\ref{equ: torsor isomorpism}), its base change $$X_U\times_U\is=((J_U\times_UZ)/J_U[n])\times_U\is$$ is the quotient of $(J_U\times_U Z)\times_U \is\simeq E\times_\sk \widetilde{Z}$ by the subgroup scheme $J_U[n]\times_U\is =\widetilde{J}[n]\simeq E[n]\times_\sk \is$ as shown in the following picture: 
$$
\xymatrix{
E\times_\sk \widetilde{Z} \ar[r]_\simeq^{\phi\times \mathrm{id}}\ar[d] &\widetilde{J}\times_{\is} \widetilde{Z}\ar@{=}[r]\ar[d] & (J_U\times_UZ)\times_U\is\ar[d]\\
(E\times_\sk \widetilde{Z})/E[n]  \ar[r]_{\simeq}^{\phi\times \mathrm{id}} &(\widetilde{J}\times_{\is} \widetilde{Z})/\widetilde{J}[n] \ar@{=}[r]&   ((J_U\times_UZ)/J_U[n])\times_U \is 
}
$$
In other words, $X\times_U\is$ is obtained by quotient $E\times_\sk\widetilde{Z}$ by the $E[n]$-action. As $J_U[n]$ acts on $J_U\times_U Z$ diagonally, one can check in the above commutative diagram, the $E[n]$-action on $E\times_\is \widetilde{Z}$ is just the  natural diagonal $E[n]$-action, namely, the restriction of the diagonal $\Lambda_0$-action on $E\times_\sk \widetilde{Z}$ to its semi-direct component $E[n]$. 

Next, to obtain $X_U$, one needs to quotient the Galois group $\mathrm{Aut}_\sk^{\mathrm{gp}}(E)$ action on $X\times_U \is$. However, this Galois group action on the quotient space $$X\times_U \is=((J_U\times_U Z)/J_U[n])\times_U\is\simeq (E\times_\sk\widetilde{Z})/E[n]$$  already lifts to the space $ E\times_\sk \widetilde{Z}\stackrel{\delta}{\simeq} (J_U\times_UZ)\times_U \is$ and we have the following commutative diagram 
$$
\xymatrix{
E\times_\sk \widetilde{Z}\ar[d]^{\gamma\times \gamma^\sharp} \ar[rr]_{\simeq}^{\phi\times \mathrm{id}} && \widetilde{J}\times_{\is} \widetilde{Z}\ar@{=}[rr]\ar[d]^{\gamma^\sharp\times \gamma^\sharp}&& (J_U\times_U Z)\times_U \is \ar[d]^{\mathrm{id}\times \gamma^*} \\
E\times_\sk \widetilde{Z} \ar[rr]_{\simeq}^{\phi\times \mathrm{id}} && \widetilde{J}\times_{\is} \widetilde{Z}\ar@{=}[rr]&&(J_U\times_U Z)\times_U \is
}
$$
Namely this Galois action is translated into the restriction of the diagonal $\Lambda_0$-action on $E\times_\sk \widetilde{Z}$ to its semi-direct summand $\mathrm{Aut}_\sk^{\mathrm{gp}}(E)$ via the isomorphism $\delta$. So in conclusion, $X$ is isomorphic to $(E\times_\sk \widetilde{Z})/\Lambda_0$. Finally, it is easy to check the fibration $f_U$ is identify with $p_2: (E\times_\sk \widetilde{Z})/\Lambda_0\to \widetilde{Z}/\Lambda_0=U$.
\end{proof}
In summary, we have 
\begin{Prop}\label{Prop: isotrivial=pre standard}
An isotrivial genus one fibration is pre-standard type.
\end{Prop}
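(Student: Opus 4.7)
The proof will essentially be the cleanup step that packages the constructions of Lemma~\ref{Lem: action glue} and Proposition~\ref{Prop: X realized as product quotient} into the language of Definition~\ref{Def: standard fibration}. The plan is to unwind the diagonal product-quotient realization $X_U \simeq (E \times_\sk \widetilde{Z})/\Lambda_0$ and then shrink the base to extract an \emph{irreducible} quasi-projective curve $V'$ that can serve as the pre-standard realization of $f$.

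First I would take $U \subseteq C$ to be the smooth locus of $f$ and invoke Lemma~\ref{Lem: triviality of Jacobian} to fix an elliptic curve $E$ over $\sk$ isomorphic to every smooth fibre of the Jacobian. The torsor class $o_{X_U} \in H^1_{\mathrm{fl}}(U, J_U)$ is killed by some $n \in \N_+$ by Proposition~\ref{Prop: torsor of genus one is torsion}, so pulling back a trivialising section of $\mathbf{Pic}^n_{X_U/U}$ via the multiplication-by-$n$ map produces the $J_U[n]$-torsor $Z \subseteq X_U$ as in (\ref{equ: definition of Z}). Base-changing along the \'etale Galois cover $\iota: \is \to U$ (whose Galois group is $\mathrm{Aut}^{\mathrm{gp}}_\sk(E)$) and gluing the $E[n]$- and $\mathrm{Aut}^{\mathrm{gp}}_\sk(E)$-actions as in Lemma~\ref{Lem: action glue} yields a $\Lambda_0$-torsor $\widetilde{Z} \to U$ with $\Lambda_0 = E[n] \rtimes \mathrm{Aut}^{\mathrm{gp}}_\sk(E) \subseteq \mathbf{Aut}_\sk(E)$, and Proposition~\ref{Prop: X realized as product quotient} then gives the diagonal identification $X_U \simeq (E \times_\sk \widetilde{Z})/\Lambda_0$.

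At this stage everything pre-standard is in place except the irreducibility of $V'$: the torsor $\widetilde{Z}$ may well be reducible (or even non-reduced, in positive characteristic, when $E[n]$ is infinitesimal). The plan is to pick any irreducible component $\widetilde{Z}_1 \subseteq \widetilde{Z}$ and let $G \subseteq \Lambda_0$ be its scheme-theoretic stabiliser, so that $\widetilde{Z}_1$ is a $G$-torsor over the open subset $V := \iota(\widetilde{Z}_1) \subseteq U$ (shrink if necessary so that the $\Lambda_0$-translates of $\widetilde{Z}_1$ are disjoint from it in $\widetilde{Z}|_V$). Since $G \subseteq \Lambda_0 \subseteq \mathbf{Aut}_\sk(E)$ acts faithfully on $E$ by construction, and acts faithfully on the irreducible $V' := \widetilde{Z}_1$ by definition of stabiliser, taking the diagonal $G$-action on $E \times_\sk V'$ gives $X_V \simeq (E \times_\sk V')/G$ over $V = V'/G$, which is exactly the pre-standard realisation $(E, V, V', G)$ required by Definition~\ref{Def: standard fibration}(1).

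The main obstacle, and the only real content beyond Lemma~\ref{Lem: action glue} and Proposition~\ref{Prop: X realized as product quotient}, is the last paragraph: checking that the restriction of the diagonal $\Lambda_0$-action to $E \times_\sk V'$ genuinely descends to give the desired realisation over $V$, which amounts to verifying that the scheme-theoretic stabiliser $G$ of a component $V'$ is well-defined and that $V'/G \simeq V$ as $\sk$-schemes. Once the faithfulness of $G$ on both factors is in hand, it is automatic that this realisation cannot always be upgraded to a \emph{standard} one, because the component $V'$ need not extend to a smooth compactification with a regular $G$-action; this failure is precisely what Example~\ref{Exa: concrete} will exhibit.
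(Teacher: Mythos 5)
Your construction of $Z$, $\widetilde{Z}$, $\Lambda_0=E[n]\rtimes\mathrm{Aut}^{\mathrm{gp}}_\sk(E)$ and the identification $X_U\simeq(E\times_\sk\widetilde{Z})/\Lambda_0$ is exactly the paper's route (this is the content of Lemma~\ref{Lem: action glue} and Proposition~\ref{Prop: X realized as product quotient}, of which the proposition is stated as a summary). The divergence, and the gap, is in the last step, where irreducibility of the base factor must be arranged. The paper appeals to the Nori-reduction of $\widetilde{Z}|_V$ (Remark~\ref{Rem: main on torsors}): this produces an honest sub-torsor $T'$ under a subgroup scheme $G'\subseteq\Lambda_0$ which is irreducible but possibly \emph{non-reduced}. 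You instead take an irreducible component $\widetilde{Z}_1$ together with its scheme-theoretic stabiliser $G$ and assert that $\widetilde{Z}_1$ is a $G$-torsor with $\widetilde{Z}_1/G\simeq V$. If, as the usual convention dictates, the component carries its reduced structure, this assertion fails in general for torsors under non-\'etale finite group schemes: for $p>2$ the $(\alpha_p\times\alpha_p)$-torsor $\Spec\bigl(\sk[t,x,y]/(x^p-t^2,\;y^p-t)\bigr)$ over $\A$ has reduced part $\Spec(\sk[y])$, of degree $p$ over the base, whose scheme-theoretic stabiliser in $\alpha_p\times\alpha_p$ is trivial, so the reduced component is not a sub-torsor and its quotient by the stabiliser is not $U$. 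You yourself identify the verification of $V'/G\simeq V$ as ``the only real content,'' but you do not carry it out, and it is precisely where the infinitesimal part of $\Lambda_0$ bites --- this is the phenomenon behind Remark~\ref{Rem: main on torsors}(2), which warns that the Nori-reduced torsor need not be reduced.

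The fix is either to cite the Nori-reduction as the paper does, or to keep the full scheme structure: set $\widetilde{Z}_1=\pi^{-1}(M_1)$ for an irreducible component $M_1$ of the \'etale cover $M:=\widetilde{Z}/\Lambda_0^0\to U$, where $\pi:\widetilde{Z}\to M$ is the quotient. Then $\widetilde{Z}_1$ is irreducible (it is homeomorphic to $M_1$), it is a torsor under $\Lambda_0^0\rtimes H$ with $H$ the stabiliser of $M_1$ in $\Lambda_0(\sk)$ (cf.\ Example~\ref{Exa: typical example of extension and reduction of torsors} and the first reduction in \S~\ref{Subsec: proof of thm-torsor main}), and the diagram of Proposition~\ref{Prop: X realized as product quotient} restricts as required --- at the price that $V'$ may be non-reduced, exactly as in the paper. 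Two minor points: the image of any component of the finite surjective map $\widetilde{Z}\to U$ is all of $U$, and distinct components of $\widetilde{Z}$ are already disjoint (they are preimages of the disjoint components of the \'etale cover $M$), so your parenthetical shrinking to separate the $\Lambda_0$-translates is unnecessary.
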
 
\begin{proof}
By Remark~\ref{Rem: main on torsors}, by shrinking $U$ to a smaller open subset $V$ if necessary, the torsor $\widetilde{Z}|_V$ reduces to torsor $T'$ over $V$ under a subgroup scheme $G'\subseteq \Lambda_0$ so that $T'$ is  irreducible. Then we have by Proposition~\ref{Prop: X realized as product quotient}:
$$
\xymatrix{
   (E\times_\sk T_1')/G' \ar[d]^{p_2} \ar@{=}[r]  & (E\times_\sk \widetilde{Z}|_V)/\Lambda_0 \ar[rr]^\simeq \ar[d]^{p_2}&& X_V  \ar[d]^{f_V}\\
  T_1'/\Lambda\ar@{=}[r]                         &  \widetilde{Z}|_V /\Lambda_0 \ar@{=}[rr] && V 
}
$$
where all group scheme actions are diagonal. We are done.
\end{proof}
 It should be noted that up to now, we have only made use of the isotriviality of $f$ rather than the infiniteness of $\B(X/C)$.
 
Finally, we mention the usage of the infinite assumption of $\B(X/C)$ is the next lemma.
\begin{Lem}\label{Lemma: infinite et for wT}
The group $\et(\widetilde{Z}/U)$ for the $\Lambda_0$-torsor $\widetilde{Z}$  is infinite.
\end{Lem}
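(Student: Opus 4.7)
The plan is to generalise the strategy of Proposition~\ref{Prop: infinite Bt g>2} to the genus-one setting: given any $(\tau,\sigma)\in\ea(X/C)$, I will construct a $\Lambda_0$-torsor automorphism $\widetilde{\tau}:\widetilde{Z}\to\widetilde{Z}$ over $\sigma$, thereby exhibiting $\B(X/C)\subseteq\Bt(\widetilde{Z}/U)$ up to finite index. Since the identity component of $\Lambda_0=E[n]\rtimes \mathrm{Aut}_\sk^{\mathrm{gp}}(E)$ lies in $E[n]$ and is Abelian, Corollary~\ref{Cor: Bt definition} will then yield the infiniteness of $\et(\widetilde{Z}/U)$.

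The construction factors through two preliminary steps. First, Picard functoriality produces a group homomorphism $\ea(X/C)\to\ega(J_U/U)$ sending $(\tau,\sigma)\mapsto(\tau_*,\sigma)$, where $\tau_*:J_U\to J_U$ is the push-forward on $\mathbf{Pic}^0$; its composition with the projection to $\mathrm{Aut}_\sk(U)$ recovers $\sigma$, so the image of $\ega(J_U/U)\to\mathrm{Aut}_\sk(U)$ contains the infinite group $\B(X/C)$. Second, the rule $\zeta\mapsto\zeta\circ\tau_*^{-1}$ on $W$-points defines an automorphism $\tau_*^\sharp$ of $\is$ over $\sigma$, in complete analogy with the Galois action $\gamma^*$ in (\ref{equ:  definition of gamma*}); this is automatically $\mathrm{Aut}_\sk^{\mathrm{gp}}(E)$-equivariant and yields $(\tau_*^\sharp,\sigma)\in\et(\is/U)$.

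Promoting such a pair to an element of $\et(\widetilde{Z}/U)$ is the main step. My approach is to exploit the canonical trivialisation of Proposition~\ref{Prop: X realized as product quotient}: the identification $X_U\times_U\widetilde{Z}\simeq E\times_\sk\widetilde{Z}$ transports the base-change of $\tau$ into an $E$-torsor automorphism of $E\times_\sk\widetilde{Z}$ over $\sigma$. After normalising translation on the $E$-factor, this has the form $(e,\widetilde{z})\mapsto(e+\nu(\widetilde{z}),\widetilde{\tau}(\widetilde{z}))$ for a unique $\widetilde{\tau}:\widetilde{Z}\to\widetilde{Z}$ over $\sigma$ and morphism $\nu:\widetilde{Z}\to E$. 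The $\Lambda_0$-equivariance of $\widetilde{\tau}$ should then follow from $\tau$ respecting the $J_U$-torsor structure on $X_U$ (the $E[n]$-semifactor) together with the equivariance of $\tau_*^\sharp$ established above (the $\mathrm{Aut}_\sk^{\mathrm{gp}}(E)$-semifactor).

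The delicate point, and the main obstacle, is that $Z\subseteq X_U$ was defined via an arbitrary section $s$ of $\mathbf{Pic}^n_{X_U/U}$, so a priori $\tau^{\mathrm{Pic}}(s)\ne s$: the two sections differ by an element $\alpha_{(\tau,\sigma)}\in J_U(U)/nJ_U(U)$ that measures the failure of $\tau$ to preserve $Z$ set-theoretically. I expect to dispose of this either by passing to a finite-index subgroup of $\ea(X/C)$ on which $\alpha$ vanishes, possibly after modifying $\tau$ by a translation from $J(C)\subseteq\mathrm{Aut}_C(X)$ (cf.\ Corollary~\ref{Cor: ext of auto of genus one}), or by showing directly that over the cover $\is$ this lifting ambiguity is absorbed into the $E[n]$-factor of the semidirect product $\Lambda_0$, producing a genuine $\Lambda_0$-equivariant automorphism $\widetilde{\tau}$ of $\widetilde{Z}$ without any finite-index restriction.
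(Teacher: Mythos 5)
Your overall strategy is the same as the paper's: push $(\tau,\sigma)$ forward through Picard functoriality to get $(\tau_+,\sigma)$ on $\mathbf{Pic}_{X_U/U}$ and $(\tau_*,\sigma)\in\et(\is/U)$, then assemble an element of $\et(\widetilde{Z}/U)$ over $\sigma$, and you correctly isolate the one genuine obstacle: $Z$ depends on the auxiliary section $s$ of $\mathbf{Pic}^n_{X_U/U}$, and $\tau$ need not preserve it. But at exactly that point your argument has a gap. Your first proposed fix --- ``pass to a finite-index subgroup of $\ea(X/C)$ on which the defect $\alpha_{(\tau,\sigma)}$ vanishes'' --- presupposes the very thing that needs proof, namely that the stabilizer $N_s$ of $s$ (under the action of Lemma~\ref{Lemma: action of ea}, after allowing modification of $\tau$ by translations in $J_U(U)\subseteq\mathrm{Aut}_C(X)$) has finite-index image in $\B(X/C)$. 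Modifying $\tau$ by a translation $b\in J_U(U)$ shifts the defect by $n b$, so the residual obstruction lives in $J_U(U)/nJ_U(U)$, and the index you need to control is bounded by the order of this group. Its finiteness is not formal: $J_U(U)$ itself is typically infinite here (indeed Theorem~\ref{Thm: main1} shows it can even be infinitely generated), so one must invoke the Lang--N\'eron theorem \cite[Thm.~7.1]{Conrad06-Chow's-K/k-trace} together with the divisibility of the $K/\sk$-trace $E_1(\sk)$ to see that $J_U(U)/nJ_U(U)$ is a torsion quotient of a finitely generated group, hence finite. This is precisely the content of the paper's Proposition~\ref{Prop: finite kernel and cokernel}, and without it your construction only yields elements of $\et(\widetilde{Z}/U)$ lying over the subgroup $f_\sharp(N_s)$, which a priori could be finite even though $\B(X/C)$ is infinite.

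Your second fallback --- absorbing the lifting ambiguity into the $E[n]$-factor of $\Lambda_0$ over the cover $\is$ --- does not work in general. The translate $\tau(Z)$ is the fibre of the multiplication-by-$n$ map over $(\tau,\sigma)\cdot s$, and carrying it back to $Z$ requires translating by an $n$-th root of the defect $a\in J_U(U)$; such a root exists inside $J_U(U)$ (equivalently, the two $J_U[n]$-torsors are isomorphic) only when $a\in nJ_U(U)$. So the ambiguity is genuinely measured by the class of $a$ in $J_U(U)/nJ_U(U)$ and cannot be traded away into the structure group. In short: the skeleton of your argument is sound and matches the paper, but the arithmetic finiteness input (Lang--N\'eron) that makes the finite-index reduction legitimate is missing, and it is the heart of the proof.
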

The proof of this lemma will be given in \S~\ref{Subsub: proof of lemma infinite bt} below. Assuming this Lemma at the moment, we  complete the proof of Theorem~\ref{Thm: main+main} as follows. 
\begin{proof}[Proof of Theorem~\ref{Thm: main+main}]
By Lemma~\ref{Lemma: infinite et for wT} and Theorem~\ref{Thm: main on torsors}, the $\Lambda_0$-torsor $\widetilde{Z}$ reduces to a 
torsor $T$ of a subgroup scheme $\Lambda\subseteq \Lambda_0$ over $U$ so that 
\begin{itemize}
\item $T$ is irreducible, smooth over $\sk$;

\item $\et(T/U)$ is infinite; and 

\item the $\Lambda$-action on $T$ extends to the normal compactification $T\subseteq C'$.
\end{itemize}

Then as $\widetilde{Z}$ reduces to $T$, we have the following commutative diagram after Proposition~\ref{Prop: X realized as product quotient}, which completes the proof.
$$
\xymatrix{
(E\times_\sk C')/\Lambda \ar[d]^{p_2} & (E\times_\sk T)/\Lambda \ar[d]^{p_2} \ar@{=}[r]  \ar@{_{(}->}[l]& (E\times_\sk \widetilde{Z})/\Lambda_0 \ar[rr]^\simeq \ar[d]^{p_2}&& X_U  \ar[d]^{f_U}\\
C=C'/\Lambda &\ar@{_{(}->}[l] T/\Lambda\ar@{=}[r]                         &  \widetilde{Z} /\Lambda_0 \ar@{=}[rr] && U
}
$$
 \end{proof}

\subsubsection{Equivariant actions} \label{Subsub: proof of lemma infinite bt}
Now we go to the proof of Lemma~\ref{Lemma: infinite et for wT}. For any $(\tau,\sigma)\in \ea(X/C)$, let us denote by
\begin{align*}
 X_U^\sigma:&=X_U\times_{U,\sigma} U,  & J_U^\sigma:&=J_U\times_{U,\sigma} U, \\ 
 \mathbf{Pic}_{X_U/U}^\sigma:&=\mathbf{Pic}_{X_U/U}\times_{U,\sigma} U, &  \is^\sigma:&=\is\times_{U,\sigma} U.
\end{align*}
Then we have canonical identifications: 
\begin{align*}
J_U^{\sigma} =\mathbf{Pic}^0_{X_U^\sigma/U}\subseteq  \mathbf{Pic}_{X_U/U}^\sigma =\mathbf{Pic}_{X_U^\sigma/U} 
\,\, \text{and}\,\,\is^{\sigma} =\mathbf{Iso}_U^{\mathrm{gp}}( \mathbf{Pic}^0_{X_U^\sigma/U}, E\times_\sk U)
\end{align*}
And we denote by $\phi^\sigma: J_U^\sigma\times_U \is^\sigma \stackrel{\sim}{\to} E\times_\sk U$ the universal isomorphism on $\is^{\sigma}$. Now the isomorphism $\tau\times f: X\stackrel{\sim}{\to} X^\sigma:=X\times_{C,\sigma} C$  induces natural isomorphisms $\Theta(\tau,\sigma),\Theta_0(\tau,\sigma)$ and $\Upsilon(\tau,\sigma)$ shown in the following picture
{\small $$
\xymatrix{
J_U=\mathbf{Pic}^0_{X_U /U} \ar[d]^{\Theta_0(\tau,\sigma)}_{\simeq}\ar@{^{(}->}[r] &  \mathbf{Pic}_{X_U/U} \ar[d]_{\simeq}^{\Theta(\tau,\sigma)} & \is=\mathbf{Iso}_U^{\mathrm{gp}}(\mathbf{Pic}^0_{X_U/U}, E\times_\sk U)\ar[d]^{\Upsilon(\tau,\sigma)}_{\simeq}\\
\mathbf{Pic}^0_{X_U^\sigma/U} \ar@{=}[d]\ar@{^{(}->}[r]& \mathbf{Pic}_{X_U^\sigma/U} \ar@{=}[d] & \mathbf{Iso}^{\mathrm{gp}}_U(\mathbf{Pic}^0_{X_U^\sigma/U}, E\times_\sk U)\ar@{=}[d]\\
J_U^\sigma  \ar@{^{(}->}[r]& \mathbf{Pic}_{X_U/U}^\sigma  & \is^\sigma=\is\times_{U,\sigma} U,
}
$$} and the following commutative diagram:
\begin{equation}\label{equ: 101}
\xymatrix{
J_U\times_U\is \ar[rr]^\phi       \ar[d]^{\Theta_0(\tau,\sigma)\times \Upsilon(\tau,\sigma)}                   && E\times_\sk \is \ar[d]^{\mathrm{id}\times\Upsilon(\tau,\sigma)}\\
J_U^\sigma\times_U \is ^\sigma \ar@{=}[d]\ar[rr]^{\phi^\sigma} && E\times_\sk \is^{\sigma} \ar@{=}[d]\\
(J_U\times_U\is)\times_{U,\sigma} U \ar[rr]^{\phi\times\mathrm{id}} && (E\times_\sk \is)\times_{U,\sigma} U.
}
\end{equation}

We denote by
\begin{align*}
\tau_+&=p_1\circ \Theta(\tau,\sigma): \mathbf{Pic}_{X_U/U} \stackrel{\Theta(\tau,\sigma)}{\to}\mathbf{Pic}_{X_U^\sigma/U}=\mathbf{Pic}_{X_U/U}\times_{U,\sigma} U\stackrel{p_1}{\to} \mathbf{Pic}_{X_U/U},   \\
\tau_*&=p_1\circ \Upsilon(\tau,\sigma): \is \stackrel{\Upsilon(\tau,\sigma)}{\longrightarrow} \is^\sigma=\is\times_{U,\sigma} U\stackrel{p_1}{\to} \is.
\end{align*}
Then $(\tau_+,\sigma)$ and $(\tau_*,\sigma)$ are contained in $\ega(\mathbf{Pic}_{X_U/U}/U)$ and $\ea(\is/U)$ respectively.
More precisely, recall that $\is=\iso$ is a $\mathrm{Aut}_\sk^{\mathrm{gp}}(E)$-torsor over $U$ whose $\mathrm{Aut}_\sk^{\mathrm{gp}}(E)$-action is induced by the natural $\mathrm{Aut}_\sk^{\mathrm{gp}}(E)$-action on the source space $E\times_\sk U$. It is quite clear that $\tau^*$ preserves this group action and hence  the pair $(\tau_*,\sigma)$ is contained in the subgroup $\et(\is/U)$.

\begin{Lem}\label{Lemma: action of ea}
(1) The maps
\begin{align*}
\ea(X/C)\to \ega(\mathbf{Pic}_{X_U/U}/U)&:  (\tau,\sigma)\mapsto (\tau_+,\sigma)\\
\ea(X/C)\to \et(\is/U)&:  (\tau,\sigma)\mapsto (\tau_*,\sigma)
\end{align*}
are group homomorphisms.

(2) The map
$\ea(X/C)\times \mathbf{Pic}_{X_U/U}(U) \to \mathbf{Pic}_{X_U/U}(U)$ given by $$  (\tau,\sigma) \times s= \tau^n_+\circ s \circ \sigma^{-1}, \,\, \forall \,\, (\tau,\sigma)\in \ea(X/C), s\in \mathbf{Pic}_{X_U/U}(U)$$
   defines a group $\ea(X/C)$ action on the set  $\mathbf{Pic}_{X_U/U}(U)$ for any $n\in \Z$. 
$$
\xymatrix{
U\ar[rr]^s \ar[d]^\sigma  && \mathbf{Pic}_{X_U/U} \ar[d]^{\tau_+} \ar[rr] && U\ar[d]^\sigma\\
U\ar[rr]^{\tau_+\cdot s \cdot \sigma^{-1} } \ar[rr] && \mathbf{Pic}_{X_U/U}\ar[rr] && U\\
}
$$
\end{Lem}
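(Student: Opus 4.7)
The plan is to derive both parts by exploiting the functoriality of the $\mathbf{Pic}$- and $\mathbf{Iso}^{\mathrm{gp}}$-functors in the base, with Part~(2) to be deduced formally from Part~(1) together with the $\sigma$-equivariance of $\tau_+$. The point is that everything of substance has already been built into the constructions of $\Theta$, $\Upsilon$, $\tau_+$ and $\tau_*$ in the paragraphs preceding the lemma; the lemma just records what functoriality delivers.

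For Part~(1), my approach is to apply the $\mathbf{Pic}$-functor to the evident factorisation of isomorphisms
\begin{equation*}
(\tau_1\tau_2,f)\colon X_U \xrightarrow{(\tau_2,f)} X_U^{\sigma_2} \xrightarrow{(\tau_1,f)\times_{U,\sigma_2}U} X_U^{\sigma_1\sigma_2},
\end{equation*}
where the canonical identification $X_U^{\sigma_1\sigma_2}\simeq (X_U^{\sigma_1})^{\sigma_2}$ is used. This will yield the compatibility
\begin{equation*}
\Theta(\tau_1\tau_2,\sigma_1\sigma_2) = \bigl(\Theta(\tau_1,\sigma_1)\times_{U,\sigma_2}U\bigr)\circ\Theta(\tau_2,\sigma_2),
\end{equation*}
and projecting on the first factor will then give $(\tau_1\tau_2)_+=\tau_{1,+}\circ\tau_{2,+}$; the identity and inverse cases are handled analogously. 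The argument for $\tau_*$ should be structurally identical, replacing $\mathbf{Pic}$ by $\mathbf{Iso}^{\mathrm{gp}}_U$; in addition I note, as already observed in the paragraph preceding the lemma, that $\tau_*$ preserves the $\mathrm{Aut}_\sk^{\mathrm{gp}}(E)$-torsor structure on $\is$, so its image actually lies in $\et(\is/U)$ rather than merely $\ea(\is/U)$.

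For Part~(2), let $\pi\colon\mathbf{Pic}_{X_U/U}\to U$ denote the structure morphism. I first check that $\tau_+\circ s\circ\sigma^{-1}$ is again a section: the relation $(\tau_+,\sigma)\in\ega(\mathbf{Pic}_{X_U/U}/U)$ gives $\pi\circ\tau_+=\sigma\circ\pi$, so that
\begin{equation*}
\pi\circ(\tau_+\circ s\circ\sigma^{-1}) = \sigma\circ\pi\circ s\circ\sigma^{-1} = \mathrm{id}_U,
\end{equation*}
and since $\tau_+$ preserves the degree grading on $\mathbf{Pic}_{X_U/U}$, the same computation applies after replacing $\tau_+$ by its restriction $\tau_+^n$ to $\mathbf{Pic}^n_{X_U/U}$. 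The group action axiom then follows directly from Part~(1) by the one-line calculation
\begin{equation*}
(\tau_1,\sigma_1)\cdot\bigl((\tau_2,\sigma_2)\cdot s\bigr) = \tau_{1,+}\circ\tau_{2,+}\circ s\circ\sigma_2^{-1}\circ\sigma_1^{-1} = (\tau_1\tau_2)_+\circ s\circ(\sigma_1\sigma_2)^{-1}.
\end{equation*}

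No serious obstacle is anticipated: once the functorial definitions of $\Theta$, $\Upsilon$, $\tau_+$ and $\tau_*$ are unpacked, the entire lemma amounts to routine diagram-chasing driven by the universal properties of $\mathbf{Pic}$ and $\mathbf{Iso}^{\mathrm{gp}}$. The one place to be mildly careful is the bookkeeping of the fibered products $X_U^\sigma$, $\mathbf{Pic}_{X_U/U}^\sigma$ and $\is^\sigma$, which must all be interpreted consistently with the chosen directions of $\Theta$ and $\Upsilon$.
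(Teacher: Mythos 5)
Your proposal is correct and follows essentially the same route as the paper: the paper disposes of Part~(1) with ``can be checked directly'' (your functoriality argument via the factorisation of $\Theta$ and $\Upsilon$ is exactly the check being alluded to), and its proof of Part~(2) is the identical one-line associativity computation you give. No gaps.
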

\begin{proof}
The statement (1) can be checked directly. Following (1), we have 
\begin{align*}
(\tau',\sigma')\cdot ((\tau,\sigma)\cdot s)&=(\tau',\sigma')\cdot (\tau_+\circ s\circ \sigma^{-1})\\
                                          &=\tau_+'\circ \tau_+ \circ s \circ \sigma^{-1}\circ \sigma'^{-1}\\
                                          &=(\tau'\cdot\tau)_+\circ s \circ (\sigma'\cdot \sigma)^{-1}\\
                                          &=((\tau',\sigma')\cdot (\tau,\sigma))\cdot s,                                     
\end{align*}
 which gives (2).
\end{proof}
By construction, the equivariant automorphism $\tau_+$ preserves the degree and we thus denote by $\tau^n_+: \mathbf{Pic}^n_{X_U/U}\to \mathbf{Pic}^n_{X_U/U}$ the restriction of $\tau_+$ on $\mathbf{Pic}^n_{X_U/U}$. In particular, $p_1\circ\Theta_0(\tau,\sigma): J_U\to J_U$ is the same as  $\tau^0_+$ and $$\tau_+^1=\tau: \mathbf{Pic}^1_{X_U/U}=X\to X=\mathbf{Pic}^1_{X_U/U} .$$

Now we concentrate on the section $s: U\to \mathbf{Pic}^n_{X_U/U} \in \mathbf{Pic}^n_{X_U/U}(U)$ we fixed in \S~\ref{subsub: no name} (cf. the paragraph we construct $Z$), we denote by $N_s:=\mathrm{Stab}_{s}\subseteq \ea(X/C)$ the stabilizer of $s\in \mathbf{Pic}^n_{X_U/U}(U)$ under the $\ea(X/C)$-action on $\mathbf{Pic}^n_{X_U/U}(U)$ constructed in Lemma~\ref{Lemma: action of ea}
\begin{Lem}
For each $(\tau,\sigma)\in N_s$, we have $\tau(Z)=Z$. 
\end{Lem}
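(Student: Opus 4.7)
The plan is to use the defining Cartesian description of $Z$ as the pullback of the section $s$ under the multiplication-by-$n$ map and then exploit the fact that $\tau_+$, being a group scheme equivariant automorphism of $\mathbf{Pic}_{X_U/U}$, commutes with $\cdot n$. Recall from \S~\ref{subsub: no name} that $Z\subseteq X_U=\mathbf{Pic}^1_{X_U/U}$ is defined by the Cartesian square
$$
\xymatrix{
Z \ar@{^{(}->}[r]\ar[d] & \mathbf{Pic}^1_{X_U/U}\ar[d]^{\cdot n}\\
U \ar[r]^s & \mathbf{Pic}^n_{X_U/U}
}
$$
and that $\tau = \tau^1_+$ under the identification $X_U=\mathbf{Pic}^1_{X_U/U}$.

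First I would unwind the definition of $N_s$. The condition $(\tau,\sigma)\cdot s=s$ in Lemma~\ref{Lemma: action of ea}(2) with $n$ in place of the degree reads $\tau^n_+\circ s\circ\sigma^{-1}=s$, equivalently
$$\tau^n_+\circ s = s\circ \sigma.$$
Next I would observe that since $\tau_+\in\ega(\mathbf{Pic}_{X_U/U}/U)$ is an equivariant group scheme automorphism, it commutes with multiplication by $n$ on each fibre; more precisely, the diagram
$$
\xymatrix{
\mathbf{Pic}^1_{X_U/U} \ar[r]^{\cdot n}\ar[d]^{\tau^1_+}& \mathbf{Pic}^n_{X_U/U}\ar[d]^{\tau^n_+}\\
\mathbf{Pic}^1_{X_U/U} \ar[r]^{\cdot n} & \mathbf{Pic}^n_{X_U/U}
}
$$
commutes, since both vertical arrows cover $\sigma$ on the base and $\tau_+$ respects the group structure.

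Now I would check that $\tau(Z)\subseteq Z$ by composing these two squares. Given any scheme-valued point $x\in Z$ lying over $u\in U$, we have $n\cdot x=s(u)$. Its image $\tau(x)=\tau^1_+(x)$ lies over $\sigma(u)$ and satisfies
$$n\cdot \tau(x)=\tau^n_+(n\cdot x)=\tau^n_+(s(u))=s(\sigma(u)),$$
using the commuting square and the identity $\tau^n_+\circ s=s\circ\sigma$. This is exactly the defining condition for $\tau(x)$ to lie in $Z$, so $\tau(Z)\subseteq Z$. Applying the same argument to $(\tau^{-1},\sigma^{-1})\in N_s$ (the stabilizer is a subgroup), we get $\tau^{-1}(Z)\subseteq Z$, whence $\tau(Z)=Z$.

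There is no real obstacle here; the only thing to be slightly careful about is that the composition $\tau^n_+\circ s\circ\sigma^{-1}$ genuinely lands in $\mathbf{Pic}^n_{X_U/U}(U)$, which is clear because the structure morphism $\mathbf{Pic}^n_{X_U/U}\to U$ composed with this triple composition equals $\sigma\circ\mathrm{id}_U\circ\sigma^{-1}=\mathrm{id}_U$. Everything else is formal manipulation of the Cartesian square defining $Z$.
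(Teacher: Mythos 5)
Your proof is correct and follows essentially the same route as the paper: both arguments combine the commutativity of $\tau_+$ with the multiplication-by-$n$ map (coming from $\tau_+$ being a group-scheme equivariant automorphism) with the stabilizer identity $\tau^n_+\circ s=s\circ\sigma$, then conclude that the pullback $Z$ of $s$ is preserved. Your version merely spells out the scheme-point check and the inverse argument for equality, which the paper leaves implicit.
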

\begin{proof}
By the construction of $\tau_+$, we have the following commutative diagram for $(\tau,\sigma)\in N_s$:
$$
\xymatrix{
  X_U=\mathbf{Pic}^1_{X_U/U}\ar[d]^{\tau_+^1=\tau}\ar[rr]^{\cdot n} &&   \mathbf{Pic}^n_{X_U/U}\ar[d]^{\tau_+} &&\ar@{_{(}->}[ll]_s  U\ar[d]^{\sigma}\\
  X_U=\mathbf{Pic}^1_{X_U/U}\ar[rr]^{\cdot n} &&   \mathbf{Pic}^n_{X_U/U} &&\ar@{_{(}->}[ll]_s U
}
$$ As a result, the divisor $Z$ defined as the pull back of $s(U)\subseteq \mathbf{Pic}^n_{X_U/U}$ by the multiplication by $n$ map, is preserved by the action $\tau_+|_{X_U}=\tau$. 
\end{proof}
As $\tau(Z)=Z$ and noticing that $\tau_+$ preserves the group structure of $\mathbf{Pic}_{X_U/U}$, we obtain the right commutative diagram from the left one below.   
\begin{equation}\label{equ:102}
\xymatrix{
J_U\times_U X_U\ar[d]^{\tau^0_+\times_\sigma \tau} \ar[rr]^\beta&& X_U \ar[d]^{\tau}  & J_U[n]  \times_U  Z \ar[d]^{\tau^0_+\times_\sigma\tau}\ar[rr]^{\beta|_Z} && Z\ar[d]^\tau\\
J_U \times_UX_U  \ar[rr]^\beta && X_U  & J_U[n]               \times_U  Z            \ar[rr]^{\beta|_Z} && Z
}
 \end{equation}

Next we construct for each $(\tau,\sigma)\in N_s$ an isomorphism $\widetilde{\tau}$ as below: 
$$\widetilde{\tau}:=\tau\times_\sigma\tau_*: \widetilde{Z}=Z\times_U\is  \stackrel{\tau\times_\sigma\tau_*}{\longrightarrow}Z\times_U\is =\widetilde{Z}.$$

\begin{Prop}
For any $(\tau,\sigma)\in N_s$, the pair $(\widetilde{\tau},\sigma)$ is contained in $\et(\widetilde{Z}/U)$. 
\end{Prop}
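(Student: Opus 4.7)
The plan is to verify that $\widetilde{\tau}:=\tau\times_\sigma\tau_\ast$ is an automorphism of $\widetilde{Z}$ lying over $\sigma$ and that it is compatible with the $\Lambda_0$-torsor structure. The first point is essentially by construction: $\tau$ restricts to an automorphism of $Z\subseteq X_U$ by the preceding lemma and lies over $\sigma$, while $\tau_\ast:\is\to\is$ also lies over $\sigma$, so the fibre product of these two $\sigma$-twisted isomorphisms is a well-defined automorphism of $\widetilde{Z}=Z\times_U\is$ with base map $\sigma$ on $\widetilde{Z}/\Lambda_0=U$. Using the semi-direct decomposition $\Lambda_0=E[n]\rtimes\mathrm{Aut}_\sk^{\mathrm{gp}}(E)$, the remaining torsor-equivariance can be checked one factor at a time.

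For the factor $\mathrm{Aut}_\sk^{\mathrm{gp}}(E)$, its action on $\widetilde{Z}$ is the étale one $\gamma^\sharp=\mathrm{id}_Z\times\gamma^\ast$ on $Z\times_U\is$, so a direct computation gives
\[
\widetilde{\tau}\circ\gamma^\sharp=\tau\times_\sigma(\tau_\ast\circ\gamma^\ast),\qquad \gamma^\sharp\circ\widetilde{\tau}=\tau\times_\sigma(\gamma^\ast\circ\tau_\ast),
\]
and these agree because $(\tau_\ast,\sigma)\in\et(\is/U)$ was already noted, which by Definition~\ref{Def: equivariant automorphism}(3) encodes exactly the commutation $\tau_\ast\circ\gamma^\ast=\gamma^\ast\circ\tau_\ast$ for every $\gamma\in\mathrm{Aut}_\sk^{\mathrm{gp}}(E)$.

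The substantive point, and where I expect the main technical obstacle to lie, is equivariance for the $E[n]$-factor: one must see that the possibly nontrivial group automorphism $\tau_+^0$ by which $\tau$ intertwines the $J_U[n]$-action on $Z$ becomes the identity after being transported to $E[n]$ via $\phi$. Concretely, $\widetilde{\beta}$ is defined by transferring the diagonal $J_U[n]$-action $\beta|_Z\times\mathrm{id}$ on $(J_U[n]\times_U Z)\times_U\is$ through the identification $\phi:\widetilde{J}\simeq E\times_\sk\is$. The right-hand square of diagram~\eqref{equ:102} gives
\[
\tau\circ\beta|_Z=\beta|_Z\circ(\tau_+^0\times_\sigma\tau),
\]
whereas projecting diagram~\eqref{equ: 101} to the first factor yields
\[
\phi\circ(\tau_+^0\times_\sigma\tau_\ast)=(\mathrm{id}_E\times\tau_\ast)\circ\phi,
\]
so the twisting by $\tau_+^0$ is absorbed into the $\phi$-identification. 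Splicing these two identities and restricting to $n$-torsion should give $\widetilde{\tau}\circ\widetilde{\beta}=\widetilde{\beta}\circ(\mathrm{id}_{E[n]}\times\widetilde{\tau})$, as required.

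Apart from this central cancellation, the rest is routine bookkeeping among the several fibre-product structures ($\times_U$ with the $\sigma$-twist, $\times_\sk$, $\times_\is$), which one must thread through carefully to see that the semi-direct identification $\Lambda_0=E[n]\rtimes\mathrm{Aut}_\sk^{\mathrm{gp}}(E)$ really suffices to reduce to the two componentwise checks. Combining the two equivariance statements produces $(\widetilde{\tau},\sigma)\in\et(\widetilde{Z}/U)$, as claimed.
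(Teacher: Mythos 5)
Your proposal is correct and follows essentially the same route as the paper: the same reduction to the two semi-direct factors of $\Lambda_0=E[n]\rtimes\mathrm{Aut}_\sk^{\mathrm{gp}}(E)$, the same use of $(\tau_*,\sigma)\in\et(\is/U)$ to handle the \'etale factor, and the same key cancellation in which the twist by $\tau_+^0$ from diagram~(\ref{equ:102}) is absorbed by the identification $\phi$ via diagram~(\ref{equ: 101}). The "splicing" you describe is precisely the large commutative diagram the paper assembles, restricted to $n$-torsion.
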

\begin{proof}
The pair $(\widetilde{\tau},\sigma)$ is clearly contain in $\ea(\widetilde{Z}/U)$. It remains to that $\widetilde{\tau}$  preserves the $\Lambda_0=E[n]\rtimes_\sk \mathrm{Aut}^{\mathrm{gp}}_\sk(E)$-action on $\widetilde{Z}$ given in \S~\ref{subsub: no name} (cf. Lemma~\ref{Lem: action glue}).

First we show that the action of  second semi-direct component $\mathrm{Aut}^{\mathrm{gp}}_\sk(E)$ of $\Lambda_0$ is preserved. In fact, as mentioned that $(\tau_*,\sigma)$ is contained in $\et(\is/U)$, as automorphisms of $\is$ we have $\tau_*\circ \gamma^*=\gamma^*\circ\tau_*$ for all $\gamma\in \mathrm{Aut}_\sk^{\mathrm{gp}}(E)$. In particular, the automorphism $\widetilde{\tau}=\tau\times_\sigma \tau_*:Z\times_U\is \to Z\times_U\is $ is commutative with the automorphism $\gamma^\sharp=\mathrm{id}\times \gamma^*:Z\times_U\is \to Z\times_U\is$. We are done.

Next, we check the commutativity of $\widetilde{\tau}$ with the $E[n]$-action. Let us similarly define 
$\widetilde{\tau}_+:=\tau_+\times_\sigma \tau^*: \widetilde{J}=J_U\times_U\is \stackrel{\tau_+\times_\sigma \tau^*}{\longrightarrow}J_U\times_U=\widetilde{J}.$

 Then the following commutative diagram
\begin{equation}\label{eq 3}
\xymatrix{
\widetilde{J} \ar[d]^{\widetilde{\tau}_+} \ar[rr]^\phi && E\times_\sk \is\ar[d]^{\mathrm{id}\times \tau^*} \\
\widetilde{J}\ar[rr]^\phi && E\times_\sk\is
 }
\end{equation} 
follows immediately from the next commutative diagram equivalent to  (\ref{equ: 101}):
 \begin{equation}
\xymatrix{
J_U\times_U\is \ar[rr]^\phi \ar@/_5pc/[dd]_{\widetilde{\tau}_+\times \alpha}       \ar[d]^{\Theta_0(\tau,\sigma)\times \Upsilon(\tau,\sigma)}                   && E\times_\sk \is \ar[d]^{\mathrm{id}\times \Upsilon(\tau,\sigma)} \ar@/^5pc/[dd]^{(\mathrm{id}\times \tau_*)\times \alpha'}\\
J_U^\sigma\times_U \is ^\sigma \ar@{=}[d]\ar[rr]^{\phi^\sigma} && E\times_\sk \is^{\sigma} \ar@{=}[d]\\
(J_U\times_U\is)\times_{U,\sigma} U \ar[rr]^{\phi\times\mathrm{id}} && (E\times_\sk \is)\times_{U,\sigma} U
}.
\end{equation}
Here $\alpha: J_U\times_U\is \to U$ and $\alpha': E\times_\sk \is \to U$ are the structure maps. Combining  (\ref{equ:102}) with (\ref{eq 3}), we have the following large commutative diagram:
  {\small $$
\xymatrix{
&\widetilde{J}[n]\times_\is \widetilde{Z}\ar[ddd]^{\widetilde{\tau}_+\times_{\tau_*}\widetilde{\tau}} \ar[ddl]_{\phi\times\mathrm{id}} \ar@{=}[r] & (J_U[n]\times_U Z)\times_U \is \ar[ddd]^{(\tau_+\times_\sigma\tau)\times_\sigma \tau_*} \ar[rr]&& \widetilde{Z}\ar[ddd]^{\tau\times_\sigma \tau_*=\widetilde{\tau}}\ar@{=}[ddl] \\
\\
{\red E[n]\times_\sk \widetilde{Z}} \ar[ddd]^{\mathrm{id}\times\widetilde{\tau}}\ar[rrr]_{\widetilde{\beta}}&&&{\red \widetilde{Z}}\ar[ddd]^{\widetilde{\tau}}\\
&\widetilde{J}[n]\times_\is \widetilde{Z}\ar[ddl]^{\phi\times\mathrm{id}}\ar@{=}[r] & (J_U[n]\times_U Z)\times_U \is \ar[rr]&& \widetilde{Z}\ar@{=}[ddl] \\
\\
{\red E[n]\times_\sk \widetilde{Z}} \ar[rrr]_{\widetilde{\beta}} &&&{\red \widetilde{Z}}
}
$$ }   
The square box with red vertices says that $\widetilde{\tau}$ is commutative with the $E[n]$-action. We are done.
\end{proof}
This proposition gives an immediate corollary.
\begin{Cor}\label{Cor: Image of N_s contained in B}
The  subgroup $$\mathrm{Im}(f_\sharp|_{N_s}: N_s\to \B(X/C),\,\,\, (\tau,\sigma)\mapsto \sigma )\subseteq \mathrm{Aut}_\sk(U)$$ is contained in $\Bt(\widetilde{Z}/U)$.
\end{Cor}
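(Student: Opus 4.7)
The plan is to read the corollary off the proposition proved immediately above. By Corollary~\ref{Cor: Bt definition} and Definition~\ref{Def: equivariant automorphism}, the subgroup $\Bt(\widetilde{Z}/U)\subseteq \mathrm{Aut}_\sk(U)$ is by definition the image of the projection $\nu_\sharp:\et(\widetilde{Z}/U)\to\mathrm{Aut}_\sk(U)$, $(\widetilde{\tau},\sigma)\mapsto \sigma$. On the other hand, the map $f_\sharp|_{N_s}$ sends $(\tau,\sigma)\in N_s\subseteq \ea(X/C)$ to the same $\sigma$. So the two projections are tautologically compatible with the assignment $(\tau,\sigma)\mapsto (\widetilde{\tau},\sigma)=(\tau\times_\sigma \tau_*,\sigma)$ constructed in the previous subsection.

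Concretely, I would argue as follows: given any $\sigma\in f_\sharp(N_s)$, pick $(\tau,\sigma)\in N_s$ lifting it. By the preceding proposition, $(\widetilde{\tau},\sigma)\in \et(\widetilde{Z}/U)$, and therefore $\sigma=\nu_\sharp(\widetilde{\tau},\sigma)\in \Bt(\widetilde{Z}/U)$. This exhausts the containment being asserted.

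I do not foresee any obstacle: the real technical content was absorbed by the preceding proposition, which required a careful verification that $\widetilde{\tau}$ commutes with both semidirect factors of the $\Lambda_0=E[n]\rtimes \mathrm{Aut}_\sk^{\mathrm{gp}}(E)$-action on $\widetilde{Z}$ (the $\mathrm{Aut}_\sk^{\mathrm{gp}}(E)$-part from $(\tau_*,\sigma)\in \et(\is/U)$, the $E[n]$-part from the large diagram built out of (\ref{equ:102}) and (\ref{eq 3})). Once that proposition is granted, the corollary is a one-line bookkeeping statement about images under projection to $\mathrm{Aut}_\sk(U)$.
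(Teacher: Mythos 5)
Your proof is correct and is exactly the deduction the paper intends: the paper gives no separate argument, calling the corollary an "immediate" consequence of the preceding proposition, and your lifting of $\sigma$ to $(\tau,\sigma)\in N_s$ and then to $(\widetilde{\tau},\sigma)\in\et(\widetilde{Z}/U)$ is that deduction spelled out. Nothing is missing.
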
  
The next proposition finishes the proof of Lemma~\ref{Lemma: infinite et for wT}.
\begin{Prop}\label{Prop: finite kernel and cokernel}
The kernel and cokernel of the homomorphism $$f_\sharp|_{N_s}: N_s\to \B(X/C), \,\,\, (\tau,\sigma)\to \sigma$$ are both finite. In particular, $\Bt(\widetilde{Z}/U)$ is infinite.
\end{Prop}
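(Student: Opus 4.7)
The strategy is to relate both the kernel and the cokernel of $f_\sharp|_{N_s}$ to the action of $\mathrm{Aut}_C(X)$ on $\mathbf{Pic}^n_{X_U/U}(U)$. For the kernel, $\ker(f_\sharp|_{N_s}) = N_s \cap \mathrm{Aut}_C(X)$; by Corollary~\ref{Cor: ext of auto of genus one} there is an exact sequence $0 \to J(C) \to \mathrm{Aut}_C(X) \to \mathrm{Aut}^{\mathrm{gp}}_K(J_\eta)$ with finite right-hand term, so it suffices to bound $J(C)\cap N_s$. A direct calculation identifies the induced automorphism $(T_t)_+^n$ of $\mathbf{Pic}^n_{X_U/U}$, for the translation $T_t\in J(C)\subseteq \mathrm{Aut}_C(X)$, with translation by $-nt$ in the $J_U$-torsor structure (because pullback by $T_t$ sends a degree-$n$ divisor $P_1+\cdots+P_n$ to $(P_1-t)+\cdots+(P_n-t)$). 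Hence $T_t\in N_s$ iff $nt=0$, giving $J(C)\cap N_s=J(C)[n]$, which embeds into $J_\eta(\overline K)[n]$ and is therefore finite.

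For the cokernel, I plan to construct a set-theoretic map
\[
\Psi : \B(X/C)\longrightarrow \mathrm{Aut}_C(X)\backslash \mathbf{Pic}^n_{X_U/U}(U), \qquad \sigma \longmapsto \mathrm{Aut}_C(X)\cdot\bigl((\tau,\sigma)\cdot s\bigr),
\]
where $(\tau,\sigma)$ is any lift of $\sigma$ under $f_\sharp$. Two such lifts differ by an element of $\mathrm{Aut}_C(X)$, so $\Psi$ is well defined. A short manipulation using Lemma~\ref{Lemma: action of ea}(2) shows that $\Psi(\sigma_1)=\Psi(\sigma_2)$ iff there is an $\alpha\in \mathrm{Aut}_C(X)$ making $\alpha(\tau_2,\sigma_2)^{-1}(\tau_1,\sigma_1)$ lie in $N_s$, which is equivalent to $\sigma_1\sigma_2^{-1}\in f_\sharp(N_s)$. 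Hence $\Psi$ descends to an injection $\B(X/C)/f_\sharp(N_s)\hookrightarrow \mathrm{Aut}_C(X)\backslash\mathbf{Pic}^n_{X_U/U}(U)$, and the cokernel is finite provided the target is.

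Finally, since $n\cdot o_{X_U}=0$, the torsor $\mathbf{Pic}^n_{X_U/U}$ is trivial and $\mathbf{Pic}^n_{X_U/U}(U)$ is a $J(C)$-torsor. By the first paragraph, $J(C)\subseteq \mathrm{Aut}_C(X)$ acts on this torsor as translation by $-nJ(C)$, and $\mathrm{Aut}_C(X)/J(C)$ is finite, so it remains to prove that $J(C)/nJ(C)$ is finite. Using Lang--N\'eron \cite[Thm.~7.1]{Conrad06-Chow's-K/k-trace} exactly as in the proof of Theorem~\ref{Thm: main1}, the quotient $J(C)/E(\sk)$ is finitely generated, where $E=\mathrm{Tr}_{K/\sk}(J_\eta)$. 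As $E(\sk)$ is the group of $\sk$-points of an abelian variety over an algebraically closed field, it is divisible, so $E(\sk)=nE(\sk)\subseteq nJ(C)$, and therefore $J(C)/nJ(C)$ is a quotient of the finitely generated torsion group $(J(C)/E(\sk))/n(J(C)/E(\sk))$, hence finite. The principal technical step is the explicit identification of the $\mathrm{Aut}_C(X)$-action on $\mathbf{Pic}^n_{X_U/U}$; once that is in hand, the finiteness of both kernel and cokernel follow from the exact sequence of Corollary~\ref{Cor: ext of auto of genus one} combined with Lang--N\'eron and divisibility.
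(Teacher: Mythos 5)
Your proposal is correct and follows essentially the same route as the paper: both reduce the kernel to $J(C)\cap N_s=J(C)[n]$ via the identification of the $\ea(X/C)$-action of a translation $T_t$ on $\mathbf{Pic}^n_{X_U/U}(U)$ with translation by $\pm nt$, and both reduce the cokernel to the finiteness of the set of $\mathrm{Aut}_C(X)$-orbits on $\mathbf{Pic}^n_{X_U/U}(U)$, settled by Lang--N\'eron together with divisibility of the trace; your map $\Psi$ is just the paper's double-coset identification $\mathrm{Aut}_C(X)\backslash\ea(X/C)/N_s\simeq \B(X/C)/f_\sharp(N_s)$ written out at the level of $\B(X/C)$.
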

\begin{proof}
First from the exact sequence (\ref{ext: intro-fundamental}):
\begin{equation*}
1\to \mathrm{Aut}_C(X)\to \ea(X/C)\stackrel{f_\sharp}{\to} \B(X/C)\to 1. 
\end{equation*}
we need only to prove that:
\begin{enumerate}[(a)]
\item the intersection of $N_s$ with $\mathrm{Aut}_C(X)$ is finite;

\item the subgroup $G:=<N_s,\mathrm{Aut}_C(X) >$ has a finite index in $\ea(X/C)$.
\end{enumerate}
Note that the subset  $\ea(X/C)\cdot s \subseteq \mathbf{Pic}^n_{X_U/U}(U)$ is canonically identified with the set $\ea(X/C)/N_s$ of left $N_s$-cosets. Then the set of $\mathrm{Aut}_C(X)$-orbits in $\ea(X/C)\cdot s$ is canonically identified with the set $\mathrm{Aut}_C(X)\backslash   \ea(X/C)/N_S$ of double cosets. As $\mathrm{Aut}_C(X)$ is normal,   $\mathrm{Aut}_C(X)\backslash   \ea(X/C)/N_S$ is the same as $\ea(X/C)/G$. So (b) is implied by 
\begin{enumerate}[(b1)]
\item there is only finitely many $\mathrm{Aut}_C(X)$-orbits in $\mathbf{Pic}^n_{X_U/U}(U)$.
\end{enumerate}
On the other hand, we have another exact sequence (\ref{ext: aut of genus one})):
\begin{align*}
1\to J_U(U) \to \mathrm{Aut}_C(X) \stackrel{\nu}{\to} \mathrm{Aut}^{\mathrm{gp}}_K(J_\eta)
\end{align*} 
As the group $\mathrm{Aut}^{\mathrm{gp}}_K(J_\eta)$ is finite, it suffices to prove that
\begin{enumerate}[(a')]
\item the intersection of $N_s$ with $J_U(U)$ is finite;

\item there are only finitely many $J_U(U)$-orbits in $\mathbf{Pic}^n_{X_U/U}(U)$.
\end{enumerate}
Let us then prove the two assertions. Note, as a subgroup of $\ea(X_U/U)$, the subgroup $J_U(U)$-action on $\mathbf{Pic}^n_{X_U/U}(U)$ (by Lemma~\ref{Lemma: action of ea}(3)) is different from the natural one we mentioned many times given by the multiplication in relative Picard scheme. By construction, the difference of the two actions are just by multiplication by $n$ as shown in the following commutative diagram:
$$
\xymatrix{J_U(U)\times \mathbf{Pic}^n_{X_U/U}(U) \ar[d]^{n\times \mathrm{id}} \ar[rr]^{\varepsilon':=\text{action as subgroup of $\ea(X/C)$}} &&\mathbf{Pic}^n_{X_U/U}(U)\ar@{=}[d] \\
J_U(U)\times \mathbf{Pic}^n_{X_U/U}(U)\ar@{=}[r]&\mathbf{Pic}^0_{X_U/U}(U)\times \mathbf{Pic}^n_{X_U/U}(U) \ar[r]^{\,\,\,\,\,\,\,\,\,\,\,\,\,\,\,  \varepsilon} & \mathbf{Pic}^n_{X_U/U}(U)}
$$
Here the action $\varepsilon$ is the action in relative Picard scheme and with this action. As mentioned before, with the action $\varepsilon$, $\mathbf{Pic}^n_{X_U/U}(U)$ is a trivial torsor under $J_U(U)$. So with the  $\varepsilon'$ action of $J_U(U)$ on $\mathbf{Pic}^n_{X_U/U}(U)$ we have 
\begin{itemize}
\item the  $J_U(U)$-action factors through the quotient group $J_U(U)/J_U(U)[n]$;

\item the quotient group $J_U(U)/J_U(U)[n]$ acts freely on  $\mathbf{Pic}^n_{X_U/U}(U)$; and

\item the set of  $J_U(U)$-orbits is (non-canonically) identified with the   quotient space  $J_U(U)/n\cdot J_U(U) $.
\end{itemize}
The statement (a') follows as the group $N_s\cap J_U(U)$ is the group $J_U(U)[n]$, which is finite.  

The statement (b') is as follows. Let $K:=K(C),\eta:=\Spec(K)$ be the generic point of $C$, and  $E_1$ be the $K/\sk$-trace of $J_\eta$.  No matter $E_1$ is trivial or an elliptic curve, the map $E_1(\sk)$ is divisible. Thus the group $J_U(U)/n\cdot J_U(U)$ is an $n$-torsion quotient group of the finitely generated group $J_U(U)/E_1(\sk)=J_\eta(K)/E_1(\sk)$ (by Lang-N\'eron (cf. \cite[Thm.~7.1]{Conrad06-Chow's-K/k-trace}). Therefore $J_U(U)/n\cdot J_U(U)$ is finite and we are done.    
\end{proof}

\subsection{Classification of genus one fibration with infinite B-automorphism}\label{Subsec: classification in genus one}
Now we give a complete classification of relatively minimal genus one fibration $f: X\to C$ with infinite $\B(X/C)$. 

According to Theorem~\ref{Thm: main+main}, we need to classify the set of the following data:
\begin{enumerate}[(a)]
\item a finite subgroup scheme $\Lambda\subseteq \mathbf{Aut}_\sk(E)$   for an elliptic curve $E$ over $\sk$ 

\item a $\Lambda$-torsor $\nu: T\to U$ with infinite $\et(T/U)$ for a smooth irreducible curve $U$ so that $T$ is irreducible and smooth over $\sk$.
\end{enumerate}
We have already classified in Proposition~\ref{Prop: classification of torsors with infinite Bt}, the second data (b) and it turns out that $\Lambda$ is Abelian (cf. Remark~\ref{Rem: G is abelian in classification}). Up to isomorphism, an Abelian subgroup scheme $\Lambda\subseteq \mathbf{Aut}_\sk(E)$ is elementary (cf. Definition~\ref{Def: elementary} and Lemma~\ref{Lem: el}). Namely, we can write $\Lambda=\Lambda_t\times_\sk \Gamma$ for a subgroup schemes $\Lambda_t\subseteq E$ and $\Gamma\subseteq \mathrm{Aut}_\sk^{\mathrm{gp}}(E)$ so that $\Lambda_t$ is fixed by  $\Gamma$. So we really aims to classify the following data
\begin{itemize}
\item an elliptic curve  $E$ over $\sk$ with an Abelian subgroup $\Gamma\subseteq \mathrm{Aut}_\sk^{\mathrm{gp}}(E)$ and a finite subgroup scheme $\Lambda_t\subseteq E^\Gamma$;

\item a $\Lambda:=\Lambda_t\times_\sk \Gamma$-torsor $T$ over $U$ with infinite $\Bt(T/U)$ so that $T$ is irreducible and smooth over $\sk$.
\end{itemize}
With the classification of $(E,\Gamma, E^\Gamma)$ given in \S~\ref{Sec: Appendix}, we shall now work out the complete classifications. Recall there are four possibilities for $U$: an elliptic curve, $\p, \A$ and $\Aa$.

\subsubsection{$U=C$ is an elliptic curve}\label{7.3.1}
When $U=C$ is an elliptic curve, the torsor $T=C'$ is another elliptic curve, the natural map $\nu: C'\to C$ is a homomorphism and hence $\Lambda\subseteq \mathbf{Aut}_\sk(E)$ is at the same time canonically identified with the kernel of $C'\to C$. In other words, we need to give a complete list of pairs $(E, \Lambda=\Lambda_t\times_\sk \Gamma\subseteq \mathbf{Aut}_\sk(E), C')$ along with an embedding of group schemes $\Lambda\to C'$. With these data, the fibration is  $$f=p_2: X=(E\times_\sk C')/\Lambda\to C:=C'/\Lambda.$$  
According to whether $\Gamma$ is trivial or not, we have two possibilities as below by  the Enriques-Kodaira classification (cf. \cite[\S~10]{Badescu01-algebriac-surfaces}).
\begin{enumerate}
\item[($\Gamma=\{\mathrm{id}\}$)]   $X$ is an Abelian surface (case 4 of the list in   \cite[\S~10]{Badescu01-algebriac-surfaces}). In this case $f:X\to C$ is a surjective homomorphism with smooth connected kernel.  

\item[($\Gamma\neq \{\mathrm{id}\}$)]  $X$ is a hyperelliptic  (also known as bielliptic in some literatures) surface (case 5 of the list in   \cite[\S~10]{Badescu01-algebriac-surfaces}). In this case, $f$ is the Albanese map and the numerical invariants of $X$ are either $p_g(X)=0, q(X)=1, b_1(X)=2$ or $p_g(X)=1,q(X)=2, b_1(X)=2, \Delta=2$. 
\end{enumerate}
  
For the hyperelliptic surface case,  when $\mathrm{char}.(\sk)\neq 2,3$, there already exists a complete classification known as the Bagnera-de Franchis list (cf.  \cite[pp~160, \S~10]{Badescu01-algebriac-surfaces}). Moreover, the equivariant automorphism $\ea(X/C)$, which is the same as $\mathrm{Aut}_\sk(X)$ at this time has also been worked out in \cite{Bennett-Miranda90}. In characteristic $2,3$, the full list of hyperelliptic surfaces has also been given by Bombieri and Mumford in \cite{Bombieri-MumfordII77}.

\begin{Rem}
(1). In characteristic $p>0$, an ordinary elliptic curve does not contain $\alpha_p$ as a subgroup scheme. In particular, when $p=3, j(E)=0, |\Gamma|=3$ the subgroup scheme $E^\Gamma\times \Gamma \simeq \alpha_p\times \Gamma$ can not embed to any elliptic curve $C'$. The case $p=2,j(E)=1728,|\Gamma|=4$ is similar.

(2). The result surface $X$ is special if and only $\Gamma$ fixes the the space $H^0(E,\Omega_{E/\sk}^1)$ of $1$-forms. This happens if and only if $|\Gamma|$ is a power of $p$ (cf. Remark~\ref{Rem: fix 1-form}).

(3). The $\Gamma$ is always cyclic.
\end{Rem}

\subsubsection{$U=C=\p$}
According to Proposition~\ref{Prop: classification of torsors with infinite Bt}, this time the fibration $f$ is 
$$f=p_2: X=E\times_\sk\p\to \p$$ for an elliptic curve $E$.

\subsubsection{$U=\A$}
In this case $\Lambda=\Lambda_t\times \Gamma \subseteq E\rtimes_\sk \mathrm{Aut}_\sk(E)$ is a subgroup of $\mathbb{G}_a$  by  
Proposition~\ref{Prop: classification of torsors with infinite Bt}.  Note that finite subgroup scheme of $\mathbb{G}_a$ has the formation $\alpha_{p^n}\times V$ for a $\mathbb{F}_p$-linear subspace $V\subseteq \sk$. In particular, $\Gamma$ is a cyclic group of order $p$ or trivial. According to Proposition~\ref{Prop: Abelian Automorphism of elliptic curves}, we have the following classifications.

\begin{enumerate}
\item When $\Gamma=e$. Either $E$ is ordinary, $\Lambda_t=E[p](\sk)$ or $E$ is supersingular, $\Lambda_t=\mathrm{Ker}(F_{E/\sk}: E\to E^{(p)})\simeq \alpha_p$. Note that when $E$ is supersingular, $E[p]$ is not isomorphic to $\alpha_{p^2}$ and can never embeds to $\mathbb{G}_a$.

\item When $\Lambda_t=0$. Then $\Gamma$ is a cyclic group of order $p$. There are following classification:
\begin{itemize}
\item either $p=3, E: y^2=x^3-x \,\,(j(E)=0), \Gamma=<\gamma>$ with $\gamma: (x,y)\mapsto (x+1,y)$.

\item or $p=2,  E: y^2+y=x^3\,\,(j(E)=0), \Gamma=<\gamma>$ with $\gamma: (x,y)\mapsto (x,y+1)$.
\end{itemize}
 
\item Neither $\Lambda_t$ nor $\Gamma$ is trivial. There are following classification: \begin{itemize}
\item either $p=3, E: y^2=x^3-x, \Gamma=<\gamma>$ with $\gamma: (x,y)\mapsto (x+1,y)$ and $ \Lambda_t=\mathrm{Ker}(F_{E/\sk}: E\to E^{(p)})\simeq \alpha_p$.

\item or $p=2,  E: y^2+y=x^3, \Gamma=<\gamma>$ with $\gamma: (x,y)\mapsto (x,y+1)$  and $ \Lambda_t=\mathrm{Ker}(F_{E/\sk}: E\to E^{(p)})\simeq \alpha_p$.
\end{itemize}
\end{enumerate}

\subsubsection{$U=\Aa$}
In this case $\Lambda=\Lambda_t\times \Gamma \subseteq E\rtimes_\sk \mathrm{Aut}_\sk(E)$ is a subgroup of $\mathbb{G}_m$  by  
Proposition~\ref{Prop: classification of torsors with infinite Bt}.  Note that finite subgroup scheme of $\mathbb{G}_m$ has the formation $\mu_{p^n}\times \Z/m\Z$ for some $n, m $ with $(m,p)=1$. According to Proposition~\ref{Prop: Abelian Automorphism of elliptic curves}, we have the following classifications.
\begin{enumerate}
\item When $\Gamma=e$, then either $\Lambda_t\subseteq E(\sk)$ is a cyclic group of order $n$ with $(n,p)=1$, or $E$ is not supersingular,   $\Lambda_t$ is the production of  a cyclic subgroup of $E(\sk)$ of order $n$  with $(n,p)=1$ and $\mathrm{Ker}(F^k_{E/\sk}: E\to E^{(p^k)})\simeq \mu_{p^k}$;

\item When $\Lambda_t=0$, then  $\Gamma$ can be any Abelian subgroup $\mathrm{Aut}^{\mathrm{gp}}_\sk(E)$ of order prime to $p$. These are the following:
\begin{itemize}
\item $p\neq 2$, $\Gamma=\{\pm \mathrm{id}\}$;  

\item $p\neq 3, E: y^2-y=x^3-1\,\, (j(E)=0), \Gamma=<\gamma>$ with $\gamma: (x,y)\mapsto (\zeta_3x, y)$;

\item $ p\neq 2, E: y^2=x^3-x \,\,(j(E)=1728), \Gamma=<\gamma>$ with $\gamma: (x,y)\mapsto (-x, \zeta_4 y)$;

\item $p\neq 2,3, E:y^2=x^3-1\,\, (j(E)=0), \Gamma=<\gamma>$ with $\gamma: (x,y)\mapsto (\zeta_3x, -y)$.
\end{itemize} Here $\zeta_n$ is a primitive unit root of order $n$.

\item If neither $\Gamma$ nor $\Lambda_t$ are trivial, then $\Lambda_t$ must contain an additive subgroup scheme or a subgroup scheme of the same order with $\Gamma$ by Proposition~\ref{Prop: Abelian Automorphism of elliptic curves}. Thus $\Lambda$ can not embed to $\mathbb{G}_m$.
\end{enumerate}

\subsubsection{The classification theorem}
 In summary, we have the following theorem.
\begin{Thm}\label{Thm: classification of genus one}There are exactly   four kinds of relatively minimal genus one fibration $f:X\to C$ with infinite $\B(X/C)$ as follows:
\begin{enumerate}[(A).]
\item[(A-i)] The surface $X$ is an Abelian surface, $C$ is an elliptic curve and $f:X\to C$ is a surjective homomorphism with smooth connected kernel.

\item[(A-ii)] The surface $X$ is a hyperelliptic surface given in the Bagnera-DeFranchis list (cf. \S~\ref{7.3.1}).

\item[(B)] The fibration $f$ is $$f=p_2: X=:E\times_\sk \p \to \p$$ for an elliptic curve $E$ over $\sk$.

\item[(C-i)] The characteristic of $\sk$ is $p>0$, there is an elliptic curve $E$ over $\sk$, a non-trivial subgroup scheme $\Lambda\subseteq E$ along with an embedding $\Lambda\subseteq \mathbb{G}_a$, so $\Lambda$ acts on $\p$ additively. The fibration $f$ is as below:
$$f=p_2: X:=(E\times_\sk\p)/\Lambda \to C=\p/\Lambda.$$
Here $\Gamma$ acts on $E\times_\sk \p$ diagonally. The complete classification of $(E,\Lambda)$ is as follows:
\begin{itemize}
\item for ordinary $E$ over $\sk$, we take $\Lambda= E[p](\sk)$;

\item  for supersingular $E$, we take  $\mathrm{Ker}(F_{E/\sk}: E\to E^{(p)})\simeq \alpha_p$.
\end{itemize}
In these two cases, $f$ has a singular fibres at $\infty\subseteq \p$ of type $pI_0$.

\item[(C-ii)] 
In the following two pairs $(E, \Gamma\subseteq \mathrm{Aut}_\sk^{\mathrm{gp}}(E))$ of elliptic curve with order $p=\mathrm{char}.(\sk)$ cyclic automorphism group:
\begin{itemize}
\item $p=3, E: y^2=x^3-x $ , $\Gamma=<\gamma>, \gamma: (x,y)\mapsto (x+1,y)$,
 
\item $p=2, E: y^2+y=x^3 $, $\Gamma=<\gamma>, \gamma: (x,y)\mapsto (x,y+1)$,  
\end{itemize}
we take $\Lambda_t=0$ or $\mathrm{Ker}(F_{E/\sk}: E\to E^{(p)})\simeq \alpha_p$ and $\Lambda=\Lambda_t\times_\sk \Gamma$.  Fixing an embedding $\Lambda\hookrightarrow \mathbb{G}_a$ (up to automorphism of $\mathbb{G}_a$ such an embedding is unique), then $\Lambda$  acts on $\p$ additively (cf. Example~\ref{Exa: example of group actions}). The fibration $f$ is as below:
$$\xymatrix{
X\ar[rrrrrr]_{\vartheta}^{\text{minimal resolution of singularities}} \ar[d]^f&&&&&& (E\times_\sk \p)/\Lambda\ar[d]^{p_2}\\
C\ar@{=}[rrrrrr] &&&&&& \p/\Lambda
}$$ Here $\Lambda$ acts on $E\times_\sk \p$ diagonally. 

In this case, there is a unique singular fibre of both $f$ and its Jacobian fibration $\mu: J\to C$ at $\infty$. When $\Lambda_t=0$, $f$ is the same as its Jacobian and the singular fibre is of type $II^*$. If $\Lambda_t\neq 0$, the the singular fibre of $f$ is type $pII^*$ (note in characteristic zero, the only multiple fibre is type $nI_k$).

\item[(D-i)] There is an elliptic curve $E$, a subgroup scheme $\Lambda\subseteq E$ along with an embedding $\Lambda\subseteq \mathbb{G}_m$, so $\Lambda$ acts on $\p$ multiplicatively. The fibration $f$ is as below:
$$f=p_2: X:=(E\times_\sk\p)/\Lambda \to C=\p/\Lambda.$$ Here $\Gamma$ acts on $E\times_\sk \p$ diagonally. 
The complete classification of $(E,\Lambda)$ is as follows:
\begin{itemize}
\item for any $E$, we can take $\Lambda\subseteq E(\sk)$ to be a non-trivial cyclic subgroup of order   prime to $\mathrm{char}.(\sk)$;

\item when $E$ is supersingular, we can take $\Lambda$ to be either $\mathrm{Ker}(F_{E^k/\sk}: E\to E^{(p^k)})\simeq \mu_p$ or its production with a non-trivial cyclic subgroup of $E(\sk)$ of order   prime to $\mathrm{char}.(\sk)$. 
\end{itemize}
In these two cases, $f$ has two singular fibres at $0,\infty\subseteq \p$ of type $nI_0$ with $n=\dim_\sk \cO_\Lambda$. 

\item[(D-ii)] There is an elliptic curve $E$, a non-trivial cyclic group $\Gamma\subseteq \mathrm{Aut}^{\mathrm{gp}}_\sk(E)$ of order $n$ prime to $\mathrm{char}.(\sk)$. Fixing an arbitrary isomorphism $\Gamma\simeq \mu_n(\sk)$, then $\Gamma$ also acts on $\p$ multiplicatively. The fibration $f$ is as below:
$$\xymatrix{
X\ar[rrrrrr]_{\vartheta}^{\text{minimal resolution of singularities}} \ar[d]^f&&&&&& (F\times_\sk \p)/\Gamma\ar[d]^{p_2}\\
C\ar@{=}[rrrrrr] &&&&&& \p/\Gamma
}$$ Here $\Gamma$ acts on $E\times_\sk \p$ diagonally. 
The complete classification of $(E,\Gamma)$ is as follows:

\begin{itemize}
\item $p\neq 2$, $\Gamma=\{\pm 1\}$.  The singular fibres at $0,\infty$ are of type $I_0^*$;

\item $p\neq 3, E: y^2-y=x^3-1\,\, (j(E)=0), \Gamma=<\gamma>$ with $\gamma: (x,y)\mapsto (\zeta_3x, y)$. The singular fibres at $0,\infty$ are of  type  $IV$ and $IV^*$ (or vice versa) respectively;

\item $ p\neq 2, E: y^2=x^3-x \,\,(j(E)=1728), \Gamma=<\gamma>$ with $\gamma: (x,y)\mapsto (-x, \zeta_4 y)$. The singular fibres at $0,\infty$ are of type $III$ and $III^*$ (or vice versa) respectively.
\end{itemize}
 
\end{enumerate}
\end{Thm}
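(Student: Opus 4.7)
The plan is to deduce Theorem~\ref{Thm: classification of genus one} by combining the structure result Theorem~\ref{Thm: main+main} with the classification of torsors Proposition~\ref{Prop: classification of torsors with infinite Bt}, and then to enumerate, in each of the four torsor types, which Abelian subgroup schemes $\Lambda\subseteq \mathbf{Aut}_\sk(E)$ are admissible. Concretely, Theorem~\ref{Thm: main+main} realises every $f:X\to C$ in our class as the minimal resolution of $p_2:(E\times_\sk C')/\Lambda\to C'/\Lambda$, with $T=C'\cap f^{-1}(U)$ a $\Lambda$-torsor over the smooth locus $U$ having infinite $\et(T/U)$ and with the $\Lambda$-action extending to $C'$. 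By Remark~\ref{Rem: G is abelian in classification}, the classification Proposition~\ref{Prop: classification of torsors with infinite Bt} forces $\Lambda$ to be Abelian; so by the (elementary) decomposition of Abelian subgroup schemes of $\mathbf{Aut}_\sk(E)=E\rtimes \mathrm{Aut}_\sk^{\mathrm{gp}}(E)$, I can write $\Lambda=\Lambda_t\times_\sk \Gamma$ with $\Lambda_t\subseteq E$ a subgroup scheme of translations fixed pointwise by $\Gamma\subseteq \mathrm{Aut}_\sk^{\mathrm{gp}}(E)$.

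Next I would split according to the four possibilities of the pair $(T,\Lambda)$ in Proposition~\ref{Prop: classification of torsors with infinite Bt}. If $T=E'$ is an elliptic curve, then $\Lambda$ embeds into the translation subgroup scheme of $E'$, the quotient $X=(E\times_\sk E')/\Lambda$ is automatically smooth, and the Enriques--Kodaira classification together with the Bagnera--DeFranchis / Bombieri--Mumford lists of hyperelliptic surfaces (cited in \S~\ref{7.3.1}) gives cases (A-i) and (A-ii) depending on whether $\Gamma$ is trivial. If $T=\p$ then $\Lambda$ must be trivial and one immediately obtains case (B). If $T=\A$, Proposition~\ref{Prop: classification of torsors with infinite Bt} forces $\Lambda\subseteq \mathbb{G}_a$, so $\Lambda_t$ is a unipotent subgroup scheme of $E$ and $\Gamma$ is cyclic of order $1$ or $p$; combining with the classification of $(E,\Gamma,E^\Gamma)$ from \S~\ref{Sec: Appendix} and selecting those embedding into $\mathbb{G}_a$ yields exactly (C-i) and (C-ii). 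Finally, if $T=\Aa$ then $\Lambda\subseteq \mathbb{G}_m$, so both $\Lambda_t$ and $\Gamma$ are of multiplicative type, which combined with the same Appendix list gives cases (D-i) and (D-ii), and rules out all mixed configurations because $\mathbb{G}_m$ contains no $\alpha_p$ factor.

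For each case I would then identify the singular fibres of $f$ by analysing the local action of $\Lambda$ at the branch points of $C'\to C$. The smooth quotient lemma Corollary~\ref{Cor: quotient by gp} shows $f_U$ is smooth, so singular fibres occur only above the image of the branch locus; over each such point the Jacobian fibre is determined by the image $\Gamma\subseteq \mathrm{Aut}_\sk^{\mathrm{gp}}(E)$ of the stabiliser (giving types $I_0^\ast, II^\ast, III^\ast, IV, IV^\ast, \ldots$ via the standard Kodaira dictionary for elliptic fibres with extra automorphisms), and the contribution of $\Lambda_t$ produces a multiple fibre of multiplicity equal to the length of the (possibly non-reduced) isotropy subgroup scheme, giving the $pI_0$ and $pII^\ast$ types in positive characteristic. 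The converse, namely that every instance on the list actually has infinite $\B(X/C)$, is immediate: the second projection on $C'\times \Lambda$-equivariant translations of $C'$ descends to give an infinite subgroup of $\B(X/C)$.

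The hardest part will be the bookkeeping in cases (C) and (D), specifically verifying that the enumerated pairs $(\Lambda_t,\Gamma)$ exhaust all Abelian subgroup schemes of $\mathbf{Aut}_\sk(E)$ that embed as indicated (this relies essentially on the rigidity Proposition~\ref{Prop: rigidity of subgroup} and the explicit automorphism lists in \S~\ref{Sec: Appendix}), together with correctly identifying the Kodaira-type and multiplicity of each singular fibre after minimal resolution of the cyclic/infinitesimal quotient singularities of $(E\times_\sk C')/\Lambda$. Special care is needed in the wild cases $p=2,3$ where $\alpha_p$ appears as a factor of $\Lambda$ and the standard characteristic-zero tables of singular fibres must be replaced by their positive-characteristic analogues, as already illustrated by Example~\ref{Exa: concrete}.
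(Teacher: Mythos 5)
Your proposal follows essentially the same route as the paper: reduce via Theorem~\ref{Thm: main+main} and Proposition~\ref{Prop: classification of torsors with infinite Bt} to classifying elementary Abelian subgroup schemes $\Lambda=\Lambda_t\times_\sk\Gamma\subseteq\mathbf{Aut}_\sk(E)$ embedding into the translation group of $C'$, $\mathbb{G}_a$, or $\mathbb{G}_m$ according to the four possibilities for $U$, and then enumerate using the Appendix tables of $(E,\Gamma,E^\Gamma)$. The only point to tighten is the exclusion of mixed configurations in the $\Aa$ case: besides $\mathbb{G}_m$ having no additive factor, you also need that its prime-to-$p$ part is cyclic (since for nontrivial $\Gamma$ of order prime to $p$ a nontrivial $\Lambda_t\subseteq E^\Gamma$ would force a non-cyclic prime-to-$p$ subgroup, as in the paper's Proposition~\ref{Prop: Abelian Automorphism of elliptic curves}).
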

\subsection{On surface with $\kappa(S)=1$}\label{Subsec: k(S)=1}
Let $S$ be a minimal surface over $\sk$ of Kodaira dimension one. Then it is well known that the Iitaka fibration of $S$ is a fibration $f: S\to C$   whose general fibre (not necessarily smooth) has arithmetic genus one. When $p=2,3$ this fibration can be quasi-elliptic and otherwise it is a genus one fibration in the sense of this paper (with smooth general fibre). As this fibration is canonical, it is $\mathrm{Aut}_\sk(S)$-equivariant and thus $\ea(S/C)=\mathrm{Aut}_\sk(C)$.     A direct consequence of Corollary~\ref{Cor: last} gives that
\begin{Prop}\label{Prop: kappa=1}
If $f$ is a genus one fibration in the sense of this paper, then $\B(S/C)$ is finite. Namely, the image of $\mathrm{Aut}(S)\to \mathrm{Aut}(C)$ is finite.
\end{Prop}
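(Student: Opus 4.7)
The plan is a direct application of Corollary~\ref{Cor: last}. Since $S$ is a minimal surface, its Iitaka fibration $f: S\to C$ has no vertical $(-1)$-curves (indeed no $(-1)$-curves at all), so $f$ is automatically relatively minimal, and by hypothesis it is a genus one fibration in the sense of this paper. Because the Iitaka fibration is canonical, every element of $\mathrm{Aut}_\sk(S)$ descends to an automorphism of $C$, giving $\ea(S/C)=\mathrm{Aut}_\sk(S)$ and identifying $\B(S/C)$ with the image of $\mathrm{Aut}_\sk(S)\to \mathrm{Aut}_\sk(C)$.

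If $g(C)\ge 2$, then $\mathrm{Aut}_\sk(C)$ is already finite and there is nothing to prove. So I would reduce to $g(C)\le 1$ and argue by contradiction: assume $\B(S/C)$ is infinite. Then $f$ falls squarely inside the setup of Theorem~\ref{Thm: main+main}, and Corollary~\ref{Cor: last} applies. It yields $\kappa(S)=0$ in the case $g(C)=1$ and $\kappa(S)=-\infty$ in the case $g(C)=0$. Either conclusion contradicts the hypothesis $\kappa(S)=1$, completing the argument.

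There is really no analytic obstacle in the proof; all of the work has been done in the structural classification. The only bookkeeping point worth explicit verification is that $f: S\to C$ really is a relatively minimal genus one fibration with smooth general fibre. The latter is precisely what the hypothesis ``$f$ is a genus one fibration in the sense of this paper'' guarantees (it excludes the quasi-elliptic Iitaka fibrations that can occur in characteristics $2$ and $3$), while relative minimality was observed above from the minimality of $S$. With these two checks the proposition is an immediate contrapositive of Corollary~\ref{Cor: last}.
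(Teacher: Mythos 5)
Your proposal is correct and is essentially the paper's own argument: the paper also derives Proposition~\ref{Prop: kappa=1} as a direct consequence of Corollary~\ref{Cor: last}, using that the Iitaka fibration is $\mathrm{Aut}_\sk(S)$-equivariant so that $\ea(S/C)=\mathrm{Aut}_\sk(S)$, and that infinite $\B(S/C)$ would force $\kappa(S)\le 0$. Your extra remarks on relative minimality and on the case $g(C)\ge 2$ are harmless bookkeeping that the paper leaves implicit.
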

\begin{Cor}\label{Cor: Jordan}
If $f$ is a genus one fibration in the sense of this paper, the $\mathrm{Aut}_\sk(S)$ has Jordan property. Namely, there is a uniform bound $N(S)\in \N_+$ such that any finite subgroup of  $\mathrm{Aut}_\sk(S)$ contains an Abelian subgroup of index at most $N(S)$.
\end{Cor}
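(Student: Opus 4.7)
The plan is to exhibit a chain of subgroups $J(C) \subseteq \mathrm{Aut}_C(S) \subseteq \mathrm{Aut}_\sk(S)$ whose bottom layer is Abelian and whose successive indices are finite, depending only on $S$; the Jordan bound will then follow by intersecting with any finite subgroup $H \subseteq \mathrm{Aut}_\sk(S)$. Since the Iitaka fibration $f: S\to C$ is canonical, $\mathrm{Aut}_\sk(S) = \ea(S/C)$, and I would chain together the two exact sequences already on hand: the equivariant sequence $(\ref{ext: intro-fundamental})$ identifying $\mathrm{Aut}_\sk(S)/\mathrm{Aut}_C(S) \cong \B(S/C)$, and Corollary~\ref{Cor: ext of auto of genus one} embedding $\mathrm{Aut}_C(S)/J(C)$ into $\mathrm{Aut}_K^{\mathrm{gp}}(J_\eta)$.

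Next I would argue that both quotients are finite with bounds depending only on $S$. By Proposition~\ref{Prop: kappa=1}, $\B(S/C)$ is finite; and $\mathrm{Aut}_K^{\mathrm{gp}}(J_\eta)$ is finite, since it embeds into the (always finite) geometric automorphism group of the elliptic generic fibre $J_{\overline{\eta}}$. Setting
$$N(S) := |\B(S/C)| \cdot |\mathrm{Aut}_K^{\mathrm{gp}}(J_\eta)|,$$
$J(C)$ then sits inside $\mathrm{Aut}_\sk(S)$ as a subgroup of index at most $N(S)$.

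To finish, for any finite subgroup $H \subseteq \mathrm{Aut}_\sk(S)$ the intersection $A := H \cap J(C)$ satisfies $[H:A] \leq N(S)$, and since $J(C) = J_\eta(K)$ is the group of sections of the Jacobian elliptic fibration---hence Abelian---the subgroup $A$ is itself Abelian, furnishing the required bounded-index Abelian subgroup of $H$. The only genuinely non-routine ingredient is Proposition~\ref{Prop: kappa=1}, which itself rests on Theorem~\ref{Thm: main+main} and Corollary~\ref{Cor: last}; once that finiteness of $\B(S/C)$ is granted there is no real obstacle, and the corollary becomes a formal consequence of the two exact sequences.
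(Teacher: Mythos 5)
Your proposal is correct and follows essentially the same route as the paper: the paper likewise deduces that $J(C)$ is an Abelian subgroup of finite index in $\mathrm{Aut}_\sk(S)$ by combining the finiteness of $\B(S/C)$ (Proposition~\ref{Prop: kappa=1}) with the exact sequences (\ref{ext: intro-fundamental}) and (\ref{ext: aut of genus one}), and then concludes the Jordan property. Your version merely spells out the explicit bound $N(S)$ and the intersection-with-$H$ step, which the paper leaves implicit.
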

\begin{proof}
It suffices to prove that  $\mathrm{Aut}_\sk(S)$ contains an Abelian subgroup of finite index. As $\B(S/C)$ is finite, $\mathrm{Aut}_\sk(S)$ contains an Abelian subgroup $J(C)$ of finite index by (\ref{ext: intro-fundamental}) and (\ref{ext: aut of genus one}). Here $\mu: J\to C$ is the Jacobian fibration.  We are done.
\end{proof}
In characteristic zero, Proposition~\ref{Prop: kappa=1} is proven by Prokhorov and Shramov (cf. \cite[Lem.~3.3]{P-S19}) based on the canonical bundle formula of Kodaira. The key ingredient is to rule out the possibility where the base $C=\p$, $f$ is isotrivial with at most two multiple fibres. Their argument actually implies the following  result.
\begin{Prop}\label{Prop: last}
Let $\sk$ be an algebraically closed field of characteristic zero, $f: X\to \p$ be a relatively minimal isotrivial genus one fibration with  at most two singular fibres, then $\kappa(X)<1$.
\end{Prop}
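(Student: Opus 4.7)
The plan is to apply Kodaira's canonical bundle formula for genus one fibrations and to use that isotriviality sharply restricts the admissible Kodaira types of singular fibres. Since $\mathrm{char}(\sk)=0$ there are no wild fibres, so for a relatively minimal genus one surface fibration $f\colon X\to\p$ one has
\begin{equation*}
K_X \sim_{\Q} f^{*}\!\Bigl( K_{\p}+D+\sum_{i}\tfrac{m_i-1}{m_i}[p_i] \Bigr),
\end{equation*}
where $\deg D=\chi(\cO_X)$ and the $m_iF_i$ are the multiple fibres. Hence $\kappa(X)=1$ is equivalent to the strict positivity of the degree $-2+\chi(\cO_X)+\sum_i(1-1/m_i)$ on $\p$.

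I would next compute $\chi(\cO_X)$ in terms of the singular fibres. Noether's formula together with $K_X^{2}=0$ gives $12\chi(\cO_X)=e(X)$, and by multiplicativity of Euler numbers in fibrations — combined with the vanishing $e(\text{smooth fibre})=e(mI_0)=0$ — this reduces to $12\chi(\cO_X)=\sum_{j}e(F_j)$ summed over the non-$mI_0$ singular fibres. Isotriviality then constrains the Kodaira types: the local monodromies of the Jacobian fibration $J\to\p$ all lie in a finite subgroup of $\mathrm{SL}_2(\Z)$, so the only admissible non-$mI_0$ singular fibres are of potentially-good reduction type, namely $II, III, IV, I_0^{*}, IV^{*}, III^{*}, II^{*}$, with Euler numbers $2,3,4,6,8,9,10$; similarly the only multiple fibres allowed are of type $mI_0$ (any $mI_n$ with $n\ge 1$ would have infinite Jacobian monodromy).

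The final step is a case analysis. Let $n$ be the number of $mI_0$ fibres and $s$ the number of remaining singular fibres, with $n+s\le 2$. The relation $\prod_{j=1}^{s}T_j=\mathrm{id}$ in $\pi_{1}\bigl(\p\setminus\{\text{critical values}\}\bigr)=\langle\gamma_1,\ldots,\gamma_s\mid \prod\gamma_j=1\rangle$ rules out $s=1$, and for $s=2$ forces the two local monodromies to be mutual inverses, leaving only the pairings $(II,II^{*})$, $(III,III^{*})$, $(IV,IV^{*})$, $(I_0^{*},I_0^{*})$, each contributing $\sum e(F_j)=12$ so that $\chi(\cO_X)=1$. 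Substituting, the four surviving configurations $(n,s)\in\{(0,0),(1,0),(2,0),(0,2)\}$ yield degrees $-2$, $-1-\tfrac{1}{m}$, $-\tfrac{1}{m_1}-\tfrac{1}{m_2}$, and $-1$ respectively, all strictly negative; so in fact $\kappa(X)=-\infty$ in every case, and a fortiori $\kappa(X)<1$. The step requiring most care is the reduction of admissible Kodaira types via isotriviality — identifying which conjugacy classes in $\mathrm{SL}_2(\Z)$ have finite order and checking that no multiple fibre with non-trivial Jacobian monodromy can arise; after this, the $\pi_{1}$-relation and the numerical check are essentially bookkeeping.
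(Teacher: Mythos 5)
Your proof is correct and in fact establishes the stronger conclusion $\kappa(X)=-\infty$ in every admissible configuration. The paper does not write out its own proof of this proposition --- it attributes the statement to the canonical-bundle-formula argument of Prokhorov and Shramov (\cite[Lem.~3.3]{P-S19}) --- and your argument is exactly a self-contained version of that same approach, so there is nothing substantive to compare.
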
 
In positive characteristics, Proposition~\ref{Prop: last} fails (cf. Example~\ref{Exa: concrete}). In other words, it is unlikely that one can prove Proposition~\ref{Prop: kappa=1} with canonical bundle formula alone. A thorough study of the structure of $f:X\to C$ with infinite $\B(X/C)$ as this paper looks necessary.

\section{Appendix: Abelian automorphism of elliptic curves}\label{Sec: Appendix}
We shall give a complete classification of pairs $(E,\Lambda)$, where $E$ is an elliptic curve over $\sk$   and $\Lambda\subseteq \mathbf{Aut}_\sk (E)$ is an Abelian subgroup scheme. 

\subsection{Abelian group-autormorphism of elliptic curves}
\subsubsection{Structure of Abelian group-automorphism group of elliptic curves} Recall 
\begin{Thm} [\protect{\cite[\S~A, Prop.~1.2]{Silverman-The-arithmetic-of-elliptic-curves}}]\label{Automorphism of elliptic curves}
Let $E$ be an elliptic curve over an algebraically closed field $\sk$ of characteristic $p$, then 
$$
|\mathrm{Aut}^{\mathrm{gp}}_{\sk}(E)|=\left\{\begin{array}{cc}
2, & j(E)\neq 0,1728 \\
4, & j(E)=1728, p\neq 2,3 \\
6, & j(E)=0, p\neq 2,3 \\
12,& j(E)=0, p=3\\
24,& j(E)=0,p=2.
\end{array}  \right.
$$ Moreover,  the group structure of $\mathrm{Aut}^{\mathrm{gp}}_{\sk}(E)$ is also clear: 
\begin{itemize}
\item when $|\mathrm{Aut}^{\mathrm{gp}}_{\sk}(E)|\le 6$, it is cyclic; and
\item when $j(E)=0,p=2,3$ the structure of $\mathrm{Aut}^{\mathrm{gp}}_{\sk}(E)$ is described in \cite[\S~A, Exer.~A.1.]{Silverman-The-arithmetic-of-elliptic-curves}.
\end{itemize}
\end{Thm}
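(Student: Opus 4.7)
The plan is to follow the classical approach via Weierstrass models, which reduces the computation of $\mathrm{Aut}^{\mathrm{gp}}_\sk(E)$ to an elementary analysis of admissible changes of variable. First I would fix the origin $O\in E$ and use Riemann--Roch on $3O$ to produce the standard Weierstrass embedding $E\hookrightarrow \mathbb{P}^2_\sk$ with $O$ mapping to the flex at infinity. Any group automorphism $\varphi:E\to E$ preserves $O$ together with the linear system $|3O|$, so it extends to a projective automorphism of $\mathbb{P}^2$ stabilising the chosen Weierstrass model. A direct computation in affine coordinates then shows $\varphi$ must take the form
\[
(x,y)\mapsto (u^2 x + r,\ u^3 y + u^2 s x + t)
\]
for some $u\in \sk^\times$ and $r,s,t\in \sk$ subject to the constraint that all Weierstrass coefficients $a_1,\ldots,a_6$ are preserved. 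The whole proof reduces to parametrising these admissible tuples case by case.

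For $p\neq 2,3$ one may further reduce to short form $y^2=x^3+Ax+B$, which forces $r=s=t=0$, and the preservation condition becomes $A=u^4 A$, $B=u^6 B$. If $j(E)\notin\{0,1728\}$ then both $A,B\neq 0$, giving $u^2=1$; if $j(E)=1728$ (so $B=0$) then $u^4=1$; if $j(E)=0$ (so $A=0$) then $u^6=1$. In all three subcases the group is generated by multiplication of $(x,y)$ by a single root of unity and is thus cyclic of the stated order, which simultaneously handles the first three lines of the table and their claimed cyclic structure.

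The real work lies in the remaining cases $p\in\{2,3\}$ with $j(E)=0$, where the usual reduction to short Weierstrass form is unavailable and the parameters $r,s,t$ become essential. In characteristic $3$ I would normalise $E$ to $y^2=x^3-x$; the transformations $(x,y)\mapsto (x+r,\pm y)$ with $r^3=r$, combined with a $(x,y)\mapsto(-x,\zeta_4 y)$-type rescaling, produce a group of order $12$ realised as the semidirect product $\mathbb{Z}/3\rtimes\mathbb{Z}/4$ (binary dihedral). Characteristic $2$ proceeds analogously with the supersingular normal form $y^2+y=x^3$; solving $s^3=s$, $t^2+t=s^6$, and $u^3=1$ yields $24$ admissible tuples, and recognising the multiplication law identifies them with the binary tetrahedral group $\mathrm{SL}_2(\mathbb{F}_3)$. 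One concludes by checking via direct substitution that no further admissible tuple exists.

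The main obstacle is exactly that last completeness step in characteristics $2$ and $3$: one has to verify that the explicit lists of $(u,r,s,t)$ \emph{exhaust} all admissible tuples, rather than merely producing the correct count. This reduces to solving an explicit system of polynomial equations forced by the invariance of each $a_i$, which is tractable but requires care since in small characteristic one can no longer diagonalise the change of variables, and since the resulting group is non-abelian one must additionally pin down the full multiplication table in order to identify it with the semidirect product or with $\mathrm{SL}_2(\mathbb{F}_3)$ rather than with some other group of the same order.
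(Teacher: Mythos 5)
The paper offers no proof of this statement: it is quoted verbatim from Silverman's Appendix~A (Prop.~1.2 and Exercise~A.1), so there is no internal argument to compare against. Your proposal is exactly the standard proof from that reference --- fix the origin, pass to a Weierstrass model, and enumerate the admissible coordinate changes $(x,y)\mapsto(u^2x+r,\,u^3y+u^2sx+t)$ case by case --- and its structure is sound: the three cases with $p\neq 2,3$, the identification of the order-$12$ group in characteristic $3$ with $\Z/3\Z\rtimes\Z/4\Z$, and the identification of the order-$24$ group in characteristic $2$ with $\mathrm{SL}_2(\mathbb{F}_3)$ all match the cited source.

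One concrete slip in the characteristic-$2$ computation: for $y^2+y=x^3$ the invariance conditions are $u^3=1$, $s^4=s$ (equivalently $s\in\mathbb{F}_4$, forced by matching the coefficient of $x'$ after the $x'^2$-coefficient has imposed $r=s^2$) and $t^2+t=s^6$. The equation $s^3=s$ that you wrote has only the two solutions $s\in\{0,1\}$ in characteristic $2$, so your system as stated has $3\cdot 2\cdot 2=12$ solutions, not the required $24$; with $s^4=s$ one gets $3\cdot 4\cdot 2=24$, the eight tuples with $u=1$ form a quaternion subgroup $Q_8$, and the resulting extension $Q_8\rtimes\Z/3\Z\cong\mathrm{SL}_2(\mathbb{F}_3)$ is the binary tetrahedral group as you claim.
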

An easy group theoretic calculation based on the explicit structure of $\mathrm{Aut}^{\mathrm{gp}}_{\sk}(E)$ given in \cite[\S~A, Exer.~A.1.]{Silverman-The-arithmetic-of-elliptic-curves}, we have the following.
\begin{Prop}\label{Prop: Abelian to cyclic}
Any Abelian subgroup of $\mathrm{Aut}^{\mathrm{gp}}_{\sk}(E)$ is a cyclic group. Two Abelian subgroups of $\mathrm{Aut}^{\mathrm{gp}}_{\sk}(E)$ of the same order are conjugate to each other. 
\end{Prop}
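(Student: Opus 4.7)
The plan is a straightforward case analysis driven by Theorem~\ref{Automorphism of elliptic curves}. In the cases $|\mathrm{Aut}^{\mathrm{gp}}_\sk(E)| \in \{2, 4, 6\}$ the group itself is cyclic, so every subgroup is cyclic and a finite cyclic group has a unique---hence trivially self-conjugate---subgroup of each divisor order. The content of the proposition thus reduces entirely to the two exceptional cases $j(E) = 0$ with $p = 3$ (order $12$) and $p = 2$ (order $24$).

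For $p = 3$, $j(E) = 0$, the description in \cite[\S~A, Exer.~A.1]{Silverman-The-arithmetic-of-elliptic-curves} identifies $\mathrm{Aut}^{\mathrm{gp}}_\sk(E)$ with the dicyclic group of order $12$, which may be realised as $\mathbb{Z}/3 \rtimes \mathbb{Z}/4$ with $\mathbb{Z}/4$ acting on $\mathbb{Z}/3$ through its quotient $\mathbb{Z}/2$ by inversion. A direct element count using the semidirect-product law shows there is a unique involution (generator of the centre), a unique Sylow $3$-subgroup, and three mutually conjugate cyclic Sylow $2$-subgroups of order $4$. The only Abelian subgroup of order $6$ is then $\langle \text{Sylow }3\rangle \times \langle \text{centre}\rangle$, which is cyclic; a Klein four-subgroup is excluded by the uniqueness of the involution; and the whole group is non-Abelian. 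Every Abelian subgroup is thus cyclic and those of the same order are conjugate, as required.

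For $p = 2$, $j(E) = 0$, the automorphism group is $\mathrm{SL}_2(\mathbb{F}_3) \cong Q_8 \rtimes \mathbb{Z}/3$, with $\mathbb{Z}/3$ cyclically permuting the three maximal cyclic subgroups of $Q_8$. The same programme applies: the unique involution is central; the four Sylow $3$-subgroups form one conjugacy class; and the three cyclic order-$4$ subgroups of $Q_8$ are mutually conjugate under the $\mathbb{Z}/3$-action. The cyclic subgroups of order $6$ are precisely the products $\{\pm 1\}\cdot C_3$ for a Sylow $3$-subgroup $C_3$, hence again form a single conjugacy class. An Abelian subgroup of order $12$ would correspond to a surjection onto $\mathbb{Z}/2$, which is ruled out because the abelianisation of $\mathrm{SL}_2(\mathbb{F}_3)$ is $\mathbb{Z}/3$; and $Q_8$ itself is non-Abelian. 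Hence no further Abelian subgroups exist, and the proposition follows.

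The computations are routine once the group structure is fixed; the only real pitfall is keeping the semidirect-product conventions consistent, so I would carry out the element-order counts explicitly in each presentation rather than relying on abstract structure theorems. No deeper obstacle is expected.
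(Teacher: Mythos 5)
Your proposal is correct and follows essentially the same route as the paper, which simply cites the explicit description of $\mathrm{Aut}^{\mathrm{gp}}_{\sk}(E)$ from \cite[\S~A, Exer.~A.1]{Silverman-The-arithmetic-of-elliptic-curves} and declares the statement "an easy group theoretic calculation"; you have merely written out that calculation for the two non-cyclic cases (the dicyclic group of order $12$ and $\mathrm{SL}_2(\mathbb{F}_3)$), and your element counts and conjugacy arguments check out.
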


In other words, up to automorphism of $E$, Abelian subgroups of $\mathrm{Aut}^{\mathrm{gp}}_{\sk}(E)$ are uniquely determined by their orders.
$$
\xymatrix{
E\ar[d]_{\simeq}^\sigma && \Lambda\ar@{^{(}->}[rr] \ar[d]&& \mathrm{Aut}_\sk(E) \ar[d]^{\sigma^*=\text{conjugate by $\sigma$}} \\
E              &&\sigma \Lambda\sigma^{-1}\ar@{^{(}->}[rr] && \mathrm{Aut}_\sk(E)
}
$$

\subsubsection{List of maximal Abelian group-automorphism subgroups of elliptic curves}\label{Subsub: list} The following is the complete list of pair $(E,\Gamma)$ where $E$ is an elliptic curve with $j(E)=0$ or $1728$ and $\Gamma=<\gamma> (\text{or} <\gamma'>)\subseteq \mathrm{Aut}^{\mathrm{gp}}_{\sk}(E)$ is a maximal Abelian automorphism subgroup.
\begin{align*} 
&(j(E)=1728, p\neq 2,3) & E: y^2=x^3-x, & \,\,\gamma: (x,y)\mapsto (-x, \zeta_4 y),  |\Gamma|=4; \\
&(j(E)=0, p\neq 2,3) & E: y^2=x^3-1, &\,\, \gamma: (x,y)\mapsto (\zeta_3x, -y),  |\Gamma|=6; \\
&(j(E)=0, p=3) & E: y^2=x^3-x, & \,\,\gamma: (x,y)\mapsto (-x, \zeta_4 y),  |\Gamma|=4; \\
&& & \,\,\gamma': (x,y)\mapsto (x+1, -y),  |\Gamma'|=6; \\
&(j(E)=0, p=2) & E: y^2+y=x^3, &\,\, \gamma': (x,y)\mapsto (\zeta_3x, y+1),  |\Gamma|=6; \\
&&  \gamma: (x,y)&\mapsto (x+1, y+x+\zeta_3),  |\Gamma|=4. 
\end{align*}Here $\zeta_n$ is a primitive unit root of order $n$.
\begin{Rem}\label{Rem: fix 1-form}
In this list, one can check that $\gamma$ or $\gamma'$ fixes $H^0(E,\Omega^1_{E/\sk})$ if and only if $\mathrm{ord}(\gamma)$ or $\mathrm{ord}(\gamma')$ is a power of $p$.
\end{Rem}
\subsection{Structure of Abelian automorphism group scheme of elliptic curves}
For an elliptic curve $E$ over $\sk$, we have the semi-direct decomposition $$\mathbf{Aut}_\sk(E)=E \rtimes \mathrm{Aut}_\sk^{\mathrm{gp}}(E).$$ Therefore if given a finite subgroup scheme $\Lambda_t\subseteq E$ and a finite group $\Gamma\subseteq  \mathrm{Aut}_\sk^{\mathrm{gp}}(E)$ so that $\Lambda_t$ is preserved by the $\Gamma$-action, then $$\Lambda:=\Lambda_t\rtimes \Gamma\subseteq E \rtimes \mathrm{Aut}_\sk^{\mathrm{gp}}(E)$$ is a finite subgroup scheme of $\mathbf{Aut}_\sk(E)$.  
\begin{Def}\label{Def: elementary}
A subgroup scheme of the above formation $\Lambda=\Lambda_t\rtimes \Gamma$ is called an elementary subgroup scheme of $\mathbf{Aut}_\sk(E)$.
\end{Def}
Clearly, not all subgroup schemes are elementary. 
\begin{Lem}\label{Lem: el}
Up to translation isomorphism of $E$, any finite Abelian subgroup scheme of  $\mathbf{Aut}_\sk(E)$ is elementary. That is, let $\Lambda\subseteq \mathbf{Aut}_\sk(E)$ be a finite Abelian subgroup scheme. Then there is a suitable $a\in E(\sk)$ so that the conjugate group scheme $\Lambda^{T_a}:=T_a\cdot \Lambda\cdot T_{-a}$ is elementary.
\end{Lem}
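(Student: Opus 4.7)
The plan is to find $a \in E(\sk)$ such that, after conjugating by $T_a$, the extension $1 \to \Lambda_t \to \Lambda^{T_a} \to \Gamma \to 1$ splits; here $\pi: \mathbf{Aut}_\sk(E) = E \rtimes \mathrm{Aut}_\sk^{\mathrm{gp}}(E) \to \mathrm{Aut}_\sk^{\mathrm{gp}}(E)$ is the canonical projection, $\Lambda_t := \Lambda \cap E$, and $\Gamma := \pi(\Lambda)$. Abelian-ness of $\Lambda$ makes $\Gamma$ Abelian, hence cyclic by Proposition~\ref{Prop: Abelian to cyclic}; write $\Gamma = \langle \gamma_0 \rangle$. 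If $\gamma_0 = \mathrm{id}$ then $\Lambda = \Lambda_t$ is already elementary, so assume $\gamma_0 \ne \mathrm{id}$. Because $\sk$ is algebraically closed and $\Gamma$ is \'etale, the connected-\'etale decomposition $\Lambda \cong \Lambda^0 \times \Lambda^{\mathrm{et}}$ ensures that $\pi$ is surjective on $\sk$-points; fix a $\sk$-rational lift $\lambda = T_b \gamma_0 \in \Lambda(\sk)$ of $\gamma_0$.

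A direct computation in the semi-direct product $E \rtimes \mathrm{Aut}_\sk^{\mathrm{gp}}(E)$ gives
\begin{equation*}
T_a \cdot (T_b \gamma_0) \cdot T_{-a} \;=\; T_{b + a - \gamma_0(a)}\, \gamma_0,
\end{equation*}
so to absorb the translation part it suffices to solve $(\mathrm{id} - \gamma_0)(a) = -b$ in $E(\sk)$. The crucial point is that $\mathrm{id} - \gamma_0$ is a non-zero group endomorphism of $E$ (since $\gamma_0 \ne \mathrm{id}$), hence a surjective isogeny, so such an $a$ exists. For this $a$, the conjugate $\Lambda^{T_a}$ contains $\gamma_0$ (the conjugate of $\lambda$), and it also contains $\Lambda_t$ (translations on $E$ commute, so $T_a$ centralises $\Lambda_t \subseteq E$). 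Normality of $\Lambda_t$ in $\Lambda^{T_a}$ forces $\gamma_0(\Lambda_t) = \Lambda_t$, so the natural morphism $\Lambda_t \rtimes \langle \gamma_0 \rangle \to \Lambda^{T_a}$ is a well-defined closed immersion of finite commutative group schemes; comparing ranks $|\Lambda^{T_a}| = |\Lambda| = |\Lambda_t| \cdot |\Gamma|$ promotes this inclusion to an isomorphism, exhibiting $\Lambda^{T_a}$ as the elementary subgroup scheme $\Lambda_t \rtimes \Gamma$.

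The main obstacle is making sure the $\sk$-point argument genuinely upgrades to a statement about group schemes rather than merely abstract groups of points. This is handled by the three ingredients highlighted above: the \'etaleness of $\Gamma$ (which promotes any $\sk$-rational section to a scheme-theoretic one), the surjectivity of the isogeny $\mathrm{id} - \gamma_0$ on $\sk$-points (automatic for any non-zero isogeny between elliptic curves), and the rank count for finite group schemes. As a sanity check, the identity $N \cdot (\mathrm{id} - \gamma_0) = \mathrm{id} - \gamma_0^n = 0$ in $\mathrm{End}(E)$, with $N := 1 + \gamma_0 + \cdots + \gamma_0^{n-1}$ and $\mathrm{End}(E)$ a domain, forces $N = 0$ on $E$; this is exactly the necessary condition $N(b) = 0$ for the equation $(\mathrm{id} - \gamma_0)(a) = -b$ to admit a solution, so everything is internally consistent.
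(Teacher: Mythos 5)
Your proof is correct and follows essentially the same route as the paper: both reduce to the cyclic image $\Gamma=\langle\gamma_0\rangle$ via Proposition~\ref{Prop: Abelian to cyclic}, lift a generator to $T_b\gamma_0\in\Lambda(\sk)$, and solve $(\gamma_0-\mathrm{id})(a)=b$ using surjectivity of the non-zero isogeny $\gamma_0-\mathrm{id}$ so that conjugation by $T_a$ kills the translation part. Your added bookkeeping (the rank count and the scheme-theoretic splitting) just makes explicit what the paper leaves implicit in ``we are done as $\overline{\phi}$ generates $\pi(\Lambda(\sk))$.''
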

In other words, up to the isomorphism $T_a: E\to E$, we can assume $$\Lambda=\Lambda_t\rtimes \Gamma\subseteq E \rtimes \mathrm{Aut}_\sk^{\mathrm{gp}}(E)=\mathbf{Aut}_\sk(E)$$ for a finite subgroup scheme $\Lambda_t\subseteq E$ and a finite group $\Gamma\subseteq  \mathrm{Aut}_\sk^{\mathrm{gp}}(E)$ so that $\Lambda_t$ is fixed (rather than  preserved since $\Lambda$ is Abelian) by the $\Gamma$-action.

\begin{proof}
A subgroup scheme $H\subseteq \mathbf{Aut}_\sk(E)$ is elementary if the quotient homomorphism 
$$\pi|_{H(\sk)}: H(\sk) \to \mathrm{Aut}_\sk^{\mathrm{gp}}(E) $$ admits a section over its image subgroup $\pi(H(\sk))$. 
For our $\Lambda$, the image $\pi(\Lambda(\sk))$ is Abelian and hence cyclic by Proposition~\ref{Prop: Abelian to cyclic}. If the image group $\pi(\Lambda(\sk))$ is trivial, then we are done automatically. Then we let $\phi: E\to E\in \Lambda(\sk)$ be an element so that $\overline{\phi}:=\pi(\phi)\neq \mathrm{id}$ is a generator of $\pi(\Lambda(\sk))$. So we can write $\phi=T_x\circ \overline{\phi}$ for a suitable $x\in E(\sk)$. Now as $\overline{\phi}\neq \mathrm{id}$, the map $$E(\sk)\to E(\sk): y\mapsto \overline{\phi}(y)-y$$ is surjective. In particular, there is $a\in E(\sk)$ so that $\overline{\phi}(a)-a=x$. Namely, $$T_a\circ \phi \circ T_{-a}(y)=\phi(y-a)+a=\overline{\phi}(y)-\overline{\phi}(a)+a+x=\overline{\phi}(y).$$  In other words, $\overline{\phi}$ is contained in $\Lambda^{T_a}$. We are done as $\overline{\phi}$ generates $\pi(\Lambda(\sk))$.
\end{proof}
\begin{Rem}
This proof is a copy and slight modification of that in \cite[pp.~159, \S~10]{Badescu01-algebriac-surfaces} where $p\neq 2,3$. The key is that $\pi(\Lambda(\sk))$ is cyclic.  
\end{Rem}

\subsection{Classification}
By Lemma~\ref{Lem: el}, up to translations, Abelian subgroup schemes of an elliptic are elementary. It now suffices to classify elementary Abelian subgroup scheme. Namely the triple pair $(E,\Lambda_t\subseteq E,\Gamma\subseteq \mathrm{Aut}^{\mathrm{gp}}_\sk(E))$ so that $\Lambda_t$ is fixed by $\Gamma$. To obtain such a classification, we only have to classify the pair $(E,\Gamma)$ and at the same time work out the subgroup scheme $E^\Gamma$ fixed by $\Gamma$. 

From the  list given in \S~\ref{Subsub: list}, we have the following proposition by a simple calculation.
\begin{Prop}\label{Prop: Abelian Automorphism of elliptic curves}
We have the following classification $(E,\Gamma, E^\Gamma)$ (recall by Proposition~\ref{Prop: Abelian to cyclic}, $\Gamma$ is uniquely determined by $|\Gamma|$ upto automorphism of $E$):
\begin{itemize}
\item if $j(E)\neq 0,1728$, then $\Gamma$ is either trivial or $\Gamma=\pm \mathrm{id}$.  In the latter case,  $$E^\Gamma=E[2]\simeq \left\{\begin{array}{cc}
(\Z/2\Z)^{2},  & p\neq 2;\\
\mu_2\times_\sk \Z/2\Z, & p=2.
\end{array} \right. $$ 

\item if $j(E)=1728$, $p\neq 2,3$, then $\Gamma$ is a cyclic group of order $1,2$ or $4$. In this case
$$E^\Gamma=\left\{ \begin{array}{cc}
E[2]\simeq (\Z/2\Z)^2, & |\Gamma|=2;\\
\Z/2\Z, & |\Gamma|=4. 
\end{array}\right.$$

\item if $j(E)=0, p\neq 2,3$, then $\Gamma$ is a cyclic group of order $1,2,3$ or $6$.
In this case
$$E^\Gamma=\left\{ \begin{array}{cc}
E[2]\simeq (\Z/2\Z)^2, & |\Gamma|=2;\\
\Z/3\Z  & |\Gamma|=3;\\
e  & |\Gamma|=6;\\
\end{array}\right.$$

\item if $j(E)=0,p=3$,  then $\Gamma$ is a cyclic group of order $1,2,3,4$ or $6$; 
In this case
$$E^\Gamma=\left\{ \begin{array}{cc}
E[2]\simeq (\Z/2\Z)^2, & |\Gamma|=2;\\
\mathrm{Ker}(F_{E/\sk}: E\to E^{(p)})\simeq \alpha_3 & |\Gamma|=3.\\
\Z/2\Z, & |\Gamma|=4; \\
e, & |\Gamma|=6.
\end{array}\right.$$

\item if $j(E)=0,p=2$,  then $\Gamma$ is a cyclic group of order $1,2,3,4$ or $6$. 
In this case
$$E^\Gamma=\left\{ \begin{array}{cc}
E[2] (\text{infinitesimal}), & |\Gamma|=2;\\
\Z/3\Z & |\Gamma|=3;\\
 \mathrm{Ker}(F_{E/\sk}: E\to E^{(p)})\simeq \alpha_2  & |\Gamma|=4;\\
e & |\Gamma|=6.
\end{array}\right.$$
\end{itemize}
\end{Prop}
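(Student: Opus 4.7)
The plan is to reduce, via the explicit generators tabulated in \S\ref{Subsub: list}, to case-by-case computations of kernels of simple isogenies. By Proposition~\ref{Prop: Abelian to cyclic} every Abelian subgroup $\Gamma\subseteq\mathrm{Aut}_\sk^{\mathrm{gp}}(E)$ is cyclic, and two such subgroups of the same order are conjugate; since conjugation by $\sigma\in\mathrm{Aut}_\sk^{\mathrm{gp}}(E)$ induces an isomorphism of subgroup schemes $\sigma(E^{\Gamma})=E^{\sigma\Gamma\sigma^{-1}}$, the isomorphism type of $E^\Gamma$ depends only on the pair $(E,|\Gamma|)$. For $\Gamma=\langle\gamma\rangle$ with $\gamma\neq\mathrm{id}$ of order $n$, we have $E^\Gamma=\ker\phi_\gamma$, where $\phi_\gamma:=\gamma-\mathrm{id}\in\mathrm{End}(E)$ is a nonzero isogeny.

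First I would compute $|E^\Gamma|=\deg(\phi_\gamma)$. Since $\gamma$ satisfies $\gamma^n=1$ in $\mathrm{End}(E)$, the element $\gamma-1$ lies in $\Z[\zeta_n]\subseteq\mathrm{End}(E)\otimes\Q$ with norm $N_{\Q(\zeta_n)/\Q}(\zeta_n-1)$, giving $\deg\phi_\gamma=4,3,2,1$ for $n=2,3,4,6$ respectively. This already pins down the $|\Gamma|=6$ rows (trivial $E^\Gamma$) and furnishes a numerical check in the other rows.

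Next I would determine the local/\'etale decomposition via the differential of $\phi_\gamma$. The action of $\gamma$ on the one-dimensional $\sk$-vector space $\mathrm{Lie}(E)$ is by some scalar $\chi(\gamma)\in\sk^*$, so $d\phi_\gamma=\chi(\gamma)-1$. Thus $\phi_\gamma$ is separable iff $\chi(\gamma)\neq 1$; by Remark~\ref{Rem: fix 1-form} this fails exactly when $\mathrm{ord}(\gamma)$ is a power of $p$. In the separable cases $\ker\phi_\gamma$ is \'etale and can be read off directly from the formulas in \S\ref{Subsub: list}: for $n=2$ we have $\phi_\gamma=[-2]$ so $E^\Gamma=E[2]$ with its classical (characteristic-dependent) structure, while for $n=3,4$ with $p\nmid n$ the fixed locus on Weierstrass coordinates is an \'etale cyclic group of the predicted order.

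The main obstacle is the inseparable cases $(p,n)\in\{(2,2),(2,4),(3,3)\}$ on the $j(E)=0$ curves. Here I would use two inputs: the curves $y^2+y=x^3$ and $y^2=x^3-x$ are supersingular in characteristics $2$ and $3$ respectively (the first has no $xy$-term; for the second the coefficient of $x^{p-1}$ in $(x^3-x)^{(p-1)/2}$ vanishes), so $\ker F_{E/\sk}\cong\alpha_p$ is the unique infinitesimal subgroup scheme of $E$ of order $p$, and the formal group of $E$ has height two. Combined with the degree computation, $\ker\phi_\gamma$ is forced to contain $\ker F_{E/\sk}$ and, by order-matching, to equal the entry predicted by the table; a direct expansion of the explicit automorphisms of \S\ref{Subsub: list} in the formal parameter $t=-x/y$ at the origin pins down the precise group-scheme structure in each remaining case and completes the proof.
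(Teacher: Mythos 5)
Your proposal is correct and follows essentially the same route as the paper, which simply derives the table ``by a simple calculation'' from the explicit list of pairs $(E,\gamma)$ in \S\ref{Subsub: list}; you have merely supplied the details of that calculation (computing $|E^\Gamma|=\deg(\gamma-\mathrm{id})$ via the norm of $\zeta_n-1$, detecting separability through the action on $\mathrm{Lie}(E)$ as in Remark~\ref{Rem: fix 1-form}, and identifying the infinitesimal kernels using supersingularity of the $j=0$ curves in characteristics $2$ and $3$). All the individual computations check out, so nothing further is needed.
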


\medskip

\begin{center}
\textbf{Acknowledgement}
\end{center}

The author would like to thank professor Jinxing Cai, Yifei Chen, Lifan Guan, Xiaoyu Su and Lei Zhang (USTC) and Lei Zhang (CUHK) for helpful communications. The author would also like to thank an anonymous referee for pointing out some mistakes in the first version of this paper.

\bibliography{ref}

\bigskip

\end{document}